\newtheorem{theorem}{Theorem}[section]
\newtheorem{proposition}[theorem]{Proposition}
\newtheorem{corollary}[theorem]{Corollary}
\newtheorem{lemma}[theorem]{Lemma}
\newtheorem*{theorem-non}{Theorem}
\newcounter{theoremintro}
\newtheorem{thmintro}[theoremintro]{Theorem}
\newtheorem{corintro}[theoremintro]{Corollary}
\theoremstyle{definition}
\newtheorem{definition}[theorem]{Definition}
\newtheorem{example}[theorem]{Example}
\newtheorem{remark}[theorem]{Remark}
\newtheorem*{nota-non}{Notation}
\newcommand{\Z}{\mathbb{Z}}
\newcommand{\N}{\mathbb{N}}
\newcommand{\C}{\mathbb{C}}
\newcommand{\cG}{\mathcal{G}}
\newcommand{\norm}[1]{ \left\| #1 \right\| }
\newcommand{\setbuilder}[2] { \left\{ #1 \enskip \middle| \enskip #2 \right\} }
\newcommand{\abs}[1]{ \left| #1 \right| }
\newcommand{\closure}[1]{\overline{#1}}
\newcommand{\set}[1]{\left\{#1\right\}}
\newcommand{\directsum}{\oplus}
\newcommand{\isoto}{\cong}
\newcommand{\boundary}{\partial}
\newcommand{\defeq}{\coloneqq}
\newcommand{\what}[1]{\widehat{#1}}
\newcommand{\wtilde}[1]{\widetilde{#1}}
\newcommand{\actson}{\curvearrowright}
\newcommand{\injectsinto}{\hookrightarrow}
\newcommand{\surjectsonto}{\twoheadrightarrow}
\newcommand{\tensor}{\otimes}
\renewcommand{\Re}{\operatorname{Re}}
\DeclareMathOperator{\conv}{conv}
\DeclareMathOperator{\supp}{supp}
\DeclareMathOperator{\id}{id}
\DeclareMathOperator{\Ad}{Ad}
\DeclareMathOperator{\Aut}{Aut}
\DeclareMathOperator{\FC}{FC}
\DeclareMathOperator{\FCH}{FCH}
\DeclareMathOperator{\RP}{RP}
\DeclareMathOperator{\LP}{LP}
\title[Simplicity of crossed products by FCH groups]{Simplicity of crossed products by FC-hypercentral groups}
\author{Shirly Geffen}
\address{Mathematisches Institut, Fachbereich Mathematik und Informatik, Universit{\"a}t M{\"u}nster \\ Einsteinstrasse 62 \\ 48149 M{\"u}nster \\ Germany}
\email{sgeffen@uni-muenster.de}
\author{Dan Ursu}
\address{Mathematisches Institut, Fachbereich Mathematik und Informatik, Universit{\"a}t M{\"u}nster \\ Einsteinstrasse 62 \\ 48149 M{\"u}nster \\ Germany}
\email{dursu@uni-muenster.de}
\subjclass[2020]{46L55, 47L65}
\keywords{C*-algebra, group action, crossed product, simple, prime, noncommutative dynamical system, injective envelope}
\thanks{The project was funded by the Deutsche Forschungsgemeinschaft (DFG, German Research Foundation) -- Project-ID 427320536 -- SFB 1442, as well as under Germany's Excellence Strategy EXC 2044 390685587, Mathematics M{\"u}nster: Dynamics--Geometry--Structure, and by the ERC Advanced Grant 834267 -- AMAREC. The first named author was also supported by the generosity of Eric and Wendy Schmidt by recommendation of the Schmidt Futures program.}
\begin{document}

    \begin{abstract}
		In this paper, we give a complete, two-way characterization, of when a noncommutative crossed product $A \rtimes_\lambda G$ is simple, in the case of $G$ being an FC-hypercentral group. This is a large class of amenable groups that contains all virtually nilpotent groups, and in the finitely-generated setting, coincides with the set of groups which have polynomial growth. We further completely characterize the ideal intersection property under the assumption that the group is FC, meaning that every element has a finite conjugacy class. Finally, for minimal actions of arbitrary discrete groups on unital C*-algebras, we are able to characterize when the crossed product $A \rtimes_\lambda G$ is prime.
	\end{abstract}
	
	\maketitle
	
	\tableofcontents

	\renewcommand*{\thetheoremintro}{\Alph{theoremintro}}
	\section{Introduction}
    \label{sec:introduction}
	The last several years have seen tremendous progress in problems related to C*-algebras generated by groups and their dynamical systems. In particular, the problem of determining when the reduced group C*-algebra $C^*_\lambda(G)$ is simple, which has its origin in Powers' work from 1975 \cite{Powers}, was completely solved by Breuillard, Kalantar, Kennedy, and Ozawa in \cite{kalantar_kennedy_boundaries}, \cite{breuillard_kalantar_kennedy_ozawa_c_simplicity}, and \cite{kennedy_intrinsic} (see also \cite{HaagerupCstarSimplicity}), using new refreshing ideas. This theory was shortly after generalized to the case of reduced crossed products $C(X) \rtimes_\lambda G$ involving commutative C*-algebras, by Kawabe in \cite{kawabe_crossed_products}. Somewhat more recently, the corresponding result for {\'e}tale groupoid C*-algebras was obtained in collaborated work of the second author \cite{kklru_groupoids}. This last result is still in essence a result about dynamics on commutative C*-algebras, with the C*-algebra $C_0(\cG^{(0)})$ (where $\cG^{(0)} \subseteq \cG$ is the unit space) admitting a partial action of the inverse semigroup of open bisections, in the appropriate sense.

    Noncommutative C*-dynamical systems $G \actson A$ and their corresponding crossed products $A \rtimes_\lambda G$ had always been more mysterious, and before going into more detail, we would like to briefly describe the context and scope of our paper here. Our flagship result, Theorem~\ref{thmIntro:B}, fully characterizes simplicity of the noncommutative crossed product $A \rtimes_\lambda G$ in the case of FC-hypercentral groups $G$, a very large class of amenable groups that includes, for example, all virtually nilpotent groups. Related results are Theorem~\ref{thmIntro:A}, dealing with a related property called the \emph{ideal intersection property} in the non-minimal setting, and Theorem~\ref{thmIntro:C} characterizing primality of the crossed product with no assumptions on the group whatsoever - only minimality of the action. Our results are the first true \emph{if and only if} characterizations in such a general setting, and present substantial progress in our understanding of the ideal structure of noncommutative crossed products. The main existing results before our work include the work of Elliott \cite{elliott_properly_outer}, Kishimoto \cite{Kishimoto_freely_acting}, Olesen and Pedersen \cite{olesen_pedersen_I, olesen_pedersen_II, olesen_pedersen_III}, and Archbold and Spielberg \cite{archbold_spielberg_amenable_crossed_products} that give the sufficient (but far from necessary) conditions of the action being \emph{properly outer} or \emph{topologically free}. Olesen and Pedersen also give conditions for simplicity/primality of the crossed product in the case of abelian groups, using a somewhat complicated tool called the \emph{Connes spectrum} of the action. Our characterizations are far easier and can be completely described directly in terms of the dynamics of $G$ on either $A$ itself or the injective envelope $I(A)$. Likewise, Rieffel \cite{rieffel_finite_groups} gives further characterizations in the case of finite groups, but these are also indirect and written in terms of certain subalgebras $C \subseteq A \rtimes_\lambda G$. Somewhat more distant (but extremely important) are the results of Echterhoff \cite{echterhoff_jot} which give conditions on prime ideals in $A \rtimes_\lambda G$ being maximal, particularly useful (especially for us!) in the case of $G$ being FC-hypercentral. Finally, there was the recent work of Kennedy and Schafhauser in \cite{kennedy_schafhauser_noncommutative_crossed_products}, which obtains intrinsic sufficient conditions that are weaker than proper outerness, but are still far from necessary in general, with the converse holding under a strong untwisting condition that they call \emph{vanishing obstruction}. This extra assumption is known to not hold even in certain actions of finite $G$ on finite-dimensional $A$.
	
	Now we go back into somewhat more detail. As mentioned previously, the problem of determining when a reduced crossed product $A \rtimes_\lambda G$, where $A$ is now noncommutative, is simple (or more generally, has the intersection property) was addressed already in the classical work of Archbold and Spielberg \cite{archbold_spielberg_amenable_crossed_products}. They show that $A \rtimes_\lambda G$ is simple under minimality and a (noncommutative) topological freeness assumption on the action. While minimality is necessary for simplicity, topological freeness is known to be only a sufficient condition. Similarly, if $G$ is a countable amenable group acting on a separable C*-algebra, such that the induced action of $G$ on the primitive ideal space of $A$ is free, it is known that $A\rtimes_\lambda G$ is simple if and only if the induced action on $\mathrm{Prim}(A)$ is minimal (see, for example, \cite[Theorem~8.20 and Theorem~8.21]{Williams}). However, such forms of freeness are rarely satisfied (see for instance Example~\ref{eg}).
	This leaves the question of a necessary and sufficient condition for simplicity in the noncommutative setting open and mysterious. Note that if $A$ is commutative and the group $G$ is amenable (or acts amenably on $A$), then it is shown in \cite[Theorem~4.1]{kawamura_tomiyama_crossed_products} (see also \cite[Theorem~2]{archbold_spielberg_amenable_crossed_products}) that $A \rtimes_\lambda G$ is simple if and only if the action is minimal and topologically free. 
	
	A seemingly weaker condition of noncommutative topological freeness, which nowadays is considered to be ``the right condition'', is called \emph{proper outerness} and it was introduced by Kishimoto in \cite{Kishimoto_freely_acting} (under the original name of \emph{freely acting}). A different version of this property was introduced a few years prior by Elliott in \cite{elliott_properly_outer}, and the two notions are known to coincide when the underlying C*-algebra is separable. The first general result on proper outerness (together with minimality of the action and separability of the base algebra) implying simplicity of a reduced crossed product can be found in a paper by Olesen and Pedersen \cite[Theorem~7.2]{olesen_pedersen_III} (see also \cite[Remark~2.23]{Sierakowski}), generalizing older results of Elliott and Kishimoto in special cases. 
	
	Using the powerful tools developed for the theory of C*-simplicity, Kennedy and Schafhauser \cite{kennedy_schafhauser_noncommutative_crossed_products} obtain sufficient conditions for the action that are weaker than proper outerness in general, but still imply simplicity in the minimal setting. Moreover, under an ``untwisting'' assumption which they call \emph{vanishing obstruction}, they also manage to obtain converse results. For amenable groups, everything again simplifies: if $G$ is amenable and the action $G \actson A$ has vanishing obstruction, then simplicity of $A \rtimes_\lambda G$ is equivalent to proper outerness and minimality of the action \cite[Corollary~9.7]{kennedy_schafhauser_noncommutative_crossed_products}. However, \emph{vanishing obstruction} is a strong condition, which can fail even in the setting of finite abelian groups acting on finite-dimensional C*-algebras. See \cite[Example~5.6]{kennedy_schafhauser_noncommutative_crossed_products} for a counterexample of the form $M_2 \rtimes_\lambda (\Z / 2 \Z \times \Z / 2 \Z)$ (also analyzed in our paper in Section~\ref{sec:examples:finite_dimensional} using our new tools).

	Also worth mentioning are the works of Olesen and Pedersen \cite{olesen_pedersen_I} and \cite{olesen_pedersen_II}, where they fully characterize when a crossed product is prime, or has the intersection property, in the case of locally compact abelian groups. Their characterizations are in terms of the Connes spectrum $\Gamma(\alpha)$ of the homomorphism $\alpha\colon G \to \Aut(A)$, which is a certain subgroup of the dual group $\what{G}$. The precise definition of the Connes spectrum, found in \cite[Section~1]{olesen_inner_automorphisms}, is a fairly complicated construction that involves considering all possible $G$-invariant hereditary C*-subalgebras $B \subseteq A$, and the $L^1(G)$-action on each of these algebras.
	
	In addition, there is also the work of Rieffel in \cite{rieffel_finite_groups}, where he considers characterizing simplicity and primality of crossed products in the case of finite groups, using quite similar notions of \emph{partly inner} and \emph{purely outer} automorphisms. However, these results are written in terms of a suitable subalgebra $C \subseteq A \rtimes_\lambda G$ that is in principle easier to understand, but more intrinsic characterizations in terms of the dynamical system $(A,G)$ are not given.

	We now begin describing the results in this article, and the main ideas surrounding them. First of all, we would like to mention that, as in all recent results concerning C*-simplicity and ideal structure for crossed products, we will heavily use the theory of injective envelopes developed by Hamana. More precisely, for a C*-dynamical system $(A,G)$, we will frequently consider the two additional induced dynamical systems: $(I(A),G)$ and $(I_G(A),G)$, where $I(A)$, the \emph{injective envelope} of $A$, denotes the minimal injective C*-algebra containing $A$, and $I_G(A)$ denotes its equivariant version. The existence of these objects was proven by Hamana in \cite{hamana79_injective_envelopes_cstaralg, hamana85-injective_envelopes_equivariant}. We refer the reader to Section~\ref{sec:preliminaries:injective_envelopes} for more details.

    We also mention that we obtain different results based on different end goals (obtaining the intersection property, obtaining simplicity, ...) and different assumptions (FC groups, FC-hypercentral groups, ...). At the end of the day, all of these results use the exact same underlying characterizations, which are cumbersome to write out each time. As such, we split off the characterizations here. We also warn the reader that these are characterizations for the \emph{negations} of the properties that we are interested, simply due to them being much easier to write out and work with this way.

    \begin{nota-non}
    \label{notation:characterizations}
        Let $G$ be a discrete group acting on a C*-algebra $A$ by *-automorphisms. Denote the set of elements of $G$ whose conjugacy class is finite by $\FC(G)$. We say that the action has characterization (C1) if there exist if there exists some $r \in \FC(G) \setminus \set{e}$, a non-zero $r$-invariant central projection $p \in I(A)$, and a unitary $u \in U(I(A)p)$ such that
		\begin{enumerate}
			\item $r$ acts by $\Ad u$ on $I(A)p$;
			
			\item $s \cdot p = p$ and $s \cdot u = u$ for all $s \in C_G(r) \defeq \setbuilder{g \in G}{gr = rg}$.
		\end{enumerate}
  
		We say that the action has characterization (C2) if there exists some $r \in \FC(G) \setminus \set{e}$, a non-zero $r$-invariant ideal $J \subseteq A$, and a unitary $u$ in the multiplier algebra $M(J)$ such that:
		\begin{enumerate}
			\item $J \cap s \cdot J$ is essential in both $J$ and $s \cdot J$, for all $s \in C_G(r)$. In particular, $M(J)$ and $M(s \cdot J)$ both canonically embed into $M(J \cap s \cdot J)$;
			
			\item Letting $\varepsilon_1 = \norm{\alpha_r|_J - (\Ad u)|_J}$ and $\varepsilon_2 = \sup_{s \in C_G(r)} \norm{s \cdot u - u}$, we have
			\[ 2 \sqrt{2 - \sqrt{4 - \varepsilon_1^2}} + \varepsilon_2 < \sqrt{2}. \]
		\end{enumerate}
    \end{nota-non}
	
	In Section~\ref{sec:intersection_property_fc}, we consider the case of FC-groups, which are groups where every conjugacy class is finite. Recall that a C*-dynamical system $(A,G)$ is said to have the \emph{intersection property} if any non-zero ideal of $A \rtimes_\lambda G$ has non-zero intersection with $A$. It is clear that for minimal actions, this is equivalent to simplicity of $A \rtimes_\lambda G$. Again, We write down all of our main results in terms of the negations of these properties, as this makes the statements of the characterizations less cumbersome.
	
	\begin{thmintro}[Theorem~\ref{thm:mainSec4} and Theorem~\ref{thm:equivariant_proper_outer_equivalence}, together with Proposition~\ref{prop:nonunital_intersection_property_transfer} and Proposition~\ref{prop:nonunital_characterizations_equivalent} for the non-unital case]
		\label{thmIntro:A}
		Let $G$ be an FC-group acting on a C*-algebra $A$, not necessarily unital. Then the crossed product $A \rtimes_\lambda G$ does not have the intersection property if and only if the action has characterization (C1) involving $I(A)$. If the C*-algebra $A$ is separable, then this is furthermore equivalent to the action having characterization (C2) involving $A$ itself.
	\end{thmintro}

	In certain special cases, we obtain significantly nicer characterizations. For example, in the setting of simple C*-algebras, Corollary~\ref{cor:ip_fc} can reduce condition (C1) from $I(A)$ to $M(A)$ directly. Likewise, in the case of $G = \Z$ and square-free $G = \Z/n\Z$, Corollary~\ref{cor:cyclic} shows that the intersection property turns out to be completely equivalent to the usual notion of proper outerness, even in the setting of non-separable $A$.
	
	Similar methods to what are used to prove Theorem~\ref{thmIntro:A} allow us to also characterize when a crossed product $A \rtimes_\lambda G$ is prime - see Theorem~\ref{thm:mainSec4Prime}. This generalizes a characterization by Hamana, which was obtained in the setting of finite groups, see \cite[Theorem~10.1]{hamana85-injective_envelopes_equivariant}.
	
	The first step in our argument is that if the crossed product $A \rtimes_\lambda G$ does not have the intersection property, then through the machinery of Section~\ref{sec:pseudoexpectations}, we are fairly easily able to obtain an element $u\in I_G(A)$ with the above requirements. However, in Theorem~\ref{thmIntro:A} we want $u$ to rather belong to $I(A)$. A general theme in all of the modern C*-simplicity results is to take any results on $I_G(A)$, which is an extremely mysterious and poorly-understood space, and try to push them down onto a more tractable space, such as $I(A)$ or $A$ itself. In the commutative case, this ends up being relatively straightforward, given that we have dual maps between the spectra of all of these spaces. In the noncommutative case, this is not nearly as straightforward, and we end up developing a far more roundabout argument.
	
	Finally, after we have our unitary $u \in U(I(A)p)$ as in the statement of Theorem~\ref{thmIntro:A}, we may instead convert this to a more tractable property involving $A$ instead. Observe that if we removed the statements ``$s \cdot p = p$ and $s \cdot u = u$ for all $s \in C_G(r)$'' from the requirements in the theorem, then the characterization would essentially be asking whether or not the action of $G$ on $I(A)$ is properly outer, which is well-known to coincide with an appropriate analogue on $A$ (with a little bit of work, this follows from \cite[Theorem~7.4]{hamana85-injective_envelopes_equivariant}). In the separable setting at least, the simplest characterization of proper outerness is that of Elliott \cite{elliott_properly_outer}. A brief overview of these classical notions is given in Section~\ref{sec:preliminaries:properly_outer}. The equivariant version of this characterization is developed in Section~\ref{sec:proper_outerness_on_A}. This turns out to be substantially more difficult than one would expect, and is the most involved part of our paper. Another glance at the second half of Theorem~\ref{thmIntro:A} will lead to the observation that there is no $C_G(r)$-invariance on any of the constituent components, but rather an ``almost invariance'' in all of the appropriate senses. The proof of the equivalence is also far from being a straightforward generalization of the classical case, and requires the application of new ideas and techniques along every step of the way.
	
	We upgrade these results to the setting of FC-hypercentral groups, at least in the context of minimal actions. A brief review of FC-hypercentral groups can be found in Section~\ref{sec:preliminaries:fc_hypercentral}. This is a much larger class of groups, which, in the finitely generated setting, coincides with the set of groups with polynomial growth.
	
	\begin{thmintro}[Theorem~\ref{thm:mainFCHSimpleMinimal} and Theorem~\ref{thm:equivariant_proper_outer_equivalence}]
		\label{thmIntro:B}
		Let $G$ be an FC-hypercentral group acting minimally on a unital C*-algebra $A$. Then $A \rtimes_\lambda G$ is not simple if and only if the action has characterization (C1) involving $I(A)$. If the C*-algebra $A$ is separable, then this is furthermore equivalent to the action having characterization (C2) involving $A$ itself.
	\end{thmintro}

        The proof of this theorem was originally given via different techniques, but it was kindly pointed out to us by Echterhoff that it follows from a combination of Theorem~\ref{thmIntro:C} below, together with one of his results. The proof of both can be found in Section~\ref{sec:primality_minimal}. Theorem~\ref{thmIntro:C} stems from the observation that most of the arguments presented in the proof of Theorem~\ref{thmIntro:A} can be modified to prove, in full generality, when a crossed product $A \rtimes_\lambda G$ is prime, as long as the action is minimal. Again, minimality is very crucial for a specific part of the argument, and we do not see an easy way to reduce it down to just assuming that $A$ is $G$-prime.
	
	\begin{thmintro}[Theorem~\ref{thm:DiscreteMinimal} and Theorem~\ref{thm:equivariant_proper_outer_equivalence}]
		\label{thmIntro:C}
		\label{thm:primality_minimal}
		Let $G$ be any discrete group acting minimally on a unital C*-algebra $A$. Then $A \rtimes_\lambda G$ is not prime if and only if the action has characterization (C1) involving $I(A)$. If the C*-algebra $A$ is separable, then this is furthermore equivalent to the action having characterization (C2) involving $A$ itself.
	\end{thmintro}
	
	If the group $G$ is amenable (for example, the FC-hypercentral groups of Theorem~\ref{thmIntro:B}), the proof of Theorem~\ref{thmIntro:C} is not as difficult. However, the non-amenable setting makes use of a highly non-trivial technical lemma, namely Lemma~\ref{lem:infinitely_many_distinct_translates_unbounded_sum}.
	
	Finally, we obtain an immediate corollary to Theorem~\ref{thmIntro:C}, one which is perhaps a bit surprising at first. It is reminiscent of a likely well-known but somewhat folklore analogue in the theory of tracial von Neumann algebras. Consider an ICC group $G$ acting on a tracial von Neumann algebra $(M,\tau)$, and assume that both the action on $M$ is trace-preserving and the action on $Z(M)$ is ergodic. In this case, the von Neumann crossed product $M \closure{\rtimes} G$ is a factor. For group C*-algebras, we also have the equally folklore result that $C^*_\lambda(G)$ is prime if (and only if) $G$ is ICC \cite[Proposition~2.3]{Murphy_prim}. The C*-crossed product version is the following:
	
	\begin{corintro}[Corollary~\ref{cor:ICC}]
		\label{cotIntro:D}
		If $G$ is a discrete ICC group acting minimally on a unital C*-algebra $A$, then the reduced crossed product $A\rtimes_\lambda G$ is prime.
	\end{corintro}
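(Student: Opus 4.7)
The plan is to derive the corollary as an immediate consequence of Theorem~\ref{thmIntro:C} (Theorem~\ref{thm:primality_minimal}). By definition, a group $G$ is ICC (infinite conjugacy classes) precisely when every non-trivial conjugacy class is infinite, which is equivalent to saying that the FC-center $\FC(G)$ is trivial, i.e.\ $\FC(G) = \set{e}$.

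Now suppose $G$ is a discrete ICC group acting minimally on a unital C*-algebra $A$. If the crossed product $A \rtimes_\lambda G$ were not prime, then by Theorem~\ref{thmIntro:C} there would exist some $r \in \FC(G) \setminus \set{e}$, together with a non-zero $r$-invariant central projection $p \in I(A)$ and a unitary $u \in U(I(A)p)$ satisfying the stated invariance conditions. But the set $\FC(G) \setminus \set{e}$ is empty, since $\FC(G) = \set{e}$ by the ICC assumption. This is a contradiction, so $A \rtimes_\lambda G$ must be prime.

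The only real content here is knowing the equivalent reformulation of the ICC property in terms of $\FC(G)$, and then invoking Theorem~\ref{thmIntro:C}. There is no genuine obstacle; all the technical heavy lifting is absorbed into Theorem~\ref{thmIntro:C} itself, which is exactly why the authors present this as a corollary.
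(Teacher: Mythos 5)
Your proposal is correct and matches the paper exactly: the paper presents Corollary~\ref{cor:ICC} as an immediate consequence of Theorem~\ref{thm:DiscreteMinimal}, relying precisely on the fact that $\FC(G) = \set{e}$ for an ICC group, so the necessary condition for non-primality (existence of $r \in \FC(G) \setminus \set{e}$) can never be met. No issues.
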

	
	\section{Preliminaries}
	
	\subsection{Monotone complete C*-algebras}
	\label{sec:preliminaries:monotone_complete}
	
	As made abundantly clear in the introduction, the injective envelopes $I(A)$ and $I_G(A)$ (reviewed in Section~\ref{sec:preliminaries:injective_envelopes}) played a crucial role in all recent ideal-structure results for $A \rtimes_\lambda G$. One of the most important facts about injective algebras that make them especially convenient to work with is that they are what is known as \emph{monotone complete}. This means that every bounded increasing net of self-adjoint elements admits a supremum.
	
	Every von Neumann algebra is monotone complete, and while the converse is false, such algebras will still in many ways behave as if they were von Neumann algebras. Quite important to us is the fact that polar decompositions still work inside monotone complete C*-algebras (for a more general version, we refer to \cite[Section~21]{Berberian}).
	
	\begin{proposition}[{\cite[Lemma~2.1]{YenPolarDec}} and {\cite[Lemma~4.2]{SasakiPolarDec}}] \label{prop:monotonecompletePolarDec}
		Let $A$ be a monotone complete C*-algebra, and let $a \in A$. There exists a unique projection $p\in A$, called the \emph{right projection} of $a$, such that $ap = a$, and $ab = 0$ if and only if $pb = 0$, for every $b\in A$. This projection is denoted by $\RP(a)$. The \emph{left projection} of $a$ is defined similarly, and is denoted by $\LP(a)$.
		Moreover, there is a unique partial isometry $u \in A$ with the property that $a = u \abs{a}$ and $u^*u = \RP(a)$.
	\end{proposition}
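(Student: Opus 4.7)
The plan is to mimic the standard polar-decomposition argument in a von Neumann algebra, with monotone suprema of bounded increasing nets replacing the role of strong-operator limits. Note that a monotone complete C*-algebra is automatically unital (the supremum of an approximate unit is a unit), so continuous functional calculus of self-adjoint elements is freely available.

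\emph{Construction of $\RP(a)$.} Set $b = a^*a$, let $f_n(t) = \min(nt, 1)$, and put $e_n \defeq f_n(b) \in A$. The net $(e_n)_n$ is increasing in $[0,1]$, so by monotone completeness $p \defeq \sup_n e_n$ exists in $A$. One checks that $p$ is a projection using that $f_n^2$ and $f_n$ have the same monotone pointwise limit, together with the compatibility of continuous functional calculus with monotone suprema. From the identity $b e_n = (t f_n(t))(b) \nearrow b$ one extracts $bp = b$, and then the C*-identity $\norm{a(1-p)}^2 = \norm{(1-p) b (1-p)} = 0$ gives $ap = a$. For the kernel characterization, the easy direction $pc = 0 \Rightarrow ac = 0$ is immediate from $ap = a$; conversely, if $ac = 0$ then $c^* b c = 0$, so $\abs{a} c = 0$, hence $e_n c = f_n(b) c = 0$ for every $n$ by Stone--Weierstrass (since $f_n(0) = 0$). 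To pass to the supremum, use the decreasing net $1 - e_n \searrow 1 - p$, observe that $(1-e_n) c = c$ for all $n$, and take the order-infimum to get $(1-p) c = c$, i.e., $p c = 0$. Uniqueness of $p$ is standard: any $q$ satisfying the two properties forces $(1-p) q = 0$ and $(1-q) p = 0$, so $p = q$.

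\emph{Construction of $u$.} Imitating the von Neumann case, let $g_n(t) = 1/\max(t, 1/n)$, continuous on $\sigma(\abs{a})$, and set $u_n \defeq a \cdot g_n(\abs{a}) \in A$. A direct computation gives $u_n^* u_n = h_n(\abs{a})^2$ with $h_n(t) = t/\max(t, 1/n) \in [0,1]$, so $\norm{u_n} \leq 1$ and $u_n^* u_n \nearrow p$. To extract a genuine partial isometry $u \in A$ with $u \abs{a} = a$ and $u^* u = p$, I would invoke the extended (Borel) functional calculus available in monotone complete C*-algebras, roughly defining $u = a \cdot \abs{a}^\dagger$ where $\abs{a}^\dagger$ is the generalised inverse of $\abs{a}$ on the range of $p$; equivalently, one performs the polar decomposition inside the enveloping von Neumann algebra $A^{**}$ and verifies that the resulting partial isometry already lies in $A$ by a monotone-closedness argument. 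Uniqueness of $u$ is then forced by the kernel property: if $v \abs{a} = a$ and $v^* v = p$, then $(u-v) \abs{a} = 0$, hence $(u-v) p = 0$ by the kernel characterization applied to $\RP(\abs{a}) = p$, which gives $u = up = vp = v$.

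\emph{Main obstacle.} The real technical content --- and the substance of the arguments in the cited works of Yen and Sasaki --- lies in the interaction of functional calculus with monotone suprema in a general, non-von-Neumann, monotone complete C*-algebra: specifically, that $\sup_n f_n(b)$ really is a projection, that $x \mapsto x c$ behaves well with respect to order-infima of bounded decreasing nets, and that one has enough Borel functional calculus available to define elements such as $\abs{a}^\dagger$. In a von Neumann algebra all of these are immediate consequences of SOT-continuity of functional calculus; in the monotone complete setting they require genuinely new arguments. Taking those technicalities as a black box, the sketch above reduces the proof to routine algebraic manipulations modeled on the von Neumann algebra case.
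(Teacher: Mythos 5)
The paper does not prove this proposition at all: it is imported as a black box from the cited lemmas of Yen and Sasaki, so there is no in-paper argument to compare yours against. Judged on its own terms, your sketch correctly identifies the shape of the classical argument and, for the first half, the right tool: once $e_n c=0$ for all $n$, the passage to $pc=0$ is best done by writing $c^*e_nc=0$ and invoking the compatibility of two-sided multiplication with suprema (the paper's Lemma on $b(\sup\mathcal{F})b^*=\sup(b\mathcal{F}b^*)$), rather than a one-sided ``order-infimum of $(1-e_n)c$,'' which is not literally meaningful since those elements are not self-adjoint. That, together with the fact that $\sup_n f_n(b)$ is a projection, is precisely the nontrivial content of the cited lemmas, and you have (admittedly) black-boxed it; so the first half is a correct reduction but not a proof.

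The second half contains a genuine error of approach. Your fallback of performing the polar decomposition in $A^{**}$ and arguing that the partial isometry lies in $A$ ``by a monotone-closedness argument'' runs directly against the point the paper makes immediately after this proposition: $\RP(a)$ is defined \emph{intrinsically} via suprema taken in $A$, and these need not agree with suprema (equivalently, strong limits) taken in any ambient representation. Concretely, $A$ is not a monotone closed subalgebra of $A^{**}$, and the partial isometry $v$ produced there satisfies $v^*v=s(\abs{a})$, the support projection of $\abs{a}$ computed in $A^{**}$, which is in general strictly smaller than $\RP(a)$ and need not belong to $A$; even if $v$ happened to lie in $A$ it would not satisfy $v^*v=\RP(a)$. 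The correct route (and the substance of Yen's and Sasaki's lemmas) is to show that the bounded sequence $u_n=a\,g_n(\abs{a})$ order-converges \emph{in $A$} to a partial isometry $u$ with $u^*u=\sup_n f_n(a^*a)=\RP(a)$; this order-convergence is exactly the step that has no shortcut through a von Neumann envelope. Your uniqueness argument for $u$, by contrast, is correct as written, using $\RP(\abs{a})=\RP(a)$.
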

	
	It is important to recognize a subtle point, and that is that the left and right projections $\LP(a)$ and $\RP(a)$ were defined \emph{intrinsically} in terms of the monotone complete C*-algebra $A$, and \emph{not} in terms of any representation $A \subseteq B(H)$. In the setting of von Neumann algebras $M \subseteq B(H)$, the polar decomposition $x = u \abs{x}$ of an element $x \in M$ is typically done with requiring that the support projection of $u$ is the projection onto $\closure{\operatorname{ran}} \abs{x}$. In this setting, this projection lies in $M$ and coincides with $\RP(x)$,
	but in the general monotone complete setting, there is no reason to expect them to coincide.
	
	Also worth mentioning is the fact that, just like von Neumann algebras canonically admit a weak*-topology, monotone complete C*-algebras admit notions of order convergence, with the one of interest to us given by Hamana in \cite[Section~1]{hamana82_mc_tensor_products_I}. Note that Hamana defines a notion of convergence for nets, without necessarily showing that it arises out of a topology. While we will not actually need to use this notion of convergence most of the time, it is still worth keeping its existence in mind, as in particular it is a very necessary part of the definition of Hamana's monotone complete crossed product, which we make heavy use of.

	Many times, certain properties of a non-monotone complete C*-algebra can be studied more easily by embedding it into a certain monotone completion. This is done, for example, in \cite{hamana81_regular_embeddings}, where the \emph{regular monotone completion} $\closure{A}$ of $A$ is constructed. This object admits nice abstract properties that describe it uniquely, but is perhaps a bit difficult to get a concrete handle on. If we instead consider a crossed product C*-algebra $A \rtimes_\lambda G$, then, under certain conditions, there is another monotone complete C*-algebra that it embeds into, and is far easier to explicitly write down. In \cite[Section~3]{hamana82_mc_tensor_products_II}, the \emph{monotone complete crossed product} is defined as follows:
	
	Assume $G$ is a discrete group acting on a unital C*-algebra $A \subseteq B(H)$, and recall that a reduced crossed product $A \rtimes_\lambda G$ can be viewed as bounded operators acting on the Hilbert space $H \tensor \ell^2(G)$. One may concretely view every operator $T \in B(H \tensor \ell^2(G))$ as a matrix $T = [T_{r,s}]_{r,s \in G}$ over $B(H)$. With respect to this representation, every finitely supported element $\sum_{t \in G} a_t \lambda_t\in A\rtimes_\lambda G$ embeds as the matrix $[r^{-1} \cdot a_{rs^{-1}}]_{r,s \in G}$. 
	Using the fact that such symmetry is still present after taking limits, every element $a\in A\rtimes_\lambda G$ can be written as a formal sum $\sum_{t\in G}a_t\lambda_t$, in a unique way. 
	However, there is nothing stopping us from instead considering the following, perfectly valid, operator subsystem of $B(H \tensor \ell^2(G))$.
	\[M(A,G) \defeq \setbuilder{\sum_{t \in G} a_t \lambda_t \text{ (formal sum)}}{[r^{-1} \cdot a_{rs^{-1}}]_{r,s \in G}\in B(H \tensor \ell^2(G))}. \]
	
	It is immediate that the coefficients $(a_t)_{t\in G}$ uniquely determine the elements of $M(A,G)$. What is not immediately clear is why $M(A,G)$ is, in any way, closed under multiplication. We summarize below results from
	\cite[Section~6]{hamana82_mc_tensor_products_I} and \cite[Section~3]{hamana82_mc_tensor_products_II} (see also \cite[Section~3]{hamana85-injective_envelopes_equivariant}), which show that things mostly still behave how one would expect, as long as we specify the correct multiplication structure to use.
	
	\begin{proposition}
		Let $G$ be a discrete group acting on a monotone complete C*-algebra $A$. Then $M(A,G)$ is a C*-algebra, when inheriting the involution and Banach space structure from $B(H \tensor \ell^2(G))$, but equipped with the \emph{new} multiplication
		\[ [ x_{r,s} ]_{r,s\in G} \cdot [ y_{r,s} ]_{r,s\in G} \coloneqq \left[ O-\sum_{t \in G} x_{r,t} y_{t,s} \right]_{r,s\in G} \]
		where $[x_{r,s}]_{r,s \in G}$ and $[y_{r,s}]_{r,s \in G}$ are the matrix representations with respect to the Hilbert space $H \tensor \ell^2(G)$, and $O-\sum_{t \in G} x_{r,t} y_{t,s}$ denotes the order-limit in $A$ of the finite sums. The multiplication of the formal sums is reflected in the following way:
		\[ \left( \sum_{g \in G} a_g \lambda_g \right) \cdot \left( \sum_{g \in G} b_g \lambda_g \right) = \sum_{g \in G} \left( O-\sum_{t \in G} a_{t^{-1}} (t^{-1} \cdot b_{tg}) \right) \lambda_g. \]
		Moreover, $M(A,G)$ is monotone complete.
	\end{proposition}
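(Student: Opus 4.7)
The plan is to separate the proposition into three claims: (i) $M(A,G)$ is a norm-closed, self-adjoint subspace of $B(H \tensor \ell^2(G))$; (ii) for all $T, S \in M(A,G)$ and all $r,s \in G$ the order-sum $O-\sum_t T_{r,t} S_{t,s}$ exists in $A$ and coincides with the $(r,s)$-entry of the operator product $TS$; and (iii) monotone completeness of $M(A,G)$. Once (ii) is in place, associativity, distributivity, the C*-identity, and the formal-sum formula all follow mechanically, since the new multiplication then coincides with the inherited composition in $B(H \tensor \ell^2(G))$.

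For (i), I would recover the coefficient sequence $(a_t)$ entrywise via $a_t = T_{e,t^{-1}}$, which gives uniqueness and linearity immediately. Self-adjointness is a direct index computation: $T_{r,s}^* = s^{-1} \cdot a_{sr^{-1}}^*$, and setting $g = rs^{-1}$ rewrites this as $r^{-1} \cdot (g \cdot a_{g^{-1}}^*)$, so $T^* \in M(A,G)$ with coefficient sequence $b_g = g \cdot a_{g^{-1}}^*$. Norm-closedness follows because the coefficient extraction is entrywise continuous and $A$ is norm-closed inside $B(H)$.

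For (ii), I would reduce to the positive case by polarisation: applying the identity $4 b^* a = \sum_{k=0}^{3} i^k (a + i^k b)^*(a + i^k b)$ with $a = S_{u,s}$ and $b = T_{r,u}^*$ writes each $T_{r,u} S_{u,s}$ as a linear combination of four positive quantities $|S_{u,s} + i^k T_{r,u}^*|^2$, so the partial sums $\sum_{u \in F} T_{r,u} S_{u,s}$ over finite $F \subset G$ split into four positive, increasing nets of self-adjoints in $A$. Each such net is uniformly bounded by $2(\|T\|^2 + \|S\|^2)$, as follows from $|x+y|^2 \leq 2(|x|^2 + |y|^2)$ together with the row and column estimates $\sum_u T_{r,u} T_{r,u}^* \leq \|T\|^2$ and $\sum_u S_{u,s}^* S_{u,s} \leq \|S\|^2$ coming from $T, S$ being bounded operators on $H \tensor \ell^2(G)$. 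Monotone completeness of $A$ then produces suprema in $A$, which assemble into an element $c_{r,s} \in A$. Identifying $c_{r,s}$ with the $B(H)$-entry $(TS)_{r,s}$ is the delicate step: one uses the compatibility between order-convergence in a monotone complete C*-algebra and weak operator convergence under its natural embedding into $B(H)$, as developed in \cite{hamana82_mc_tensor_products_I}. A direct index change confirms that $[r^{-1} \cdot c_{rs^{-1}}]$ has the correct covariance structure, so $TS \in M(A,G)$, and the formal-sum formula is obtained by substituting $u = t^{-1}$ and $g = rs^{-1}$ into $c_g = O-\sum_u a_u (u \cdot b_{u^{-1}g})$.

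For (iii), a bounded increasing self-adjoint net in $M(A,G)$ has entrywise increasing bounded nets in $A$, whose suprema exist in $A$ by monotone completeness; the covariance structure passes to suprema because C*-automorphisms of $A$ automatically preserve suprema (applying the inverse gives the reverse of the easy inequality $\sup_\alpha \phi(a_\alpha) \leq \phi(\sup_\alpha a_\alpha)$), and the uniform operator-norm bound on the net guarantees the assembled matrix still defines a bounded operator on $H \tensor \ell^2(G)$. The main obstacle throughout is the identification step at the end of (ii): this is the only place where monotone completeness plays its non-trivial role, and it is precisely what motivates the introduction of the order-sum multiplication in the first place, in place of a naive SOT limit that would in general fall outside of $A$.
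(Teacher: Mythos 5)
Your outline has the right skeleton (entrywise coefficient extraction, polarisation plus row/column bounds to produce the order-sums, then deduce the algebra axioms), but the load-bearing step of your part (ii) is false, and the paper itself flags this in the remark immediately following the proposition: the new multiplication \emph{does not in general coincide} with the operator composition in $B(H \otimes \ell^2(G))$, precisely because the entries are defined as order-limits in $A$ rather than strong limits in $B(H)$. There is no ``compatibility between order-convergence and weak operator convergence under the natural embedding'' to appeal to: if the supremum in $A$ of every bounded increasing net agreed with its WOT-limit in $B(H)$, then every vector state would be normal on $A$, and a monotone complete C*-algebra with a separating family of normal states is a von Neumann algebra. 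Since there exist monotone complete C*-algebras with no normal states whatsoever (e.g.\ the regular monotone completion of $C[0,1]$), the order-limit $O\text{-}\sum_t T_{r,t}S_{t,s}$ will in general strictly dominate the $(r,s)$-entry of the operator product $TS$. Consequently everything you declare to ``follow mechanically'' from that identification --- closure of $M(A,G)$ under the new product, associativity, the C*-identity for the \emph{inherited} operator norm, and even the boundedness of the matrix $[c_{r,s}]$ as an operator on $H \otimes \ell^2(G)$ --- is left unproved; these are exactly the non-trivial points of the construction.

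For context, the paper does not prove this proposition either: it is quoted as a summary of Hamana's results in \cite{hamana82_mc_tensor_products_I} and \cite{hamana82_mc_tensor_products_II}, where the C*-structure is established by substantially different means. The parts of your sketch that do survive are the existence of the order-sums (the polarisation and row/column estimates are essentially correct) and the self-adjointness and norm-closedness computations in (i). Your part (iii) also needs more care than ``entrywise increasing bounded nets'': only the diagonal entries of an increasing net are themselves increasing and self-adjoint, positivity of a matrix over $A$ is not an entrywise condition, and one must still verify that the entrywise-assembled candidate is a genuine least upper bound in $M(A,G)$ and a bounded operator.
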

	
	\begin{remark}
		As remarked in \cite[Section~3]{hamana85-injective_envelopes_equivariant}, the above multiplication does not necessarily coincide with the usual multiplication in $B(H \tensor \ell^2(G))$, due to the fact that we are taking an order-limit in $A$ instead of, say, a strong limit in $B(H)$.
	\end{remark}
	
	Most of the time, we will not be multiplying arbitrary elements in $M(A,G)$ together, and thus the reader should not worry too much about order-convergent sums. However, one fact that we absolutely will be making heavy use of is the fact that $M(A,G)$ is itself monotone complete, whenever $A$ is.
	
	\begin{remark}\label{rem:CondExpeMonComp}
		Just as for reduced crossed products, there is an equivariant faithful conditional expectation $E \colon M(A,G) \to A$, extending the canonical conditional expectation on $A \rtimes_\lambda G$. Every $x \in M(A,G)$ is given by the formal sum $x=\sum_{g\in G}E(x\lambda_g^*)\lambda_g$.
	\end{remark}

	Moving back to the more basic theory of monotone complete C*-algebras, they admit fairly canonical monotone complete subalgebras. It is an easy exercise to show that the fixed point set $A^G$ of a group $G$ acting on a monotone complete C*-algebra $A$ by *-automorphisms is itself monotone complete, and in particular, we obtain the following result:

	\begin{proposition}
		\label{prop:monotone_complete_center}
		Let $A$ be a monotone complete C*-algebra. Then the center $Z(A)$, is automatically monotone complete.
	\end{proposition}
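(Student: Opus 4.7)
The plan is to reduce the claim to Proposition~\ref{prop:monotone_complete_fixed_point_algebra} by realizing $Z(A)$ as an intersection of fixed point subalgebras under cyclic groups of unitaries. I would first invoke the standard fact that every monotone complete C*-algebra is automatically unital, so that one can speak of the unitary group $U(A)$, and use the fact that in any unital C*-algebra every element is a linear combination of at most four unitaries. This yields the identification
\[ Z(A) = \bigcap_{u \in U(A)} A^{\langle u \rangle}, \]
where each cyclic group $\langle u \rangle$ is viewed as a discrete group acting on $A$ by conjugation (clearly by *-automorphisms).

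Now, given a bounded increasing net $(x_\lambda)_\lambda$ of self-adjoint elements in $Z(A)$, set $x \defeq {\sup\limits_\lambda}^{A} x_\lambda$, which exists by monotone completeness of $A$. For each fixed unitary $u \in U(A)$, the net lies inside $A^{\langle u \rangle}$, so Proposition~\ref{prop:monotone_complete_fixed_point_algebra}, applied to the discrete group $\langle u \rangle$, forces $x \in A^{\langle u \rangle}$. Intersecting over all $u \in U(A)$ gives $x \in Z(A)$. Since $x$ is already a supremum in the larger algebra $A$, it automatically serves as a supremum in the subalgebra $Z(A)$, exactly as in the final sentence of the proof of Proposition~\ref{prop:monotone_complete_fixed_point_algebra}.

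There is no real obstacle here: once one accepts unitality of $A$, the proof is a direct transcription of the previous one, with the ambient group $G$ replaced by an arbitrary cyclic subgroup of $U(A)$, one unitary at a time. In particular, we do \emph{not} need to regard $U(A)$ as a topological group acting on $A$; we only ever invoke the discrete (in fact cyclic) case of the previous proposition. The only point worth checking carefully is the passage from ``$x$ commutes with every element of $A$'' to ``$x$ is fixed under conjugation by every unitary'', which is precisely where unitality of $A$ (and hence the linear span property of unitaries) is used.
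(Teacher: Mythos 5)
Your proof is correct and is essentially the same as the paper's: the paper applies Proposition~\ref{prop:monotone_complete_fixed_point_algebra} once to the full unitary group $U(A)$ acting by conjugation, whereas you apply it to each cyclic group $\langle u \rangle$ separately and intersect, which is only a cosmetic repackaging of the same argument. Your added remarks on unitality of monotone complete C*-algebras and the span of the unitaries are correct and, if anything, make explicit what the paper's one-line proof leaves implicit.
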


	We will often be interested with working more hands-on with the center of a monotone complete C*-algebra. It is a well-known result (we include a more modern citation) that, in the commutative setting, monotone complete C*-algebras are determined by a topological condition on their spectra.
	
	\begin{theorem}[{\cite[Theorem~2.3.7]{SWMonotoneComplete}}]
		\label{thm:commutative_monotone_complete_iff_extremally_disconnected}
		Let $X$ be a compact Hausdorff space. The C*-algebra $C(X)$ is monotone complete if and only if $X$ is \emph{extremally disconnected}, i.e.\ the closure of any open set $U \subseteq X$ is in fact clopen.
	\end{theorem}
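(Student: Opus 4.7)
My plan is to prove both implications by hand, using the characterization of extremal disconnectedness via clopen closures of open sets, with the compact Hausdorff structure supplying Urysohn bump functions at crucial points.

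For the forward direction, suppose $C(X)$ is monotone complete and let $U \subseteq X$ be open. I will consider the family $\mathcal{F} = \setbuilder{f \in C(X)}{0 \leq f \leq 1,\ f|_{X \setminus U} = 0}$, which is upward directed (closed under $\max$), so by monotone completeness it admits a supremum $g \in C(X)$. Urysohn's lemma applied inside $U$ produces, for each $x_0 \in U$, some $f \in \mathcal{F}$ with $f(x_0) = 1$, so $g \equiv 1$ on $U$ and hence on $\overline{U}$ by continuity. To force $g$ to vanish off $\overline{U}$, I fix $x_0 \notin \overline{U}$ and use Urysohn to separate $\{x_0\}$ from $\overline{U}$, producing $\phi \in C(X,[0,1])$ with $\phi(x_0) = 1$ and $\phi|_{\overline{U}} \equiv 0$. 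Since $\phi$ vanishes on $\overline{U}$ and each $f \in \mathcal{F}$ vanishes off $U$, we get $f\phi = 0$, so $f = f(1-\phi) \leq g(1-\phi)$; passing to the supremum yields $g \leq g(1-\phi)$, hence $g\phi = 0$ and $g(x_0) = 0$. Therefore $g = \chi_{\overline{U}}$ is continuous, so $\overline{U}$ is clopen.

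For the reverse direction, assume $X$ is extremally disconnected and let $(f_\lambda)$ be a bounded increasing net in the self-adjoint part of $C(X)$; we may assume $0 \leq f_\lambda \leq 1$. The pointwise supremum $h(x) = \sup_\lambda f_\lambda(x)$ is only lower semicontinuous, so the plan is to synthesize a continuous candidate directly from the clopen structure of its super-level sets. For each $c \in [0,1]$ I set $U_c = \setbuilder{x \in X}{h(x) > c}$ (open) and $V_c = \overline{U_c}$ (clopen by hypothesis), and define $g(x) = \sup\setbuilder{c \in [0,1]}{x \in V_c}$, with the convention that the sup of the empty set is $0$. Since $c \mapsto V_c$ is decreasing, the identities
\[ \set{g > c} = \bigcup_{c' > c} V_{c'}, \qquad \set{g \geq c} = \bigcap_{c' < c} V_{c'} \]
realize the super- and sublevel sets of $g$ as unions or complements of intersections of clopens, which forces $g \in C(X)$. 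The inequality $g \geq f_\lambda$ is immediate from $\set{f_\lambda > c} \subseteq U_c \subseteq V_c$, and if some continuous $f$ dominated every $f_\lambda$ but failed $f(x_0) \geq g(x_0)$, picking $c$ with $f(x_0) < c < g(x_0)$ would make $\set{f < c}$ an open neighborhood of $x_0 \in \overline{U_c}$; this neighborhood must meet $U_c$ at some $y$, and any $\lambda$ with $f_\lambda(y) > c$ contradicts $f \geq f_\lambda$. Hence $g = \sup_\lambda f_\lambda$ in $C(X)$.

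The main technical obstacle is the reverse direction: the pointwise supremum $h$ is typically discontinuous, and the supremum inside $C(X)$ must strictly dominate $h$ on the boundary of $\set{h > c}$. The passage from $U_c$ to its clopen closure $V_c$ is precisely the mechanism that converts lower semicontinuous data into a continuous function, and it uses extremal disconnectedness in an essential way at the single step ``$V_c$ is clopen''. Once $g$ is constructed, verifying the supremum property reduces to a density argument inside $V_c = \overline{U_c}$, and the forward direction is comparatively direct because $\chi_{\overline{U}}$ is the natural candidate to write down from the start.
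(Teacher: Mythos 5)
The paper does not prove this statement at all --- it is quoted directly from \cite[Theorem~2.3.7]{SWMonotoneComplete} --- so there is nothing internal to compare against; what you have written is a self-contained proof of the classical Stone--Nakano--Dixmier theorem, and it is correct. The forward direction is the standard argument: the only step left implicit is that $g \leq 1$ (because the constant function $1$ is an upper bound for $\mathcal{F}$ and $g$ is the \emph{least} upper bound), which is what upgrades ``$g(x_0) \geq 1$ for $x_0 \in U$'' to ``$g \equiv 1$ on $U$''; the trick of multiplying by $1-\phi$ to kill $g$ off $\closure{U}$ is exactly right, since $g(1-\phi)$ is again a continuous upper bound for $\mathcal{F}$. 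The reverse direction is also sound: $U_c$ is open because the pointwise supremum $h$ is lower semicontinuous, the two displayed identities correctly exhibit $\set{g>c}$ as open and $\set{g\geq c}$ as closed (so $g$ is continuous), the inclusion $\set{f_\lambda > c} \subseteq V_c$ gives $g \geq f_\lambda$, and the minimality argument correctly exploits that $x_0 \in \closure{U_{c'}} \subseteq \closure{U_c}$ for some $c' > c$, so that the open set $\set{f<c}$ must meet $U_c$. The reduction of a general bounded increasing self-adjoint net to one with values in $[0,1]$ by an affine rescaling is harmless. This is a genuinely complete and correct proof of a result the paper only cites.
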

	
	\subsection{Injective envelopes}
	\label{sec:preliminaries:injective_envelopes}
	
	The theory of injective envelopes of C*-algebras was introduced by Hamana in the 1970s and 1980s, originally in \cite{hamana79_injective_envelopes_cstaralg} and \cite{hamana79_injective_envelopes_opsys}, along with an equivariant version in \cite{hamana85-injective_envelopes_equivariant}. 
	When dealing with Hamana's theory, the right category to work in is the category of \emph{$G$-operator systems}. That is, the category where objects are operator systems with an action of a discrete group $G$ by unital complete order isomorphisms (these automatically become *-isomorphisms whenever the operator system $G$ is acting upon is a C*-algebra). The non-equivariant version can be obtained by letting $G = \set{e}$. However, the objects we deal with in this paper will always end up being C*-algebras. Therefore, for convenience, we recall some of Hamana's theory here in a way that requires only very basic theory of operator systems (see \cite{paulsen_cb_maps} for an extensive introduction).
	
	\begin{definition}
		Let $S$ and $T$ be operator systems. We say that $\phi\colon S\to T$ is a \emph{complete order isomorphism} if $\phi$ is a unital completely positive linear isomorphism with completely positive inverse.
	\end{definition}
	
	When two operators systems $S$ and $T$ are equipped with an action of a discrete group $G$ by complete order isomorphisms, we say that $S$ embeds equivariantly into $T$, and write $(S,G)\subseteq (T,G)$, if there is a $G$-equivariant unital completely positive map $\phi: S\to T$ which is a complete order isomorphism onto its range. Such a map is called an \emph{equivariant embedding}.
	
	\begin{theorem}
		Let $G$ be a discrete group acting on a unital C*-algebra $A$. Then there exists a unique $G$-C*-algebra, denoted $I_G(A)$, with the following properties:
		\begin{enumerate}
			\item $(A,G)\subseteq (I_G(A),G)$ (inclusion of C*-algebras).
			\item Let $S$ and $T$ be $G$-operator systems such that $(S,G)\subseteq (T,G)$. Then every unital $G$-equivariant completely positive map $\phi\colon S\to I_G(A)$ extends to a unital $G$-equivariant completely positive map $\tilde{\phi}\colon T\to I_G(A)$. 
			\item Let $S$ be a $G$-operator system and assume that $\phi\colon I_G(A)\to S$ is a $G$-equivariant unital completely positive map which restricts to an embedding on $A$. Then $\phi$ is an embedding.
		\end{enumerate}
		
	\end{theorem}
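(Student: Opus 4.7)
The plan is to construct $I_G(A)$ via Hamana's classical idempotent-minimization technique, and then read off properties (1)--(3) and uniqueness from minimality.

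\textbf{Step 1 (Embedding into a $G$-injective operator system).} Pick a faithful unital representation $A \subseteq B(H)$ and consider $V := B(H \tensor \ell^2(G))$, equipped with the $G$-action $\Ad(1 \tensor \lambda_g)$. The algebra $A$ sits inside $V^G$ via $a \mapsto \sum_{g \in G} (g^{-1} \cdot a) \tensor e_{g,g}$, which is $G$-equivariant, unital, and a complete order embedding. A standard averaging argument (using that $B(H)$ is injective as an operator system) shows that $V$ is $G$-\emph{injective}: every $G$-equivariant unital completely positive map defined on a subsystem of a $G$-operator system and valued in $V$ extends to the whole ambient system.

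\textbf{Step 2 (Minimal $A$-projection).} Let $\mathcal{P}$ be the set of $G$-equivariant unital completely positive maps $\phi\colon V \to V$ such that $\phi|_A = \id_A$; this is non-empty (it contains $\id_V$) and is a compact convex set in the pointwise weak$^*$-topology (using that each $\phi$ is contractive). Order $\mathcal{P}$ by declaring $\phi \preceq \psi$ iff $\phi = \phi \circ \psi = \psi \circ \phi$ (equivalently, one works with a seminorm argument: take any strictly positive $G$-invariant functional on a subsystem and pick $\phi$ minimizing a suitable invariant on the identity, as in Hamana's original work). A routine Zorn/compactness argument produces a minimal element $\phi_0 \in \mathcal{P}$. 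The minimality forces $\phi_0 \circ \phi_0 = \phi_0$, i.e.\ $\phi_0$ is a $G$-equivariant idempotent.

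\textbf{Step 3 (C*-algebra structure via Choi--Effros).} Set $I_G(A) := \phi_0(V)$ as a $G$-operator system. By the Choi--Effros theorem, the formula $x \cdot y := \phi_0(xy)$ makes $\phi_0(V)$ a C*-algebra, with $\phi_0$ a conditional expectation. Because $\phi_0|_A = \id_A$, this new product extends the original product on $A$, so $(A,G) \subseteq (I_G(A),G)$ as $G$-C*-algebras, giving property (1). Property (2) (injectivity) is inherited from $V$: given an equivariant embedding $(S,G) \subseteq (T,G)$ and a $G$-UCP map $\phi\colon S \to I_G(A) \subseteq V$, extend to $\tilde{\phi}\colon T \to V$ using $G$-injectivity of $V$, then compose with $\phi_0$ to land back in $I_G(A)$.

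\textbf{Step 4 (Rigidity and uniqueness).} For property (3), suppose $\phi\colon I_G(A) \to S$ is a $G$-UCP map restricting to an embedding on $A$. Pick an extension $\tilde\phi\colon V \to V$ of an embedding $S \hookrightarrow V$ (after enlarging $S$ via injectivity) composed with $\phi$. Then $\tilde{\phi} \circ \phi_0 \in \mathcal{P}$, and $\phi_0 \circ \tilde{\phi} \circ \phi_0 \preceq \phi_0$; minimality of $\phi_0$ gives equality, which, by a Schwarz inequality argument on the multiplicative domain of $\phi_0 \circ \tilde{\phi}$, forces $\phi$ to be a complete order embedding on $I_G(A)$. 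The hardest part of the argument is this passage from minimality to rigidity --- one has to argue carefully that a non-faithful extension would contradict minimality of $\phi_0$. Uniqueness of $I_G(A)$ then follows formally: given a second object $B$ with properties (1)--(3), injectivity of each side yields $G$-UCP maps $I_G(A) \rightleftarrows B$ restricting to $\id_A$; the compositions are UCP self-maps fixing $A$, hence by (3) they are complete order embeddings, and a further application of (3) forces them to be mutual inverses. The resulting isomorphism is automatically multiplicative since both objects are C*-algebras whose products are determined by $A$ via the Choi--Effros construction.
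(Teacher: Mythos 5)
The paper does not actually prove this statement: it is quoted as background and attributed to Hamana \cite{hamana85-injective_envelopes_equivariant}, so there is no in-paper argument to compare against. What you have written is a sketch of Hamana's original construction, and its skeleton is correct: embed $(A,G)$ equivariantly into a $G$-injective operator system, extract a minimal $A$-fixing equivariant idempotent $\phi_0$, give $\phi_0(V)$ a C*-structure via Choi--Effros, and deduce properties (1)--(3) and uniqueness from minimality.

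Three points need attention, however. First, the ambient object: $G$-injectivity of all of $B(H\otimes\ell^2(G))$ under $\Ad(1\otimes\lambda_g)$ is not the standard fact, and the ``averaging argument'' you invoke is not available for non-amenable $G$. The standard $G$-injective object is the diagonal $\ell^\infty(G,B(H))$ with the translation action (this is Hamana's lemma, proved by extending the evaluation-at-$e$ compression and re-assembling equivariantly), and that is exactly where your embedding $a\mapsto\sum_g(g^{-1}\cdot a)\otimes e_{g,g}$ lands, so you should simply take $V=\ell^\infty(G,B(H))$. (Also, the image of that embedding is not contained in $V^G$; the map is equivariant, which is what you need, but its image is not pointwise fixed.) Second, producing a minimal idempotent is not a routine Zorn argument with the relation $\phi\preceq\psi\iff\phi=\phi\circ\psi=\psi\circ\phi$: Hamana minimizes the associated $A$-seminorms $p_\phi(x)=\norm{\phi(x)}$ (and their matrix amplifications) over the point-weak* compact set $\mathcal{P}$, and then passes to Ces\`aro averages of a minimizer to manufacture an idempotent; this, together with the deduction of essentiality from minimality of the seminorm (your Step 4, which you yourself flag as the hardest part), is where the actual content of the theorem lies and is only asserted in your write-up. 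Third, in the uniqueness argument, property (3) shows the two compositions $I_G(A)\to I_G(A)$ and $B\to B$ are embeddings fixing $A$, but to conclude they are the identity you need $G$-rigidity, which for injective extensions is equivalent to essentiality but is a separate lemma that must be invoked; the final isomorphism is then multiplicative because a unital complete order isomorphism of C*-algebras is a *-isomorphism, not because ``the products are determined by $A$.'' None of these are wrong turns --- they are precisely the places where the proof lives --- but as written the proposal defers them to Hamana, which is in effect what the paper itself does by citing him.
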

	
	When $G=\set{e}$, one obtains the \emph{injective envelope} of $A$, which is denoted by $I(A)$.
	
	\begin{definition}
		\label{def:GinjectiveGrigidetc}
		Property (2) expresses the fact that $I_G(A)$ is a \emph{$G$-injective} object in the category of $G$-operator systems. Property (3) is known as \emph{$G$-essentiality} of the inclusion $A\subseteq I_G(A)$. Moreover, the inclusion $A\subseteq I_G(A)$ is \emph{$G$-rigid}. Namely, the identity map on $I_G(A)$ is the only $G$-equivariant unital completely positive map $\phi\colon I_G(A)\to I_G(A)$ which restricts to the identity map on $A$ (see \cite[Lemma~2.4]{hamana85-injective_envelopes_equivariant}). If we set $G = \set{e}$, we obtain the non-equivariant versions of these properties, and we simply call them \emph{injectivity}, \emph{essentiality}, and \emph{rigidity}.
	\end{definition}
	
	Very importantly, we may apply all of our theory of monotone complete C*-algebras to the theory of injective envelopes, as the following well-known proposition shows.
	
	\begin{proposition}
		\label{prop:injective_is_mc_and_ginjective_is_injective}
		An injective C*-algebra is automatically monotone complete, and in particular this applies to $I(A)$ for any unital C*-algebra $A$. Moreover, a $G$-injective C*-algebra is automatically injective, and is therefore monotone complete as well. In particular, this applies to $I_G(A)$ for any unital C*-algebra $A$.
	\end{proposition}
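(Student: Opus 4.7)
The plan is to handle the two claims in sequence: first, to transfer monotone completeness from some ambient $B(H)$ along a ucp retraction provided by injectivity; and second, to collapse $G$-injectivity to ordinary injectivity via rigidity of the (non-equivariant) injective envelope.

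For the first claim, I would fix any faithful unital representation $A\subseteq B(H)$. Injectivity of $A$ extends $\id_A$ to a ucp map $\Phi\colon B(H)\to A$ with $\Phi|_A=\id_A$. Given a bounded increasing net $(x_\lambda)_\lambda$ of self-adjoint elements of $A$, view it inside $B(H)$, where it converges SOT to its supremum $x\in B(H)$. Setting $y\defeq \Phi(x)\in A$, positivity of $\Phi$ yields $y-x_\lambda=\Phi(x-x_\lambda)\geq 0$, so $y$ is an upper bound in $A$; and if $z\in A$ is any other upper bound, then $z\geq x_\lambda$ also holds inside $B(H)$, hence $z\geq x$ in $B(H)$, and applying $\Phi$ gives $z=\Phi(z)\geq \Phi(x)=y$. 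Therefore $y$ is the supremum of $(x_\lambda)_\lambda$ in $A$. Applied to $A=I(A)$ (which is injective by its universal property), this gives monotone completeness of $I(A)$.

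For the second claim, let $B$ be $G$-injective. I would first extend the $G$-action from $B$ to $I(B)$: for each $g\in G$, injectivity of $I(B)$ extends the ucp map $\alpha_g\colon B\to B\subseteq I(B)$ to a ucp $\tilde\alpha_g\colon I(B)\to I(B)$, and rigidity of $I(B)$ forces $\tilde\alpha_g\tilde\alpha_h=\tilde\alpha_{gh}$ and $\tilde\alpha_e=\id_{I(B)}$ (both sides are ucp extensions of the same map on $B$); applying the same rigidity trick to $\tilde\alpha_g\tilde\alpha_{g^{-1}}$ shows each $\tilde\alpha_g$ is a $*$-automorphism. Thus $(B,G)\subseteq (I(B),G)$ as $G$-operator systems, and $G$-injectivity of $B$ produces a $G$-equivariant ucp retraction $\Phi\colon I(B)\to B$. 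Viewing $\Phi$ as a ucp map $I(B)\to I(B)$, it restricts to the identity on $B$, so (non-equivariant) rigidity forces $\Phi=\id_{I(B)}$; since the image of $\Phi$ lies in $B$, this forces $I(B)=B$, whence $B$ is injective. The monotone completeness of $B$ then follows from the first part, which in particular applies to $I_G(A)$.

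The main step in this plan is producing the $G$-action on $I(B)$: without it, there is no ambient $G$-object in which to frame the equivariant extension problem that $G$-injectivity solves. Once the action is in hand, the collapse $I(B)=B$ is essentially a rigidity trick, and the rest is routine.
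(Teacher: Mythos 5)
Your proposal is correct, but the route you take for the second claim differs from the paper's. The paper handles both claims in one stroke: it represents $B \rtimes_\lambda G$ faithfully on some $B(H)$, observes that this gives a \emph{$G$-equivariant} embedding $B \subseteq B(H)$ (with $G$ acting by conjugation by the $\lambda_g$), uses $G$-injectivity to produce a $G$-equivariant ucp retraction $B(H) \to B$, and then cites the standard facts that injectivity and monotone completeness pass from $B(H)$ to $B$ along such a retraction; setting $G = \{e\}$ recovers the non-equivariant statement. You instead split the argument: your first part spells out (correctly) the transfer of monotone completeness along a ucp retraction $B(H) \to A$ that the paper only cites, and your second part proves the stronger and cleaner statement that a $G$-injective $B$ satisfies $B = I(B)$, by extending the action to $I(B)$, retracting equivariantly onto $B$, and invoking rigidity. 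What your approach buys is a conceptually sharper conclusion ($G$-injective algebras are their own injective envelopes) without needing the crossed-product representation; what the paper's approach buys is avoiding the construction of the $G$-action on $I(B)$ altogether. One small point of order in your construction of that action: rigidity only asserts uniqueness of ucp extensions of $\id_B$, so you should first use the $\tilde\alpha_{g^{-1}}\tilde\alpha_g = \id$ trick to see each $\tilde\alpha_g$ is a complete order automorphism, and only then deduce $\tilde\alpha_g\tilde\alpha_h = \tilde\alpha_{gh}$ by precomposing with $\tilde\alpha_{(gh)^{-1}}$; the parenthetical ``both sides are ucp extensions of the same map on $B$'' is not by itself an instance of rigidity. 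This is the standard argument (it is how the paper's Proposition~\ref{prop:injective_envelope_inclusions} is obtained), so it is a matter of phrasing rather than a gap.
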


	As remarked in \cite[Remark~2.3]{hamana85-injective_envelopes_equivariant}, $I_G(A)$ is always an injective C*-algebra. However, as $I(A)$ is an essential extension of $A$, it is not hard to construct the following chain of inclusions, where the embeddings are operator system embeddings:
	\[ A\subseteq I(A)\subseteq I_G(A). \]
	The next proposition shows that we may in fact obtain such inclusions in a $G$-C*-algebra sense.
	
	\begin{proposition}[{\cite[Section~3]{hamana85-injective_envelopes_equivariant}}, {\cite[Corollary~4.3]{hamana79_injective_envelopes_cstaralg}}, and {\cite[Lemma~6.2]{hamana85-injective_envelopes_equivariant}}]
		\label{prop:injective_envelope_inclusions}
		Let $G$ be a discrete group acting on a unital C*-algebra $A$. The action of $G$ on $A$ extends uniquely to an action of $G$ on $I(A)$, and there is a $G$-equivariant injective *-homomorphism from $I(A)$ to $I_G(A)$ restricting to the identity map on $A$. In other words, we may view
		\[ (A,G) \subseteq (I(A),G) \subseteq (I_G(A),G). \]
        These inclusions automatically restrict to inclusions on the center of each algebra. That is,
		\[ (Z(A),G) \subseteq (Z(I(A)),G) \subseteq (Z(I_G(A)),G). \]
	\end{proposition}

	A slightly surprising version of Proposition~\ref{prop:injective_envelope_inclusions} for crossed products is the following theorem by Hamana, which will be instrumental in passing from $I_G(A)$ to $I(A)$.
	
	\begin{theorem}[{\cite[Theorem~3.4]{hamana85-injective_envelopes_equivariant}}]
		\label{thm:crossed_product_injective_envelope_inclusions}
		Let $G$ be a discrete group acting on a unital C*-algebra $A$. We have a *-homomorphic embedding $I_G(A) \rtimes_\lambda G \injectsinto I(A \rtimes_\lambda G)$ that restricts to the identity on the copy of $A \rtimes_\lambda G$ in both algebras. Consequently, we may view
		\[ A \rtimes_\lambda G \subseteq I(A) \rtimes_\lambda G \subseteq I_G(A) \rtimes_\lambda G \subseteq I(A \rtimes_\lambda G). \]
	\end{theorem}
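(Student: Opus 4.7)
The plan is to first produce a $G$-equivariant $*$-embedding $\varphi\colon I_G(A) \injectsinto I(A\rtimes_\lambda G)$ restricting to $\id_A$, and then promote it to the desired embedding of crossed products.

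Observe that $\lambda_g \in A\rtimes_\lambda G \subseteq I(A\rtimes_\lambda G)$ are unitaries, so conjugation $\beta_g \defeq \Ad\lambda_g$ defines a $G$-action on $I(A\rtimes_\lambda G)$ which agrees with the given $G$-action when restricted to $A$; hence $A \subseteq I(A\rtimes_\lambda G)$ becomes a $G$-equivariant inclusion of $G$-operator systems. Applying $G$-injectivity of $I_G(A)$ to this inclusion yields a $G$-equivariant u.c.p.\ map $\pi\colon I(A\rtimes_\lambda G) \to I_G(A)$ restricting to $\id_A$; applying the (non-equivariant) injectivity of $I(A\rtimes_\lambda G)$ yields a u.c.p.\ map $\varphi_0\colon I_G(A) \to I(A\rtimes_\lambda G)$ restricting to $\id_A$, a priori not equivariant.

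The key step is to refine $\varphi_0$ to an equivariant $*$-embedding. The strategy, in the spirit of Hamana's original construction of injective envelopes via minimal $A$-projections, is to pass to a minimal element $\varphi$ in the partially ordered, convex, pointwise weak$^*$-compact set of all u.c.p.\ extensions $I_G(A) \to I(A\rtimes_\lambda G)$ of $\id_A$. This set is $G$-invariant under the twist $\varphi \mapsto \beta_g \circ \varphi \circ \alpha_g^{-1}$. Using that $\pi \circ \varphi \colon I_G(A) \to I_G(A)$ is a u.c.p.\ map extending $\id_A$, $G$-rigidity of $I_G(A)$ applied via the equivariant retract $\pi$ forces the equivariant twists of any minimal $\varphi$ to coincide, hence $\varphi$ is $G$-equivariant; moreover $\pi \circ \varphi = \id_{I_G(A)}$ forces $\varphi$ to be a complete order embedding. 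Multiplicativity is then obtained through a Choi multiplicative domain argument: $A$ lies in the multiplicative domain of $\varphi$ because $\varphi|_A = \id_A$, and equivariance together with the covariance $\lambda_g a \lambda_g^* = g \cdot a$ propagates the multiplicative domain throughout $I_G(A)$, using monotone completeness of $I_G(A)$ and the $G$-essentiality of $A \subseteq I_G(A)$.

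With a $G$-equivariant $*$-embedding $\varphi$ in hand, the covariant pair $(\varphi, \{\lambda_g\})$ inside $I(A\rtimes_\lambda G)$ induces a $*$-homomorphism $\Phi\colon I_G(A) \rtimes_\lambda G \to I(A\rtimes_\lambda G)$ by the universal property of the crossed product, extending the identity on $A \rtimes_\lambda G$. Injectivity on the reduced crossed product follows by comparing the canonical faithful conditional expectations on both sides, where on the right one uses the monotone complete machinery of Remark~\ref{rem:CondExpeMonComp} together with $\pi$. The main obstacle is the middle step: securing both equivariance and multiplicativity of $\varphi$ without any amenability hypothesis on $G$. This cannot be done by any simple averaging, and must instead exploit the delicate interplay between non-equivariant injectivity of $I(A\rtimes_\lambda G)$ and the equivariant rigidity/essentiality of $I_G(A)$.
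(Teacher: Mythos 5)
The paper does not prove this statement at all: it is quoted verbatim from Hamana \cite[Theorem~3.4]{hamana85-injective_envelopes_equivariant}, so there is no in-paper proof to compare against. Judged on its own terms, your proposal has a fatal gap at its central step. You take a minimal element of ``the partially ordered, convex, pointwise weak$^*$-compact set of all u.c.p.\ extensions $I_G(A)\to I(A\rtimes_\lambda G)$ of $\id_A$,'' but $I(A\rtimes_\lambda G)$ is only an injective (hence monotone complete) C*-algebra, not a von Neumann algebra, so its unit ball carries no weak$^*$ topology and the set of u.c.p.\ maps into it is not compact in any useful sense. This is exactly the obstruction the paper itself emphasizes in Section~\ref{sec:preliminaries:injective_envelopes} when explaining why the averaging argument for $G$-injectivity of injective von Neumann algebras does not transfer to $I(A)$; Hamana's minimal-projection machinery is run inside $B(H\otimes\ell^2(G))$ (a von Neumann algebra, where point-weak$^*$ compactness is genuinely available), not inside the injective envelope itself. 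Even granting a minimal element, the claims that minimality plus $G$-rigidity ``forces the equivariant twists to coincide'' and that the multiplicative domain ``propagates throughout $I_G(A)$'' are assertions, not arguments: a minimal point of a $G$-invariant ordered set need not be $G$-fixed, and a unital complete order embedding of a C*-algebra whose image is merely an operator system need not be multiplicative for the ambient product.

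The final step is also not innocent. To conclude that the covariant pair $(\varphi,\{\lambda_g\})$ yields an \emph{injective} map on the \emph{reduced} crossed product, the standard argument requires a \emph{faithful} u.c.p.\ map $P$ on $C^*(\varphi(I_G(A)),\lambda_G)$ with $P(\varphi(b)\lambda_g)=\delta_{g,e}\,b$. The map $\pi$ you produce by $G$-injectivity restricts on $A\rtimes_\lambda G$ to an arbitrary equivariant pseudo-expectation; there is no reason it kills the $\lambda_g$ for $g\neq e$, and whether such maps are faithful is precisely the content of Proposition~\ref{prop:intersection_property_iff_faithful} and the subject of the whole paper --- it can fail. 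So both the existence of the equivariant $*$-embedding and the faithfulness of the induced map on the reduced crossed product remain unproved. Since the result is a nontrivial theorem of Hamana whose proof occupies real work in \cite{hamana85-injective_envelopes_equivariant}, citing it (as the paper does) or reproducing Hamana's argument in the correct ambient von Neumann algebra would be the way to go.
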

	
	Keeping the above in mind, the following result is a very easy but very important observation, which follows from the uniqueness of the injective envelope. It will let us transfer properties between $A \rtimes_\lambda G$, $I(A) \rtimes_\lambda G$, and $I_G(A) \rtimes_\lambda G$.
	
	\begin{proposition}
		\label{prop:intermediate_algebra_shares_injective_envelope}
		Let $A$ and $B$ be unital C*-algebras such that $A \subseteq B \subseteq I(A)$. Then there is a *-isomorphism $I(B) \isoto I(A)$ which restricts to the identity on $B$.
	\end{proposition}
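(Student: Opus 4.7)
\medskip

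\noindent\textbf{Proof proposal.} The plan is to produce the isomorphism directly from the universal properties of the injective envelope (injectivity, essentiality, and rigidity), as recorded in Definition~\ref{def:GinjectiveGrigidetc} in the non-equivariant case $G=\{e\}$.

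First, I would use injectivity of $I(A)$ to produce a candidate map in one direction. Since $B \subseteq I(A)$ and $I(A)$ is injective, the inclusion $\iota\colon B \hookrightarrow I(A)$ extends to a ucp map
\[ \psi\colon I(B) \to I(A) \]
with $\psi|_B = \mathrm{id}_B$. Because the inclusion $B \subseteq I(B)$ is essential (Definition~\ref{def:GinjectiveGrigidetc} with $G=\{e\}$) and $\psi|_B$ is a complete order embedding, $\psi$ is itself a complete order embedding.

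Next, I would produce a map in the other direction. Since $A \subseteq I(B)$ (via $A \subseteq B \subseteq I(B)$) and $I(B)$ is injective, the inclusion $A \hookrightarrow I(B)$ extends to a ucp map $\phi\colon I(A) \to I(B)$ with $\phi|_A = \mathrm{id}_A$. Now consider the composition $\psi \circ \phi\colon I(A) \to I(A)$. Since $\psi|_B = \mathrm{id}_B$ and $A \subseteq B$, we have $(\psi \circ \phi)|_A = \mathrm{id}_A$, so rigidity of the inclusion $A \subseteq I(A)$ forces $\psi \circ \phi = \mathrm{id}_{I(A)}$. In particular, $\psi$ is surjective, and combined with being a complete order embedding, $\psi$ is a unital complete order isomorphism with ucp inverse $\phi$.

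Finally, to upgrade $\psi$ from a complete order isomorphism to a $*$-isomorphism of C*-algebras, I would invoke the standard fact (which follows from the Choi--Schwarz inequality and the analysis of the multiplicative domain) that a unital $2$-positive bijection between unital C*-algebras whose inverse is also $2$-positive is automatically a $*$-isomorphism: the inequalities $\psi(a^*a) \geq \psi(a)^*\psi(a)$ and $\phi(\psi(a)^*\psi(a)) \geq a^*a$ combined with $\phi \circ \psi = \mathrm{id}$ force equality everywhere, so the multiplicative domain of $\psi$ is all of $I(B)$. The resulting $*$-isomorphism $\psi\colon I(B) \xrightarrow{\cong} I(A)$ satisfies $\psi|_B = \mathrm{id}_B$, as required. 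There is no substantive obstacle here; the only slightly delicate point is the last step of promoting a complete order isomorphism to a $*$-isomorphism, but this is a textbook consequence of Choi's inequality.
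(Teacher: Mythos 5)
Your argument is correct and is precisely the standard uniqueness-of-the-injective-envelope argument that the paper invokes without proof ("follows from the uniqueness of the injective envelope"): injectivity gives the two maps $\psi$ and $\phi$, essentiality of $B \subseteq I(B)$ makes $\psi$ an embedding, rigidity of $A \subseteq I(A)$ makes $\psi\circ\phi$ the identity, and the Choi multiplicative-domain argument upgrades the resulting unital complete order isomorphism to a *-isomorphism. No gaps.
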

	
	Hamana proves in \cite[Theorem~7.1]{hamana81_regular_embeddings} the equivalence between a C*-algebra $B$ being prime, and the regular monotone completion $\closure{B}$ being a factor. This is also true when considering the injective envelope $I(B)$ instead, which can be proven using the exact same proof, or the fact that $Z(\closure{B}) = Z(I(B))$ (see \cite[Theorem~6.3]{hamana81_regular_embeddings}).
	Together with Theorem~\ref{thm:crossed_product_injective_envelope_inclusions} and Proposition~\ref{prop:intermediate_algebra_shares_injective_envelope}, we obtain the following proposition:
	
	\begin{proposition}
		\label{proposition:primeifffactor}
		Let $B$ be a unital C*-algebra. Then $B$ is prime if and only if $I(B)$ is a factor.
		In particular, for a discrete group $G$ acting on a unital C*-algebra $A$, the following are equivalent:
		\begin{enumerate}
			\item $A\rtimes_\lambda G$ is prime.
			\item $I(A)\rtimes_\lambda G$ is prime.
			\item $I_G(A)\rtimes_\lambda G$ is prime.
		\end{enumerate}
	\end{proposition}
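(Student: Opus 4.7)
The plan is to split the proposition into two stages: first establish the general fact that a unital C*-algebra $B$ is prime if and only if $I(B)$ is a factor, and then bootstrap this to the three-way equivalence using the injective-envelope identifications of the various crossed products.

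For the first stage, as noted in the paragraph just above the proposition, Hamana's proof in \cite[Theorem~7.1]{hamana81_regular_embeddings} for the regular monotone completion $\closure{B}$ transfers verbatim to $I(B)$ once one knows $Z(\closure{B}) = Z(I(B))$. Nevertheless, I would include a direct argument along the following lines. For the easy direction, if $p \in Z(I(B))$ is a non-trivial central projection, then $pI(B)$ and $(1-p)I(B)$ are non-zero closed two-sided ideals of $I(B)$; essentiality of $B \subseteq I(B)$ (which, by the rigidity/essentiality property in Definition~\ref{def:GinjectiveGrigidetc} applied to the quotients by such ideals, forces every non-zero closed two-sided ideal of $I(B)$ to meet $B$ non-trivially) yields non-zero two-sided ideals $J_1 = pI(B) \cap B$ and $J_2 = (1-p)I(B) \cap B$ of $B$ with $J_1 \cap J_2 = 0$, so $B$ is not prime. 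For the converse, if $J_1, J_2 \subseteq B$ are non-zero ideals with $J_1 \cap J_2 = 0$, then $J_1 J_2 = 0$, so $J_2$ sits inside the two-sided annihilator of $J_1$ in $I(B)$. This annihilator is a closed two-sided ideal in the monotone complete C*-algebra $I(B)$, and by the standard structure of ideals in monotone complete C*-algebras it has the form $(1-e)I(B)$ for a central projection $e \in Z(I(B))$. If $I(B)$ were a factor then $e \in \{0,1\}$; $e=0$ would force $J_1 = 0$ and $e=1$ would annihilate $J_2$, either way a contradiction.

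For the second stage, I would simply read off the equivalence from the tower of inclusions
\[ A \rtimes_\lambda G \subseteq I(A) \rtimes_\lambda G \subseteq I_G(A) \rtimes_\lambda G \subseteq I(A \rtimes_\lambda G) \]
provided by Theorem~\ref{thm:crossed_product_injective_envelope_inclusions}. Proposition~\ref{prop:intermediate_algebra_shares_injective_envelope} then guarantees that every C*-algebra in this chain has injective envelope $I(A \rtimes_\lambda G)$ (under an isomorphism fixing the intermediate algebra). Applying the first part of the proposition to each of $A \rtimes_\lambda G$, $I(A) \rtimes_\lambda G$, and $I_G(A) \rtimes_\lambda G$, primality of each is equivalent to $I(A \rtimes_\lambda G)$ being a factor, which immediately gives the equivalence of (1)--(3).

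The main obstacle is the first stage, and specifically the passage from essentiality of $B \subseteq I(B)$ (phrased at the operator system level in terms of ucp maps) to the concrete C*-algebraic statement that every non-zero closed two-sided ideal of $I(B)$ intersects $B$ non-trivially; the remaining ingredient, that closed two-sided ideals in a monotone complete C*-algebra are cut out by central projections, is standard but deserves a clean citation. Once both facts are in hand, the argument is essentially formal, and the three-way equivalence of (1)--(3) follows with no further work.
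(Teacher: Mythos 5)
Your proposal follows essentially the same route as the paper: the paper disposes of the first stage by citing Hamana (\cite[Theorem~7.1]{hamana81_regular_embeddings}, transferred to $I(B)$ via $Z(\closure{B}) = Z(I(B))$), and of the second stage by exactly the tower of inclusions from Theorem~\ref{thm:crossed_product_injective_envelope_inclusions} together with Proposition~\ref{prop:intermediate_algebra_shares_injective_envelope}. Your easy direction (non-trivial central projection gives orthogonal ideals of $B$ via essentiality, which does indeed force every non-zero closed two-sided ideal of $I(B)$ to meet $B$, by the quotient-map argument used in Proposition~\ref{prop:minimality_transfers_to_injective_envelopes_and_centers}) is fine, and the second stage is exactly right.

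One justification in your converse direction is off, though the conclusion you need is true. There is no ``standard structure of ideals in monotone complete C*-algebras'' saying that closed two-sided ideals are cut out by central projections: $c_0 \subseteq \ell^\infty$ is a closed two-sided ideal of an injective (hence monotone complete) C*-algebra that is not of the form $p\,\ell^\infty$. What is true, and what you actually need, is the special fact that the two-sided \emph{annihilator} of an ideal $J_1 \subseteq B$ inside $I(B)$ is a central corner: setting $e = \sup^{I(B)} (J_1)_1^+$, Proposition~\ref{prop:ideal_sup_projection} gives $e \in B' \cap I(B) = Z(I(B))$ acting as a unit on $J_1$, and Lemma~\ref{lem:sup_bFbstar} shows $xJ_1 = 0$ if and only if $xe = 0$, so the annihilator is $(1-e)I(B)$. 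Equivalently, and more in the spirit of the paper, one can skip the annihilator entirely: the sup-projections $p = \sup^{I(B)}(J_1)_1^+$ and $q = \sup^{I(B)}(J_2)_1^+$ are non-zero, lie in $Z(I(B))$, and satisfy $pq = 0$ by Proposition~\ref{prop:ideal_sup_orthogonality}, so $I(B)$ is not a factor. With that substitution your argument is complete.
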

	
	Although $I(A)\rtimes_\lambda G$ and $I_G(A)\rtimes_\lambda G$ embed into the monotone complete C*-algebra $I(A\rtimes_\lambda G)$, which is also their common injective envelope, we may alternatively consider the embeddings of $I(A)\rtimes_\lambda G$ and $I_G(A)\rtimes_\lambda G$ into the monotone complete C*-algebras $M(I(A),G)$ and $M(I_G(A),G)$, respectively. This is usually more helpful, because it allows us to work with elements that still admit well-behaved series $x = \sum_{t \in G} x_t \lambda_t$, with $x_t$ inside $I(A)$ or $I_G(A)$.
	
	We will also need the following easy observation.
	
	\begin{proposition}
		\label{prop:minimality_transfers_to_injective_envelopes_and_centers}
		Let $G$ be a discrete group acting minimally on a unital C*-algebra $A$. The induced actions on the C*-algebras $I(A)$, $I_G(A)$, $Z(A)$, $Z(I(A))$, and $Z(I_G(A))$ are all minimal as well.
	\end{proposition}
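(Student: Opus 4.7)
The plan is to decompose the proof into two reductions. First, I would show that $G$-minimality of $A$ transfers to both $I(A)$ and $I_G(A)$. Second, as an independent step, I would show that $G$-minimality of any unital $G$-C*-algebra $B$ implies $G$-minimality of its center $Z(B)$; applied to $B = A$, $I(A)$, and $I_G(A)$, this handles the three centers appearing in the statement.

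For the first reduction, suppose $J$ is a $G$-invariant closed two-sided ideal of $I_G(A)$ (respectively $I(A)$). The intersection $J \cap A$ is a $G$-invariant ideal of $A$, so by minimality of $A$ it equals $0$ or $A$. The latter would force $1 \in J$, so we may assume $J \cap A = 0$. The quotient map is then a ($G$-equivariant) unital completely positive map whose restriction to $A$ is an injective *-homomorphism, so by $G$-essentiality of the inclusion $A \subseteq I_G(A)$ (respectively essentiality of $A \subseteq I(A)$) from Definition~\ref{def:GinjectiveGrigidetc}, the quotient map itself must be an embedding, forcing $J = 0$.

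For the second reduction, assume $B$ is $G$-minimal and $I \subseteq Z(B)$ is a proper $G$-invariant closed ideal. I pick a character $\chi \colon Z(B) \to \C$ vanishing on $I$ (available since $Z(B)/I$ is a non-zero commutative unital C*-algebra) and extend it via Hahn--Banach to a state $\phi$ on $B$. In the associated GNS triple $(\pi_\phi, H_\phi, \xi_\phi)$, each $b \in I$ satisfies $\norm{\pi_\phi(b)\xi_\phi}^2 = \phi(b^*b) = \chi(b^*b) = 0$; moreover, centrality of $b$ in $B$ makes $\pi_\phi(b)$ commute with $\pi_\phi(B)$, so $\pi_\phi(b)$ vanishes on the dense subspace $\pi_\phi(B)\xi_\phi$ and is therefore zero. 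Hence $\phi$ vanishes on the closed $G$-invariant two-sided ideal $\overline{BI}$ of $B$. Since $\phi(1) = 1$, the ideal $\overline{BI}$ is proper, and $G$-minimality of $B$ forces $\overline{BI} = 0$, which gives $I = 0$.

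The main obstacle I anticipate is the second reduction: the tempting Gelfand-duality approach on $\mathrm{Spec}(Z(B))$ stalls because a proper $G$-invariant open subset of the spectrum can be dense without being clopen, so $G$-invariant projections alone do not detect it. The state-extension and GNS route sidesteps this entirely by using centrality to promote the vanishing of $\chi$ on $I$ into the vanishing of $\pi_\phi$ on all of $I$, which is exactly what is needed to produce a proper $G$-invariant ideal of $B$ and invoke minimality.
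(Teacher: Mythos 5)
Your proof is correct and follows essentially the same route as the paper: minimality passes to $I(A)$ and $I_G(A)$ by applying (\,$G$-)essentiality to the quotient map by a $G$-invariant ideal, and minimality passes to centers by extending a character killing the ideal to a state on the whole algebra and showing that state annihilates the generated two-sided ideal. The only cosmetic differences are that you invoke plain essentiality of $A \subseteq I(A)$ directly where the paper first derives $G$-essentiality of that inclusion from the $I_G(A)$ case, and you run the center argument through the GNS representation where the paper uses the multiplicative domain of the extended state; both variants are equally valid.
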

	
	\begin{proof}
		Assume that $J \subseteq I_G(A)$ is a non-trivial $G$-invariant ideal of $I_G(A)$. The quotient map $q \colon I_G(A) \to I_G(A) / J$ is necessarily injective on $A$ by minimality, but is not injective on $I_G(A)$. This contradicts $G$-essentiality of the inclusion $A\subseteq I_G(A)$.
		
		The exact same proof will work for $I(A)$, since the inclusion $A\subseteq I(A)$ is essential and, in particular, $G$-essential.

		
		Since $A$ was arbitrary, if we show that $(Z(A),G)$ is minimal, this will automatically hold for $I(A)$ and $I_G(A)$ as well. To see this, identify $Z(A)$ with $C(X)$, for some compact space $X$, and assume that $U \subseteq X$ is a non-zero proper invariant open subset for the induced action on $X$.
		
		Choose $x\in X\setminus U$, and extend the evaluation map $\delta_x\colon Z(A)\to\mathbb{C}$ to a state $\rho\colon A\to \mathbb{C}$. Then $\rho$ vanishes on the ideal generated by $C_0(U)$ in $A$, namely $\closure{\operatorname{span}} (C_0(U) \cdot A)$, since $Z(A)$ lies in the multiplicative domain (see \cite[Proposition~1.5.7]{Brown_ozawa} and \cite[Definition~1.5.8]{Brown_ozawa}) of $\rho$. (As a side note, the ideal $\closure{\operatorname{span}} (C_0(U) \cdot A)$ is just $C_0(U) \cdot A$ by the Cohen-Hewitt factorization theorem). In any case, as $\rho$ is non-trivial, this ideal (which is clearly $G$-invariant and non-zero) cannot be all of $A$, contradicting minimality of the action.
	\end{proof}

	We also briefly mentioned in the introduction of this paper that a large part of the difficulty in characterizing simplicity of $A \rtimes_\lambda G$ is the difficulty in passing from results on $I_G(A)$ to results on $I(A)$. The following proposition, which we prove here for convenience, was observed by Hamana in \cite[Remark~3.8]{hamana85-injective_envelopes_equivariant}.
	
	\begin{proposition}
		Let $G$ be a discrete amenable group and let $M$ be an injective von Neumann algebra. Then $M$ is automatically $G$-injective.
	\end{proposition}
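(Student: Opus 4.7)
The plan is to combine ordinary (non-equivariant) injectivity with an averaging argument using an invariant mean on $G$, exploiting that $M$, being a von Neumann algebra, has a predual $M_*$ which lets us make weak* sense of such averages.

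Let $m \in \ell^\infty(G)^*$ be a left-invariant mean, which exists by amenability. Suppose $(S,G) \subseteq (T,G)$ is an inclusion of $G$-operator systems and $\phi \colon S \to M$ is a $G$-equivariant unital completely positive map. Since $M$ is injective as an operator system, there exists a (possibly non-equivariant) unital completely positive extension $\psi \colon T \to M$. I would then define the averaged map $\tilde{\psi} \colon T \to M$ by declaring, for each $t \in T$, that $\tilde{\psi}(t) \in M = (M_*)^*$ is the bounded linear functional on $M_*$ given by
\[ \omega \longmapsto m\bigl( g \mapsto \omega(g^{-1} \cdot \psi(g \cdot t)) \bigr). \]
The function inside the mean is bounded by $\norm{\omega}\,\norm{t}$ since the $G$-action on $M$ is isometric and $\psi$ is contractive, so this does define an element of $M$ of norm at most $\norm{t}$.

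The next step is to verify the three properties of $\tilde{\psi}$:
\emph{(i)} Unitality and (complete) positivity follow from the corresponding properties of $\psi$ together with the fact that the invariant mean is a state on $\ell^\infty(G)$, so applying $\tilde{\psi}$ entrywise to a positive matrix $[t_{ij}] \in M_n(T)_+$ yields a positive matrix in $M_n(M)$ by pairing with arbitrary positive normal functionals on $M_n(M)$.
\emph{(ii)} For $s \in S$, the integrand simplifies to $g^{-1} \cdot \phi(g \cdot s) = g^{-1} \cdot (g \cdot \phi(s)) = \phi(s)$, a constant function of $g$, so $\tilde{\psi}(s) = \phi(s)$ since $m$ is a state.
\emph{(iii)} For $G$-equivariance, fix $h \in G$, $t \in T$, $\omega \in M_*$; then
\[ \omega(\tilde{\psi}(h \cdot t)) = m\bigl( g \mapsto \omega(g^{-1} \cdot \psi(gh \cdot t)) \bigr), \]
and changing variables via left-invariance of $m$ by the substitution $g \mapsto gh^{-1}$ converts the integrand to $\omega(h g^{-1} \cdot \psi(g \cdot t)) = (h^{-1} \cdot \omega)(g^{-1} \cdot \psi(g \cdot t))$, whose $m$-average equals $(h^{-1} \cdot \omega)(\tilde{\psi}(t)) = \omega(h \cdot \tilde{\psi}(t))$, as desired.

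The main obstacle is making the averaging construction precise, i.e.\ ensuring that $\tilde{\psi}(t)$ actually lives in $M$ and that complete positivity survives the weak* limit. Both issues are resolved by the existence of the predual $M_*$, which is exactly where injectivity of $M$ (qua von Neumann algebra) enters beyond operator-system injectivity: it lets us define $\tilde{\psi}(t)$ pointwise against $M_*$ and lets us check positivity of matrix amplifications by testing against normal states on $M_n(M)$. The verification for $M_n$-amplifications is identical after replacing $M$ by $M_n(M)$ (still an injective von Neumann algebra with predual $M_n(M_*)$) and $\psi$ by its $n$-th amplification; the resulting averaged map is the $n$-th amplification of $\tilde{\psi}$, which is therefore completely positive.
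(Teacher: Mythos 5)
Your argument is correct, but it takes a more hands-on route than the paper. The paper considers the set of \emph{all} unital completely positive extensions $\psi \colon T \to M$ of $\phi$, notes that it is a nonempty convex set that is compact in the point-weak* topology (precisely because $M$ is a von Neumann algebra), equips it with the conjugation action $\psi \mapsto g\psi g^{-1}$, and invokes Day's fixed point theorem to produce an equivariant extension. You instead fix a single non-equivariant extension and average it directly against an invariant mean, pairing with the predual to see that the average lands in $M$ and testing against normal positive functionals on $M_n(M)$ to preserve complete positivity. These are two standard faces of the same use of amenability --- indeed Day's theorem is typically proved by exactly this kind of mean-averaging --- so the mathematical content is the same; your version is more explicit and self-contained but obliges you to verify well-definedness, complete positivity, and equivariance by hand, all of which you do correctly, while the paper's version outsources those checks to the fixed point theorem. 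One small slip: the change of variables $g \mapsto gh^{-1}$ in your equivariance computation is a \emph{right} translation, so strictly speaking you need right-invariance (or bi-invariance) of the mean rather than left-invariance; since a discrete amenable group always admits a bi-invariant mean, this is cosmetic and does not affect the proof.
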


	This is highly contingent on $M$ being a von Neumann algebra, thus having the unit ball be compact under a nice topology (and, for example, being able to apply fixed point theorems for amenable groups acting on compact convex sets). In general, $I(A)$ is only monotone complete, and thus the closest analogue is a notion of order convergence. See, for example, the discussion near the start of \cite[Section~1]{hamana82_mc_tensor_products_I}. It is unlikely that the unit ball of $I(A)$ is compact under any order topology, and thus this crucial piece of the puzzle is missing in the above proof. In other words, one expects $I_G(A) \neq I(A)$. With that being said, this does not rule out the existence of some other deep reason for the equality $I_G(A) = I(A)$ to still hold, and such a result would indeed trivialize half of our work. 
	
	\subsection{Properly outer automorphisms}
	\label{sec:preliminaries:properly_outer}
	This subsection serves two purposes. The first is simply for intuition on the statements of the main theorems (Theorem~\ref{thmIntro:A}, Theorem~\ref{thmIntro:B}, and Theorem~\ref{thmIntro:C}), as all of them characterize simplicity (or primality) using an equivariant version of proper outerness, or the lack thereof. However, observe that in each theorem, \emph{two} characterizations are given. One is a more elegant, but perhaps more intractable characterization involving the somewhat mysterious injective envelope $I(A)$. The other is a characterization involving the ideals of $A$ (and their multiplier algebras), and it is more down-to-earth and understandable, even if it is more technically cumbersome. These are both based off of two equivalent characterizations for what is known as \emph{proper outerness} of an action of $G$ on a C*-algebra $A$.
	
	Our story begins with von Neumann algebras. Recall that an automorphism $\alpha$ of a measure space $(X,\mu)$ is said to be \emph{essentially free} if the set of fixed points of $\alpha$ has measure zero, and this notion is very important when studying actions on abelian von Neumann algebras $L^\infty(X,\mu)$. In \cite{kallman_free_actions}, Kallman generalizes this notion to the noncommutative setting, and introduces the notion of a \emph{freely acting} automorphism of a von Neumann algebra. However, it was observed by Hamana in \cite[Proposition~5.1]{hamana82_mc_tensor_products_I} that because monotone complete C*-algebras in many ways behave like von Neumann algebras, in particular with being able to take polar decompositions, all of Kallman's results work exactly the same in this more general setting. The following series of results are those originally observed by Kallman and Hamana:
	
	\begin{definition}\label{def:Prop_outer_monotone_complete}
		Let $A$ be a monotone complete C*-algebra. We say that a *-automorphism $\alpha \in \Aut(A)$ is \emph{freely acting} if whenever $x \in A$ satisfies $xy = \alpha(y) x$ for all $y \in A$, we have $x = 0$. We say that $\alpha$ is \emph{properly outer} if there is no non-zero $\alpha$-invariant central projection $p \in A$ with $\alpha|_{Ap}$ inner.
	\end{definition}
	
	The following result shows that in the setting of monotone complete C*-algebras, there is no difference between the notions defined above. Actually, there is a bit of a subtle point in the statement of the theorem. Namely, in the definition of proper outerness, one can ask whether $\alpha|_{pAp}$ is ever inner for $p$ not necessarily central, but this does not actually change the definition.
	
	\begin{theorem}[{\cite[Proposition~5.1]{hamana82_mc_tensor_products_I}}]
		\label{thm:PropInnerDecoMonCom}
		Let $A$ be a monotone complete C*-algebra, and let $\alpha \in \Aut(A)$. There is a largest $\alpha$-invariant projection $p \in A$ such that $\alpha|_{pAp}$ is inner. Moreover, $p$ is central and $\alpha|_{A(1-p)}$ is freely acting. In fact, this decomposition of $A \isoto Ap \directsum A(1-p)$ into inner and freely acting parts is unique.
	\end{theorem}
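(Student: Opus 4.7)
The plan is to construct $p$ as the supremum of the family $\mathcal{F}$ of central $\alpha$-invariant projections on which $\alpha$ restricts to an inner automorphism, using polar decomposition in the monotone complete setting to convert arbitrary ``twisted-commuting'' elements into central data, and monotone completeness itself to form the supremum and to upgrade the resulting projection beyond the central setting.

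The technical core is a polar decomposition lemma: for any $x \in A$ satisfying $xy = \alpha(y)x$ for all $y \in A$, I would extract a central $\alpha$-invariant projection $q$ with $\alpha|_{Aq}$ inner. The twisted commutation and its adjoint $x^*\alpha(y) = yx^*$ give $x^*xy = yx^*x$ and $xx^*\alpha(y) = \alpha(y)xx^*$, so both $x^*x$ and $xx^*$ lie in $Z(A)$. By Proposition~\ref{prop:monotonecompletePolarDec}, write $x = u|x|$ with $u^*u = \RP(x)$; since $|x|$ is central, so is $\RP(x)$. Using Borel functional calculus approximations $e_\epsilon = \chi_{[\epsilon,\norm{x}]}(|x|)$, which increase in order to $\RP(x)$ and can each be written as $e_\epsilon = |x| g_\epsilon(|x|)$ for an appropriate continuous $g_\epsilon$, one extracts from $u|x|y = \alpha(y)u|x|$ the relation $u e_\epsilon y = \alpha(y) u e_\epsilon$, and passing to the order-supremum in $\epsilon$ yields $uy = \alpha(y)u$. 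Applying the same reasoning to $x^*$ (which satisfies the twisted relation $x^*z = \alpha^{-1}(z)x^*$) shows that $\LP(x) = uu^*$ is central and $\alpha^{-1}$-invariant; a short manipulation then forces $\alpha(q) = q$ for $q := u^*u$, and $u$ is a unitary in $Aq$ implementing $\alpha|_{Aq}$.

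Next, I would show $\mathcal{F}$ is closed under arbitrary suprema. Given $q_1, q_2 \in \mathcal{F}$ with implementing unitaries $u_1 \in U(Aq_1)$ and $u_2 \in U(Aq_2)$, the element $u_1 + u_2(q_2 - q_1 q_2)$ is a unitary in $A(q_1 \vee q_2)$ implementing $\alpha|_{A(q_1 \vee q_2)}$, so $\mathcal{F}$ is closed under finite suprema. A transfinite recursion exploiting the monotone completeness of $A$ --- used to patch partial unitaries over suprema of bounded increasing nets of mutually orthogonal projections --- extends this to arbitrary suprema, so $p := \sup \mathcal{F}$ is the largest element of $\mathcal{F}$. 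To upgrade maximality beyond the central setting: if $q'$ is any $\alpha$-invariant projection and $v \in U(q'Aq')$ implements $\alpha|_{q'Aq'}$, then extending $v$ by zero to $x := v \in A$ gives $xy = \alpha(y)x$ for all $y \in A$, and the polar decomposition lemma produces an element of $\mathcal{F}$ dominating $q'$, so $q' \leq p$.

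Finally, for free action on the complement, if $\alpha|_{A(1-p)}$ were not freely acting, some nonzero $x \in A(1-p)$ would satisfy $xy = \alpha(y)x$ for all $y \in A(1-p)$; applying the polar decomposition lemma inside the monotone complete C*-algebra $A(1-p)$ yields a nonzero central $\alpha$-invariant $q \leq 1-p$ on which $\alpha$ is inner, making $p+q \in \mathcal{F}$ and contradicting the maximality of $p$. Uniqueness of the decomposition is then immediate: any other central $\alpha$-invariant $q$ with $\alpha|_{Aq}$ inner and $\alpha|_{A(1-q)}$ freely acting satisfies $q \leq p$ by maximality, while $p-q$ would be a central projection in $A(1-q)$ on which $\alpha$ acts innerly, forcing $p = q$ by free action. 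I expect the main obstacle to be the polar decomposition step: in the general monotone complete setting one cannot appeal to spatial support projections as in a von Neumann algebra, so Proposition~\ref{prop:monotonecompletePolarDec} must be wielded in tandem with order-theoretic approximations to pass cleanly from $u|x|y = \alpha(y)u|x|$ to $uy = \alpha(y)u$.
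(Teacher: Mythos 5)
Your technical core --- extracting, from any $x$ with $xy=\alpha(y)x$ for all $y\in A$, a central $\alpha$-invariant projection and an implementing unitary via polar decomposition --- is exactly the paper's Theorem~\ref{thm:PropOuterCorner} (Kallman's argument), which is the only part of this statement the paper proves itself (the full decomposition is cited to Hamana). Your finite-join formula $u_1+u_2(q_2-q_1q_2)$, the free-action argument on $A(1-p)$, and the uniqueness argument are all correct, and the patching over an infinite orthogonal family can indeed be made to work (sum the real and imaginary parts of the $u_i$ as bounded increasing nets and use Lemma~\ref{lem:sup_bFbstar}). One stylistic point: the passage from $u\abs{x}y=\alpha(y)u\abs{x}$ to $uy=\alpha(y)u$ does not need spectral projections of $\abs{x}$ (and $\chi_{[\epsilon,\norm{x}]}(t)/t$ is not continuous as written); the defining property of the right projection in Proposition~\ref{prop:monotonecompletePolarDec}, namely $ab=0$ iff $\RP(a)b=0$, gives $\RP(\abs{x})(uy-\alpha(y)u)=0$ in one line, which is how the paper argues.

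The genuine gap is the step upgrading maximality ``beyond the central setting.'' You assert that if $q'$ is an $\alpha$-invariant, not necessarily central, projection and $v\in U(q'Aq')$ implements $\alpha|_{q'Aq'}$, then the extension of $v$ by zero satisfies $vy=\alpha(y)v$ for all $y\in A$. This is false: take $A=M_2(\C)$, $\alpha=\id$, $q'=e_{11}$, $v=e_{11}$; then $ve_{12}=e_{12}$ while $\alpha(e_{12})v=e_{12}e_{11}=0$. The relation $vz=\alpha(z)v$ is only available for $z\in q'Aq'$, and for general $y$ the off-diagonal components $q'\,(vy)\,(1-q')$ and $(1-q')\,(\alpha(y)v)\,q'$ obstruct the identity. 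This is precisely the ``subtle point'' the paper flags just before the theorem, and it is the one part of the statement that does not follow from Theorem~\ref{thm:PropOuterCorner} plus a supremum argument over \emph{central} projections. Repairing it requires a genuinely different idea --- for instance, reduce to the case $p=0$ by replacing $q'$ with $q'(1-p)$, then use the central cover $c(q')$ and an exhaustion by partial isometries $w_i$ with $w_i^*w_i\le q'$ and $\sum_i w_iw_i^*=c(q')$ (available since monotone complete C*-algebras are AW*-algebras) to propagate $v$ to a nonzero element of $A c(q')$ that twisted-commutes with all of $A$, contradicting free action. None of this comparison-theoretic input appears in your proposal, so as written the claim that $p$ dominates every $\alpha$-invariant projection with inner restriction is unproved.
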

	
	For the proof of Theorem~\ref{thm:PropInnerDecoMonCom}, Hamana uses the following theorem, which we explicitly state below as we will make use of it as well. This can be proven by closely following the proof of Kallman in \cite[Theorem~1.1]{kallman_free_actions}, in the setting of von Neumann algebras.
	
	\begin{theorem}
		\label{thm:PropOuterCorner}
		Let $A$ be a monotone complete C*-algebra, let $\alpha \in \Aut(A)$, and assume that $x \in A$ implements the automorphism in the sense that $xy = \alpha(y) x$ for all $y \in A$. Let $x = u \abs{x}$ be the polar decomposition of the element $x$, and let $p = u^*u$ be the domain projection of the partial isometry $u$. We have:
		\begin{enumerate}
			\item The projection $p$ is an $\alpha$-invariant central projection in $A$;
			\item The partial isometry $u$ is a unitary in $U(Ap)$, and $\alpha|_{Ap} = \Ad u$.
		\end{enumerate}
	\end{theorem}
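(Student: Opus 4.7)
The plan is to follow Kallman's strategy in the von Neumann algebra case, replacing weak-operator manipulations by the abstract polar decomposition provided by Proposition~\ref{prop:monotonecompletePolarDec}. First I would show that all the relevant objects are central. Taking the adjoint of $xy = \alpha(y)x$ and relabeling gives the companion identity $x^*z = \alpha^{-1}(z)x^*$ for all $z \in A$. Combining the two identities produces $(x^*x)y = x^*\alpha(y)x = y(x^*x)$ and, symmetrically, $(xx^*)y = x\alpha^{-1}(y)x^* = y(xx^*)$ for every $y \in A$, so $\abs{x}^2$ and $xx^*$ both lie in $Z(A)$. By continuous functional calculus, $\abs{x} \in Z(A)$. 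Since $p = u^*u$ is the support projection of $\abs{x}$ and $uu^*$ is the support projection of $\abs{x^*}$, a direct use of the defining property in Proposition~\ref{prop:monotonecompletePolarDec} shows that both projections lie in $Z(A)$ as well.

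Next I would prove $p = uu^* = \alpha(p)$ by chaining several inequalities. Setting $y = p$ in the implementing relation and using $xp = x$ yields $\alpha(p)x = x$, so $(\alpha(p)u - u)\abs{x} = 0$. Taking adjoints and applying the characterizing property of $p$ as the support of $\abs{x}$ gives $(\alpha(p)u - u)p = 0$; combined with $up = uu^*u = u$ (partial-isometry identity plus centrality of $p$), this reduces to $\alpha(p)u = u$. Right-multiplying by $u^*$ gives $uu^* \leq \alpha(p)$, while taking adjoints, right-multiplying by $u$, and using centrality of $\alpha(p)$ gives $p \leq \alpha(p)$. Running the identical argument with $x^*$ (which implements $\alpha^{-1}$ and whose right projection is $uu^*$) supplies the reverse inequality $\alpha(p) \leq uu^*$. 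Finally, from $up = u$ and centrality of $p$ one immediately gets $uu^* \leq p$. Chaining these together forces $uu^* = p = \alpha(p)$, proving (1).

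For (2), the equalities $u^*u = uu^* = p$ together with $up = u$ show that $u$ is a unitary in $Ap$. To verify $\alpha|_{Ap} = \Ad u$, fix $y \in Ap$. The implementing relation together with the centrality of $\abs{x}$ gives $(uy - \alpha(y)u)\abs{x} = 0$, and the same support-projection trick yields $(uy - \alpha(y)u)p = 0$. Using $yp = y$ and $\alpha(y)p = \alpha(y)$ (since $\alpha(p) = p$), this reduces to $uy = \alpha(y)u$, and right-multiplication by $u^*$ gives $\alpha(y) = uyu^*$, as desired.

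The main obstacle is forcing the equality $p = uu^*$: polar decomposition a priori only guarantees that these are the right and left projections of $x$, and in general they differ. The resolution is the \emph{symmetrization trick} of running the same sequence of deductions for both $x$ (with $\alpha$) and $x^*$ (with $\alpha^{-1}$); together with the centrality of both projections, this is what causes the chain of inequalities $uu^* \leq p \leq \alpha(p) \leq uu^*$ to collapse.
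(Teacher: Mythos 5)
Your proposal is correct and follows essentially the same route as the paper's proof (which itself adapts Kallman's argument): establish centrality of $x^*x$, $xx^*$, and hence of the associated support projections, then use the defining property of the right projection to force $u^*u = uu^* = \alpha(p)$ and to verify $\alpha|_{Ap} = \Ad u$. The only difference is organizational --- you obtain $\alpha(p) \leq uu^*$ by symmetrizing through $x^*$ and collapsing a chain of inequalities, whereas the paper proves $\RP(x) = \RP(x^*)$ and $\alpha(p) = p$ as two separate direct computations --- but this is the same underlying argument.
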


	We state the following corollary (which we mentioned earlier) to Theorem~\ref{thm:PropInnerDecoMonCom}.
	
	\begin{corollary}
		Given a *-automorphism $\alpha \in \Aut(A)$ on a monotone complete C*-algebra $A$, it is freely acting if and only if it is properly outer.
	\end{corollary}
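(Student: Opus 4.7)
The plan is to dispose of both implications by direct contraposition, using the two theorems immediately preceding the statement. Essentially no new content is needed; the corollary is designed to repackage Theorem~\ref{thm:PropOuterCorner} and the obvious converse.

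For the forward direction, I would prove the contrapositive: assume $\alpha$ is not properly outer, so there exist a non-zero $\alpha$-invariant central projection $p \in A$ and a unitary $u \in U(Ap)$ with $\alpha|_{Ap} = \Ad u$. Viewing $u$ as a non-zero element of $A$, I claim that $u$ itself witnesses the failure of free action. For any $y \in A$, using that $u = up = pu$ (since $u \in Ap$ and $p \in Z(A)$), that $py \in Ap$, that $\alpha(py) = u(py)u^{-1}$ inside $Ap$ (i.e.\ $u(py) = \alpha(py)u$), and that $\alpha(p) = p$, I get
\[ uy = u(py) = \alpha(py)u = \alpha(p)\alpha(y)u = p\alpha(y)u = \alpha(y)pu = \alpha(y)u. \]
Hence $uy = \alpha(y)u$ for all $y \in A$ with $u \neq 0$, contradicting the freely acting hypothesis.

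For the backward direction, I again argue by contraposition. Suppose $\alpha$ is not freely acting, so there exists a non-zero $x \in A$ with $xy = \alpha(y)x$ for all $y \in A$. This is precisely the hypothesis of Theorem~\ref{thm:PropOuterCorner}, which hands me the polar decomposition $x = u\abs{x}$ and guarantees that the domain projection $p = u^*u$ is a non-zero (since $x \neq 0$ and hence $\RP(x) \neq 0$), $\alpha$-invariant, central projection, and that $u \in U(Ap)$ with $\alpha|_{Ap} = \Ad u$. This exhibits a violation of proper outerness.

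There is really no main obstacle here: both directions are one- or two-line consequences of previously established results. The only subtle point to be careful about is ensuring in the forward direction that $u$ commutes with $p$ correctly, which is automatic since $p$ is central and $u = up$; once that is noted, the computation unfolds formally. The backward direction is essentially a restatement of Theorem~\ref{thm:PropOuterCorner}, so the corollary amounts to packaging the equivalence between ``there is a non-zero implementing element'' and ``there is a non-zero central projection on which $\alpha$ is inner.''
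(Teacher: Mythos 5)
Your proposal is correct and matches the paper's intent: the paper states this as an immediate corollary of the preceding two theorems without writing out a proof, and your two contrapositive arguments (the unitary $u$ on the corner $Ap$ witnessing non-free action, and Theorem~\ref{thm:PropOuterCorner} converting an implementing element into a central projection and unitary) are exactly the intended content.
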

	
	Everything that was done above shows that proper outerness is a very intuitive and well-behaved property in the setting of monotone complete C*-algebras. Our discussion now takes a turn towards the setting of general C*-algebras, given that an equivariant version of proper outerness forms the second half of each of Theorem~\ref{thmIntro:A}, Theorem~\ref{thmIntro:B}, and Theorem~\ref{thmIntro:C}. As mentioned in the introduction, such notions (in their non-equivariant versions) have played an important role in studying the simplicity of reduced crossed products $A \rtimes_\lambda G$. Multiple generalizations of proper outerness to automorphisms on arbitrary C*-algebras were given by various authors, and they generally all coincide when the underlying C*-algebra is separable. In particular, consider the properties listed in Theorem~\ref{thm:properly_outer_equivalence} below. The proofs of these implications are somewhat subtle and trace back through several papers (see, for example, \cite[Remark~2.2]{elliott_properly_outer} and \cite[Theorem~2.1]{hamana82_okayasu_saito}, \cite[Section~1]{hamana82_centre}, \cite[Theorem~7.3]{hamana85-injective_envelopes_equivariant}, and \cite[Theorem~6.6]{olesen_pedersen_III}).

	The precise definition of the Borchers spectrum itself can be found, for example, in \cite[Section~8.8]{pedersen_cstar_algebras_and_their_automorphism_groups} (alternatively, see \cite[Section~3]{olesen_pedersen_III} or \cite[Section~1]{Kishimoto_freely_acting}). The Borchers spectrum of a single automorphism $\alpha \in \Aut(A)$ is defined as the Borchers spectrum for the corresponding $\Z$-action on $A$, where $n \in \Z$ acts by $\alpha^n$. 

	\begin{theorem}
		\label{thm:properly_outer_equivalence}
		Let $A$ be a unital C*-algebra, and let $\alpha \in \Aut(A)$, with its unique extension to $I(A)$ also denoted by $\alpha$. Consider the following conditions:
		\begin{enumerate}
			\item
			\label{thm:properly_outer_equivalence:borchers}
			There is a non-zero $\alpha$-invariant ideal $J \subseteq A$ such that $\Gamma_B(\alpha|_J) = \set{e}$, where $\Gamma_B(\cdot)$ represents the Borchers spectrum of the automorphism.
			
			\item
			\label{thm:properly_outer_equivalence:injective_envelope}
			There is a non-zero $\alpha$-invariant central projection $p \in I(A)$, and a unitary $u \in U(I(A)p)$ such that $\alpha$ acts by $\Ad u$ on $I(A)p$.
			
			\item
			\label{thm:properly_outer_equivalence:elliott}
			There is a non-zero $\alpha$-invariant ideal $J \subseteq A$ and a unitary $u \in M(J)$, such that $\norm{\alpha|_J - (\Ad u)|_J} < 2$.
			
			\item
			\label{thm:properly_outer_equivalence:elliott_strong}
			Given any $\varepsilon\in (0,2)$, there exists a non-zero $\alpha$-invariant ideal $J \subseteq A$ and a unitary $u \in M(J)$ such that $\norm{\alpha|_J - (\Ad u)|_J} < \varepsilon$.
		\end{enumerate}
		We have that (\ref{thm:properly_outer_equivalence:borchers}) and (\ref{thm:properly_outer_equivalence:injective_envelope}) are equivalent, and both are implied by (\ref{thm:properly_outer_equivalence:elliott}) and (\ref{thm:properly_outer_equivalence:elliott_strong}). If $A$ is separable, all four are equivalent.
	\end{theorem}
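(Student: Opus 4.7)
The plan is to route the implications through classical characterizations of proper outerness due to Elliott, Kishimoto, Olesen--Pedersen, and Hamana, with the central notion being: $\alpha$ extended to $I(A)$ fails to be freely acting as a $*$-automorphism of the monotone complete C*-algebra $I(A)$. The four conditions are really four different ``coordinates'' on this single phenomenon, and I will stitch them together in the order $(2)\iff(1)$, then $(3)\implies(1)$, then $(4)\implies(3)$ is trivial, and finally $(1)\implies(4)$ in the separable case.

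First, the equivalence $(1) \iff (2)$ is the structural core. Hamana's theorem \cite[Theorem~7.4]{hamana85-injective_envelopes_equivariant} establishes that $\alpha \in \Aut(A)$ is properly outer on $A$ in Elliott's classical sense if and only if the unique extension of $\alpha$ to $I(A)$ is freely acting in the monotone complete sense of Definition~\ref{def:Prop_outer_monotone_complete}. Kishimoto's work \cite{Kishimoto_freely_acting} characterizes this classical proper outerness by the condition $\Gamma_B(\alpha|_J) = \what{\Z}$ for every non-zero $\alpha$-invariant ideal $J$. Negating and then passing to a smaller invariant hereditary subalgebra adapted to spectral subspaces of $\alpha$ reduces a non-full Borchers spectrum to $\set{e}$, yielding condition~$(1)$. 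Once one is in the ``not freely acting'' regime on $I(A)$, Theorem~\ref{thm:PropInnerDecoMonCom} applied to $I(A)$ produces the central projection $p$ and unitary $u \in U(I(A)p)$ in~$(2)$; conversely, the existence of such $p$ and $u$ directly witnesses that the extension is not freely acting.

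Second, the implications from the Elliott-style conditions $(3)$ and $(4)$ into $(1)$ and $(2)$ rely on Kishimoto's bound: $\norm{\alpha|_J - (\Ad u)|_J} < 2$ for some $u \in M(J)$ forces $\Gamma_B(\alpha|_J)$ to be a proper closed subgroup of $\what{\Z}$, which was essentially the motivation for Kishimoto's definition of proper outerness. The same spectral-subspace reduction as above then lands us in~$(1)$. Thus $(3) \implies (1)$, and since $(4) \implies (3)$ is immediate by taking $\varepsilon$ close to $2$, we also get $(4) \implies (1)$.

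Finally, when $A$ is separable, the loop closes with $(1) \implies (4)$, which is the Olesen--Pedersen approximation theorem \cite{olesen_pedersen_III}. Its proof uses derivation theory together with spectral theory for the $\Z$-action to produce, for every $\varepsilon \in (0,2)$, an $\alpha$-invariant ideal $J' \subseteq J$ and a unitary $u \in M(J')$ with $\norm{\alpha|_{J'} - (\Ad u)|_{J'}} < \varepsilon$. The main technical obstacle in the whole argument is the bookkeeping needed to identify $I(J)$ for a non-unital invariant ideal $J \subseteq A$ with a corner $I(A)q$ of $I(A)$ under a central $\alpha$-invariant projection $q$; this is what lets the monotone-complete polar decomposition of Proposition~\ref{prop:monotonecompletePolarDec} and Theorem~\ref{thm:PropOuterCorner} interact correctly with unitaries in $M(J)$, and it requires the non-unital injective envelope theory developed in Section~\ref{sec:proper_outerness_on_A}. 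Once this identification is in place, the remaining steps are careful applications of the cited classical results.
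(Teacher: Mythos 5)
Your overall architecture (Hamana's spectral results tying the Borchers spectrum to innerness on the injective envelope, the ideal--central-corner correspondence, and Olesen--Pedersen for the separable closing step) is the same as the paper's, but two of your implications rest on a false step. The problem is your claim that ``passing to a smaller invariant hereditary subalgebra adapted to spectral subspaces of $\alpha$ reduces a non-full Borchers spectrum to $\set{e}$.'' The Borchers spectrum $\Gamma_B(\alpha|_J)$ is a closed subgroup of $\what{\Z} \cong \mathbb{T}$, and it can be a \emph{nontrivial proper} subgroup on every available ideal: take a properly outer automorphism $\alpha$ of order two on a simple unital C*-algebra $A$ (e.g.\ a free $\Z/2\Z$-action on $\mathcal{O}_2$). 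Then $\alpha^2 = \id$ forces $\Gamma_B(\alpha) \subseteq \set{1,-1} \neq \what{\Z}$, proper outerness forces $\Gamma_B(\alpha) \neq \set{e}$, and simplicity leaves no smaller non-zero ideal to pass to. So ``$\Gamma_B$ not full'' does not yield condition~(\ref{thm:properly_outer_equivalence:borchers}), and indeed in this example neither (\ref{thm:properly_outer_equivalence:borchers}) nor (\ref{thm:properly_outer_equivalence:injective_envelope}) holds. Condition~(\ref{thm:properly_outer_equivalence:borchers}) demands $\Gamma_B(\alpha|_J) = \set{e}$ exactly, which by \cite[Theorem~7.3]{hamana85-injective_envelopes_equivariant} means $\alpha$ itself (not merely some power of it) is inner on $I(J) \cong I(A)p$; that is the correct bridge to (\ref{thm:properly_outer_equivalence:injective_envelope}), and it needs no reduction step at all, only the ideal--corner correspondence of Proposition~\ref{prop:ideal_injective_envelope}.

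The same false reduction undermines your $(\ref{thm:properly_outer_equivalence:elliott}) \implies (\ref{thm:properly_outer_equivalence:borchers})$: concluding only that $\Gamma_B(\alpha|_J)$ is a proper subgroup is strictly weaker than what is needed, and cannot be upgraded afterwards. The paper instead extracts the full strength of the hypothesis $\norm{\alpha|_J - (\Ad u)|_J} < 2$: by Kadison--Ringrose, $\Ad u^* \circ \alpha|_J = \exp\delta$ for a self-adjoint derivation $\delta$; by Hamana--Okayasu--Sait\^{o}, $\delta$ extends to an inner derivation $\Ad x$ on $I(J^+)$ with $x^* = -x$; hence $\alpha|_J$ is genuinely inner on $I(J^+)$ and $\Gamma_B(\alpha|_J) = \set{e}$ on the nose. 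A secondary issue is your use of ``Elliott's classical sense'' of proper outerness as the pivot for $(\ref{thm:properly_outer_equivalence:borchers}) \iff (\ref{thm:properly_outer_equivalence:injective_envelope})$: Elliott's and Kishimoto's notions coincide only for separable $A$, whereas this equivalence must hold (and in the paper does hold) with no separability assumption, so the pivot has to be the Borchers-spectrum (Kishimoto) formulation throughout. Your treatment of $(\ref{thm:properly_outer_equivalence:elliott_strong}) \implies (\ref{thm:properly_outer_equivalence:elliott})$ and of the separable implication via \cite[Theorem~6.6]{olesen_pedersen_III} is fine.
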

	
	In all of the above properties, the intuition behind them is that the automorphism $\alpha \in \Aut(A)$ is ``almost inner'' on an ideal $J \subseteq A$. Given that proper outerness is supposed to mean ``not inner on any piece of the C*-algebra'', the following definition makes sense.
	
	\begin{definition}
    \label{def:properly_outer}
		Let $A$ be a unital C*-algebra. A *-automorphism $\alpha \in \Aut(A)$ is said to be \emph{properly outer in the sense of Kishimoto} if it satisfies the \emph{negation} of condition~(\ref{thm:properly_outer_equivalence:borchers}) in Theorem~\ref{thm:equivariant_proper_outer_equivalence} (originally introduced in \cite{Kishimoto_freely_acting} as ``freely acting''), and hence equivalently the negation of condition~(\ref{thm:properly_outer_equivalence:injective_envelope}) in that theorem. It is called \emph{properly outer in the sense of Elliott} if it satisfies the \emph{negation} of condition~(\ref{thm:properly_outer_equivalence:elliott}) in Theorem~\ref{thm:equivariant_proper_outer_equivalence}. Typically, ``properly outer'' on its own means properly outer in the sense of Kishimoto. 
		
		An action $\alpha\colon G\to \Aut(A)$ of a discrete group $G$ on the C*-algebra $A$ is said to be properly outer if each $\alpha_t$ corresponding to $t \in G \setminus \set{e}$ is properly outer.
	\end{definition}
	
	Recall that $I(A)$ is in many ways small enough to still remember many of the basic properties of $A$. 
	Thus, given that in the monotone complete setting, it is clear what the ``correct'' definition of proper outerness is (recall Definition~\ref{def:Prop_outer_monotone_complete}), the ``correct'' definition for an automorphism $\alpha\in \Aut(A)$ on a general C*-algebra $A$ should be the one that always coincides with the (unique) extension of $\alpha$ to $I(A)$ being properly outer. Hence, Kishimoto's definition is in some sense preferable to Elliott's definition (which only coincides with being properly outer on $I(A)$ in the separable setting). However, in the separable setting, where all of these definitions coincide, Elliott's definition is likely the more easy of the two to verify. Kishimoto's definition involves the use of the Borchers spectrum, and it is substantially more cumbersome to work with.
	
	\subsection{FC-hypercentral groups}
	\label{sec:preliminaries:fc_hypercentral}
	
	The center of a group $G$ can be phrased as the set of elements having a conjugacy class of size $1$. The FC-center $\FC(G)$ is a slight generalization of this concept, where we instead consider the set of elements admitting conjugacy classes of finite size. It is not hard to check that this is indeed a normal subgroup of $G$. This concept can once again be taken further. Consider now the quotient $G/\FC(G)$. There is no guarantee that there are no non-trivial elements of finite conjugacy class in this new quotient. That is, it may be the case that $\FC(G/\FC(G))$ is again non-trivial, and so we may quotient by it again.
	
	The FC-central series is what is obtained after performing the above process ordinal-many times. Start with $F_1 = \FC(G)$, and define the sets $F_\alpha$ for the ordinal numbers $\alpha$ as follows:
	
	\begin{enumerate}
		\item For successor ordinals $\alpha + 1$, define $F_{\alpha + 1}$ as the (necessarily normal) subgroup of $G$ satisfying $F_{\alpha + 1} / F_\alpha = \FC(G / F_\alpha)$, that is, $F_{\alpha + 1}$ is the preimage of $\FC(G / F_\alpha)$ under the quotient map $G \surjectsonto G / F_\alpha$.
		
		\item For limit ordinals $\beta$, define $F_\beta = \bigcup_{\alpha < \beta} F_\alpha$ (again, necessarily normal in $G$).
	\end{enumerate}
	
	\begin{definition}
		The \emph{FC-hypercenter} of $G$, denoted $\FCH(G)$, is the union of all of the $F_\alpha$ above. We say that $G$ is \emph{FC-hypercentral} if $\FCH(G) = G$. We say that $G$ is an \emph{FC-group} if $\FC(G)=G$.
	\end{definition}
	
	Recall that a group $G$ is called \emph{ICC} if every $g\in G\setminus \{e\}$ has infinite conjugacy class. FC-hypercentral groups are precisely the class of groups which have no non-trivial ICC quotients. This class contains all virtually nilpotent groups, and, in the finitely generated setting, coincides with the class of virtually nilpotent groups, and thus, also with the class of polynomial growth groups by Gromov's Theorem (see \cite[Theorem~2]{McLain_remarks_upper_central_series} and \cite[Theorem~2]{DM56_FcNipFcSol}). Likewise, it is also the case (and substantially easier to show) that, in the finitely generated setting, the class of FC-groups is precisely the class of groups $G$ where the center $Z(G)$ has finite index in $G$.
    
	In recent breakthrough results, the class of countable (not necessarily finitely generated) FC-hypercentral groups was identified with the class of strongly amenable groups \cite{FrischTamuzPooya} and with the class of groups which satisfy the \emph{Choquet-Deny property}, by \cite[Theorem 4.8]{Jaworski} and \cite{FrischHartmanTamuzPooya}. 
	We refer the reader to \cite{FrischThesis} for a smooth introduction to these exciting results. 
	
	\section{Pseudo-expectations and central elements}
	\label{sec:pseudoexpectations}
	
	Recall that there is always a canonical conditional expectation $E\colon A \rtimes_\lambda G \to A$, determined by mapping a finitely supported element $\sum_{t \in G} x_t \lambda_t\in A\rtimes_\lambda G$ to $x_e$. All of the existing ideal structure results for crossed products from the last few years pass through the machinery of what are known as \emph{pseudo-expectations}, which is an analogue obtained by expanding the codomain to the $G$-injective envelope $I_G(A)$. We recall the precise definition below.
	
	\begin{definition}[{\cite[Definition~6.1]{kennedy_schafhauser_noncommutative_crossed_products}}]
		\label{def:pseudoexpectation}
		An \emph{equivariant pseudo-expectation} for a reduced crossed product $A \rtimes_\lambda G$ is a $G$-equivariant unital and completely positive map $F \colon A \rtimes_\lambda G \to I_G(A)$, such that $F|_A = \id_A$.
	\end{definition}
	
	Pseudo-expectations were first introduced in the non-equivariant setting by Pitts in \cite{Pitts}, and even in that setting were found to be helpful for understanding the ideal structure of $A\rtimes_\lambda G$ (see \cite{Zarikian}). However, we will focus on equivariant pseudo-expectations, following the work of Kennedy and Schafhauser \cite{kennedy_schafhauser_noncommutative_crossed_products}.
	
	Before we proceed further, the following was not explicitly mentioned in \cite{kennedy_schafhauser_noncommutative_crossed_products}, but is a very convenient result that is used implicitly. We prove it for convenience, and will certainly make use of it as well.
	
	\begin{proposition}
		\label{prop:pseudoexpectations_conditional_expectations_bijection}
		Equivariant conditional expectations $F'\colon I_G(A) \rtimes_\lambda G \to I_G(A)$ are in canonical bijection (given by restriction) with equivariant pseudo-expectations $F\colon A \rtimes_\lambda G \to I_G(A)$. 
	\end{proposition}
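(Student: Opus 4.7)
The plan is to establish both directions of the bijection, with the forward direction (restriction) being essentially trivial and the backward direction (extension) requiring the machinery of $G$-injectivity combined with $G$-rigidity.

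First, the easy direction: if $F' \colon I_G(A) \rtimes_\lambda G \to I_G(A)$ is an equivariant conditional expectation, then $F'|_{A \rtimes_\lambda G}$ is manifestly $G$-equivariant and unital completely positive. Since $A \subseteq I_G(A)$ and $F'$ is the identity on its codomain $I_G(A)$, the restriction is the identity on $A$. Hence $F'|_{A \rtimes_\lambda G}$ is an equivariant pseudo-expectation in the sense of Definition~\ref{def:pseudoexpectation}.

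For the reverse direction, given an equivariant pseudo-expectation $F \colon A \rtimes_\lambda G \to I_G(A)$, I would first invoke the $G$-injectivity of $I_G(A)$ applied to the $G$-equivariant inclusion $A \rtimes_\lambda G \subseteq I_G(A) \rtimes_\lambda G$ (where both algebras carry the natural $G$-action by conjugation by $\lambda_g$, which restricts correctly because $A \subseteq I_G(A)$ is $G$-equivariant by Proposition~\ref{prop:injective_envelope_inclusions}) to extend $F$ to some $G$-equivariant unital completely positive map $F' \colon I_G(A) \rtimes_\lambda G \to I_G(A)$. To promote $F'$ to a conditional expectation, the crux is to verify that $F'$ restricts to the identity on the C*-subalgebra $I_G(A)$. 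But $F'|_{I_G(A)}$ is a $G$-equivariant unital completely positive self-map of $I_G(A)$ which agrees with $\id_A$ on $A$, so by $G$-rigidity of the inclusion $A \subseteq I_G(A)$ (Definition~\ref{def:GinjectiveGrigidetc}), it must be $\id_{I_G(A)}$. Tomiyama's theorem then automatically upgrades the ucp projection $F'$ onto $I_G(A)$ to a genuine conditional expectation (equivalently, it satisfies the $I_G(A)$-bimodule property).

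For the bijection, I would finally observe that restriction is injective on conditional expectations: if $F'$ is an equivariant conditional expectation extending $F$, then for any $a_t \in I_G(A)$ and $t \in G$, the bimodule property gives
\[ F'(a_t \lambda_t) = a_t F'(\lambda_t) = a_t F(\lambda_t), \]
so $F'$ is completely determined on finite sums $\sum_t a_t \lambda_t$ by $F$ alone. Since such finite sums are norm-dense in $I_G(A) \rtimes_\lambda G$ and $F'$ is continuous, the extension is unique. The only subtle point I anticipate is being careful that the $G$-action on $I_G(A) \rtimes_\lambda G$ induced by conjugation restricts to the original $G$-action on $I_G(A)$, making the inclusion $A \rtimes_\lambda G \subseteq I_G(A) \rtimes_\lambda G$ a bona fide $G$-equivariant inclusion of operator systems to which $G$-injectivity applies; this is immediate but worth noting explicitly.
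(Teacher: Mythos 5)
Your proposal is correct and follows essentially the same route as the paper's proof: restriction is clearly well-defined, injectivity follows from $I_G(A)$ lying in the multiplicative domain (so $F'$ is determined by its values on the $\lambda_t$), and surjectivity follows by extending via $G$-injectivity and then invoking $G$-rigidity to see the extension is the identity on $I_G(A)$. The only difference is that you spell out the Tomiyama step and the equivariance of the conjugation action explicitly, which the paper leaves implicit.
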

	
	\begin{proof}
		It is clear that equivariant conditional expectations on $I_G(A)\rtimes_\lambda G$ restrict to pseudo-expectations on $A \rtimes_\lambda G$. The map $F' \mapsto F'|_{A \rtimes_\lambda G}$ is injective, since $I_G(A)$ lies in the multiplicative domain of $F'$, and so $F'$ is uniquely determined by the values it takes on the unitaries $\{\lambda_t\}_{t\in G}$.
		The restriction map is also surjective, as every equivariant pseudo-expectation $F\colon A \rtimes_\lambda G \to I_G(A)$ extends to some $G$-equivariant completely positive map $F'\colon I_G(A) \rtimes_\lambda G \to I_G(A)$, thanks to $G$-injectivity of $I_G(A)$. This extension, being the identity on $A$, is necessarily the identity on $I_G(A)$ by $G$-rigidity (see Definition~\ref{def:GinjectiveGrigidetc}).
	\end{proof}
	
	There are many reasons why pseudo-expectations are useful for studying simplicity of $A \rtimes_\lambda G$. One evidence is the following proposition, which directly characterizes simplicity, or more generally the intersection property, in terms of pseudo-expectations. 
	\begin{proposition}[{\cite[Theorem~6.6]{kennedy_schafhauser_noncommutative_crossed_products}}]
		\label{prop:intersection_property_iff_faithful}
		Let $G$ be a discrete group acting on a unital C*-algebra $A$. Then $A \rtimes_\lambda G$ has the intersection property if and only if every equivariant pseudo-expectation $F\colon A \rtimes_\lambda G \to I_G(A)$ is faithful.
	\end{proposition}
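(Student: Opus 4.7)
The plan is to establish the equivalence by directly correlating non-zero ideals $J \subseteq A \rtimes_\lambda G$ with $J \cap A = \set{0}$ to non-faithful equivariant pseudo-expectations. The forward direction is a short application of $G$-injectivity, while the reverse direction proceeds by analyzing the left kernel of the pseudo-expectation.

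For the direction ``faithful pseudo-expectations imply the intersection property'', I would argue contrapositively. Given a non-zero ideal $J$ with $J \cap A = \set{0}$, the quotient map $q \colon A \rtimes_\lambda G \to (A \rtimes_\lambda G)/J$ is injective on $A$ and gives a $G$-equivariant $*$-isomorphism onto its image. Composing the inverse $q(A) \to A \subseteq I_G(A)$ with the $G$-injectivity of $I_G(A)$ (applied to the equivariant inclusion $q(A) \subseteq (A \rtimes_\lambda G)/J$) produces a $G$-equivariant ucp extension $\phi \colon (A \rtimes_\lambda G)/J \to I_G(A)$. Then $F \defeq \phi \circ q$ is an equivariant pseudo-expectation that vanishes on $J$, and any non-zero $x \in J$ gives $F(x^*x) = 0$ with $x^*x \neq 0$, contradicting faithfulness.

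For the converse, I would analyze the left kernel $N_F \defeq \setbuilder{x \in A \rtimes_\lambda G}{F(x^*x) = 0}$ of a pseudo-expectation $F$, aiming to show it is a closed two-sided ideal with $N_F \cap A = \set{0}$. Closedness is immediate from norm-continuity of $F$, and the left ideal property follows from positivity of $F$ applied to the operator inequality $x^*b^*bx \leq \norm{b}^2 x^*x$. The right ideal property is the delicate point, handled separately on generators: for $a \in A$, the hypothesis $F|_A = \id_A$ places $A$ in the multiplicative domain of $F$, yielding $F(a^*x^*xa) = a^*F(x^*x)a = 0$; for a unitary $\lambda_g$, the $G$-equivariance of $F$ gives
\[ F(\lambda_g^* x^*x \lambda_g) = F(g^{-1}\cdot (x^*x)) = g^{-1}\cdot F(x^*x) = 0. \]
Extending by linearity, density of the algebraic crossed product in $A \rtimes_\lambda G$, and closedness of $N_F$ shows $N_F \cdot (A \rtimes_\lambda G) \subseteq N_F$. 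Finally, if $a \in A \cap N_F$ then $a^*a = F(a^*a) = 0$, so $N_F \cap A = \set{0}$, and invoking the intersection property forces $N_F = \set{0}$, proving faithfulness.

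The most subtle step is verifying that $N_F$ is genuinely two-sided rather than merely a left ideal, and this is exactly where $G$-equivariance of $F$ becomes indispensable; without it one could still place $A$ in the multiplicative domain but would be unable to absorb the unitaries $\lambda_g$, so $N_F$ would remain only a right $A$-module and the correspondence between ideals of the crossed product and non-faithful pseudo-expectations would collapse.
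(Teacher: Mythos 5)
Your proof is correct, and it is essentially the standard argument: the paper does not prove this proposition but cites \cite[Theorem~6.6]{kennedy_schafhauser_noncommutative_crossed_products}, whose proof runs along exactly the same lines (quotient by the offending ideal plus $G$-injectivity in one direction; the left kernel $N_F$, shown to be a closed two-sided ideal via the multiplicative domain of $A$ and equivariance to absorb the $\lambda_g$, in the other). Your identification of equivariance as the indispensable ingredient for two-sidedness of $N_F$ is exactly right.
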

	
	This requirement that every equivariant pseudo-expectation be faithful is perhaps a bit mysterious. If $A$ is commutative, it follows from a combination of Proposition~\ref{prop:intersection_property_iff_faithful}, \cite[Theorem~3.4]{kawabe_crossed_products} and \cite[Theorem~6.4]{kennedy_schafhauser_noncommutative_crossed_products} that this is in fact equivalent to having a \emph{unique} equivariant pseudo-expectation, namely the canonical conditional expectation. (See also \cite[Theorem~4.6]{PittsZarikian} for the non-equivariant setting.) In the noncommutative setting, it is not known whether they are equivalent. This is one of the obstructions to obtaining a nice characterization of simplicity in the noncommutative case, as precisely what makes a pseudo-expectation non-faithful is poorly understood, while in contrast we have a better idea of what arbitrary pseudo-expectations look like. Proposition~\ref{prop:pseudoexpectation_coefficients} paints a picture on the latter (see also \cite{Ursu}, where similar ideas are used by the second coauthor in the context of traces on crossed products).
	We would also like to remark that in an unpublished work of Zarikian
	it was proven that, in the non-equivariant setting, having all pseudo-expectations $A \rtimes_\lambda G\to I(A)$ faithful implies that there is a unique pseudo-expectation.
	
	\begin{proposition}
		\label{prop:pseudoexpectation_coefficients}
		Let $G$ be a discrete group acting on a unital C*-algebra $A$ and let $F \colon A \rtimes_\lambda G \to I_G(A)$ be an equivariant pseudo-expectation. Then $F$ is uniquely determined by the ``coefficients'' $x_t \defeq F(\lambda_t)$,  which satisfy the following properties:
		\begin{enumerate}
			\item $x_e = 1$;
			\item $x_t y = (t \cdot y) x_t$, for all $t\in G$ and all $y \in I_G(A)$;
			\item $s \cdot x_t = x_{sts^{-1}}$, for all $s,t\in G$;
			\item The matrices $[x_{st^{-1}}]_{s,t \in \mathcal{F}}$ are positive for every finite set $\mathcal{F} \subseteq G$.
		\end{enumerate}
	\end{proposition}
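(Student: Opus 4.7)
The plan is to promote $F$ to an equivariant conditional expectation on the larger crossed product $I_G(A) \rtimes_\lambda G$, after which uniqueness and each of the four properties follow from short computations using the multiplicative domain. Specifically, by Proposition~\ref{prop:pseudoexpectations_conditional_expectations_bijection}, there is a unique equivariant conditional expectation $F' \colon I_G(A) \rtimes_\lambda G \to I_G(A)$ restricting to $F$ on $A \rtimes_\lambda G$. Since $F'|_{I_G(A)} = \id$, the subalgebra $I_G(A)$ sits in the multiplicative domain of $F'$, so on a finitely supported element we get $F'\bigl(\sum_t a_t \lambda_t\bigr) = \sum_t a_t x_t$. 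As finitely supported elements are norm-dense and $F'$ is contractive, the family $(x_t)_{t \in G}$ determines $F'$, and therefore also its restriction $F$.

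Turning to the four algebraic conditions, property~(1) is just $F(1) = 1$. For property~(2), the multiplicative domain gives $F'(\lambda_t y) = F'(\lambda_t)\, y = x_t y$, while using the crossed-product relation $\lambda_t y = (t \cdot y)\lambda_t$ and pulling $(t \cdot y) \in I_G(A)$ out of $F'$ on the left gives $F'(\lambda_t y) = (t \cdot y)\, x_t$; comparing produces the commutation relation. Property~(3) is equivariance of $F$ applied to $\lambda_t$, once one fixes the canonical $G$-action on the crossed product (extending the action on $A$ and sending $\lambda_t$ to $\lambda_{sts^{-1}}$ under conjugation by $\lambda_s$): then $s \cdot x_t = s \cdot F(\lambda_t) = F(\lambda_{sts^{-1}}) = x_{sts^{-1}}$. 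Property~(4) is the standard positivity trick — the column vector $V = (\lambda_s)_{s \in \mathcal{F}}$ satisfies $V V^* = [\lambda_{st^{-1}}]_{s,t \in \mathcal{F}}$, a manifestly positive element of $M_{|\mathcal{F}|}(A \rtimes_\lambda G)$, which complete positivity of $F$ sends to the positive matrix $[x_{st^{-1}}]_{s,t \in \mathcal{F}}$.

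Overall, no step here poses a substantial obstacle; the only point meriting a little care is ensuring that the right $G$-action on the crossed product (the one conjugating the unitaries $\lambda_t$, so that equivariance of $F$ interacts nontrivially with the indexing of the coefficients) is used when invoking equivariance, after which all four relations reduce to one-line verifications of the sort sketched above.
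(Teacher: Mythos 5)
Your proposal is correct and follows essentially the same route as the paper: pass to the unique equivariant conditional expectation on $I_G(A) \rtimes_\lambda G$ via Proposition~\ref{prop:pseudoexpectations_conditional_expectations_bijection}, then deduce uniqueness and each of the four properties from the multiplicative domain, equivariance (with respect to the conjugation action on the $\lambda_t$), and complete positivity applied to $VV^*$. The only cosmetic difference is that you spell out the $G$-action on the crossed product explicitly, which the paper leaves implicit.
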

	
	\begin{proof}
		By Proposition~\ref{prop:pseudoexpectations_conditional_expectations_bijection}, we may transfer the discussion to equivariant conditional expectations $F\colon I_G(A) \rtimes_\lambda G \to I_G(A)$. These are uniquely determined by the coefficients $(x_t)_{t\in G}$ since $I_G(A)$ lies in the multiplicative domain of $F$.
		
		We now turn to proving each individual property of the coefficients. It is clear that $x_e = 1$. Now let $y \in I_G(A)$ and $t\in G$ be given, and using multiplicative domain again, observe that:
		\[ x_t y = F(\lambda_t) y = F(\lambda_t y) = F((t \cdot y) \lambda_t) = (t \cdot y) F(\lambda_t) = (t \cdot y) x_t. \]
		Furthermore, $s \cdot x_t = x_{sts^{-1}}$ is due to $G$-equivariance of $F$. Finally, the last positivity condition is due to the fact that
		\[ F^{(n)} \left( \begin{bmatrix} \lambda_{s_1} \\ \vdots \\ \lambda_{s_n} \end{bmatrix} \begin{bmatrix} \lambda_{s_1} \\ \vdots \\ \lambda_{s_n} \end{bmatrix}^* \right) = \left[x_{s_i s_j^{-1}}\right]_{i,j = 1, \dots, n}. \]
	\end{proof}
	
	\begin{remark}
		It is not immediately obvious, but it can actually be shown that every time one has such coefficients $(x_t)_{t \in G}$ as above, this will define a pseudo-expectation $F\colon A \rtimes_\lambda G \to I_G(A)$ through $F(a \lambda_t) \coloneqq a x_t$. This is essentially what is proven, in a very special case, in \cite[Lemma~9.1]{kennedy_schafhauser_noncommutative_crossed_products}. However, we will not make use of this converse, and will not make use of the positive-definiteness requirement either.
	\end{remark}
	
	The following highlights another, seemingly unrelated, area in which such coefficients $(x_t)_{t \in G}$ show up. This will be crucial for us when constructing non-trivial central elements.

	\begin{proposition}
		\label{prop:crossed_product_center_coefficients}
		Let $G$ be a discrete group acting on a unital C*-algebra $A$, and consider an element $x\coloneqq \sum_{t \in G} x_t^* \lambda_t\in A\rtimes_\lambda G$. (The stars on $x_t$ are intentional). Then, $x\in Z(A\rtimes_\lambda G)$ if and only if
		\begin{enumerate}
			\item $x_t y = (t \cdot y) x_t$, for all $t\in G$ and all $y \in A$;
			\item $s \cdot x_t = x_{sts^{-1}}$, for all $s,t\in G$.
		\end{enumerate}
	\end{proposition}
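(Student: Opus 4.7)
The plan is a direct Fourier-coefficient computation. Since $A\rtimes_\lambda G$ is generated as a C*-algebra by $A$ together with $\{\lambda_s\}_{s\in G}$, the element $x$ lies in the center if and only if it commutes with every $y\in A$ and every $\lambda_s$. The canonical conditional expectation $E\colon A\rtimes_\lambda G\to A$ (extended if necessary as in Remark~\ref{rem:CondExpeMonComp}) guarantees that an element of $A\rtimes_\lambda G$ is zero iff all its coefficients $E(\,\cdot\,\lambda_g^*)$ vanish, so I will extract both implications by comparing coefficients in the ``Fourier'' expansions.

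For any $y\in A$, using $\lambda_t y=(t\cdot y)\lambda_t$ I would write
\[ xy=\sum_{t\in G}x_t^*(t\cdot y)\lambda_t,\qquad yx=\sum_{t\in G}yx_t^*\lambda_t. \]
Equating coefficients gives $yx_t^*=x_t^*(t\cdot y)$ for every $t\in G$; taking adjoints and using that $y^*$ also ranges over $A$, this is precisely condition (1). Conversely, the same identity shows that (1) implies $xy=yx$ for all $y\in A$.

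For a group element $s\in G$, I would expand
\[ x\lambda_s=\sum_{t\in G}x_t^*\lambda_{ts},\qquad \lambda_s x=\sum_{t\in G}(s\cdot x_t^*)\lambda_{st}, \]
and then reindex each sum over the common variable $g\in G$ (via $g=ts$ in the first and $g=st$ in the second) to obtain $x\lambda_s=\sum_g x_{gs^{-1}}^*\lambda_g$ and $\lambda_s x=\sum_g(s\cdot x_{s^{-1}g}^*)\lambda_g$. Equating coefficients and taking adjoints yields $x_{gs^{-1}}=s\cdot x_{s^{-1}g}$ for all $s,g\in G$; substituting $t=s^{-1}g$ (so that $gs^{-1}=sts^{-1}$) rewrites this exactly as $x_{sts^{-1}}=s\cdot x_t$, which is condition (2), and again the implication is clearly reversible.

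The argument is essentially bookkeeping, so the only real obstacle is being careful with the formal series when $x$ has infinite support. I would handle this by working inside the monotone complete crossed product $M(A,G)$ (so that the series $\sum_t x_t^*\lambda_t$ has a well-defined meaning and the coefficients $x_t^*=E(x\lambda_t^*)$ are uniquely determined), and then noting that the products $xy$, $yx$, $x\lambda_s$, $\lambda_s x$ all have finite-support contributions at each coefficient, so the reindexing above is valid termwise and does not require passing through an order limit.
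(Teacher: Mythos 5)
Your proposal is correct and follows essentially the same route as the paper's proof: expand $xy$, $yx$, $x\lambda_s$, $\lambda_s x$ in Fourier coefficients, use uniqueness of the coefficients (via the canonical conditional expectation) to equate them, and observe that commuting with $A$ and with all $\lambda_s$ is equivalent to centrality. Your extra remarks about taking adjoints and about the reindexing being termwise (so no order limits are needed) are accurate and simply make explicit what the paper leaves as "not hard to verify."
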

	
	\begin{proof}
		Given $a\in A$, it is not hard to verify that
		\[ a\cdot x=\sum_{t\in G}a x_t^*\lambda_{t} \quad \text{and} \quad x\cdot a=\sum_{t\in G} x_t^*(t\cdot a)\lambda_{t}. \]
		Using the fact that elements in $A\rtimes_\lambda G$ are uniquely determined by their coefficients, we see that $x$ commutes with the copy of $A$ inside $A\rtimes_\lambda G$ if and only if condition~(1) of the proposition is satisfied.
		
		Now given $g\in G$, we also have
		\[ \lambda_s\cdot x=\sum_{t\in G}(s \cdot x_{s^{-1}t}^*)\lambda_t \quad \text{and} \quad x\cdot \lambda_s = \sum_{t\in G} x_{ts^{-1}}^*\lambda_t. \]
		Comparing coefficients, we see that $x$ commutes with the set of unitaries $\setbuilder{\lambda_s}{s \in G}$ if and only if condition~(2) of the proposition is satisfied.
		The result then follows.
	\end{proof}
	
	The following closely related proposition was proven by Hamana.
	
	\begin{proposition}[{\cite[Lemma~10.3]{hamana85-injective_envelopes_equivariant}}]
		\label{prop:mc_crossed_product_commutant_coefficients}
		Let $G$ be a discrete group acting on a monotone complete C*-algebra $A$, and let $x\coloneqq \sum_{t \in G} x_t^* \lambda_t\in M(A,G)$ be given. The following conditions are equivalent.
		\begin{enumerate}
			\item $x\in Z(M(A,G))$;
			\item $x\in (A\rtimes_\lambda G)'\cap M(A,G)$;
			\item\begin{enumerate}
				\item $x_t y = (t \cdot y) x_t$, for all $t\in G$ and all $y \in A$;
				\item $s \cdot x_t = x_{sts^{-1}}$, for all $s,t\in G$.
			\end{enumerate}
		\end{enumerate}
	\end{proposition}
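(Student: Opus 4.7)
The plan is to prove the three implications (1) $\Rightarrow$ (2) $\Rightarrow$ (3) $\Rightarrow$ (1). The first is immediate from the inclusion $A \rtimes_\lambda G \subseteq M(A,G)$, since any element central in the larger algebra commutes in particular with the subalgebra.

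For (2) $\Rightarrow$ (3), the idea is to imitate the computation in Proposition~\ref{prop:crossed_product_center_coefficients} directly inside $M(A,G)$. The key observation is that when multiplying $x = \sum_t x_t^* \lambda_t$ by either an element $a \in A$ or a single unitary $\lambda_s$, the order-sum formula for the product in $M(A,G)$ collapses to a single non-zero term, since the other factor has only one non-zero coefficient. A direct computation then yields $a x = \sum_g a x_g^* \lambda_g$ and $x a = \sum_g x_g^* (g \cdot a) \lambda_g$, together with $\lambda_s x = \sum_g (s \cdot x_{s^{-1}g})^* \lambda_g$ and $x \lambda_s = \sum_g x_{gs^{-1}}^* \lambda_g$. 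Comparing coefficients (and taking adjoints on the $A$-side) recovers conditions (a) and (b) exactly as in the classical reduced crossed product setting.

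The heart of the argument is (3) $\Rightarrow$ (1), where one must check that $x$ commutes with an \emph{arbitrary} $y = \sum_t y_t \lambda_t \in M(A,G)$. Using the multiplication formula in $M(A,G)$, the $g$-coefficient of $xy$ is $O-\sum_t x_{t^{-1}}^*(t^{-1} \cdot y_{tg})$, and the $g$-coefficient of $yx$ is $O-\sum_t y_{t^{-1}}(t^{-1} \cdot x_{tg}^*)$. Taking adjoints of condition (a) and replacing $t$ by $t^{-1}$ gives $x_{t^{-1}}^*(t^{-1}\cdot y_{tg}) = y_{tg} x_{t^{-1}}^*$, while condition (b) applied with outer element $t^{-1}$ and inner index $tg$ gives $t^{-1}\cdot x_{tg} = x_{gt}$, and hence $t^{-1}\cdot x_{tg}^* = x_{gt}^*$. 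After reindexing $s = tg$ in the first sum and $s = t^{-1}$ in the second, both order-sums become $O-\sum_s y_s x_{gs^{-1}}^*$, so the $g$-coefficients coincide for every $g \in G$, forcing $xy = yx$.

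The main obstacle I anticipate is the subtlety of the order-convergent sums: a priori, term-by-term manipulation of order-limits is not automatic. However, because $M(A,G)$ is closed under its own multiplication, the products $xy$ and $yx$ already lie in $M(A,G)$, so their $g$-coefficients exist intrinsically and are by construction the order-limits of the finite partial sums displayed above. Term-by-term agreement of the (suitably reindexed) summands then forces equality of the limits, and no further analytic argument is needed beyond the purely algebraic conditions (a) and (b).
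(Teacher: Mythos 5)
Your proof is correct, and it goes somewhat beyond what the paper itself records: the paper cites Hamana's Lemma~10.3 for the full three-way equivalence and only remarks that (2)~$\Leftrightarrow$~(3) follows by the same coefficient comparison as in Proposition~\ref{prop:crossed_product_center_coefficients}. Your treatment of (1)~$\Rightarrow$~(2) and (2)~$\Rightarrow$~(3) matches that sketch exactly, and your (3)~$\Rightarrow$~(1) supplies the implication the paper defers entirely to Hamana. The one point worth making explicit in that last step is why the reindexing of the order-sums is legitimate: for each finite $F \subseteq G$ the partial sum $\sum_{t \in F} x_{t^{-1}}^*(t^{-1} \cdot y_{tg})$ equals, term by term, $\sum_{s \in Fg} y_s x_{gs^{-1}}^*$, and the partial sum of the $yx$-series over $F$ equals $\sum_{s \in F^{-1}} y_s x_{gs^{-1}}^*$; since $F \mapsto Fg$ and $F \mapsto F^{-1}$ are order-isomorphisms of the directed set of finite subsets, the two coefficient nets are relabelings of one and the same net, and uniqueness of order-limits then gives equality of the $g$-coefficients. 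With that observation spelled out, no further analytic input is needed, exactly as you claim.
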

	
	The equivalence between properties (2) and (3) will be the one which is relevant for us, and its proof is similar to the proof of Proposition~\ref{prop:crossed_product_center_coefficients}. The equivalence to condition~(1) will not be used later, but it is interesting nonetheless.
	
	\begin{proposition}
		\label{prop:properly_outer_equivariant_polar_decompositions}
		Let $G$ be a discrete group acting on a monotone complete C*-algebra $A$. Assume that $(x_t)_{t \in G}$ are elements of $A$ satisfying the following properties
		\begin{enumerate}
			\item $x_t y = (t \cdot y) x_t$, for all $t\in G$ and all $y \in A$;
			\item $s \cdot x_t = x_{sts^{-1}}$, for all $s,t\in G$.
		\end{enumerate}
		Let $x_t = u_t \abs{x_t}$ be the polar decomposition of $x_t$, with $p_t = u_t^*u_t$ being the domain projection of $u_t$. Then the following relations hold:
		\begin{enumerate}
			\item The projection $p_t$ is a $t$-invariant central projection in $A$, the partial isometry $u_t$ is a unitary in $U(Ap_t)$, and $t$ acts by $\Ad u_t$ on the corner $Ap_t$;
			\item $s \cdot u_t = u_{sts^{-1}}$, for all $s,t\in G$.
		\end{enumerate}
	\end{proposition}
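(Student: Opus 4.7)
The plan is to obtain the two conclusions almost immediately by combining Theorem~\ref{thm:PropOuterCorner}, applied to each $x_t$ individually, with the uniqueness clause in Proposition~\ref{prop:monotonecompletePolarDec} to handle the equivariance property.

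For conclusion (1), observe that condition~(1) on the family $(x_t)_{t \in G}$ is precisely the statement that $x_t$ implements the automorphism $\alpha_t \in \Aut(A)$ given by $y \mapsto t \cdot y$. Thus Theorem~\ref{thm:PropOuterCorner} applies verbatim: its conclusions state exactly that $p_t = u_t^*u_t$ is an $\alpha_t$-invariant central projection, that $u_t$ is a unitary in $U(Ap_t)$, and that $\alpha_t|_{Ap_t} = \Ad u_t$. This is precisely conclusion (1), and no further work is needed.

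For conclusion (2), I would apply the automorphism corresponding to $s \in G$ to the polar decomposition $x_t = u_t \abs{x_t}$. Since the action of $s$ is a *-automorphism, it preserves the functional calculus, so $s \cdot \abs{x_t} = \abs{s \cdot x_t}$; and by condition (2) on the family, $s \cdot x_t = x_{sts^{-1}}$, so $s \cdot \abs{x_t} = \abs{x_{sts^{-1}}}$. Moreover, $s \cdot u_t$ is a partial isometry, and applying $s$ to the defining relation $u_t^* u_t = \RP(x_t) = p_t$ yields $(s \cdot u_t)^*(s \cdot u_t) = s \cdot p_t$. Thus we have
\[ x_{sts^{-1}} = (s \cdot u_t)\,\abs{x_{sts^{-1}}}, \qquad (s \cdot u_t)^*(s \cdot u_t) = s \cdot p_t. \]

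The final step is to verify that $s \cdot p_t = p_{sts^{-1}}$, after which the uniqueness of polar decomposition in Proposition~\ref{prop:monotonecompletePolarDec} forces $s \cdot u_t = u_{sts^{-1}}$. This reduces to the fact that *-automorphisms transport right projections: if $p = \RP(a)$ in a monotone complete C*-algebra, then for any automorphism $\beta$, the relations $ap = a$ and ``$ab = 0 \iff pb = 0$'' transport via $\beta$ to the corresponding relations for $\beta(a)$ and $\beta(p)$, so $\beta(p) = \RP(\beta(a))$ by the uniqueness of the right projection. Applying this with $\beta = $ action of $s$, $a = x_t$, and $p = p_t$ yields $s \cdot p_t = \RP(s \cdot x_t) = \RP(x_{sts^{-1}}) = p_{sts^{-1}}$, as desired. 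I do not anticipate any genuine obstacle in this argument; the only point requiring a little care is this equivariance of the right projection, and it is a direct consequence of the intrinsic characterization of $\RP$ recalled in Proposition~\ref{prop:monotonecompletePolarDec}.
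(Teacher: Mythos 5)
Your proposal is correct and follows essentially the same route as the paper: conclusion (1) is a direct application of Theorem~\ref{thm:PropOuterCorner}, and conclusion (2) is obtained by applying the automorphism of $s$ to the polar decomposition and invoking the uniqueness clause of Proposition~\ref{prop:monotonecompletePolarDec}. The only point you spell out more explicitly than the paper is that *-automorphisms transport right projections (so that $s \cdot p_t = \RP(x_{sts^{-1}})$), which is precisely the detail the uniqueness argument requires.
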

	
	\begin{proof}
		The first condition is an immediate application of Theorem~\ref{thm:PropOuterCorner}. For the second condition, first note that if $\alpha \colon A \to B$ is a *-isomorphism between monotone complete C*-algebras, and $x \in A$ admits polar decomposition $x = u \abs{x}$, then $\alpha(x)$ admits polar decomposition $\alpha(u) \abs{\alpha(x)}$. In particular, applying this to the *-automorphism of $s \in G$ tells us that the element $x_{sts^{-1}} = s \cdot x_t$ admits the polar decomposition $(s \cdot u_t) \abs{s \cdot x_t}$. Uniqueness of polar decomposition then tells us that $s \cdot u_t = u_{sts^{-1}}$.
	\end{proof}

	For convenience, we will sometimes want to go from a subset of pairs $(p_t,u_t)_{t\in C}$ indexed over a conjugacy class $C\subseteq G$, and satisfying the conditions above, to a single pair $(p,u)$, and vice versa.
	
	\begin{proposition}
		\label{prop:equivariant_unitaries_to_single_unitary}
		Let $G$ be a discrete group acting on a unital C*-algebra $A$, and let $C \subseteq G$ be the conjugacy class of a fixed element $r \in C$. The following are in canonical bijection with each other:
		\begin{enumerate}
			\item Sets of pairs $\set{(p_t, u_t)}_{t \in C}$, where $p_t$ is a $t$-invariant central projection in $A$, $u_t \in U(A p_t)$ is a unitary such that $t$ acts by $\Ad u_t$ on the corner $Ap_t$, and moreover $s \cdot u_t = u_{sts^{-1}}$, for all $s \in G$ and $t \in C$.
			
			\item A single $r$-invariant central projection $p$ in $A$ and a unitary $u \in U(Ap)$ such that $r$ acts by $\Ad u$ on the corner $Ap$, and moreover $s \cdot u = u$, for all $s \in C_G(r)$.
		\end{enumerate}
		The map from the first setting to the second setting is given by $p = p_r$ and $u = u_r$.
	\end{proposition}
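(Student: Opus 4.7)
The plan is to verify both directions of the bijection explicitly, with the map from (1) to (2) being restriction to $t = r$ as suggested, and the map from (2) to (1) being defined by conjugation.

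First I would handle the forward direction $(1) \to (2)$. Given a set $\{(p_t, u_t)\}_{t \in C}$ as in (1), simply set $p \defeq p_r$ and $u \defeq u_r$. The first three required properties ($p$ is an $r$-invariant central projection, $u \in U(Ap)$, $r$ acts by $\Ad u$ on $Ap$) are immediate from the assumptions on $(p_r, u_r)$. For the centralizer condition, if $s \in C_G(r)$, then $srs^{-1} = r$, so by the given equivariance $s \cdot u = s \cdot u_r = u_{srs^{-1}} = u_r = u$.

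Next I would construct the inverse. Given $(p,u)$ as in (2), for each $t \in C$ pick some $s \in G$ with $t = srs^{-1}$ and define $p_t \defeq s \cdot p$ and $u_t \defeq s \cdot u$. The key step here is well-definedness: if $t = s_1 r s_1^{-1} = s_2 r s_2^{-1}$, then $s_2^{-1} s_1 \in C_G(r)$, so $(s_2^{-1} s_1) \cdot u = u$, giving $s_1 \cdot u = s_2 \cdot u$. For the projection, observe that because $u$ is a unitary in $U(Ap)$ we have $u^*u = p$, so applying the $C_G(r)$-invariance of $u$ yields $s \cdot p = (s \cdot u)^*(s \cdot u) = u^*u = p$ for all $s \in C_G(r)$, and therefore $s_1 \cdot p = s_2 \cdot p$ as well. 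This shows the definition does not depend on the choice of $s$.

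I would then verify the $(p_t, u_t)$ satisfy the required properties. Centrality and $t$-invariance of $p_t = s \cdot p$ follow since $*$-automorphisms preserve the center and $t = srs^{-1}$ fixes $s \cdot p$ exactly when $r$ fixes $p$. The statement $u_t \in U(Ap_t)$ is immediate, and the identity $t \cdot a = u_t a u_t^*$ for $a \in A p_t$ follows by writing $a = s \cdot b$ with $b \in Ap$ and using $r \cdot b = u b u^*$. Finally, the equivariance $s' \cdot u_t = u_{s'ts'^{-1}}$ holds because if $t = srs^{-1}$, then $s'ts'^{-1} = (s's)r(s's)^{-1}$ and $u_{s'ts'^{-1}} = (s's) \cdot u = s' \cdot u_t$. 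For the bijection, composing $(1) \to (2) \to (1)$ recovers the original family using the equivariance of the $u_t$'s (and consequently of the $p_t = u_t^*u_t$'s), while composing $(2) \to (1) \to (2)$ recovers $(p,u)$ by taking $s = e$ when $t = r$. The main subtlety — and the only place care is needed — is the observation that the $C_G(r)$-invariance of $u$ automatically forces the $C_G(r)$-invariance of $p = u^*u$, which is precisely what makes the inverse construction well-defined.
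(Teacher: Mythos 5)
Your proof is correct and follows essentially the same route as the paper: restrict to $t=r$ in one direction, and in the other define $p_{srs^{-1}} \defeq s\cdot p$, $u_{srs^{-1}} \defeq s\cdot u$, with well-definedness coming from $s_2^{-1}s_1 \in C_G(r)$ fixing $u$. Your explicit remark that $C_G(r)$-invariance of $p = u^*u$ follows from that of $u$ is a detail the paper leaves implicit, but otherwise the arguments coincide.
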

	
	\begin{proof}
		Since we are dealing with a conjugacy class, the pairs $\{(p_t,u_t)\}_{t\in C}$, are uniquely determined by the values of $p_r$ and $u_r$, due to the fact that $p_{srs^{-1}} = s \cdot p_r$ and $u_{srs^{-1}} = s \cdot u_r$. Thus, the map between the two settings mentioned above is injective.
		
		To show that this map is surjective, we need to check that the elements $p_{srs^{-1}} \defeq s \cdot p$ and $u_{srs^{-1}} \defeq s \cdot u$, for $s\in G$, are well-defined and satisfy the required properties. Assume that $s_1 r s_1^{-1} = s_2 r s_2^{-1}$, for some $s_1,s_2\in G$. This is equivalent to the requirement that $s_2^{-1} s_1$ belongs to $C_G(r)$, and so $s_2^{-1} s_1 \cdot u = u$, or, in other words, $s_1 \cdot u = s_2 \cdot u$. Thus, the aforementioned elements give rise to well-defined pairs $\{(p_t,u_t)\}_{t\in C}$. Observe that we indeed have $p_r = p$ and $u_r = u$.
		It is straightforward to verify that each $p_t$ is a $t$-invariant central projection in $A$, that $u_t\in U(Ap_t)$, and that $s\cdot u_t=u_{sts^{-1}}$, for all $s\in G$ and all $t\in C$. 
		Finally, we check that the action of $t$ is given by $\Ad u_t$ on the corner $Ap_t$, for every $t\in C$. Given $srs^{-1} \in C$ and $x \in A p_{srs^{-1}}$ we have that $s^{-1} x \in A p$, and so
		\[ srs^{-1} \cdot x = sr \cdot (s^{-1} \cdot x) = s \cdot (u (s^{-1} \cdot x) u^*) = (s \cdot u) x (s \cdot u)^* = u_{srs^{-1}} x u_{srs^{-1}}^*. \]
	\end{proof}

	\section{The intersection property for crossed products by FC-groups}
	\label{sec:intersection_property_fc}
	
	The goal of this section is to prove the first half of Theorem~\ref{thmIntro:A}, which characterizes the intersection property for the crossed product $A \rtimes_\lambda G$ in terms of the dynamics of $G$ on $I(A)$. Specifically, the lack of the intersection property is equivalent to the existence of a triple $(p,u,r)$, where
	\begin{itemize}
		\item $r \in G \setminus \set{e}$;
		\item $p$ is an $r$-invariant central projection in $I(A)$;
		\item $u$ is a unitary in $I(A)p$ such that $r$ acts by $\Ad u$ on $I(A)p$;
		\item $u$ is $C_G(r)$-invariant.
	\end{itemize}
	We would first like to give some brief intuition and sketch the proof of this theorem in the minimal setting. The setting of the intersection property relies on some technical results that make perhaps not as much intuitive sense without the appropriate context.
	
	Assume that the action of $G$ on $A$ is minimal, and assume that $A \rtimes_\lambda G$ is not simple. Through the technology of pseudo-expectations (see Section~\ref{sec:pseudoexpectations}), it is possible to write down a non-trivial element of $Z(I_G(A) \rtimes_\lambda G)$, which in particular implies that $I_G(A) \rtimes_\lambda G$ is not prime, and, by Proposition~\ref{proposition:primeifffactor}, neither is $I(A) \rtimes_\lambda G$. From here, letting $I$ and $J$ be two non-trivial ideals of $I(A) \rtimes_\lambda G$ with $I\cdot J = 0$, we may take a supremum of the positive elements $x\in I$ with $\norm{x}<1$ inside of the monotone complete crossed product $M(I(A),G)$. Denote this supremum by $q$. We will see in Proposition~\ref{prop:ideal_sup_projection} that this element must commute with the copy of $I(A) \rtimes_\lambda G$, and it cannot be a scalar (since it must still be orthogonal to $J$ - see Proposition~\ref{prop:ideal_sup_orthogonality}). By Proposition~\ref{prop:mc_crossed_product_commutant_coefficients}, the coefficients of $q= \sum_{r \in G} x_r \lambda_r\in M(I(A),G)$ satisfy certain properties that we are after, and moreover, there is at least one $r\in G\setminus\{e\}$ for which $x_r\neq 0$ (otherwise, $q$ would be a non-trivial element of $Z(I(A))^G$, and this is impossible due to minimality - see Proposition~\ref{prop:minimality_transfers_to_injective_envelopes_and_centers}). Following the procedure carried out in Proposition~\ref{prop:properly_outer_equivariant_polar_decompositions} and Proposition~\ref{prop:equivariant_unitaries_to_single_unitary}, we deduce that the polar decomposition of $x_r$ inside $I(A)$ gives rise to a triple $(p,u,r)$ that has the desired properties. 
	
	In order to deal with the general case, we will at some point need to define a suitable variant of being prime (see Lemma~\ref{lemma:stronglynonprime}), one that is appropriate for dealing with the intersection property when the action is not minimal.
	
	We would now like to also motivate the converse direction of the Theorem, relative to the ideas of Kennedy and Schafhauser in \cite{kennedy_schafhauser_noncommutative_crossed_products} (specifically for the case of amenable groups, which slightly simplifies their proof).
	The existence of a triple $(p,u,r)$ as in the theorem is clearly stronger than requiring the action on $I(A)$ (equivalently, on $A$) to be non properly outer. However, in \cite[Theorem~9.5]{kennedy_schafhauser_noncommutative_crossed_products}, it is instead assumed that the action on $A$ is not properly outer \emph{and} that the system $(A,G)$ has \emph{vanishing obstruction}. We now briefly explain what this means and how they use it in order to prove the existence of a non-faithful pseudo-expectation on $A\rtimes_\lambda G$ (which then violates the intersection property for $(A,G)$, by Proposition~\ref{prop:intersection_property_iff_faithful}). Let $u_t \in U(I(A)p_t)$ be the unitaries arising out of the decompositions $I(A) = I(A) p_t \directsum I(A) (1-p_t)$ into inner and properly outer parts, as described in Theorem~\ref{thm:PropInnerDecoMonCom}.
	If it were possible to ``untwist'' these unitaries in such a way so that the map $t \mapsto u_t$ behaves essentially like a group homomorphism (or, more precisely, if this map is a \emph{partial representation} of $G$, see \cite[Definition~9.1]{Exel_book} or \cite[Definition~8.2]{kennedy_schafhauser_noncommutative_crossed_products}), then the proofs in the commutative case can be mimicked, and a new non-faithful equivariant pseudo-expectation $F'\colon A \rtimes_\lambda G \to I_G(A)$ can be defined by $F'(a \lambda_t) = a u_t$. 
	(One can also start with proper outerness on $I_G(A)$, and untwisting the resulting unitaries from the decomposition of $I_G(A)$ instead. In the case of non-amenable groups, doing this becomes mandatory, due to coefficients from $I(A)$ only defining pseudo-expectations on the universal crossed product $A \rtimes G$, and not the reduced $A\rtimes_\lambda G$. See the proof of \cite[Theorem~9.1]{kennedy_schafhauser_noncommutative_crossed_products}.)
	This ability to untwist the unitaries (vanishing obstruction) trivially holds when $A$ is commutative, but as mentioned before is a very strong property that frequently fails in the noncommutative case. Likewise, even if these unitaries $u_t$ arise out of the polar decomposition of elements $x_t \in I_G(A)$ satisfying the positive-definiteness condition mentioned in Proposition~\ref{prop:pseudoexpectation_coefficients} (i.e.\ $[x_{st^{-1}}]_{s,t \in \mathcal{F}} \geq 0$ for all finite subsets $\mathcal{F} \subseteq G$), there is no guarantee that the matrices $[u_{st^{-1}}]_{s,t \in \mathcal{F}}$ are positive, a necessary condition for defining the aforementioned pseudo-expectation.
	
	Let us now explain the different route taken in our proof: Starting with a triple $(p,u,r)$ as in the theorem, we obtain a set of pairs $\{(p_t,u_t)\}_{t\in C}$ as in Proposition~\ref{prop:equivariant_unitaries_to_single_unitary}, where $C$ denotes the (finite) conjugacy class of $r$. These do not necessarily allow us to construct a non-faithful pseudo-expectation for $A\rtimes_\lambda G$, as explained above and done in \cite{kennedy_schafhauser_noncommutative_crossed_products}. However, a direct application of Proposition~\ref{prop:crossed_product_center_coefficients} shows that the element $\sum_{t\in C}u_t^*\lambda_t$ lies in the center of $I(A)\rtimes_\lambda G$, but does not belong to the copy of $I(A)$. Using this fact, we then prove that $I(A)\rtimes_\lambda G$ is not prime in a strong way (see Lemma~\ref{lemma:stronglynonprime} for the precise statement), which allows to deduce that $(I(A),G)$ does not have the intersection property, and so neither does $(A,G)$ \cite[Theorem~3.2]{bryder_injective_envelopes}.
	
	We now start building up the technical results that are necessary in order to give a complete proof. The following proposition is very similar to \cite[Lemma~1.1]{hamana82_centre} (see also the proof of \cite[Theorem~7.1]{hamana81_regular_embeddings}), which is done in the context of having $B$ below being either $\closure{A}$, Hamana's regular monotone completion, or $I(A)$. For our purposes, we will usually consider $A \rtimes_\lambda G$ and $M(A,G)$ as the monotone complete algebra, which is in general neither isomorphic to $\closure{A \rtimes_\lambda G}$ nor to $I(A \rtimes_\lambda G)$ (this is easy to see in the case of $A = \C$ and $G$ abelian).
	
	\begin{proposition}
		\label{prop:ideal_sup_projection}
		Let $A \subseteq B$ be an inclusion of unital C*-algebras, and assume that $B$ is monotone complete. If $I \subseteq A$ is any ideal of $A$, we let $I^+_1 \defeq \setbuilder{x \in I}{\norm{x} < 1}$. Recalling that this is an upwards-directed set, we have that $\sup^B I^+_1$ is a projection in $A' \cap B$, which acts on $I$ like a unit.
	\end{proposition}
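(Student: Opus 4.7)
The plan is to verify the three conclusions --- that $q \coloneqq \sup^B I^+_1$ (a) commutes with $A$, (b) is a projection, and (c) acts as a unit on $I$ --- in that order, exploiting the interplay between the monotone completeness of $B$ and the fact that $I^+_1$ is the canonical approximate unit of the ideal $I$. The upward directedness of $I^+_1$ is standard for C*-ideals, so monotone completeness of $B$ ensures that $q$ exists, and clearly $0 \leq q \leq 1$.

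For (a), given a contraction $a \in A$ and $e \in I^+_1$, the element $aea^*$ again lies in $I^+_1$ since $I$ is a two-sided ideal, $aea^* \geq 0$, and $\|aea^*\| < 1$. Hence $aea^* \leq q$, and since the map $x \mapsto axa^*$ is sup-preserving on bounded increasing nets in a monotone complete C*-algebra, taking suprema over $e$ yields $aqa^* \leq q$. Applying this both to a unitary $a$ and to its adjoint $a^*$ forces $aqa^* = q$, so $a$ and $q$ commute. Since $A$ is unital, every element is a complex linear combination of unitaries, and thus $q \in A' \cap B$.

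For (b), the inequality $q^2 \leq q$ is immediate from $q \leq 1$ and $q \geq 0$. For the reverse, the key observation is that $\{e^2 : e \in I^+_1\}$ is cofinal in $I^+_1$, because any $f \in I^+_1$ is of the form $(f^{1/2})^2$ with $f^{1/2} \in I^+_1$; consequently $\sup_e e^2 = q$. On the other hand, combining $e \leq q$ with the commutation $eq = qe$ from (a) gives $e^{1/2}(q-e)e^{1/2} = eq - e^2 \geq 0$, i.e.\ $e^2 \leq eq$. Taking suprema and rewriting $eq = q^{1/2} e q^{1/2}$ via commutativity (so that sup-continuity applies directly) produces $q \leq \sup_e q^{1/2} e q^{1/2} = q^{1/2} q q^{1/2} = q^2$. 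Combined with $q^2 \leq q$, this gives $q^2 = q$.

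Claim (c) now drops out. For each $e \in I^+_1$, one has $0 \leq e(1-q) \leq q(1-q) = q - q^2 = 0$ (using $e \leq q$, commutativity, and $q^2 = q$), so $eq = e$, and symmetrically $qe = e$. For a general $x \in I$, take the canonical approximate unit $(e_\lambda) \subseteq I^+_1$ with $e_\lambda x \to x$ in norm; since $qe_\lambda = e_\lambda$, the product $qe_\lambda x = e_\lambda x$ converges in norm to both $qx$ (by continuity of left multiplication by $q$) and to $x$, giving $qx = x$, and $xq = x$ then follows using (a). The main conceptual obstacle is step (b): without the cofinality observation using squares, one cannot produce a useful lower bound on $q^2$ from $e \leq q$ alone, which is why the commutativity (a) must be secured before anything else.
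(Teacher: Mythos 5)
Your proof is correct and follows essentially the same route as the paper: commutation via conjugating $I^+_1$ by unitaries and sup-preservation, idempotence via the fact that square roots of elements of $I^+_1$ stay in $I^+_1$ combined with the already-established commutativity, and the unit property from $e \leq q$ together with $q^2 = q$. The only cosmetic differences are that the paper squares the inequality $x^{1/2} \leq p$ directly in the commutative subalgebra rather than passing through $e^2 \leq eq$, and deduces $pj = j$ for all $j \in I$ from the fact that $I^+_1$ spans $I$ rather than via an approximate unit.
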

	
	\begin{proof}
		For convenience, we will denote $\sup^B I^+_1$ by $p$. First, we show that it commutes with all of $A$. To see this, note that any *-isomorphism $\alpha \colon B \to C$ between monotone complete C*-algebras will preserve supremums. In other words, if $(b_\lambda)_\lambda$ is a bounded increasing net of self-adjoint elements in $B$, then $(\alpha(b_\lambda))_\lambda$ is such a net in $C$, and
		\[ \sup_\lambda \alpha(b_\lambda) = \alpha( \sup_\lambda b_\lambda). \]
		In particular, if we let $u \in U(A)$ be any unitary element, then the *-automorphism $\alpha = \Ad u$ will also preserve the supremum of $I^+_1$. However, $x \mapsto uxu^*$ is a bijection on this set. Thus,
		\[ upu^* = u ( \sup{^B} I^+_1 ) u^* = \sup{^B} ( u I^+_1 u^* ) = \sup{^B} I^+_1 = p. \]
		Since $U(A)$ spans $A$, we conclude that $p \in A' \cap B$.
		
		To see that $p$ is a projection, note that for any $x \in I^+_1$, we have that $x^{1/2} \in I^+_1$ as well, and so $p \geq x^{1/2}$. Since $p$ and $x$ commute, we also have $p^2\geq x$.
		In other words, $p^2$ is an upper bound to $I^+_1$. But since $p \leq 1$, we have that $p^2 \leq p$. Given that $p$ was the \emph{least} upper bound, it follows that $p^2 = p$.
		
		Finally, we show that $p$ acts on $I$ like a unit. Let $j\in I_+^1$. Since $j\leq p$, we have
		\[(1-p)j(1-p)\leq (1-p)p(1-p)=0. \]
		This implies $(1-p)j=0$. Since $I^+_1$ spans $I$, we have $pj=j$, for every $j\in I$.
	\end{proof}

	It is interesting to note that in \cite[Theorem~7.1]{hamana81_regular_embeddings}, Hamana does not take a supremum of $I^+_1$ directly, but rather a supremum of the left projections of the elements in this set. It is possible that he wished for the result to be a projection, but at least in this earlier paper, overlooked the fact that $\sup I^+_1$ is itself always a projection. In \cite[Lemma~1.1]{hamana82_centre}, approximate units are used directly.
	
	\begin{lemma}[{\cite[Lemma~1.9 and Corollary~4.10]{hamana81_regular_embeddings}}]
		\label{lem:sup_bFbstar}
		Let $B$ be any C*-algebra, and let $\mathcal{F} \subseteq B$ be a bounded set of self-adjoint elements that admits a supremum. Then, for every $b\in B$, $\sup ( b \mathcal{F} b^* )$ exists, and
		\[ b ( \sup \mathcal{F} ) b^* = \sup ( b \mathcal{F} b^* ). \]
	\end{lemma}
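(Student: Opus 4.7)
The plan is to verify the two defining properties of the supremum separately. Setting $s \defeq \sup \mathcal{F}$, the upper-bound direction is immediate: for each $f \in \mathcal{F}$, the inequality $s - f \geq 0$ together with positivity of the map $x \mapsto bxb^*$ yields $b(s-f)b^* \geq 0$, and hence $bfb^* \leq bsb^*$.

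For the least-upper-bound property, I would assume $t \in B$ satisfies $t \geq bfb^*$ for every $f \in \mathcal{F}$ and aim to conclude $t \geq bsb^*$. My strategy is to approximate $b$ by invertible elements in a suitable unital enlargement (such as the unitization $\widetilde{B}$ or the enveloping von Neumann algebra $B^{**}$). Whenever $b_n$ is invertible in such an enlargement, the conjugation map $\Phi_{b_n}(x) = b_n x b_n^*$ is an order isomorphism with inverse $\Phi_{b_n^{-1}}(y) = b_n^{-1} y (b_n^*)^{-1}$, and hence trivially sends suprema to suprema. Suitable invertible approximants are available by perturbing with a scalar, for example $b_n = b + \varepsilon_n \cdot 1$ for a sequence $\varepsilon_n \downarrow 0$ chosen to avoid the compact set $-\sigma(b) \subseteq \mathbb{C}$; this gives $b_n \to b$ in norm. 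A uniform estimate $\norm{b_n f b_n^* - bfb^*} \leq \eta(\varepsilon_n)$ (with $\eta(\varepsilon_n)$ depending only on $\norm{b}$, $\varepsilon_n$, and the uniform bound on $\mathcal{F}$, and vanishing as $\varepsilon_n \to 0$) then upgrades the hypothesis to $b_n f b_n^* \leq t + \eta(\varepsilon_n) \cdot 1$ for every $f \in \mathcal{F}$. Applying the invertible case would yield $b_n s b_n^* \leq t + \eta(\varepsilon_n) \cdot 1$, and passing to the norm limit $\varepsilon_n \to 0$ would produce $bsb^* \leq t$.

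The main technical obstacle I foresee is ensuring that the supremum of $\mathcal{F}$ in the enlarged algebra remains equal to $s$: in general, the supremum of a non-upward-directed set may strictly decrease upon passing to a larger ambient algebra, in which case the invertible-case step would only yield $bs'b^* \leq t$ for the (possibly smaller) sup $s'$ of $\mathcal{F}$ in the enlargement. Circumventing this would likely require either working inside an enlargement engineered to preserve suprema from $B$, such as Hamana's regular monotone completion $\closure{B}$ or the injective envelope $I(B)$, or exploiting essentiality of $B$ as an ideal inside its unitization to promote the upper bound $t$ back down to $B$ before comparing with $s$. This step is where the proof is most delicate, and matching up the suprema across the two algebras is the piece I would expect to require the most care.
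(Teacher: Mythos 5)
The paper does not prove this lemma at all --- it is quoted verbatim from Hamana's regular embeddings paper (Lemma~1.9 and Corollary~4.10 there), so there is no in-paper argument to compare against; your proposal has to stand on its own. The first half does: the upper-bound direction via positivity of $x \mapsto bxb^*$ is correct and is all that direction requires.

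The least-upper-bound half, however, has a genuine gap beyond the one you flag. Your scheme needs $b$ to be a norm limit of invertible elements, and the specific perturbations $b_n = b + \varepsilon_n \cdot 1$ require scalars $\varepsilon_n \to 0$ with $-\varepsilon_n \notin \sigma(b)$. This is impossible whenever $0$ is an interior point of $\sigma(b)$ (take $b$ the unilateral shift, with $\sigma(b) = \closure{\mathbb{D}}$), and no repair by cleverer approximants is available: the invertibles are not dense in a general C*-algebra --- the shift sits at distance $1$ from them --- so \emph{no} approximation-by-invertibles argument can treat arbitrary $b$. The natural fallback, reducing to $b \geq 0$ (where $b + \varepsilon$ really is invertible) via the polar decomposition $b = u\abs{b}$, forces you into $B^{**}$, where you then collide head-on with the second problem, the one you do identify: suprema are generally \emph{not} preserved when passing to a larger ambient algebra. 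In $B^{**}$ they genuinely change (already for $B = C(X)$ with $X$ extremally disconnected, the order supremum in $C(X)$ is the regularized pointwise supremum, which is strictly larger than the supremum computed in the atomic bidual), and even for the unitization $\widetilde{B}$ the equality $\sup^{B}\mathcal{F} = \sup^{\widetilde{B}}\mathcal{F}$ needs an argument you have not supplied --- the obvious one, compressing an upper bound $a + \lambda 1$ by an approximate unit $(e_i)$ and comparing with $e_i s e_i$, quietly invokes the very lemma being proved for $b = e_i$. So as written the proposal establishes only the easy inequality; the core of the lemma (that $bsb^*$ is the \emph{least} upper bound) is not reached, and the route chosen cannot reach it for general $b$. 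You would need a genuinely different mechanism here, which is presumably why the authors simply cite Hamana rather than reprove it.
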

	
	\begin{proposition}
		\label{prop:ideal_sup_orthogonality}
		Let $A \subseteq B$ be an inclusion of unital C*-algebras, where $B$ is monotone complete. Let $I,J \subseteq A$ be non-trivial ideals of $A$ with $I\cdot J = 0$. Let $p \coloneqq \sup^B I^+_1$ and $q \coloneqq \sup^B J^+_1$. Then $pq = 0$. Moreover, $p\cdot J=0$ and $q\cdot I=0$.
	\end{proposition}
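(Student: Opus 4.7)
The plan is to establish the three claims in the order $p\cdot J = 0$, then by symmetry $q\cdot I = 0$, and finally $pq = 0$. The structural tool throughout is Lemma~\ref{lem:sup_bFbstar}, which controls the two-sided conjugation $b(\sup\mathcal{F})b^*$. Since one cannot in general move a supremum past a one-sided multiplication in a monotone complete C*-algebra, the strategy is to apply this lemma twice so as to always be in a position to push suprema through symmetric conjugations, rather than trying to take some kind of order limit of products $xy$ directly.

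Fix $y \in J_1^+$. Because $J$ is a closed two-sided ideal, $y^{1/2}\in J$, and similarly $x^{1/2}\in I$ for every $x\in I_1^+$. From $I\cdot J=0$ one gets $x^{1/2}y^{1/2}=0$, and hence $y^{1/2}xy^{1/2}=(x^{1/2}y^{1/2})^*(x^{1/2}y^{1/2})=0$. Applying Lemma~\ref{lem:sup_bFbstar} with $b=y^{1/2}$ and $\mathcal{F}=I_1^+$ then gives
\[ y^{1/2}\,p\,y^{1/2} \;=\; \sup_{x\in I_1^+} y^{1/2} x y^{1/2} \;=\; 0. \]
Reading this as $(p^{1/2}y^{1/2})^*(p^{1/2}y^{1/2})=0$ forces $p^{1/2}y^{1/2}=0$, and therefore $py=0$. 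Since $J_1^+$ linearly spans $J$, we conclude $p\cdot J=0$; the symmetric argument (swapping the roles of $I$ and $J$) yields $q\cdot I=0$.

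For the final claim, apply Lemma~\ref{lem:sup_bFbstar} once more with $b=p$ and $\mathcal{F}=J_1^+$ (whose supremum is $q$). Combining with the previous step,
\[ p q p \;=\; \sup_{y\in J_1^+} p y p \;=\; 0. \]
By Proposition~\ref{prop:ideal_sup_projection} both $p$ and $q$ are projections in $B$, so $(qp)^*(qp)=pqp=0$ gives $qp=0$, and hence $pq=0$. The only real obstacle in this argument is precisely the point flagged above: monotone completeness does not come with separate order-continuity of multiplication, so a direct ``take limits in $xy=0$'' approach is not available, and the symmetric conjugation form of Lemma~\ref{lem:sup_bFbstar} is exactly what makes the detour work.
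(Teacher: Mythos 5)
Your proof is correct and follows essentially the same route as the paper: apply Lemma~\ref{lem:sup_bFbstar} with $b = y^{1/2}$ to get $y^{1/2} p y^{1/2} = 0$ and hence $p \cdot J = 0$, then apply it again with $b = p$ to get $pqp = 0$ and hence $pq = 0$. The only cosmetic difference is that the paper passes from $j^{1/2} p j^{1/2} = 0$ to $jp = 0$ using the fact that $p$ commutes with $A$, whereas you use the C*-identity on $(p^{1/2}y^{1/2})^*(p^{1/2}y^{1/2})$; both are fine.
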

	
	\begin{proof}
		Let $j \in J$ be any positive element. Using Lemma~\ref{lem:sup_bFbstar} and the fact that $p$ is a projection that commutes with $A$ (see Proposition~\ref{prop:ideal_sup_projection}), we have that
		\[ jp = j^{1/2} p j^{1/2} = j^{1/2} (\sup{^B} I^+_1) j^{1/2} = \sup{^B} (j^{1/2} I^+_1 j^{1/2}) = 0. \]
		Since positive elements span a C*-algebra, we have that $p\cdot J=0$. Similarly, $q\cdot I=0$.
		
		Applying Lemma~\ref{lem:sup_bFbstar} once again, we have
		\[ pqp = p (\sup{^B} J^+_1) p = \sup{^B} (p J^+_1 p) = 0, \]
		and thus $pq=0$.
	\end{proof}
	
	One of Hamana's goals with these sorts of results was to prove the equivalence between a C*-algebra $B$ being prime, and the regular monotone completion $\closure{B}$ being a factor, although the same proof directly works with the injective envelope $I(B)$ instead. One direction is clear from the above result. Orthogonal ideals $I$ and $J$ in $B$ give orthogonal projections in $I(B)$ that commute with $B$, and therefore lie in $Z(I(B))$ (see \cite[Corollary~4.3]{hamana79_injective_envelopes_cstaralg}).
	
	We will also need the following concrete computation that shows we may move back and forth between projections in $I(B)$ and ideals in $B$, in a certain sense. This is essentially just part of \cite[Lemma~1.3(i)]{hamana82_centre}, but Hamana phrases it in terms of injective envelopes of non-unital C*-algebras. To avoid referencing this theory just yet (and leave it contained to Section~\ref{sec:proper_outerness_on_A}), we give a short re-proof here.
	
	\begin{proposition}
		\label{prop:ConcreteSupProj}
		Let $B$ be a unital C*-algebra and let $r\in Z(I(B))$ be a projection. Then, for $J\coloneqq r\cdot I(B)\cap B$, one has $\sup^{I(B)}J_1^+=r$.
	\end{proposition}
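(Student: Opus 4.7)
The plan is to define $p \defeq \sup^{I(B)} J_1^+$ and establish $p = r$ by proving the two inequalities $p \leq r$ and $r \leq p$ separately.

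For the upper bound, I would first note that since $r \in Z(I(B))$ and every $x \in J$ lies in $rI(B)$, we have $x = rxr \leq \|x\|\, r \leq r$ for each $x \in J_1^+$. Hence $r$ is an upper bound for $J_1^+$ in $I(B)$, yielding $p \leq r$. Combining this with Proposition~\ref{prop:ideal_sup_projection} (which tells us that $p$ is a projection in $B' \cap I(B)$ that acts as a unit on $J$) and with Hamana's identification $B' \cap I(B) = Z(I(B))$ from \cite[Corollary~4.3]{hamana79_injective_envelopes_cstaralg} already invoked in the discussion preceding this statement, I deduce that $p$ is central in $I(B)$. Since $p$ and $r$ are both projections with $p \leq r$, this makes $r - p$ a central projection in $I(B)$.

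For the reverse inequality I would argue by contradiction: assume $r - p \neq 0$. Then $K \defeq (r - p)\, I(B)$ is a non-zero closed two-sided ideal of $I(B)$ (this is where centrality of $r - p$ is essential), and using $r(r-p) = r - p$ (which holds because $rp = p$) it is contained in $rI(B)$. Since $B \subseteq I(B)$ is essential, the quotient $*$-homomorphism $\pi \colon I(B) \to I(B)/K$, having non-zero kernel, cannot restrict to an injection on $B$, so $K \cap B \neq 0$. Picking a non-zero $b \in K \cap B$ gives $b \in rI(B) \cap B = J$, so that $pb = b$ by the ``acts as a unit on $J$'' part of Proposition~\ref{prop:ideal_sup_projection}. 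Simultaneously, $b \in (r-p)I(B)$ forces $(r-p)b = b$. But $(r-p)b = rb - pb = b - b = 0$, contradicting $b \neq 0$, and the proof concludes with $p = r$.

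The one point that requires genuine care, rather than being routine, is ensuring that $(r-p) I(B)$ is a two-sided ideal, so that essentiality of $B \subseteq I(B)$ can be legitimately applied; this is the reason I first upgrade $p$ from the relative commutant $B' \cap I(B)$ to the centre $Z(I(B))$ via Hamana's centre-identification result. Once this preparatory upgrade is in hand, the rest of the argument is a clean interplay between essentiality and the fact that $p$ serves as a unit on $J$.
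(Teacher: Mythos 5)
Your proof is correct and follows essentially the same route as the paper's: both directions rest on $x \leq r$ for $x \in J_1^+$ (giving $p \leq r$), the centrality of $p$ via Proposition~\ref{prop:ideal_sup_projection} together with \cite[Corollary~4.3]{hamana79_injective_envelopes_cstaralg}, the fact that $p$ acts as a unit on $J$, and essentiality of $B \subseteq I(B)$ applied to the corner $(r-p)I(B)$. The only cosmetic difference is that you run the second half as a contradiction with an explicit element $b$, whereas the paper argues directly that $(r-p)\cdot I(B)\cap B$ is annihilated by $1-p$ and hence vanishes.
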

	
	\begin{proof}
		Denote $p=\sup^{I(B)}J_1^+$. By Proposition~\ref{prop:ideal_sup_projection} and \cite[Corollary~4.3]{hamana79_injective_envelopes_cstaralg}, we have that $p\in I(B)\cap B'=Z(I(B))$.
		In addition, given that every $x \in J_1^+$ satisfies $x \leq r$, we have $p \leq r$.
		On the other hand, since $(r-p)\cdot I(B)\cap B\subseteq J$ and $p$ acts on $J$ like a unit (see Proposition~\ref{prop:ideal_sup_projection}), it follows that $p$ acts on $(r-p)\cdot I(B)\cap B$ as a unit. Since $p\perp (r-p)$, we conclude that $(r-p)\cdot I(B)\cap B=0$, and therefore also $(r-p)\cdot I(B)=0$ by essentiality of the inclusion $B\subseteq I(B)$. This finishes the proof.
	\end{proof}
	
	As mentioned in the beginning of this section, in the minimal setting, the proof of the main theorem of this section (Theorem~\ref{thm:mainSec4}) involves showing that $I_G(A) \rtimes_\lambda G$ is not prime and conclude that $I(A) \rtimes_\lambda G$ is not prime, since they share the same injective envelope. In the context of the intersection property, we need a stronger notion of ``not prime'' that plays well relative to the subalgebras $A$, $I(A)$, and $I_G(A)$.
	
	\begin{lemma}
		\label{lemma:stronglynonprime}
		Let $G$ be a discrete group acting on a unital C*-algebra $A$. Assume that the canonical inclusion $Z(I_G(A))^G\subseteq Z(I_G(A) \rtimes_\lambda G)$ is proper. Then there exist non-trivial ideals $J,K\subseteq I(A) \rtimes_\lambda G$ with the following properties:
		\begin{enumerate}
			\item $J\cdot K=0$;
			\item If $J\cdot a=0$ for some $a\in I(A)$, then $a=0$.
		\end{enumerate}
		
	\end{lemma}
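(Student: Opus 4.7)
The strategy is to use Hamana's commutant identity to promote the non-trivial central element in $Z(I_G(A) \rtimes_\lambda G)$ to a non-scalar central element of $I(A \rtimes_\lambda G)$, conclude non-primality of $I(A) \rtimes_\lambda G$ via Proposition~\ref{proposition:primeifffactor}, and then refine the resulting orthogonal ideals inside the monotone complete crossed product $M(I(A),G)$ to force condition~(2).

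Fix $z \in Z(I_G(A) \rtimes_\lambda G) \setminus Z(I_G(A))^G$. By Theorem~\ref{thm:crossed_product_injective_envelope_inclusions} and Proposition~\ref{prop:intermediate_algebra_shares_injective_envelope}, $z$ sits inside $I(A\rtimes_\lambda G) = I(I_G(A)\rtimes_\lambda G)$ and commutes with the copy of $I_G(A) \rtimes_\lambda G$ there. Applying Hamana's identity $B' \cap I(B) = Z(I(B))$ from \cite[Corollary~4.3]{hamana79_injective_envelopes_cstaralg} with $B = I_G(A) \rtimes_\lambda G$ then gives $z \in Z(I(A \rtimes_\lambda G))$. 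Since scalars lie in $Z(I_G(A))^G$, the element $z$ is not scalar, so $I(A \rtimes_\lambda G)$ is not a factor. By Proposition~\ref{proposition:primeifffactor}, $I(A) \rtimes_\lambda G$ fails to be prime; I would then fix non-zero orthogonal ideals $J_0, K \subseteq I(A) \rtimes_\lambda G$ with $J_0 \cdot K = 0$ and enlarge $J_0$ to the full left annihilator $J := \{x \in I(A)\rtimes_\lambda G : xK = 0\}$, a non-zero two-sided ideal still satisfying $JK = 0$, providing condition~(1).

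For condition~(2), I would pass to $M(I(A),G)$ and form $q := \sup^{M(I(A),G)} J^+_1$. By Proposition~\ref{prop:ideal_sup_projection} combined with Proposition~\ref{prop:mc_crossed_product_commutant_coefficients}, $q$ is a projection in $Z(M(I(A),G))$; write $q = \sum_t y_t^* \lambda_t$ formally with $y_t \in I(A)$ satisfying the centrality relations. Adjointing $y_t y = (t \cdot y) y_t$ and specialising yields $a y_t^* = y_t^*(t \cdot a)$ for every $a \in I(A)$. If $Ja = 0$, then the sandwich argument of Proposition~\ref{prop:ideal_sup_orthogonality} (via Lemma~\ref{lem:sup_bFbstar} applied to $a^* J^+_1 a$) forces $a^* q a = 0$ and hence $qa = 0$; uniqueness of formal coefficients gives $a y_t^* = 0$ for every $t \in G$. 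Taking $t = e$ and using that $y_e = E(q)$ is strictly positive in $Z(I(A))^G$ (by faithfulness of the canonical expectation from Remark~\ref{rem:CondExpeMonComp}) yields $a \cdot \operatorname{supp}(y_e) = 0$ for a non-zero $G$-invariant central projection $\operatorname{supp}(y_e) \in Z(I(A))^G$.

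The main obstacle is upgrading this to $a = 0$, equivalently arranging $\operatorname{supp}(y_e) = 1$. My plan here is a maximality argument on the poset of pairs $(J', K')$ of non-zero orthogonal ideals of $I(A)\rtimes_\lambda G$ with $J' \supseteq J_0$, ordered so as to maximise $\operatorname{supp}(E(q'))$, where $q' := \sup^{M(I(A),G)} (J')^+_1$. Monotone completeness of $Z(I(A\rtimes_\lambda G))$ together with Proposition~\ref{prop:monotone_complete_center} guarantees a maximal element. If its support were proper, then $1 - \operatorname{supp}(y_e) \in Z(I_G(A))^G$ would cut off a non-trivial $G$-invariant central corner whose associated ideal could be absorbed into $K'$ to strictly enlarge the support, contradicting maximality. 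Hence $\operatorname{supp}(y_e) = 1$ for the optimal $J$, and $a y_e = 0$ then forces $a = 0$, finishing condition~(2).
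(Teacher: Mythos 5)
Your first paragraph is sound and matches the paper's opening move: placing $z$ in $Z(I(A\rtimes_\lambda G))$ via \cite[Corollary~4.3]{hamana79_injective_envelopes_cstaralg} and deducing non-primality of $I(A)\rtimes_\lambda G$, which settles condition~(1). The gap is in condition~(2), and it is a real one. Your improvement step --- ``absorb the ideal of $1-\supp(y_e)$ into $K'$ to strictly enlarge the support'' --- moves in the wrong direction: enlarging $K'$ can only shrink its annihilator $J'$, hence can only shrink $q'=\sup (J')_1^+$ and $\supp(E(q'))$; if instead you keep $J'$ fixed, the support is simply unchanged. The variant that would actually enlarge the support, namely adjoining $(1-p)(I(A)\rtimes_\lambda G)$ to $J'$ where $p=\supp(E(q'))$, forces you to cut $K'$ down to $pK'$, and nothing prevents $pK'=0$. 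A concrete stall: if the orthogonal pair you happen to start from is $J'=p_0(I(A)\rtimes_\lambda G)$ and $K'=(1-p_0)(I(A)\rtimes_\lambda G)$ for a non-trivial projection $p_0\in Z(I(A))^G$ (such pairs exist whenever the invariant centre is non-trivial, independently of the hypothesis), then $\supp(E(q'))=p_0$, the ideal attached to $1-p_0$ is $K'$ itself, and your procedure makes no progress.

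The missing ingredient is that the pair must be manufactured from the witness $z\in Z(I_G(A)\rtimes_\lambda G)\setminus Z(I_G(A))^G$ itself, not from an arbitrary failure of primality. The paper writes the inclusion as $C(X)\subseteq C(Y)$, picks two points $y_1\neq y_2$ in a single fibre of $Y\to X$, takes orthogonal positive contractions $z_1,z_2$ peaking at $y_1,y_2$, and sets $w_1=(1-p_1)+z_1$ and $w_2=z_2p_1$, where $p_1\in C(X)$ is the support projection of $E(z_1)$. The summand $1-p_1$ is exactly the ``filling in'' you are after --- it makes $E(w_1)$ non-vanishing on a dense subset of $X$, which is what drives condition~(2) --- and the reason this does not destroy the orthogonal complement is that $p_1$, lying in $C(X)$, is constant on the fibre through $y_1$ and $y_2$, so $p_1(y_2)=p_1(y_1)=1$ and $w_2(y_2)=1\neq 0$. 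Your maximality scheme has no mechanism playing the role of this fibre argument. (A minor slip: faithfulness of $E$ gives only $E(q)\neq 0$, not that $E(q)$ is strictly positive; you correct for this by passing to $\supp(y_e)$, but the phrase ``strictly positive'' should go.)
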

	
	\begin{proof}
		View the commutative C*-algebras $Z(I_G(A))^G$ and $Z(I_G(A) \rtimes_\lambda G)$ as $C(X)$ and $C(Y)$, respectively. The inclusion $C(X) \subsetneqq C(Y)$ is dual to a surjective, non-injective, continuous map $q : Y \to X$. Choose distinct elements $y_1,y_2$ in $Y$ mapping to the same point $x \in X$ under $q$, i.e.\ $x = q(y_1) = q(y_2)$.
		Let $U_1,U_2\subseteq Y$ be open disjoint neighborhoods of $y_1$ and $y_2$, respectively, and let $z_1,z_2\in C(Y)$ be positive contractions such that $z_i$ is supported on $U_i$ and $z_i(y_i)=1$, for $i=1,2$. Clearly, $z_1z_2=0$. Moreover, we claim that $z_i\notin C(X)$, for $i=1,2$. Indeed, the inclusion $C(X)\subseteq C(Y)$ is given by viewing a function $f\in C(X)$ as a function of $C(Y)$ which is constant (and equal to $f(x)$) on the fiber $q^{-1}(\{x\})$, for every $x\in X$. However, the functions $z_i\in C(Y)$, $i=1,2$, are clearly not constant along the fiber $q^{-1}(\{x\})$, and therefore do not belong to the copy of $C(X)$. 
		
		Note that $E(z_1)\in C(X)$, since the canonical conditional expectation $E\colon I_G(A) \rtimes_\lambda G\to I_G(A)$ maps $Z(I_G(A) \rtimes_\lambda G)$ onto $Z(I_G(A))^G$. Moreover, $Z(I_G(A))^G$ is a monotone complete C*-algebra, and so $X$ is an extremally disconnected space (see the discussion at the end of Section~\ref{sec:preliminaries:monotone_complete}). This implies that
		\[ \closure{\supp (E(z_1))} = \closure{\setbuilder{x \in X}{E(z_1)(x) \neq 0}} \] 
		is a clopen subset of $X$. Let $p_1\in C(X)$ be the characteristic function of $\overline{\supp(E(z_1))}$ (in other words, $p_1$ is the support projection of $E(z_1)$). Set $w_1\coloneqq (1-p_1)+z_1$ and $w_2\coloneqq z_2p_1$. We will show the following properties:
		\begin{enumerate}
			\item $w_1, w_2\in Z(I_G(A) \rtimes_\lambda G)$ are non-zero orthogonal positive contractions.
			\item If $w_1a=0$ for some $a\in I_G(A)$, then $a=0$.
		\end{enumerate}
		
		It is immediate that $w_1, w_2\in Z(I_G(A) \rtimes_\lambda G)$ are positive contractions and that $w_1\neq 0$. We have to show that $w_2\neq 0$. For this, we first observe that $p_1z_1=z_1$. This is due the fact that $E((1-p_1)z_1)=(1-p_1)E(z_1)=0$, and since $E$ is faithful, we get $(1-p_1)z_1=0$. By definition, we have that $z_1(y_1)=1$, and thus we must have that $p_1(y_1)=1$. As $p_1\in C(X)$ and $q(y_1)=q(y_2)$, we see that $p_1(y_1)=p_1(y_2)=1$ under the embedding $C(X) \subseteq C(Y)$. Now, $w_2(y_2)=z_2(y_2)p_1(y_2)=1$, so $w_2\neq 0$.
		Orthogonality of $w_1$ and $w_2$ follows from orthogonality of $z_1$ and $z_2$.
		
		Finally, consider the set
		\[ I = \setbuilder{a\in I_G(A)}{w_1a=0}. \]
		Using that $w_1\in Z(I_G(A) \rtimes_\lambda G)$, it is easy to check that $I$ is a closed two-sided $G$-invariant ideal of $I_G(A)$. Let $q=\sup^{I_G(A)}I_1^+$. By Proposition~\ref{prop:ideal_sup_projection}, we know that $q$ is a projection inside $Z(I_G(A))$, which is also $G$-invariant since
		\[ g \cdot q = g \cdot (\sup I_1^+) = \sup(g \cdot I_1^+) = \sup I_1^+ = q. \]
		By Lemma~\ref{lem:sup_bFbstar}, we have $w_1qw_1=w_1(\sup I_1^+)w_1=\sup(w_1I_1^+w_1)=0$. Thus $w_1q=0$.
		This in turn implies that $E(w_1q)=0$, or, equivalently, $E(w_1)q=0$, which means that $((1-p_1)+E(z_1))q=0$ inside $Z(I_G(A))^G = C(X)$. However, by definition of $p_1$, we see that the function $(1-p_1)+E(z_1)$ is non-vanishing on 
		\[ \supp(E(z_1)) \cup (X \setminus \closure{\supp(E(z_1))}), \]
		which is a dense subset of $X$. This forces $q$ to be the zero element, and consequently, $I=0$ as required.
		
		Next, recalling that $I(I_G(A)\rtimes_\lambda G)=I(A\rtimes_\lambda G)$ (see Theorem~\ref{thm:crossed_product_injective_envelope_inclusions} and Proposition~\ref{prop:intermediate_algebra_shares_injective_envelope}) and that $Z(B)\subseteq Z(I(B))$ for every unital C*-algebra $B$ \cite[Corollary~4.3]{hamana79_injective_envelopes_cstaralg}, we have that $Z(I_G(A)\rtimes_\lambda G) \subseteq Z(I(A\rtimes_\lambda G))$. This allows us to view $w_1,w_2$ as elements of $Z(I(A\rtimes_\lambda G))$, which is again monotone complete. Denote by $r_1$ and $r_2$ the right (or equivalently left) projections of $w_1$ and $w_2$, respectively, inside of this commutative and monotone complete C*-algebra (recall the definition of these projections from Proposition~\ref{prop:monotonecompletePolarDec}). The following properties hold:
		
		\begin{enumerate}
			\item $r_1,r_2\in Z(I(A\rtimes_\lambda G))$ are non-zero orthogonal projections.
			\item If $r_1a=0$ for some $a\in I(A)\subseteq I(A\rtimes_\lambda G)$, then $a=0$.
		\end{enumerate}
		
		The first property follows from the fact that, because $w_1 w_2 = 0$, then $w_1 r_2 = 0$, and so $r_1 r_2 = 0$ as well. The second property is also not hard to see, as if $r_1 a = 0$ for some $a \in I(A)$, then
		\[ a^* w_1^* w_1 a \leq a^* r_1 a = 0, \]
		and so $w_1 a = 0$, which in turn we know implies $a = 0$.
		
		Now we may finally construct the ideals that we are after. Define
		\begin{align*}
		&J \defeq (r_1 \cdot I(A \rtimes_\lambda G)) \cap (I(A) \rtimes_\lambda G), \\
		&K \defeq (r_2 \cdot I(A \rtimes_\lambda G)) \cap (I(A) \rtimes_\lambda G).
		\end{align*}
		By essentiality of the inclusion $I(A)\rtimes_\lambda G\subseteq I(A\rtimes_\lambda G)$, it follows that $J$ and $K$ are non-zero ideals inside $I(A) \rtimes_\lambda G$, and they are clearly orthogonal as well. Let $a \in I(A)$ be given, and assume that $ja=0$ for every $j\in J$. By Proposition~\ref{prop:ConcreteSupProj}, we have that $\sup^{I(A\rtimes_\lambda G)} J_1^+ = r_1$, and so $a^*r_1a=0$ by Lemma~\ref{lem:sup_bFbstar}, and so $r_1 a = 0$. Due to condition~(2) above, we obtain $a=0$. In summary, if $J\cdot a=0$ for some $a\in I(A)$, then $a=0$.		
	\end{proof}
	
	\begin{remark}
		\label{remark:stronglynonprimeinjective}
		Consider the statement of Lemma~\ref{lemma:stronglynonprime}, and assume that instead of requiring the inclusion $Z(I_G(A))^G \subseteq Z(I_G(A) \rtimes_\lambda G)$ to be proper, we required $Z(I(A))^G \subseteq Z(I(A) \rtimes_\lambda G)$ to be proper. This would automatically imply that the former inclusion is proper as well. Indeed, assume $x \in Z(I(A) \rtimes_\lambda G) \setminus Z(I(A))^G$. It is clear that $x \notin Z(I_G(A))^G$. Moreover, if we consider the inclusions
		\[ A \rtimes_\lambda G \subseteq I(A) \rtimes_\lambda G \subseteq I_G(A) \rtimes_\lambda G \subseteq I(A \rtimes_\lambda G) \]
		from Theorem~\ref{thm:crossed_product_injective_envelope_inclusions}, then we see that any element of $Z(I(A) \rtimes_\lambda G)$ in particular commutes with $A \rtimes_\lambda G$, and therefore with all of $I(A \rtimes_\lambda G)$ by \cite[Corollary~4.3]{hamana79_injective_envelopes_cstaralg}. Hence, it must lie in $Z(I_G(A) \rtimes_\lambda G)$.
	\end{remark}

	\begin{theorem}
		\label{thm:mainSec4}
		Let $G$ be an FC-group acting on a unital C*-algebra $A$. Then $(A,G)$ does not have the intersection property if and only if there exist $r\in G\setminus\{e\}$, a non-zero $r$-invariant central projection $p\in I(A)$ and a unitary $u\in U(I(A)p)$ such that
		\begin{enumerate}
			\item $r$ acts by $\Ad u$ on $I(A)p$;
			\item $s \cdot p = p$ and $s\cdot u=u$ for all $s\in C_G(r)$.
		\end{enumerate}
	\end{theorem}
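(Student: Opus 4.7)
Assume $(A,G)$ lacks the intersection property. By Proposition~\ref{prop:intersection_property_iff_faithful} there is a non-faithful equivariant pseudo-expectation $F\colon A\rtimes_\lambda G\to I_G(A)$; since the canonical pseudo-expectation is faithful, $F$ differs from it, so $x_r\defeq F(\lambda_r)\ne 0$ for some $r\ne e$. Writing $x_t\defeq F(\lambda_t)$, Proposition~\ref{prop:pseudoexpectation_coefficients} shows that $(x_t)_{t\in G}$ satisfies the commutation relations of Proposition~\ref{prop:crossed_product_center_coefficients}. Because $G$ is an FC-group, the conjugacy class $C$ of $r$ is finite, so $z\defeq \sum_{t\in C} x_t^*\lambda_t$ is a \emph{finite} sum inside $I_G(A)\rtimes_\lambda G$. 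Proposition~\ref{prop:crossed_product_center_coefficients} then places $z$ in $Z(I_G(A)\rtimes_\lambda G)$, while the non-vanishing of its $\lambda_r$-coefficient $x_r^*$ rules out $z\in I_G(A)$. Hence $Z(I_G(A))^G \subsetneq Z(I_G(A)\rtimes_\lambda G)$.

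Invoke Lemma~\ref{lemma:stronglynonprime} to produce non-trivial orthogonal ideals $J,K\subseteq I(A)\rtimes_\lambda G$ with the property that $J\cdot a=0$ forces $a=0$ for $a\in I(A)$. Set $q\defeq \sup^{M(I(A),G)} J_1^+$. By Proposition~\ref{prop:ideal_sup_projection}, $q$ is a projection in $(I(A)\rtimes_\lambda G)'\cap M(I(A),G)$ acting as a unit on $J$, and Proposition~\ref{prop:ideal_sup_orthogonality} shows $q\perp \sup^{M(I(A),G)} K_1^+ \ne 0$, so $q\ne 1$. Write $q=\sum_t y_t^* \lambda_t$ in $M(I(A),G)$; Proposition~\ref{prop:mc_crossed_product_commutant_coefficients} ensures that the $y_t$ satisfy the hypotheses of Proposition~\ref{prop:properly_outer_equivariant_polar_decompositions}. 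The crucial claim is that some $y_r$ with $r\ne e$ is non-zero: otherwise $q\in I(A)$, so $1-q\in I(A)$ satisfies $J\cdot(1-q)=0$ (because $q$ is a unit on $J$), forcing $q=1$ by the lemma and contradicting $q\ne 1$. Polar-decomposing $y_r=w_r|y_r|$ inside the monotone complete C*-algebra $I(A)$ and applying Proposition~\ref{prop:properly_outer_equivariant_polar_decompositions} yields a non-zero $r$-invariant central projection $p\defeq w_r^*w_r\in I(A)$ and a unitary $u\defeq w_r\in U(I(A)p)$ with $r$ acting as $\Ad u$; Proposition~\ref{prop:equivariant_unitaries_to_single_unitary} converts the equivariance $s\cdot w_t=w_{sts^{-1}}$ into the $C_G(r)$-invariance $s\cdot p=p$, $s\cdot u=u$.

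\noindent\textbf{Reverse direction.} Given $(p,u,r)$ as in the statement, Proposition~\ref{prop:equivariant_unitaries_to_single_unitary} packages it into a family $\{(p_t,u_t)\}_{t\in C}$ over the (finite, by FC) conjugacy class $C$ of $r$. Using $u_t\in I(A)p_t$, the centrality and $t$-invariance of $p_t$, and the fact that $t$ acts by $\Ad u_t$ on $I(A)p_t$, a short calculation verifies $u_t y=(t\cdot y)u_t$ for every $y\in I(A)$; combined with $s\cdot u_t=u_{sts^{-1}}$, Proposition~\ref{prop:crossed_product_center_coefficients} places the finite sum $z\defeq \sum_{t\in C}u_t^*\lambda_t$ in $Z(I(A)\rtimes_\lambda G)$. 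Its $\lambda_r$-coefficient $u^*$ is non-zero (as $u$ is a unitary in the non-zero corner $I(A)p$), so $z\notin I(A)$, giving $Z(I(A))^G\subsetneq Z(I(A)\rtimes_\lambda G)$. By Remark~\ref{remark:stronglynonprimeinjective} this lifts to $Z(I_G(A))^G\subsetneq Z(I_G(A)\rtimes_\lambda G)$, so Lemma~\ref{lemma:stronglynonprime} furnishes non-trivial orthogonal ideals $J,K\subseteq I(A)\rtimes_\lambda G$. For any $a\in K\cap I(A)$ we have $J\cdot a\subseteq J\cdot K=0$ and hence $a=0$, so $K\cap I(A)=0$. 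Thus $(I(A),G)$ lacks the intersection property, and so does $(A,G)$ by \cite[Theorem~3.2]{bryder_injective_envelopes}.

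\noindent\textbf{Main obstacle.} The crux is the forward direction's transfer of data from $I_G(A)$ (where pseudo-expectations naturally produce a central element through the FC-finite-sum trick) down to $I(A)$ (where our polar-decomposition technology lives). One cannot polar-decompose $z\in I_G(A)\rtimes_\lambda G$ with coefficients landing in $I(A)$, so Lemma~\ref{lemma:stronglynonprime} is inserted precisely to translate the ``large centre'' phenomenon on $I_G(A)$ into orthogonal-ideal information on $I(A)\rtimes_\lambda G$; the supremum taken inside the monotone completion $M(I(A),G)$ then restores coefficients in $I(A)$. The delicate subsidiary step is ensuring this supremum projection is not itself already in $I(A)$, and the ``$J\cdot a=0\Rightarrow a=0$'' clause in Lemma~\ref{lemma:stronglynonprime} is tailor-made to rule out that degeneracy.
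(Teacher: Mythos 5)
Your proof is correct and follows essentially the same route as the paper's: non-faithful pseudo-expectation, finite conjugacy-class sum producing a proper inclusion $Z(I_G(A))^G \subsetneq Z(I_G(A)\rtimes_\lambda G)$, Lemma~\ref{lemma:stronglynonprime}, a supremum projection in $M(I(A),G)$, and the equivariant polar decomposition. The only cosmetic difference is that you take $q=\sup^{M(I(A),G)}J_1^+$ rather than $\sup^{M(I(A),G)}K_1^+$, so you rule out $q\in I(A)$ by applying the ``$J\cdot a=0\Rightarrow a=0$'' clause to $1-q$ instead of to $q$ itself; both variants are valid.
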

	
	\begin{proof}
		First, assume that there exists a triple $(p,u,r)$ satisfying properties $(1)$ and $(2)$.
		By Proposition~\ref{prop:equivariant_unitaries_to_single_unitary}, we obtain well-defined pairs of elements $\{(p_t,u_t)\}_{t\in C}$, where $C=\setbuilder{grg^{-1}}{g \in G}$ denotes the conjugacy class of $r$, each $p_t$ is a $t$-invariant central projection in $I(A)$, each $u_t\in I(A)$ is a unitary element in the corner $I(A)p_t$ such that $t$ acts by $\Ad u_t$ on this corner, and moreover $s \cdot u_t=u_{sts^{-1}}$ for all $s\in G$ and $t\in C$.
		
		Letting $u_g=0$ for every $g\in G\setminus C$, we check that the elements $(u_g)_{g\in G}$ satisfy the conditions stated in Proposition~\ref{prop:crossed_product_center_coefficients}. Indeed, for every $g\in G\setminus C$ and every $y\in I(A)$, it is clear that $u_gy=(g\cdot y)u_g$. However, if $g\in C$, we have that 
		\[ u_gy =u_gp_gy = (u_g(p_gy)u_g^*)u_g = (g \cdot (p_g y)) u_g = p_g (g\cdot y) u_g = (g\cdot y)u_g, \]
		and so condition~(1) of Proposition~\ref{prop:crossed_product_center_coefficients} is satisfied, and condition~(2) is immediate by construction.
		Therefore,
		\[z\coloneqq\sum\limits_{c\in C} u_c^*\lambda_c\in Z(I(A)\rtimes_\lambda G). \]
		However, $z\notin Z(I(A))^G$, since $u_r=u$ and $r\neq e$. Hence, $Z(I(A))^G\subseteq Z(I(A)\rtimes_\lambda G)$ is a proper inclusion and we can apply Lemma~\ref{lemma:stronglynonprime} together with Remark~\ref{remark:stronglynonprimeinjective} in order to obtain non-trivial orthogonal ideals $J,K\subseteq I(A)\rtimes_\lambda G$ with the property that $J^\perp\cap I(A)=\{0\}$. In particular, $K$ is a non-trivial ideal of $I(A)\rtimes_\lambda G$ which intersects $I(A)$ trivially. This violates the intersection property for $(I(A),G)$, and thus for $(A,G)$ as well by \cite[Theorem~3.2]{bryder_injective_envelopes}.

		Conversely, assume that $(A,G)$ does not have the intersection property, and let $F\colon A\rtimes_\lambda G\to I_G(A)$ be a non-canonical pseudo expectation, which exists by Proposition~\ref{prop:intersection_property_iff_faithful}. The map $F$ gives us non-trivial coefficients $(x_t)_{t \in G}$ in $I_G(A)$ by letting $x_t\coloneqq F(\lambda_t)$, for every $t\in G$. These satisfy the properties listed in Proposition~\ref{prop:pseudoexpectation_coefficients}. Let $r\in G \setminus \set{e}$ be such that $x_r \neq 0$ and denote by $C \subseteq G$ its conjugacy class. We obtain that $\sum_{t \in C} x_t^* \lambda_t$ lies in $Z(I_G(A) \rtimes_\lambda G)$ by Proposition~\ref{prop:crossed_product_center_coefficients}, and so we conclude that the canonical inclusion
		\[Z(I_G(A))^G\subseteq Z(I_G(A) \rtimes_\lambda G) \]
		is proper.
		
		Applying Lemma~\ref{lemma:stronglynonprime}, we obtain non-trivial orthogonal ideals $J,K\subseteq I(A) \rtimes_\lambda G$ with the property that whenever $J\cdot a=0$, for some $a\in I(A)$, then $a=0$. Denote $q \defeq \sup^{M(I(A),G)}K_1^+$. By Proposition~\ref{prop:ideal_sup_projection}, we have that $q$ is a non-zero projection which commutes with the copy of $I(A) \rtimes_\lambda G \subseteq M(I(A),G)$. 
		Moreover, $J\cdot q=0$, by Proposition~\ref{prop:ideal_sup_orthogonality}, which implies that $q\notin I(A)$.
		
		Let $(q_t)_{t\in G}$ denote the coefficients in $I(A)$ of $q = \sum_{t \in G} q_t \lambda_t \in M(I(A),G)$, and let $z_t \defeq q_t^*$, for every $t\in G$. By Proposition~\ref{prop:mc_crossed_product_commutant_coefficients}, we have that $z_t y =(t\cdot y) z_t$, for all $t \in G$ and all $y \in I(A)$, and moreover that $s\cdot z_t=z_{sts^{-1}}$, for all $s,t\in G$.
		
		Given that $q \notin I(A)$, at least one coefficient $z_r$ is non-zero for some $r \in G \setminus \set{e}$. Letting $z_r=u\abs{z_r}$ be the polar decomposition of this element, and $p=u^*u$, we conclude by Proposition~\ref{prop:properly_outer_equivariant_polar_decompositions} and Proposition~\ref{prop:equivariant_unitaries_to_single_unitary} that the triple $(p,u,r)$ satisfies the desired properties.
	\end{proof}

	Since, under minimality of the action of $G$ on $A$, the intersection property is equivalent to simplicity of the associated crossed product, the theorem above also gives a characterization of simplicity of crossed products by FC-groups. Moreover, in general (not assuming the action is minimal), we almost immediately obtain a characterization for when crossed products by FC-groups are prime, and we state it separately below. This is in slight contrast to what is done in Section~\ref{sec:primality_minimal}, which characterizes exactly when a crossed product $A \rtimes_\lambda G$ is prime \emph{regardless} of what $G$ is, but assuming the action is minimal. 
	
	Recall that a C*-algebra $A$ equipped with a $G$-action is called \emph{$G$-prime} if whenever $I,J\subseteq A$ are non-trivial $G$-invariant ideals in $A$, their product $I\cdot J$ is non-trivial.

	\begin{theorem}\label{thm:mainSec4Prime}
		Let $G$ be an FC-group acting on a unital, $G$-prime C*-algebra $A$. Then $A\rtimes_\lambda G$ is not prime if and only if there exist $r\in G\setminus\{e\}$, a non-zero $r$-invariant central projection $p\in I(A)$ and a unitary $u\in U(I(A)p)$ such that
		\begin{enumerate}
			\item $r$ acts by $\Ad u$ on $I(A)p$;
			\item $s \cdot p = p$ and $s\cdot u=u$, for all $s\in C_G(r)$.
		\end{enumerate}
	\end{theorem}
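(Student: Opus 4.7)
The plan is to reduce both directions to Theorem~\ref{thm:mainSec4}, using the $G$-primeness hypothesis only in the converse direction.

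For the forward direction, suppose we are given a triple $(p,u,r)$ satisfying conditions (1) and (2), and let $C$ denote the finite conjugacy class of $r$. Proposition~\ref{prop:equivariant_unitaries_to_single_unitary} produces a family $\{(p_c, u_c)\}_{c \in C}$, and extending by $u_g \defeq 0$ for $g \notin C$, Proposition~\ref{prop:crossed_product_center_coefficients} shows that $z \defeq \sum_{c \in C} u_c^* \lambda_c$ lies in $Z(I(A) \rtimes_\lambda G)$. Since $r \neq e$ forces $e \notin C$ while $u_r = u \neq 0$, the element $z$ is a non-scalar element of the center. Using the containment $Z(B) \subseteq Z(I(B))$ from \cite[Corollary~4.3]{hamana79_injective_envelopes_cstaralg}, we conclude that $Z(I(I(A) \rtimes_\lambda G)) \neq \C$, so by Proposition~\ref{proposition:primeifffactor} the algebra $I(A) \rtimes_\lambda G$, and hence $A \rtimes_\lambda G$, is not prime. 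Notably, this half of the argument does not use the $G$-primeness hypothesis.

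For the converse, the key reduction is the claim that \emph{if $A$ is $G$-prime and $(A,G)$ has the intersection property, then $A \rtimes_\lambda G$ is prime}. Granting this, the contrapositive lets us deduce from non-primeness of $A \rtimes_\lambda G$ that $(A,G)$ fails the intersection property, after which the backward direction of Theorem~\ref{thm:mainSec4} supplies the desired triple $(p,u,r)$. To prove the claim, suppose towards a contradiction that $A \rtimes_\lambda G$ is not prime and select non-zero ideals $I, J \subseteq A \rtimes_\lambda G$ with $IJ = 0$. The intersection property forces both $I \cap A$ and $J \cap A$ to be non-zero, and each is $G$-invariant since $I$ and $J$ are preserved under conjugation by every $\lambda_g$. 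But $(I \cap A)(J \cap A) \subseteq IJ = 0$, contradicting $G$-primeness of $A$.

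The main conceptual point is simply identifying $G$-primeness, rather than primeness of $A$ alone, as the correct bridge between primeness of $A \rtimes_\lambda G$ and the intersection property; there is no serious technical obstacle to overcome, as both directions delegate almost immediately to the corresponding halves of Theorem~\ref{thm:mainSec4} combined with Proposition~\ref{proposition:primeifffactor}.
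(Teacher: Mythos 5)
Your proof is correct and follows essentially the same route as the paper: the forward direction builds the non-scalar central element $\sum_{c\in C}u_c^*\lambda_c$ of $I(A)\rtimes_\lambda G$ exactly as in Theorem~\ref{thm:mainSec4} and invokes Proposition~\ref{proposition:primeifffactor}, while your converse is the contrapositive restatement of the paper's argument that orthogonal ideals $I,J\subseteq A\rtimes_\lambda G$ intersect $A$ in $G$-invariant orthogonal ideals, so $G$-primeness forces one of them to violate the intersection property. No gaps.
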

	
	\begin{proof}
		Assume that there is a triple $(p,u,r)$ satisfying the properties listed in the Theorem. Exactly as in the proof of Theorem~\ref{thm:mainSec4}, the triple gives rise to a central element of $I(A)\rtimes_\lambda G$ which does not lie in $Z(I(A))^G$. In particular, such an element is non-scalar, and so $I(A)\rtimes_\lambda G$ is not prime. By Proposition~\ref{proposition:primeifffactor}, $A\rtimes_\lambda G$ is not prime either.
		
		Conversely, assume that $A\rtimes_\lambda G$ is not prime. We claim that the intersection property for $(A,G)$ cannot hold. Indeed, since $A\rtimes_\lambda G$ is not prime, we can find two non-trivial orthogonal ideals $J,K\subseteq A\rtimes_\lambda G$. As $J\cap A$ and $K\cap A$ are $G$-invariant orthogonal ideals in $A$, and $A$ is $G$-prime, we have that at least one of them must be the zero ideal. In other words, at least one of $J$ or $K$ violates the intersection property for $(A,G)$. By Theorem~\ref{thm:mainSec4}, we obtain a triple $(p,u,r)$ which satisfies the desired properties.		
	\end{proof}
	
	\begin{remark}
		Theorem~\ref{thm:mainSec4Prime} had already been proven in \cite[Theorem~10.1]{hamana85-injective_envelopes_equivariant} in the special case of finite groups.
	\end{remark}

	\section{Primality for minimal actions and simplicity for FC-hypercentral groups}
	\label{sec:primality_minimal}
	
	This section is essentially an observation that most of the arguments in the previous section, which deal with characterizing when a reduced crossed product has the intersection property (a generalization of simplicity), actually also apply in the context of characterizing when a reduced crossed product is prime (Theorem~\ref{thmIntro:C}), for all minimal actions of any discrete groups. Afterwards, we show via an application of a result of Echterhoff from \cite{echterhoff_jot} that Theorem~\ref{thmIntro:B}, which characterizes simplicity of $A \rtimes_\lambda G$ in the case of FC-hypercentral $G$, immediately follows.
	
	First, we require a seemingly arbitrary lemma with quite a convoluted proof, and it will have its usefulness become clear near the end of the proof of the main theorem. The proof of the lemma makes use of the basic properties of the Furstenberg boundary $\boundary_F G$. This was originally introduced by Furstenberg several decades ago in \cite{furstenberg_proceedings} (see also \cite{furstenberg_poisson}), as a topological boundary to be used largely in the study of Lie groups. It was more recently studied in the C*-simplicity results \cite{kalantar_kennedy_boundaries} and \cite{breuillard_kalantar_kennedy_ozawa_c_simplicity}, where in particular it was observed that for discrete groups, $I_G(\mathbb{C}) \isoto C(\partial_F G)$ as $G$-C*-algebras. If $G$ is a discrete group, the Furstenberg boundary $\boundary_F G$ is the universal compact Hausdorff $G$-space with the following properties:
	
	\begin{enumerate}
		\item It is \emph{minimal}.
		\item It is \emph{strongly proximal}, in the sense that for any probability measure $\nu \in P(\boundary_F G)$, there is a net of group elements $(g_\lambda)_\lambda$ such that $\lim_\lambda g_\lambda \nu = \delta_x$, for some $x \in X$.
	\end{enumerate}
	
	As noted in Proposition~\ref{prop:injective_is_mc_and_ginjective_is_injective}, $I_G(\mathbb{C})$ is monotone complete, and hence $\partial_F G$ is an extremally disconnected space (see, for example, \cite[Theorem~2.3.7]{SWMonotoneComplete}).
	
	Before we proceed with the proof of the lemma, it might also interest the reader to realize that in the case of amenable groups, we are guaranteed the existence of an invariant state on any unital C*-algebra, and the proof of the lemma becomes extremely easy. The significant difficulty in the non-amenable case is that now, we are only guaranteed the existence of $G$-equivariant unital completely positive maps into $I_G(\C)$.
	
	\begin{lemma}
		\label{lem:infinitely_many_distinct_translates_unbounded_sum}
		Let $G$ be a discrete group acting minimally on a compact Hausdorff space $X$, and let $f \in C(X)$ be a non-zero positive function. Consider the set
		\[ \setbuilder{g \cdot f}{g \in G}, \]
		without counting repetition among the elements.
		If this set is infinite, then the sum of its elements cannot be uniformly bounded from above, in the sense that there cannot exist some scalar $k \geq 0$ such that every sum of finitely many elements is bounded by $k$ from above.
	\end{lemma}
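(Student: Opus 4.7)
The plan is to argue by contradiction: assume some $k\geq 0$ exists such that every finite sum of distinct elements of $T\defeq\setbuilder{g\cdot f}{g\in G}$ is pointwise bounded above by $k$, and derive a contradiction from $\card{T}=\infty$. Setting $K\defeq\setbuilder{g\in G}{g\cdot f=f}$, the set $T$ is in bijection with $G/K$ (left cosets), which is infinite by hypothesis.

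The first step is to transport the problem to the Furstenberg boundary. Using $G$-injectivity of the $G$-operator system $C(\boundary_F G)\isoto I_G(\C)$, I extend the unital inclusion $\C\injectsinto C(\boundary_F G)$ to a $G$-equivariant unital completely positive map $\phi\colon C(X)\to C(\boundary_F G)$ and set $h\defeq\phi(f)\geq 0$. Since $\setbuilder{x\in X}{f(x)>0}$ is non-empty open, minimality and compactness of $X$ yield finitely many $s_1,\ldots,s_m\in G$ and $\varepsilon_0>0$ with $\sum_j s_j\cdot f\geq\varepsilon_0\mathbf{1}_X$; applying $\phi$ gives $\sum_j s_j\cdot h\geq\varepsilon_0\mathbf{1}_{\boundary_F G}$, so $h\neq 0$. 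Let $K'\defeq\setbuilder{g\in G}{g\cdot h=h}\supseteq K$. Any finite subset $S'\subseteq G/K'$ of distinct cosets lifts via the natural surjection $G/K\surjectsonto G/K'$ to $S\subseteq G/K$ of the same cardinality, producing distinct translates $\{g\cdot f:[g]_K\in S\}$ with $\sum_{[g]_K\in S}g\cdot f\leq k$; applying $\phi$ yields $\sum_{[\gamma]\in S'}\gamma\cdot h\leq k$. Taking suprema over all such finite $S'$ inside the monotone complete C*-algebra $C(\boundary_F G)$, I obtain the pointwise bound $\sum_{[\gamma]\in G/K'}h(\gamma^{-1}y)\leq k$ for every $y\in\boundary_F G$.

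If $[G:K']<\infty$ the proof concludes by pigeonhole: among $n$ distinct classes in $G/K$, at least $n/[G:K']$ collapse to a single class in $G/K'$, so some $\gamma\cdot h$ appears with multiplicity $\geq n/[G:K']$ in the sum $\sum_i g_i\cdot h$, forcing $\norm{\sum_i g_i\cdot h}_\infty\geq n\norm{h}_\infty/[G:K']>k$ for $n$ large. This already settles the amenable case, where $\boundary_F G$ reduces to a point and $K'=G$.

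The main obstacle is the case $[G:K']=\infty$, where I invoke \emph{strong proximality} of $G\actson\boundary_F G$. Fix $\varepsilon>0$ with $A_\varepsilon\defeq\setbuilder{y\in\boundary_F G}{h(y)>\varepsilon}$ non-empty; since $K'$ fixes $h$, the set $A_\varepsilon$ is $K'$-invariant, so $\gamma A_\varepsilon$ depends only on $[\gamma]\in G/K'$. The pointwise bound refines to the multiplicity estimate
\[ \card{\setbuilder{[\gamma]\in G/K'}{y\in\gamma A_\varepsilon}}\leq k/\varepsilon \qquad\text{for every }y\in\boundary_F G. \]
Choose $n>k/\varepsilon$, pick distinct cosets $[\gamma_1],\ldots,[\gamma_n]\in G/K'$ and a point $y_0\in A_\varepsilon$, set $y_i\defeq\gamma_i y_0$, and define $\nu\defeq\tfrac{1}{n}\sum_{i=1}^n\delta_{y_i}\in P(\boundary_F G)$. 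By strong proximality, the orbit closure $\closure{G\cdot\nu}$ contains a Dirac, and by its $G$-invariance together with minimality of $G\actson\boundary_F G$ it contains \emph{every} Dirac; hence there is a net $(g_\lambda)\subseteq G$ with $g_\lambda\cdot\nu\to\delta_z$ weakly for some $z\in A_\varepsilon$. Lower semicontinuity of $\mathbf{1}_{A_\varepsilon}$ then forces $g_\lambda y_i\in A_\varepsilon$ for every $i$ once $\lambda$ is large, i.e.\ $y_i\in g_\lambda^{-1}A_\varepsilon$ for every $i$. A short coset calculation (direct when $K'$ is normal, with additional bookkeeping via the conjugate subgroup $g_\lambda^{-1}K'g_\lambda$ otherwise) then exhibits $n$ distinct classes in $G/K'$ all lying inside $\setbuilder{[\eta]\in G/K'}{y_0\in\eta A_\varepsilon}$, violating the multiplicity estimate $k/\varepsilon<n$ and completing the argument.
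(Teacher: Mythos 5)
Most of your argument is sound and runs parallel to the paper's: transporting $f$ to $h=\phi(f)\in C(\boundary_F G)$ via a $G$-equivariant u.c.p.\ map, the pointwise bound $\sum_{[\gamma]\in G/K'}h(\gamma^{-1}y)\leq k$, and the pigeonhole disposal of the case $[G:K']<\infty$ are all correct. The gap is in the final step, and it is not ``bookkeeping'' --- it is the central difficulty of the lemma. From $g_\lambda\gamma_i y_0\in A_\varepsilon$ you obtain $y_0\in\eta_i A_\varepsilon$ with $\eta_i=\gamma_i^{-1}g_\lambda^{-1}$, and you need the $n$ left cosets $\eta_iK'$ to be distinct. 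But $\eta_iK'=\eta_jK'$ if and only if $\eta_j^{-1}\eta_i=g_\lambda\gamma_j\gamma_i^{-1}g_\lambda^{-1}\in K'$, i.e.\ $\gamma_j\gamma_i^{-1}\in g_\lambda^{-1}K'g_\lambda$, whereas your hypothesis on the $\gamma_i$ is $\gamma_j^{-1}\gamma_i\notin K'$. For non-normal $K'$ these conditions are unrelated: a fixed finite set such as $\set{\gamma_j\gamma_i^{-1}}_{i\neq j}$ can perfectly well lie inside a conjugate $g_\lambda^{-1}K'g_\lambda$ even though $[G:K']=\infty$, in which case all the cosets $\eta_iK'$ collapse to one and no contradiction with the multiplicity estimate is obtained. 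The order of quantifiers is what kills the repair you sketch: $g_\lambda$ is produced by strong proximality only \emph{after} $\nu$, hence after the $\gamma_i$, has been fixed, so you cannot arrange the $\gamma_i$ to be separated modulo the (unknown in advance) conjugate subgroup.

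The paper's proof is structured precisely to avoid this. Rather than fixing $n$ cosets up front, it replaces the level set by a clopen set $\closure{U}$ with projection $p$, takes $\omega=\sum_n 2^{-n}\delta_{y_n}$ for a dense sequence $(y_n)$ (this is where countability of $G$ enters, with a separate reduction for uncountable groups), notes that $(g\cdot\omega)(p)<1$ for \emph{every} $g$ because no single translate $g^{-1}\closure{U}$ can contain a dense set, and then uses strong proximality to force $(g_\lambda\cdot\omega)(p)\to 1$. Since $\omega(g^{-1}\cdot p)$ depends only on the translate $g^{-1}\cdot p$ and never equals $1$, infinitely many of these translates must be distinct with $\omega$-mass at least $\tfrac{1}{2}$; distinctness is thus \emph{forced by the values} rather than assumed. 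You would need an analogous device to make your coset formulation work; as written, your final step does not go through.
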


	\begin{proof}
		This proof will, somewhat interestingly, first assume that $G$ is countable, and from there deduce the case of uncountable groups.
		
		For convenience, let $W \subseteq G$ be a choice of elements of $G$ so that
		\[ \setbuilder{g \cdot f}{g \in W} = \setbuilder{g \cdot f}{g \in G}, \]
		and $g \cdot f$ are distinct for distinct values of $g \in W$. We will do a proof by contradiction, and start with the assumption that $\sum_{g \in W} g \cdot f$ is uniformly bounded.        
		
		Since $I_G(\mathbb{C})$ is $G$-injective and isomorphic to $C(\partial_F G)$, we know that there is at least one $G$-equivariant unital completely positive map $\phi \colon C(X) \to C(\boundary_F G)$. We also know that the left-ideal
		\[ \setbuilder{h \in C(X)}{\phi(h^*h) = 0} \]
		is a two-sided $G$-invariant ideal in $C(X)$. Since $1$ does not lie in this ideal, by minimality, this ideal must necessarily be zero. In other words, the map $\phi$ is faithful.
		
		We know that $\phi(f) \neq 0$, by faithfulness. We now divide the proof into two cases:
		
		If it were the case that $\phi(f) \geq \delta$ for some $\delta > 0$, then the sum $\sum_{g \in W} g \cdot f$ could not be bounded, as neither could $\sum_{g \in W} \phi(g \cdot f)$.
		
		Assume therefore that $\phi(f)$ is not bounded from below. From here, choose some fixed $\delta > 0$ so that $\phi(f)(y) > \delta$ for some $y \in \boundary_F G$. Let $U \coloneqq \phi(f)^{-1}(\delta,\infty)$. Given that $\boundary_F G$ is extremally disconnected, we have that $\closure{U}$ is a clopen subset, and $\phi(f)(z) \geq \delta$ for every $z\in \closure{U}$, while $\phi(f)(z) \leq \delta$ for every $z\in \partial_FG\setminus \closure{U}$. Note that, by assumption of $\phi(f)$ not being bounded below by any strictly positive scalar, we have $\closure{U} \neq \partial_FG$. For convenience, we will also let $p \in C(\boundary_F G)$ be the non-trivial projection coming from this clopen subset, so that $\phi(f) \geq \delta \cdot p$.
		
		Another observation we make is that if $g_1 \cdot p \neq g_2 \cdot p$, this implies that $g_1 U \neq g_2 U$, which are exactly the set of points on which $g_1 \cdot \phi(f)$ and $g_2 \cdot \phi(f)$ are strictly greater than $\delta$, respectively. Consequently, it must be the case that $g_1 \cdot \phi(f) \neq g_2 \cdot \phi(f)$. 
		In other words, distinct translates of $p$ give rise to distinct translates of $\phi(f)$. Thus, if we show that the sum of the distinct translates of $p$ is unbounded, then, since $\phi(f) \geq \delta \cdot p$, this will automatically imply the same for the distinct translates of $\phi(f)$.
		
		Since $G$ is countable, the Furstenberg boundary $\boundary_F G$ is separable. This is a direct consequence of minimality of the space. Let $(y_n)_{n=1}^\infty$ be a dense subset of this space. Observe that there cannot exist any $g \in G$ with the property that $g \cdot y_n \in \closure{U}$ for all $n\in\mathbb{N}$. Otherwise, we would have $y_n \in g^{-1} \closure{U}$ for all $n\in\mathbb{N}$, implying $g^{-1} \closure{U} = \boundary_F G$ by density. This is impossible, since we have already shown $\closure{U} \neq \boundary_F G$.
		
		Thus, if we construct a probability measure $\omega \in P(\boundary_F G)$ by $\omega = \sum_{n=1}^\infty \frac{1}{2^n} \delta_{y_n}$ (which we will also view as a state on $C(\boundary_F G)$), the previous paragraph essentially tells us that $(g\cdot \omega)(p)$ is never exactly equal to $1$ for any $g \in G$. However, choosing any $z \in \closure{U}$, we know that since $\boundary_F G$ is a strongly proximal, there is a net $(g_\lambda)_\lambda$ in $G$ such that $\lim_\lambda g_\lambda \cdot \omega = \delta_z$. In other words, even though $(g\cdot \omega)(p) \neq 1$ for any $g\in G$, we can still find infinitely many elements $g\in G$ for which $(g\cdot \omega)(p) = \omega(g^{-1} \cdot p)$ is arbitrarily close to $1$ and, say, greater than $\frac{1}{2}$. Thus, we have infinitely many $g\in G$ for which $g^{-1} p$ are distinct, and $\omega(g^{-1} \cdot p) \geq \frac{1}{2}$.
		
		Consequently, the sum of the distinct translates of $p$ cannot be bounded. By what was mentioned earlier, the same must be true for the corresponding distinct translates of $\phi(f)$, and thus the distinct translates of $f \in C(X)$.
		
		The case of uncountable groups is not that hard to deduce from the countable case. Assume $G$ is uncountable, and consider our starting function $f \in C(X)$. Let $U \subseteq X$ be a nonempty open set such that $f(x) > 0$ for all $x \in U$. As $\cup_{g\in G} g\cdot U$ is an open invariant set, it must cover $X$ by minimality. Compactness then tells us that there is a finite collection $g_1 U, \dots, g_n U$ covering $X$. In terms of our function translates, this tells us that $g_1 \cdot f, \dots, g_n \cdot f$ have the property that for any $x \in X$, there is some $i\in \{1,\ldots,n\}$ with $(g_i \cdot f)(x) > 0$.
		
		By assumption, we may also choose countably many distinct group elements $(k_i)_{i=1}^\infty$ such that the translates $k_i \cdot f$ are all distinct. Now let $H$ be the necessarily countable subgroup of $G$ generated by $g_1, \dots, g_n$ and $(k_i)_{i=1}^\infty$. Although the restricted action of $H$ on $X$ is not necessarily minimal, we can pass to a minimal $H$-subsystem $Y\subseteq X$ by Zorn's lemma. Consider the restriction map $\pi \colon C(X) \to C(Y)$. The function $\pi(f)$ is non-zero, by construction.
		
		If the set $\setbuilder{h \cdot \pi(f)}{h \in H}$ is finite, then by the pigeonhole principle, because $\setbuilder{h \cdot f}{h \in H}$ is infinite, there are infinitely many distinct translates $h \cdot f$ mapping to $h_0 \cdot \pi(f)$, for some $h_0\in H$. Thus, the sum of the distinct translates $h \cdot f$ mapping to this value is unbounded, as under the image of $\pi$, the sum of infinitely many copies of $h_0 \cdot \pi(f)$ is certainly unbounded.
		
		If the set $\setbuilder{h \cdot \pi(f)}{h \in H}$ is infinite, then we may simply apply the lemma we have already proven in the countable setting to deduce that the sum of the infinitely many distinct translates $h \cdot \pi(f)$ in $C(Y)$ is unbounded. Thus, the sum of the infinitely many distinct translates in the original space $C(X)$, i.e.\ the sum of the distinct $h \cdot f$, is certainly unbounded as well.
	\end{proof}

    \begin{lemma}
    \label{lem:minimal_central_element_FC_support}
        Let $G$ be a discrete group acting minimally on a unital C*-algebra $A$, and assume $z = \sum z_t^*\lambda_t$ is an element of the center of $M(I(A),G)$. If $z_t \neq 0$ for some $t \in G$, then $t$ necessarily has finite conjugacy class.
    \end{lemma}

    \begin{proof}
        To see this, let $E : M(I(A),G) \to I(A)$ be the canonical conditional expectation, and consider
		\[ E(zz^*) = O-\sum_{t \in G} z_t^* z_t, \]
		so that the net of finite sums is order-convergent to a concrete element in $I(A)$. The precise details of order convergence are not important. What is important is that this in particular implies that
		\[\sum_{t \in \mathcal{F}} z_t^* z_t\leq E(zz^*), \]
		for every finite subset $\mathcal{F}\subseteq G$ (see \cite[Lemma~1.2.(iv)]{hamana82_mc_tensor_products_I}).
		
		We recall from Proposition~\ref{prop:mc_crossed_product_commutant_coefficients} that $z_t y = (t \cdot y) z_t$, for all $y \in I(A)$. Consequently,
		\[ z_t^* z_t y = z_t^* (t \cdot y) x_t = y z_t^* z_t, \]
		for all $y \in I(A)$, or, in other words, $z_t^*z_t$ lies in $Z(I(A))$.  Proposition~\ref{prop:mc_crossed_product_commutant_coefficients} also tells us that $s \cdot z_t = z_{sts^{-1}}$, and so $s \cdot (z_t^*z_t) = z_{sts^{-1}}^*z_{sts^{-1}}$ for all $s,t\in G$.
		
		Let us restrict our attention to an \emph{infinite} conjugacy class $C \subseteq G$, and show that the coefficients $z_c$ must be zero for every $c\in C$. Assume otherwise, so that one, hence all, of these coefficients are non-zero. If $\setbuilder{z_c^*z_c}{c \in C}$ is a finite set, then by the pigeonhole principle, there are infinitely many $c \in C$ for which $z_c^*z_c$ are all the same value. Given that $z_c^*z_c \neq 0$, this clearly contradicts the assumption that the sum $\sum_{t \in G} z_t^*z_t$ is uniformly bounded above.
		Hence, we are left to conclude that $\setbuilder{z_c^*z_c}{c \in C}$ is an infinite set. Equivalently, if we choose a fixed $c_0 \in C$, the set
		\[ \setbuilder{g \cdot (z_{c_0}^*z_{c_0})}{g \in G} \]
		has infinitely many distinct elements in $Z(I(A))$. By Lemma~\ref{lem:infinitely_many_distinct_translates_unbounded_sum}, these once again cannot have a bounded sum, as $Z(I(A))$ is minimal by Proposition~\ref{prop:minimality_transfers_to_injective_envelopes_and_centers}. Therefore $z_c = 0$.
    \end{proof}
	\begin{theorem}\label{thm:DiscreteMinimal}
		Let $G$ be a discrete group acting minimally on a unital C*-algebra $A$. Then $A \rtimes_\lambda G$ is not prime if and only if there exist $r \in \FC(G) \setminus \set{e}$, a non-zero $r$-invariant central projection $p \in I(A)$, and a unitary $u \in U(I(A)p)$ such that
		\begin{enumerate}
			\item $r$ acts by $\Ad u$ on $I(A)p$;
			\item $s \cdot p = p$ and $s \cdot u = u$ for all $s \in C_G(r)$.
		\end{enumerate}
	\end{theorem}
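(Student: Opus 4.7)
My plan is to mirror the structure of the forward direction in the proof of Theorem~\ref{thm:mainFCHSimpleMinimal}, replacing the amenability-dependent use of Corollary~\ref{cor:fch_nontrivial_pseudoexpectation_nontrivial_fc_coefficients} by a direct deployment of Lemma~\ref{lem:infinitely_many_distinct_translates_unbounded_sum}. The backward implication is identical to those in Theorem~\ref{thm:mainSec4Prime} and Theorem~\ref{thm:mainFCHSimpleMinimal}: given a triple $(p,u,r)$ with $r\in\FC(G)\setminus\{e\}$, Propositions~\ref{prop:equivariant_unitaries_to_single_unitary} and \ref{prop:crossed_product_center_coefficients} produce a non-scalar element $\sum_{c\in C}u_c^*\lambda_c$ in the centre of $I(A)\rtimes_\lambda G$ (where $C$ is the \emph{finite} conjugacy class of $r$), so $I(A)\rtimes_\lambda G$ is not prime, and neither is $A\rtimes_\lambda G$ by Proposition~\ref{proposition:primeifffactor}.

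For the forward direction, assume $A\rtimes_\lambda G$ is not prime; then neither is $I(A)\rtimes_\lambda G$ by Proposition~\ref{proposition:primeifffactor}. Choose non-trivial orthogonal ideals $J,K\subseteq I(A)\rtimes_\lambda G$ and set $q\defeq\sup^{M(I(A),G)}K_1^+$. By Propositions~\ref{prop:ideal_sup_projection} and \ref{prop:ideal_sup_orthogonality}, $q$ is a non-zero projection in $M(I(A),G)$ that commutes with $I(A)\rtimes_\lambda G$ and annihilates $J$. Moreover $q\notin I(A)$, for otherwise it would lie in $Z(I(A))^G=\mathbb{C}$ (minimality of $G$ on $Z(I(A))$ comes from Proposition~\ref{prop:minimality_transfers_to_injective_envelopes_and_centers}) and thus equal $0$ or $1$. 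Writing $q=\sum_t q_t\lambda_t\in M(I(A),G)$, Proposition~\ref{prop:mc_crossed_product_commutant_coefficients} gives the relation $s\cdot q_t=q_{sts^{-1}}$ for all $s,t\in G$. Extracting the $\lambda_e$-coefficient of the identity $q=q^*q$ (using $q_{t^{-1}}=t^{-1}\cdot q_t^*$, which follows from self-adjointness of $q$) produces the crucial bound
\[ \sum_{t\in G\setminus\{e\}} t^{-1}\cdot(q_t^*q_t) \;=\; q_e - q_e^2 \;\leq\; \tfrac{1}{4} \]
inside the commutative monotone complete algebra $Z(I(A))$, with $q_e\in Z(I(A))^G=\mathbb{C}$ a scalar in $[0,1]$.

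The heart of the argument is showing that $q_r\neq 0$ for some $r\in\FC(G)\setminus\{e\}$; once this is achieved, polar-decomposing $q_r^*$ inside the monotone complete algebra $I(A)$ and invoking Propositions~\ref{prop:properly_outer_equivariant_polar_decompositions} and \ref{prop:equivariant_unitaries_to_single_unitary} produces the triple $(p,u,r)$ exactly as in the final paragraph of the proof of Theorem~\ref{thm:mainSec4}. Suppose towards a contradiction that $q_t=0$ for every $t\in\FC(G)\setminus\{e\}$ and fix $t_0\in G\setminus\FC(G)$ with $q_{t_0}\neq 0$. Set $H\defeq C_G(t_0)$ (which has infinite index in $G$) and $f\defeq q_{t_0}^*q_{t_0}\in Z(I(A))\setminus\{0\}$; note that $f$ is $H$-invariant because $h\cdot q_{t_0}=q_{ht_0h^{-1}}=q_{t_0}$ for $h\in H$. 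Using the change of variables $t=st_0s^{-1}$ (with $s$ ranging over a transversal $S$ of $H$ in $G$), one computes $t^{-1}\cdot(q_t^*q_t)=(st_0^{-1})\cdot f$, and because $t_0$ normalizes $H$ the translate $T\defeq St_0^{-1}$ is again a transversal of $H$. Restricting the bound above to the conjugacy class of $t_0$ becomes
\[ \sum_{g\in T} g\cdot f \;\leq\; \tfrac{1}{4}, \]
a bounded transversal sum of translates of a non-zero $H$-invariant positive element. Let $H'\defeq\operatorname{Stab}_G(f)\supseteq H$: by $H$-invariance of $f$ and $T$ being a transversal of $H$, each fibre of the orbit map $T\to G\cdot f$ has cardinality $[H':H]$. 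If $[H':H]=\infty$, some orbit value is hit infinitely often, forcing the sum to be unbounded; if $[H':H]<\infty$ and $[G:H']<\infty$ then $[G:H]<\infty$, contradicting $t_0\notin\FC(G)$; hence $[H':H]<\infty$ and $G\cdot f$ is infinite, in which case $\sum_{g\in T}g\cdot f=[H':H]\cdot\sum_{v\in G\cdot f}v$ forces the distinct-translates sum to be bounded by $1/(4[H':H])$, directly contradicting Lemma~\ref{lem:infinitely_many_distinct_translates_unbounded_sum} applied to $f$ and the minimal $G$-action on the spectrum $X$ of $Z(I(A))$ (minimal by Proposition~\ref{prop:minimality_transfers_to_injective_envelopes_and_centers}).

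The principal obstacle is that Lemma~\ref{lem:infinitely_many_distinct_translates_unbounded_sum} only supplies unboundedness of the \emph{full} orbit's sum of distinct translates, while our bound naturally concerns a sum over a transversal of an infinite-index subgroup. The trick that makes everything work is that the $H$-invariance of $f$ combined with $T$ being a transversal of $H$ turns this transversal sum into an exact $[H':H]$-fold multiple of the full-orbit sum of distinct translates, at which point the lemma applies without further work.
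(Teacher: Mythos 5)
Your proposal is correct and follows essentially the same route as the paper's proof: reduce to non-primality of $I(A)\rtimes_\lambda G$, form $q=\sup^{M(I(A),G)}K_1^+$, use the commutation relations of its coefficients to get a uniformly bounded sum of translates of the central elements $q_t^*q_t$, and then rule out coefficients supported on infinite conjugacy classes by combining a pigeonhole argument (finite orbit of $q_{t_0}^*q_{t_0}$) with Lemma~\ref{lem:infinitely_many_distinct_translates_unbounded_sum} (infinite orbit), before finishing with the polar decomposition. The only differences are cosmetic: you extract the bound from $q=q^*q$ as $q_e-q_e^2\leq\tfrac14$ rather than from $E(qq^*)\leq 1$, and you organize the finite/infinite dichotomy via the stabilizer $H'=\operatorname{Stab}_G(f)$ and exact fibre counts where the paper only needs each distinct translate to occur at least once.
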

	
	\begin{proof}
		First, if there is a triple $(p,u,r)$ satisfying the properties listed above, then exactly as in the proof of Theorem~\ref{thm:mainSec4Prime} (see also Theorem~\ref{thm:mainSec4}), we conclude that $A\rtimes_\lambda G$ is not prime.
		
		Conversely, assume that $A \rtimes_\lambda G$ is not prime. Hence neither is $I(A) \rtimes_\lambda G$ (by Proposition~\ref{proposition:primeifffactor}). Choose non-trivial orthogonal ideals $J,K \subseteq I(A) \rtimes_\lambda G$, and let \[q\coloneqq \sup {}^{M(I(A),G)}K_1^+.\] By Proposition~\ref{prop:ideal_sup_projection} and Proposition~\ref{prop:ideal_sup_orthogonality}, we have that $q$ is a non-zero projection which commutes with the copy of $I(A) \rtimes_\lambda G \subseteq M(I(A),G)$, and satisfies $J\cdot q=0$. Moreover, $q\notin I(A)$, as otherwise $q$ would be an element of $Z(I(A))^G$. This is impossible, since the induced system $(Z(I(A)),G)$ is minimal (see Proposition~\ref{prop:minimality_transfers_to_injective_envelopes_and_centers}), and so $Z(I(A))^G=\mathbb{C}$.
		
		Letting $q = \sum_{t\in G}z_t^*\lambda_t$ as an element of $M(I(A),G)$, we know that there exists $r\in G\setminus \{e\}$ so that $z_r\neq 0$. By Lemma~\ref{lem:minimal_central_element_FC_support}, $r$ necessarily has finite conjugacy class. The proof now proceeds exactly as in Theorem~\ref{thm:mainSec4}. More precisely, if $z_r=u\abs{z_r}$ is the polar decomposition of $z_r$, with $p=u^*u$, then the triple $(p,u,r)$ has the desired properties.
	\end{proof}

    Now we state the following immediate, and somewhat surprising, corollary to Theorem~\ref{thm:DiscreteMinimal}.
	
	\begin{corollary}
		\label{cor:ICC}
		If $G$ is a discrete ICC group acting minimally on a unital C*-algebra $A$, then the reduced crossed product $A\rtimes_\lambda G$ is prime.
	\end{corollary}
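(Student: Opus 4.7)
The plan is to derive this as an immediate consequence of Theorem~\ref{thm:DiscreteMinimal}. Recall that a discrete group $G$ is ICC precisely when every non-identity element has infinite conjugacy class, which is the same as saying $\FC(G) = \set{e}$. Theorem~\ref{thm:DiscreteMinimal} characterizes non-primality of $A \rtimes_\lambda G$ (for minimal actions on unital $A$) in terms of the existence of a triple $(p, u, r)$ with $r \in \FC(G) \setminus \set{e}$ satisfying the two conditions on $I(A)$.

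Therefore the first step is to observe that under the ICC assumption, the set $\FC(G) \setminus \set{e}$ is empty, so no such element $r$ can exist. Consequently, the equivalent condition for non-primality in Theorem~\ref{thm:DiscreteMinimal} fails vacuously. Taking the contrapositive yields that $A \rtimes_\lambda G$ is prime, which is what we want.

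Since this is essentially a one-line deduction from the already-established main theorem, there is no real obstacle; the only thing to verify is the translation between the ICC condition and the triviality of $\FC(G)$, which is immediate from the definitions. No further machinery (injective envelopes, pseudo-expectations, the technical lemma on translates) needs to be invoked at this stage, as all of that work has already been absorbed into Theorem~\ref{thm:DiscreteMinimal}.
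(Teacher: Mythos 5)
Your proposal is correct and matches the paper's intended argument exactly: the paper labels the corollary ``immediate'' from Theorem~\ref{thm:DiscreteMinimal} and gives no further proof, the point being precisely that ICC means $\FC(G)\setminus\set{e}=\emptyset$, so the characterization of non-primality fails vacuously. Nothing further is needed.
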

	
	We remark that the hypotheses that the action is minimal and $G$ is ICC are far from being necessary. For example, by \cite[Proposition~2.8]{MatuiRor}, the action of $\mathbb{Z}$ on its one-point compactification gives rise to a prime crossed product.
	
	Finally, we want to take the ideas from Section~\ref{sec:intersection_property_fc}, and show that at least in the minimal setting, they can be generalized to the case of FC-hypercentral groups.
	After the first public release of this preprint, Siegfried Echterhoff kindly pointed out to us that Theorem~\ref{thm:mainFCHSimpleMinimal} immediately follows from a combination of the main result of his paper \cite{echterhoff_jot} together with Theorem~\ref{thm:DiscreteMinimal}. Previously, this was done using different techniques inspired by \cite{bedos_omland_fc_hypercentral_simplicity}, generalized to the context of pseudo-expectations of crossed products.

    \begin{lemma}[Special case of {\cite[Theorem~3.1]{echterhoff_jot}} or {\cite[Satz~5.3.1]{echterhoff_thesis}}]
		\label{lem:fc_simplicity_equivalence}
		Let $G$ be an FC-hypercentral group acting minimally on a unital C*-algebra $A$. Then
		$A \rtimes_\lambda G$ is simple if and only if $A \rtimes_\lambda G$ is prime.  
	\end{lemma}
	
	\begin{theorem}\label{thm:mainFCHSimpleMinimal}
		Let $G$ be an FC-hypercentral group acting minimally on a unital C*-algebra $A$. Then $A \rtimes_\lambda G$ is not simple if and only if there exist $r \in \FC(G) \setminus \set{e}$, a non-zero $r$-invariant central projection $p \in I(A)$, and a unitary $u \in U(I(A)p)$ such that
		\begin{enumerate}
			\item $r$ acts by $\Ad u$ on $I(A)p$;
			\item $s \cdot p = p$ and $s \cdot u = u$ for all $s \in C_G(r)$.
		\end{enumerate}
	\end{theorem}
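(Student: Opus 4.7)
\emph{Forward direction.} Given a triple $(p, u, r)$ with $r \in \FC(G) \setminus \set{e}$ as in the theorem, the conjugacy class $C$ of $r$ is finite, and Proposition~\ref{prop:equivariant_unitaries_to_single_unitary} extends $(p, u)$ to a coherent family $\{(p_t, u_t)\}_{t \in C}$. Setting $u_t = 0$ for $t \notin C$, Proposition~\ref{prop:crossed_product_center_coefficients} places the finite sum $\sum_{c \in C} u_c^* \lambda_c$ in $Z(I(A) \rtimes_\lambda G)$ as a non-scalar element. Hence $I(A) \rtimes_\lambda G$ is not prime, so by Proposition~\ref{proposition:primeifffactor} neither is $A \rtimes_\lambda G$, and therefore it is not simple.

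\emph{Converse direction, initial setup.} Assume $A \rtimes_\lambda G$ is not simple. Proposition~\ref{prop:intersection_property_iff_faithful} produces a non-canonical equivariant pseudo-expectation $F \colon A \rtimes_\lambda G \to I_G(A)$, and Corollary~\ref{cor:fch_nontrivial_pseudoexpectation_nontrivial_fc_coefficients} yields some $s \in \FC(G) \setminus \set{e}$ with $F(\lambda_s) \neq 0$. Setting $x_t := F(\lambda_t)$ for $t$ in the finite conjugacy class $C$ of $s$, Propositions~\ref{prop:pseudoexpectation_coefficients} and~\ref{prop:crossed_product_center_coefficients} place $y := \sum_{c \in C} x_c^* \lambda_c$ in $Z(I_G(A) \rtimes_\lambda G) \setminus Z(I_G(A))^G$. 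Mirroring the proof of Theorem~\ref{thm:mainSec4}, Lemma~\ref{lemma:stronglynonprime} together with Remark~\ref{remark:stronglynonprimeinjective} yields orthogonal non-trivial ideals $J, K \subseteq I(A) \rtimes_\lambda G$ for which $J \cdot a = 0$ forces $a = 0$ whenever $a \in I(A)$, and $q := \sup^{M(I(A), G)} K_1^+$ is a non-zero central projection in $M(I(A), G) \setminus I(A)$ by Propositions~\ref{prop:ideal_sup_projection}, \ref{prop:ideal_sup_orthogonality}, and~\ref{prop:mc_crossed_product_commutant_coefficients}. Writing $q = \sum_t z_t^* \lambda_t$, the family $(z_t)$ satisfies the hypotheses of Proposition~\ref{prop:properly_outer_equivariant_polar_decompositions}, so each non-zero $z_t$ gives a candidate triple via polar decomposition.

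\emph{The main obstacle --- forcing $r \in \FC(G)$.} Unlike in Theorem~\ref{thm:mainSec4}, we must single out a non-zero coefficient $z_r$ with $r \in \FC(G) \setminus \set{e}$. To do this, I would expand the identity $q^2 = q$ in the monotone-complete multiplication of $M(I(A), G)$. Using $z_{s^{-1}} = z_s^*$ (from $q = q^*$) together with the equivariance relation $s \cdot z_{s^{-1}} = z_{s^{-1}}$, the coefficient of $\lambda_e$ in $q^2$ collapses to the order-convergent sum $\sum_{s \in G} z_s^* z_s = z_e$, which is dominated by $1$ since $q$ is a projection. Each $z_s^* z_s$ lies in $Z(I(A))$ by the twisted covariance of $z_s$, and FC-hypercentrality forces $G$ to be amenable, while Proposition~\ref{prop:minimality_transfers_to_injective_envelopes_and_centers} gives minimality of the induced action on $Z(I(A))$; combining these produces a faithful $G$-invariant state $\omega$ on $Z(I(A))$. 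For any $t \notin \FC(G)$, $G$-invariance of $\omega$ and the equivariance $s \cdot (z_t^* z_t) = z_{sts^{-1}}^* z_{sts^{-1}}$ make $\omega(z_{t'}^* z_{t'})$ constant across the infinite conjugacy class of $t$, and the partial-sum bound $\sum_{s \in F} \omega(z_s^* z_s) \leq 1$ over finite $F \subseteq G$ forces $\omega(z_t^* z_t) = 0$, whence $z_t = 0$ by faithfulness. Consequently some $r \in \FC(G) \setminus \set{e}$ has $z_r \neq 0$, and the polar decomposition $z_r = u \abs{z_r}$ inside the monotone complete algebra $I(A)$, via Propositions~\ref{prop:properly_outer_equivariant_polar_decompositions} and~\ref{prop:equivariant_unitaries_to_single_unitary}, delivers the required triple $(p, u, r)$ with $p = u^* u$.
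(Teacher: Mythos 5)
Your proof is correct, and the forward direction together with the opening of the converse follows the paper's argument essentially verbatim. The interesting divergence is in how you force the surviving coefficient of $q$ to sit over $\FC(G)$. The paper takes a more roundabout route: it first proves (Lemma~\ref{lem:fc_simplicity_equivalence}) that non-simplicity implies non-primality, extracts $q$ from a pair of orthogonal ideals, and then \emph{repackages} $q$ as a second, non-canonical pseudo-expectation $F(x) = E\bigl(( (1-q_e)+q ) x\bigr)$ so that Corollary~\ref{cor:fch_nontrivial_pseudoexpectation_nontrivial_fc_coefficients} can be applied a second time to locate $r \in \FC(G)\setminus\set{e}$ with $q_r \neq 0$. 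You instead analyze the coefficients of $q$ directly: the order-convergent identity $\sum_s z_s^* z_s = q_e \leq 1$, centrality of each $z_s^* z_s$ in $Z(I(A))$, amenability of FC-hypercentral groups, and a faithful invariant state on the minimal system $Z(I(A))$ kill every coefficient over an infinite conjugacy class. This is precisely the amenable-case specialization of the argument the paper reserves for Theorem~\ref{thm:DiscreteMinimal}, with the invariant state standing in for the much harder Lemma~\ref{lem:infinitely_many_distinct_translates_unbounded_sum}; the paper explicitly acknowledges this alternative in the proof of Theorem~\ref{thm:mainFCHSimpleMinimal} but opts not to pursue it. What your route buys is that you never need to construct the auxiliary conditional expectation $E(w\,\cdot\,)$ nor invoke Corollary~\ref{cor:fch_nontrivial_pseudoexpectation_nontrivial_fc_coefficients} a second time, and you obtain the stronger conclusion that \emph{all} coefficients of $q$ are supported on $\FC(G)$; what the paper's route buys is that the same Corollary does double duty and the structural information about pseudo-expectations is kept front and center. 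Two small cosmetic points: your invocation of Remark~\ref{remark:stronglynonprimeinjective} is unnecessary since you establish properness of the inclusion $Z(I_G(A))^G \subseteq Z(I_G(A)\rtimes_\lambda G)$ directly (which is the hypothesis of Lemma~\ref{lemma:stronglynonprime} as stated), and the domination of the partial sums $\sum_{s\in\mathcal{F}} z_s^* z_s$ by the order-limit should be justified by citing the order-convergence lemma the paper uses in Theorem~\ref{thm:DiscreteMinimal}.
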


	\section{From the injective envelope to the original C*-algebra}
	\label{sec:proper_outerness_on_A}
	
	The first dynamical characterizations of simplicity and primality that we obtain in our paper are initially written in terms of the dynamics of $G$ on $I(A)$, the injective envelope of $A$. While this gives the most elegant characterizations from a theory perspective, the injective envelope is still a somewhat mysterious object that is not that easy to describe concretely in many cases. Strictly speaking, it was also possible in all of our results to simply use Hamana's \emph{regular monotone completion} $\overline{A}$, which is significantly smaller in many cases, but at the end of the day it suffers from the same problem of still being relatively difficult to write down concretely. It would be desirable to have results which relate back to the dynamics on the original C*-algebra $A$.
	
	It was mentioned in the introduction that the notion of a \emph{properly outer} automorphism plays an important role in the study of simplicity of crossed products, and indeed, our results are an equivariant version of this notion. Recall the classical, non-equivariant notion first, which was discussed in Section~\ref{sec:preliminaries:properly_outer} (especially Theorem~\ref{thm:properly_outer_equivalence}, which we state again for convenience). Let $A$ be a unital C*-algebra, and let $\alpha \in \Aut(A)$, with its unique extension to $I(A)$ also denoted by $\alpha$. Consider the following conditions:
	\begin{enumerate}
		\item
		There is a non-zero $\alpha$-invariant ideal $J \subseteq A$ such that $\Gamma_B(\alpha|_J) = \set{e}$.
		
		\item
		There is a non-zero $\alpha$-invariant central projection $p \in I(A)$, and a unitary $u \in U(I(A)p)$ such that $\alpha$ acts by $\Ad u$ on $I(A)p$.
		
		\item
		There is a non-zero $\alpha$-invariant ideal $J \subseteq A$ and a unitary $u \in M(J)$, such that $\norm{\alpha|_J - (\Ad u)|_J} < 2$.
		
		\item
		Given any $\varepsilon\in (0,2)$, there exists a non-zero $\alpha$-invariant ideal $J \subseteq A$ and a unitary $u \in M(J)$ such that $\norm{\alpha|_J - (\Ad u)|_J} < \varepsilon$.
	\end{enumerate}
	We have that (\ref{thm:properly_outer_equivalence:borchers}) and (\ref{thm:properly_outer_equivalence:injective_envelope}) are equivalent, and both are implied by (\ref{thm:properly_outer_equivalence:elliott}) and (\ref{thm:properly_outer_equivalence:elliott_strong}). If $A$ is separable, all four are equivalent.

	We will ultimately require some notion of invariance when it comes to all of the pieces involved in the above theorem. In particular, our characterization of simplicity, or lack thereof, is a modified version of condition~(\ref{thm:properly_outer_equivalence:injective_envelope}), where we require the unitary $u$ corresponding to some automorphism $\alpha_r$ (where $r \in \FC(G) \setminus \set{e}$) to be $C_G(r)$-invariant. Note that this is in contrast to the usual definition of proper outerness, where the automorphisms are considered individually without any regard to the rest of the group action.
	
	Let us briefly investigate the feasibility and outcome of generalizing each characterization of proper outerness on the original C*-algebra $A$. First, consider (\ref{thm:properly_outer_equivalence:borchers}). The Borchers spectrum is normally defined for actions of abelian groups (and for single automorphisms $\alpha$, it secretly considers the corresponding $\Z$-action). In the setting of abelian groups, \cite[Theorem~7.3]{hamana85-injective_envelopes_equivariant} essentially gives us the exact result we are after, and makes use of the Borchers spectrum for the action of the \emph{entire} group (as opposed to a single automorphism). Note, however, that the definition of the Borchers spectrum involves the dual group $\what{G}$. This strongly hints to the fact that any invariant Borchers-type characterization that is meant to work in the non-abelian setting will involve the use of the non-abelian dual group. Such an item is borderline impossible to get a concrete handle on, in practice, for most infinite groups. Moreover, the definition of the Borchers spectrum also involves considering all possible hereditary C*-subalgebras, and these are already non-trivial enough in general as-is. Thus, while it could \emph{in theory} be possible to obtain an appropriate generalization, we highly doubt it would be one that people would wish to use in practice.
	
	We also mention that \cite[Theorem~6.6]{olesen_pedersen_III} contains several other characterizations of proper outerness. However, the vast majority (with a couple of exceptions) also involve considering either all possible hereditary C*-subalgebras of $A$, or at least the invariant ones. Again, this would make for conditions that are perhaps mysterious and hard to check in practice, and so we choose to also skip those. It is worth noting though that all of these conditions are very likely still possible to generalize.
	
	The obvious characterization remaining is Elliott's characterization, i.e.\ condition~(\ref{thm:properly_outer_equivalence:elliott}). While it requires separability of the underlying C*-algebra $A$ to be a true characterization, most C*-algebras that people are interested in end up being separable anyways. Moreover, it is only necessary to consider the space of invariant ideals of $A$, as opposed to the set of all invariant hereditary subalgebras. Thus, in our opinion, it is the most worthwhile characterization to generalize.
	
	We would like to first give an initial and incorrect guess as to how the generalization would proceed. One might guess that ``invariant'' proper outerness of $\alpha_t$ would mean that for any $C_G(t)$-invariant ideal $I \subseteq A$, and any $C_G(t)$-invariant unitary $u \in M(I)$, we have $\norm{\alpha|_I - (\Ad u)|_I} = 2$. However, this turns out to be just slightly too weak of a notion, as there is simply no reason to expect enough suitable \emph{invariant} ideals $I \subseteq A$ and \emph{invariant} unitaries $u \in M(I)$ for approximating the automorphism. As it turns out, the correct notion to use is \emph{approximately invariant} ideals and unitaries, in the appropriate sense.
	
	Before proceeding further, we remark that all of the theory of injective envelopes that we use was in the setting of \emph{unital} C*-algebras. In \cite[Section~6]{hamana82_mc_tensor_products_I}, Hamana defines the injective envelope of a non-unital C*-algebra as follows (and studies it further in \cite{hamana82_centre}):
	
	\begin{definition}
    \label{def:nonunital_injective_envelope}
		Let $A$ be a C*-algebra, not necessarily unital. We define $A^+$ to just be $A$ if $A$ is already unital, and the unitization otherwise. In particular, if $J \subseteq A$ is an ideal, the unitization $J^+$ will still be $J$ if, coincidentally, $J$ has its own unit. The injective envelope of $A$ is defined as $I(A) \defeq I(A^+)$.
	\end{definition}
	
	We also state the following result of Hamana, which gives a correspondence between ideals of a C*-algebra $A$ and central projections in $I(A)$. This, and other results, usually work in the setting of $A$ being non-unital, but to avoid unnecessary subtleties, we will just stick with the unital setting unless necessary.
	
	\begin{proposition}
		\label{prop:ideal_injective_envelope}
		Let $J$ be an ideal of a unital C*-algebra $A$. Let $p = \sup^{I(A)} J_1^+$. Then
		\begin{enumerate}
			\item $p$ is a central projection in $I(A)$;
			\item $I(A)p \isoto I(J)$.
		\end{enumerate}
		Conversely, if we start with some central projection $p \in I(A)$ and define an ideal $J \subseteq A$ as $J = I(A)p \cap A$, then $I(J) \isoto I(A)p$.
	\end{proposition}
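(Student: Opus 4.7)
For item (1), the plan is to directly invoke Proposition~\ref{prop:ideal_sup_projection} with $B = I(A)$, which yields that $p$ is a projection lying in $A' \cap I(A)$; the classical fact $A' \cap I(A) = Z(I(A))$ from \cite[Corollary~4.3]{hamana79_injective_envelopes_cstaralg} then promotes $p$ to a central projection.

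For item (2), the goal is to recognize $I(A)p$ as an injective envelope of $J$. Injectivity of $I(A)p = pI(A)p$ as a unital C*-algebra is immediate from the injectivity of $I(A)$: compressing any ucp extension by $p$ on both sides produces a ucp extension landing in $I(A)p$. From the proof of Proposition~\ref{prop:ideal_sup_projection}, $p$ acts as a unit on $J$, so $J$ sits canonically as an ideal in $I(A)p$, with its unitization $J^+$ identified with $\mathbb{C}p + J$ inside $I(A)p$. It will then suffice to establish rigidity of the inclusion $J \subseteq I(A)p$, after which the standard uniqueness of the injective envelope (applied to the possibly non-unital algebra $J$, i.e.\ to $J^+$) delivers $I(A)p \isoto I(J^+) = I(J)$.

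The technical heart of the argument is the rigidity check. Suppose $\phi \colon I(A)p \to I(A)p$ is a ucp map with $\phi|_J = \id_J$. Because $\phi(j^*j) = j^*j = \phi(j)^*\phi(j)$ for every $j \in J$, the ideal $J$ lies in the multiplicative domain of $\phi$. For any $a \in A$ and $j \in J$, noting that $apj = aj \in J$,
\[
\phi(ap)\,j \;=\; \phi(apj) \;=\; \phi(aj) \;=\; aj \;=\; (ap)\,j.
\]
Setting $b \defeq \phi(ap) - ap \in I(A)p$, we obtain $bj = 0$ for every $j \in J$, hence by Lemma~\ref{lem:sup_bFbstar},
\[
bb^* \;=\; b\,p\,b^* \;=\; b\bigl(\sup\nolimits^{I(A)} J^+_1\bigr)b^* \;=\; \sup\nolimits^{I(A)}\bigl(b\,J^+_1\,b^*\bigr) \;=\; 0.
\]
Therefore $b = 0$, proving $\phi(ap) = ap$ for every $a \in A$. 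The ucp map $\Phi \colon I(A) \to I(A)$ defined by $\Phi(x) \defeq \phi(xp) + x(1-p)$ (ucp because its two summands have orthogonal ranges) then restricts to the identity on $A$, so rigidity of $A \subseteq I(A)$ forces $\Phi = \id_{I(A)}$, which immediately gives $\phi(xp) = xp$ for every $x \in I(A)$, i.e.\ $\phi = \id_{I(A)p}$.

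For the converse direction, given a central projection $p \in I(A)$ and $J \defeq I(A)p \cap A$, Proposition~\ref{prop:ConcreteSupProj} already asserts $p = \sup^{I(A)} J^+_1$, which places us in the situation of the forward direction and hence yields $I(J) \isoto I(A)p$. The genuine obstacle in the proof is the rigidity step above, where one has to transfer identity-behavior of $\phi$ from $J$ to all of $Ap$; every other ingredient is either a routine verification or a direct appeal to results already established in the text.
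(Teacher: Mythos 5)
Your proof is correct, but it takes a genuinely different route from the paper's. For the forward direction the paper simply defers to Hamana's \cite[Lemma~1.1]{hamana82_centre}, which establishes the correspondence in the more general setting of hereditary C*-subalgebras; only the converse is argued in the text, via Proposition~\ref{prop:ConcreteSupProj}, exactly as you do. What you supply instead is a self-contained proof of the forward direction using only tools already available in the paper: Proposition~\ref{prop:ideal_sup_projection} together with \cite[Corollary~4.3]{hamana79_injective_envelopes_cstaralg} for centrality of $p$, injectivity of the corner $I(A)p$ by compression, and then a rigidity argument --- multiplicative-domain considerations plus Lemma~\ref{lem:sup_bFbstar} to upgrade $\phi|_J = \id_J$ to $\phi|_{Ap} = \id_{Ap}$, followed by the extension $\Phi(x) = \phi(xp) + x(1-p)$ and rigidity of $A \subseteq I(A)$ to conclude $\phi = \id_{I(A)p}$. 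This argument is sound (it implicitly invokes Hamana's standard fact that a rigid injective extension is \emph{the} injective envelope, which the paper also uses without comment), and it has the virtue of keeping the proposition internal to the paper's own machinery; what it gives up is the extra generality of Hamana's lemma, which covers arbitrary hereditary subalgebras rather than just ideals. One small inaccuracy: $J$ need not be an \emph{ideal} of $I(A)p$ (it is only an ideal of $Ap$, and a C*-subalgebra of the corner), but nothing in your argument uses this, since the injective-envelope identification only requires the unital inclusion $J^+ \cong \mathbb{C}p + J \subseteq I(A)p$.
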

	
	\begin{proof}
		The forward direction was proven in \cite[Lemma~1.1]{hamana82_centre}, in the more general setting of hereditary subalgebras of $A$. For the converse direction, note that if $J = I(A)p \cap A$ for a central projection $p\in I(A)$, then $\sup^{I(A)} J_1^+=p$ by Proposition~\ref{prop:ConcreteSupProj}.
	\end{proof}
	
	This now hints at how the proof of the aforementioned equivalences would work in general. Any automorphism $\alpha \in \Aut(A)$ that is ``close enough'' to being inner on an $\alpha$-invariant ideal $J \subseteq A$ will give something that is genuinely inner on $I(J)$, which is a central corner of $I(A)$.
	
	We may also realize the multiplier algebra of a C*-algebra $A$ as an idealizer of $A$ inside its injective envelope $I(A)$. 
	
	\begin{proposition}[{\cite[Section~1]{hamana82_centre}}]
		\label{prop:multiplier_algebra_contained_in_injective_envelope}
		Assume $A$ is a not necessarily unital C*-algebra, and denote by $M(A)$ its multiplier algebra. We have
		\[ M(A) = \setbuilder{x \in I(A)}{xa \in A \text{ and } ax \in A \text{ for all } a \in A}. \]
	\end{proposition}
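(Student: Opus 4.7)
The plan is to construct a canonical \ast-isomorphism between $M(A)$ and the idealizer of $A$ in $I(A)$. Let me denote the idealizer by $B = \setbuilder{x \in I(A)}{xa,\, ax \in A \text{ for all } a \in A}$. It is immediate that $B$ is a norm-closed $\ast$-subalgebra of $I(A)$ containing $A$ as a two-sided ideal, since if $x,y \in B$ and $a \in A$, then $(xy)a = x(ya) \in xA \subseteq A$.

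For the inclusion $M(A) \injectsinto B$, the first step is to extend the canonical embedding $A^+ \injectsinto I(A) = I(A^+)$ to a unital completely positive map $\phi\colon M(A) \to I(A)$ using injectivity of $I(A)$ applied to the operator system inclusion $A^+ \subseteq M(A)$. Since $\phi|_{A^+}$ is a unital $\ast$-homomorphism, $A^+$ lies in the multiplicative domain of $\phi$, so for $x \in M(A)$ and $a \in A$:
\[ \phi(x)\, a = \phi(xa) = xa, \qquad a\, \phi(x) = \phi(ax) = ax, \]
where the second equalities use $xa, ax \in A$ together with $\phi|_A = \id_A$. In particular, $\phi(x) \in B$. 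To show $\phi$ is multiplicative on all of $M(A)$, note that for $x,y \in M(A)$ and $a \in A$, both $\phi(xy)\, a$ and $\phi(x)\phi(y)\, a$ reduce to $xya$, so $\bigl(\phi(xy) - \phi(x)\phi(y)\bigr)\cdot a = 0$ for every $a \in A$. Invoking the essentiality statement discussed below, I conclude $\phi(xy) = \phi(x)\phi(y)$. Injectivity of $\phi$ follows similarly: if $\phi(x) = 0$, then $xa = 0$ for all $a \in A$, forcing $x = 0$ since $A$ is essential in $M(A)$ (a direct consequence of the double-centralizer characterization).

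For the reverse inclusion $B \subseteq \phi(M(A))$, given $b \in B$ the maps $L_b, R_b\colon A \to A$ defined by $L_b(a) = ba$ and $R_b(a) = ab$ form a double centralizer of $A$ by associativity in $I(A)$, namely $R_b(a)\cdot c = abc = a\cdot L_b(c)$. The universal property of $M(A)$ then produces a unique $m \in M(A)$ whose left and right multiplier actions on $A$ equal $L_b$ and $R_b$. The computation of the previous paragraph gives $\phi(m)\,a = ma = ba$ for all $a \in A$, so $\bigl(\phi(m) - b\bigr)a = 0$ for all $a \in A$, and essentiality again yields $\phi(m) = b$.

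The main technical obstacle is the essentiality fact repeatedly invoked: if $z \in I(A)$ satisfies $zA = 0$, then $z = 0$. One argument proceeds as follows. Since $I(A)$ is monotone complete (Proposition~\ref{prop:injective_is_mc_and_ginjective_is_injective}), form the right projection $q = \RP(z^*z)$; the assumption $zA = 0$ implies $z^*z \cdot A = 0$, hence $qA = 0$ by the defining property of $\RP$ recorded in Proposition~\ref{prop:monotonecompletePolarDec}. Taking adjoints then gives $Aq = 0$ as well. The hereditary subalgebra $qI(A)q$ meets $A$ trivially, since any $x \in qI(A)q \cap A$ satisfies $x = qxq = 0$ using $qA = 0$. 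One then appeals to the essentiality of the inclusion $A \subseteq I(A)$, obtained by composing essentiality of $A$ in $A^+$ (which is elementary for any nonzero C*-algebra) with operator-system essentiality of $A^+ \subseteq I(A^+) = I(A)$, to conclude $q = 0$, hence $z = 0$. This essentiality input is the part requiring care, but it is exactly what Hamana establishes in the setting of injective envelopes of non-unital C*-algebras.
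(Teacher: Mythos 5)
The paper offers no proof of this proposition at all --- it is quoted verbatim from Hamana's work on injective envelopes of non-unital C*-algebras --- so there is no in-text argument to compare against. Your route is the standard idealizer/double-centralizer argument and it is correct in outline: extending $A^+ \injectsinto I(A^+)$ to a unital completely positive map $\phi \colon M(A) \to I(A)$, using the multiplicative domain to get $\phi(x)a = xa$ and $a\phi(x) = ax$, and then running the double-centralizer universal property in the other direction, with every identification forced by the fact that no nonzero element of $I(A)$ annihilates $A$. All of those steps check out.

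The one step that is genuinely underspecified is the last one: you reduce everything to the claim that the projection $q = \RP(z^*z)$ with $qA = Aq = 0$ must vanish, observe that $qI(A)q \cap A = 0$, and then ``appeal to essentiality.'' But essentiality of $A^+ \subseteq I(A^+)$ is a statement about unital completely positive maps being completely isometric; it does not directly see a hereditary subalgebra meeting $A$ trivially, so as written the conclusion $q=0$ does not follow from what you have stated. It can be completed as follows. The unital completely positive map $\psi(x) = (1-q)x(1-q) + qxq$ restricts to the identity on $A^+$ (because $qa = aq = 0$), hence equals $\id_{I(A)}$ by rigidity; expanding $x = \psi(x)$ and multiplying by $q$ on the left gives $qx(1-q) = 0$ for all $x$, so $q$ is central. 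Then the compression $x \mapsto x(1-q)$ is a unital $*$-homomorphism onto the corner $I(A)(1-q)$ whose restriction to $A^+$ has kernel an ideal of $A^+$ meeting $A$ trivially, hence is (completely) isometric when $A$ is non-unital, since $A$ is essential in $A^+$; the unital case of the proposition is trivial anyway, as $x = x1_A \in A$ directly. Essentiality of $A^+ \subseteq I(A^+)$ now forces this compression to be completely isometric on all of $I(A)$, and since it annihilates $q$, we get $q = 0$ and hence $z = 0$. With that insertion your argument is complete and is, as far as one can tell, the same proof Hamana gives.
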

	
	Recall that an ideal $K$ is a C*-algebra $A$ is essential if and only if whenever $K \cdot a = 0$ for some $a\in A$ (which is equivalent to $a \cdot K = 0$), then $a=0$.
	As mentioned earlier, we will also be dealing with ideals that are ``almost invariant'' in the appropriate sense. The following observation allows us to establish what this means rigorously.
	
	\begin{proposition}
		\label{prop:essential_ideal_containment_implies_inj_env}
		Let $A$ be a unital C*-algebra, and let $J,K \subseteq A$ be ideals with $J \subseteq K$. Then
		\begin{enumerate}
			\item Assume that $J$ is essential in $K$. Then recalling Proposition~\ref{prop:ideal_injective_envelope}, $I(J)$ and $I(K)$ both share the same central support projection $p\in I(A)$, so that we canonically have $I(J) \cong I(K) \cong I(A)p$.
			\item If $\sup^{I(A)}J_1^+=\sup^{I(A)}K_1^+$, then $J$ is essential in $K$.
		\end{enumerate} 
	\end{proposition}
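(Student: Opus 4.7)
The plan is to prove $(1)$ and $(2)$ separately, with the central support projection as the common invariant. Setting $p \defeq \sup^{I(A)} J_1^+$ and $q \defeq \sup^{I(A)} K_1^+$, the containment $J_1^+ \subseteq K_1^+$ gives $p \leq q$ immediately, and once $(1)$ establishes $p = q$, the isomorphisms $I(J) \isoto I(K) \isoto I(A)p$ come for free from Proposition~\ref{prop:ideal_injective_envelope}.

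For $(1)$, the strategy is to play the ideal-theoretic essentiality of $J$ in $K$ against the operator-system essentiality of $K$ in $I(K)$, arguing by contradiction from $q > p$. First, I would introduce the subset $L \defeq \setbuilder{k \in K}{pk = 0}$, which is a closed two-sided $*$-ideal of $K$ because $p$ commutes with $K$ (Proposition~\ref{prop:ideal_sup_projection}). Since $pj = j$ for every $j \in J$, we have $L \cap J = 0$, so essentiality of $J$ in $K$ forces $L = 0$. The key intermediate step is to show that every non-zero closed two-sided ideal $N \subseteq I(K)$ meets $K$ non-trivially: considering the quotient $*$-homomorphism $\pi \colon I(K) \to I(K)/N$, which is unital and completely positive, the assumption $N \cap K = 0$ would force $\pi$ to be an embedding on $K$, and a short case analysis (according to whether $K$ is unital) extends this to an embedding on the unitization $K^+$; then essentiality of $K^+ \subseteq I(K^+) = I(K)$ from Definition~\ref{def:GinjectiveGrigidetc} upgrades $\pi$ to an embedding on $I(K)$, contradicting $N \neq 0$. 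Applying this bridge to the non-zero ideal $I(A)(q - p) \subseteq I(A)q = I(K)$ produces a non-zero $x \in K$ with $(q - p)x = x$; since $qx = x$ already holds for every $x \in K$, this forces $px = 0$, placing $x \in L = 0$, the desired contradiction.

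For $(2)$, the argument is short. Given $k \in K$ with $J \cdot k = 0$, so that $k^* J_1^+ k = \set{0}$, Lemma~\ref{lem:sup_bFbstar} gives $(pk)^*(pk) = k^* p k = \sup^{I(A)}(k^* J_1^+ k) = 0$, hence $pk = 0$. Since $\sup^{I(A)} K_1^+ = p$ by hypothesis, Proposition~\ref{prop:ideal_sup_projection} tells me that $p$ acts as a unit on $K$, so $k = pk = 0$.

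I expect the main obstacle to be the intermediate step in $(1)$ that converts the operator-system essentiality of $K \subseteq I(K)$ into the concrete ideal-theoretic statement that non-zero ideals of $I(K)$ meet $K$ non-trivially. Although this fact is essentially folklore, it requires careful navigation between $K$ and its unitization $K^+$, and forms the crucial hinge between the injective-envelope viewpoint and the ideal structure of $K$ itself.
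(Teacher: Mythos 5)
Your proof is correct, and part (2) is essentially identical to the paper's argument. For part (1) you take a genuinely different route. The paper forms the ideal $I \defeq I(A)(q-p)\cap A$ of $A$, observes that $I\cap J=0$ and hence $I\cap K=0$ by essentiality of $J$ in $K$, and then concludes $(q-p)q=0$ directly from Proposition~\ref{prop:ideal_sup_orthogonality} combined with Proposition~\ref{prop:ConcreteSupProj} (which identifies $\sup^{I(A)}I_1^+$ with $q-p$). You instead work with the annihilator $L=\setbuilder{k\in K}{pk=0}$ and a bridge lemma asserting that every non-zero closed two-sided ideal of $I(K)$ meets $K$ non-trivially. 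That bridge is correct, and your proof of it (quotient map, injectivity on $K$, unitization case analysis, operator-system essentiality of $K^+\subseteq I(K^+)$) is sound; it is the same essentiality mechanism the paper deploys elsewhere, e.g.\ in Lemma~\ref{lemma:stronglynonprime} and Proposition~\ref{prop:minimality_transfers_to_injective_envelopes_and_centers}, and it also underlies the proof of Proposition~\ref{prop:ConcreteSupProj}. In effect you re-derive by hand what that proposition already packages. The paper's version is shorter because it stays entirely at the level of ideals of $A$ and the supremum calculus of Propositions~\ref{prop:ideal_sup_projection}--\ref{prop:ideal_sup_orthogonality}; yours makes the role of essentiality of $K^+\subseteq I(K)$ explicit, at the cost of the unitization case analysis. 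Both arguments are valid and yield the same conclusion.
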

	
	\begin{proof}
		Let $p=\sup^{I(A)} J_1^+$ and $q=\sup^{I(A)} K_1^+$. Since $J\subseteq K$, we have that $p\leq q$. We need to show that $q\leq p$. Let $I=I(A)(q-p)\cap A$. Observe that $I$ is an ideal in $A$ which has trivial intersection with $J$. On the other hand, since $J$ is essential in $K$, $I$ must have trivial intersection with the ideal $K$ as well. This implies that $I\cdot K=0$, and by Proposition~\ref{prop:ideal_sup_orthogonality} we obtain that $q(q-p)=0$, and so $q\leq p$.
		
		For the second statement, assume that $p=\sup^{I(A)}J_1^+=\sup^{I(A)}K_1^+$ and let $k\in K$ be an element which is orthogonal to $J$. Then $kp=0$ by Proposition~\ref{lem:sup_bFbstar}. However, by Proposition~\ref{prop:ideal_sup_projection}, $p$ acts as the identity on $K$, and we conclude that $k=0$, as desired.
	\end{proof}
	
	The following two observations are well-known, but we recall them nevertheless.
	
	\begin{remark}\label{rem:multipliers_inclusion_essential_ideal}
		Let $A$ be a unital C*-algebra, and let $J,K \subseteq A$ be ideals with $J \subseteq K$, and such that $J$ is essential in $K$. Then $M(K) \subseteq M(J)$ as a unital inclusion of the multiplier algebras.
	\end{remark}
	
	\begin{remark}
		\label{rem:intersection_of_essential_ideals}
		Assume $A$ is a C*-algebra, not necessarily unital, and assume that $I$ and $J$ are two essential ideals of $A$. Then $I \cap J$ is also essential in $A$.
	\end{remark}
	
	Before proceeding, we mention what our suitable replacement for having an ideal $J \subseteq A$ invariant on the nose. Such an ideal will be considered ``almost invariant'' with respect to an action of a group $H$ on $A$ if, while we do not necessarily have $h \cdot J = J$, we at least have that $J \cap h \cdot J$ is essential in both $J$ and $h \cdot J$.
	
	We will also require a stronger version of Elliott's proper outerness characterization than the one presented in Theorem~\ref{thm:properly_outer_equivalence}. There, it can be observed that if $\alpha$ is an automorphism of a (not necessarily unital) separable C*-algebra $A$ such that $\alpha$ extends to an inner automorphism on \emph{all of} $I(A)$ (such automorphisms are called \emph{quasi-inner}), then given any $\varepsilon > 0$, there is some ideal $J \subseteq A$ and unitary $u \in M(J)$ such that $\norm{\alpha|_J - (\Ad u)|_J} < \varepsilon$. However, \cite[Corollary~6.7]{olesen_pedersen_III} more or less observes that in the case of quasi-inner automorphisms, the ideal $J$ can be required to be essential. Proposition~\ref{prop:elliott_strong_essential} uses essentially the same argument, but we recall two lemmas before proving it. 
	
	\begin{lemma}[{\cite[Lemma~3.1]{hamana82_centre}}]
		\label{lem:direct_sum_multiplier_algebra_injective_envelope}
		Let $(A_\lambda)_{\lambda \in \Lambda}$ be a family of C*-algebras, not necessarily unital. Consider the $c_0$-direct sum $\directsum_\lambda A_\lambda$. The injective envelope of this direct sum is the $\ell^\infty$-direct sum $\prod_\lambda I(A_\lambda)$, while the multiplier algebra is $\prod_\lambda M(A_\lambda)$.
	\end{lemma}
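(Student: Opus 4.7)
The plan is to verify that $\prod_\lambda I(A_\lambda)$ satisfies the two defining properties of the injective envelope of $(\bigoplus_\lambda A_\lambda)^+$—unital injectivity and rigidity of the natural inclusion—and then to deduce the multiplier algebra statement as an immediate consequence using Proposition~\ref{prop:multiplier_algebra_contained_in_injective_envelope}. Injectivity of $\prod_\lambda I(A_\lambda)$ should follow by a routine coordinate-wise extension argument: given operator systems $S \subseteq T$ and a ucp map $\phi : S \to \prod_\lambda I(A_\lambda)$, I would compose with each coordinate projection $\pi_\mu$, extend $\pi_\mu \circ \phi$ to $\tilde\phi_\mu : T \to I(A_\mu)$ using injectivity of $I(A_\mu)$, and reassemble the components to get the required extension.

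The substantive step is establishing rigidity of $(\bigoplus_\lambda A_\lambda)^+ \subseteq \prod_\lambda I(A_\lambda)$. Let $p_\mu$ denote the central projection onto the $\mu$-th coordinate. My first key observation will be that $p_\mu = \sup^{\prod_\lambda I(A_\lambda)} (A_\mu)_+^1$: viewing $A_\mu$ as an ideal in $A_\mu^+$, Proposition~\ref{prop:ideal_injective_envelope} gives $\sup^{I(A_\mu^+)} (A_\mu)_+^1 = 1_{I(A_\mu)}$, and this supremum is preserved upon embedding into the larger monotone complete $\prod_\lambda I(A_\lambda)$. Symmetrically, $1 - p_\mu = \sup^{\prod_\lambda I(A_\lambda)} \bigcup_{\nu \neq \mu} (A_\nu)_+^1$.

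Now let $\phi : \prod_\lambda I(A_\lambda) \to \prod_\lambda I(A_\lambda)$ be a ucp map restricting to the identity on $(\bigoplus_\lambda A_\lambda)^+$. From $a = \phi(a) \leq \phi(p_\mu)$ for every $a \in (A_\mu)_+^1$, taking the supremum yields $p_\mu \leq \phi(p_\mu)$; applying the same reasoning to the complementary set gives $1 - p_\mu \leq 1 - \phi(p_\mu)$, so $\phi(p_\mu) = p_\mu$. Hence each $p_\mu$ lies in the multiplicative domain of $\phi$, and $\phi$ restricts to a unital cp endomorphism of the corner $p_\mu(\prod_\lambda I(A_\lambda)) p_\mu \isoto I(A_\mu) = I(A_\mu^+)$ that fixes $A_\mu$, and therefore also $A_\mu^+$. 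Rigidity of $A_\mu^+ \subseteq I(A_\mu^+)$ (see Definition~\ref{def:GinjectiveGrigidetc}) then forces this restriction to be the identity on each corner. Finally, from $\phi(x) p_\mu = \phi(x p_\mu) = x p_\mu$ for all $\mu$ and all $x \in \prod_\lambda I(A_\lambda)$, coordinate-wise agreement forces $\phi = \id$.

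For the multiplier algebra, the identification $I(\bigoplus_\lambda A_\lambda) = \prod_\lambda I(A_\lambda)$ combined with Proposition~\ref{prop:multiplier_algebra_contained_in_injective_envelope} reduces the claim to checking, coordinate-by-coordinate, that an element $x = (x_\lambda) \in \prod_\lambda I(A_\lambda)$ satisfies $xb,\,bx \in \bigoplus_\lambda A_\lambda$ for all $b \in \bigoplus_\lambda A_\lambda$ if and only if each $x_\mu \in M(A_\mu)$—which is immediate. The main obstacle is the rigidity argument, and specifically extracting $\phi(p_\mu) = p_\mu$ without assuming $\phi$ preserves arbitrary suprema; the order-theoretic identification $p_\mu = \sup(A_\mu)_+^1$ (and its complement) is exactly what makes this go through using only that $\phi$ is the identity on $\bigoplus_\lambda A_\lambda$.
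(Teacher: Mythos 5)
Your proposal is correct, but it does substantially more than the paper does: for the identification $I\bigl(\bigoplus_\lambda A_\lambda\bigr) = \prod_\lambda I(A_\lambda)$ the paper simply cites Hamana's Lemma~3.1 of \cite{hamana82_centre} and offers no argument, and it then derives the multiplier statement from Proposition~\ref{prop:multiplier_algebra_contained_in_injective_envelope} exactly as you do. Your self-contained argument --- injectivity of the $\ell^\infty$-product by coordinatewise extension, followed by rigidity of $(\bigoplus_\lambda A_\lambda)^+ \subseteq \prod_\lambda I(A_\lambda)$ via the order-theoretic identification $p_\mu = \sup (A_\mu)^+_1$ and multiplicative-domain reduction to rigidity of each $A_\mu^+ \subseteq I(A_\mu^+)$ --- is sound and is in the spirit of the techniques the paper already uses (compare Propositions~\ref{prop:ideal_sup_projection}, \ref{prop:ConcreteSupProj}, and \ref{prop:ideal_injective_envelope}). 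What your route buys is independence from the external reference; what the paper's route buys is brevity.

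Two small points deserve a line of care in a written version. First, when you say the supremum $\sup^{I(A_\mu^+)}(A_\mu)^+_1 = 1$ ``is preserved'' in the larger algebra, the reason is that every element of $(A_\mu)^+_1$ is dominated by $p_\mu$, so the supremum computed in $\prod_\lambda I(A_\lambda)$ lands in the corner $p_\mu\bigl(\prod_\lambda I(A_\lambda)\bigr)p_\mu \isoto I(A_\mu)$ and hence agrees with the supremum computed there. Second, the set $\bigcup_{\nu\neq\mu}(A_\nu)^+_1$ is \emph{not} upward directed (a sum of elements from two distinct coordinates lies in no single $A_\nu$), so ``taking the supremum'' is not a monotone-completeness statement; it goes through anyway because positivity in the product is checked coordinatewise --- any self-adjoint upper bound $z$ of $\bigcup_{\nu\neq\mu}(A_\nu)^+_1$ satisfies $p_\nu z p_\nu \geq \sup(A_\nu)^+_1 = p_\nu$ for every $\nu \neq \mu$, whence $z \geq 1-p_\mu$ --- or, alternatively, because one may replace the union by the directed set $\bigl(\bigoplus_{\nu\neq\mu}A_\nu\bigr)^+_1$. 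Neither point is a gap, but both should be made explicit.
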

	
	\begin{proof}
		The first statement is proven in \cite[Lemma~3.1]{hamana82_centre}. The claim about the multiplier algebra is well-known, but also follows from the first statement and the identification given in Proposition~\ref{prop:multiplier_algebra_contained_in_injective_envelope}.
	\end{proof}
	
	\begin{lemma}
		\label{lem:direct_sum_of_orthogonal_ideals}
		Assume $(J_\lambda)_{\lambda \in \Lambda}$ is a family of pairwise orthogonal ideals in a (not necessarily unital) C*-algebra $A$. The ideal generated by this family is $J = \closure{\operatorname{span}} J_\lambda$, which is canonically isomorphic to the $c_0$-direct sum $\directsum_\lambda J_\lambda$.
	\end{lemma}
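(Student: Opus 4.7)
The plan is to first verify that the object $J = \closure{\operatorname{span}} \bigcup_\lambda J_\lambda$ is a two-sided ideal of $A$, which is immediate: each $J_\lambda$ is an ideal, so for any $a \in A$ and $x \in J_\lambda$ we have $ax, xa \in J_\lambda \subseteq J$, and this passes through finite linear combinations and norm limits. Since $J$ manifestly contains every $J_\lambda$ and is an ideal, it coincides with the ideal generated by the family. Thus only the second assertion is non-trivial: the identification with the $c_0$-direct sum.

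The key computation I would then carry out is the following orthogonality estimate. For any finite subset $F \subseteq \Lambda$ and elements $x_\lambda \in J_\lambda$, pairwise orthogonality of the ideals gives $x_\lambda x_\mu^* = 0$ whenever $\lambda \neq \mu$, hence
\[
\Bigl(\sum_{\lambda \in F} x_\lambda\Bigr)\Bigl(\sum_{\lambda \in F} x_\lambda\Bigr)^* \;=\; \sum_{\lambda \in F} x_\lambda x_\lambda^*.
\]
The summands on the right are pairwise orthogonal positive elements (again by orthogonality of the ideals), so the norm of the sum equals $\max_{\lambda \in F} \norm{x_\lambda x_\lambda^*} = \max_{\lambda \in F} \norm{x_\lambda}^2$. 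Taking square roots yields $\bigl\| \sum_{\lambda \in F} x_\lambda \bigr\| = \max_{\lambda \in F} \norm{x_\lambda}$, which is precisely the norm on the algebraic direct sum inherited from the $c_0$-direct sum.

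With that identity in hand I would define the obvious map $\phi \colon \bigoplus_\lambda J_\lambda \to J$ on the algebraic direct sum by $\phi\bigl((x_\lambda)_\lambda\bigr) = \sum_\lambda x_\lambda$ (a finite sum on finitely supported tuples). The estimate above shows $\phi$ is an isometry on the dense subspace of finitely supported tuples, so it extends uniquely to an isometric linear map on all of $\bigoplus_\lambda J_\lambda$; for a general element $(x_\lambda)_\lambda$ of the $c_0$-sum, orthogonality makes the net of finite partial sums Cauchy, and its limit is the value of the extension. The image contains every finite sum $\sum_{\lambda \in F} x_\lambda$, which is a dense subspace of $J$, so $\phi$ is surjective onto $J$.

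Finally, to promote $\phi$ to a $*$-isomorphism I would check that it is multiplicative and $*$-preserving on the dense subalgebra of finitely supported tuples: for $(x_\lambda)$ and $(y_\lambda)$ both finitely supported,
\[
\Bigl(\sum_\lambda x_\lambda\Bigr)\Bigl(\sum_\mu y_\mu\Bigr) = \sum_{\lambda,\mu} x_\lambda y_\mu = \sum_\lambda x_\lambda y_\lambda,
\]
since $J_\lambda J_\mu = 0$ for $\lambda \neq \mu$, which matches the componentwise product in the direct sum; the $*$-operation is clearly preserved. By continuity these relations extend to the whole $c_0$-direct sum, so $\phi$ is the desired $*$-isomorphism. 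I do not anticipate a genuine obstacle here, as the only subtle point is the elementary but crucial orthogonality norm identity above; the rest is routine extension by continuity.
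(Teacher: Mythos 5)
Your proof is correct and takes essentially the same route as the paper: both define the canonical map on finitely supported tuples, observe it is an isometric *-homomorphism there, and extend by continuity to a *-isomorphism onto $\closure{\operatorname{span}}\, J_\lambda$. The only cosmetic difference is that you establish the isometry by the direct orthogonality computation $\norm{\sum_{\lambda \in F} x_\lambda} = \max_{\lambda \in F} \norm{x_\lambda}$, whereas the paper simply invokes the fact that an injective *-homomorphism between C*-algebras (here, on each finite subsum $\directsum_{\lambda \in F} J_\lambda$) is automatically isometric.
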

	
	\begin{proposition}
		\label{prop:elliott_strong_essential}
		Let $\alpha \in \Aut(A)$ be an automorphism on a separable, not necessarily unital, C*-algebra $A$. If $\alpha$ is quasi-inner, i.e.\ inner on $I(A)$, then for every $\varepsilon > 0$, there exists an \emph{essential} $\alpha$-invariant ideal $J \subseteq A$ and a unitary $u \in M(J)$ with the property that $\norm{\alpha|_J - (\Ad u)|_J} < \varepsilon$.
	\end{proposition}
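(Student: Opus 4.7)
The plan is a Zorn's lemma argument that assembles an essential $\alpha$-invariant ideal from the non-essential ones already produced by Theorem~\ref{thm:properly_outer_equivalence}. Fix $\varepsilon>0$ and consider the collection $\mathcal{F}$ of all $\alpha$-invariant ideals $I\subseteq A$ admitting a unitary $u_I\in M(I)$ with $\norm{\alpha|_I-(\Ad u_I)|_I}<\varepsilon/2$. Since $\alpha$ is quasi-inner and $A$ is separable, the implication (\ref{thm:properly_outer_equivalence:injective_envelope})~$\Rightarrow$~(\ref{thm:properly_outer_equivalence:elliott_strong}) of Theorem~\ref{thm:properly_outer_equivalence} shows $\mathcal{F}\neq\emptyset$. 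Apply Zorn's lemma to the poset of pairwise orthogonal subfamilies of $\mathcal{F}$, ordered by inclusion, to choose a maximal such family $(J_\lambda)_{\lambda\in\Lambda}$ with witnessing unitaries $u_\lambda\in M(J_\lambda)$.

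Next, I would assemble these pieces into a single ideal. Let $J\defeq\closure{\operatorname{span}}\bigcup_\lambda J_\lambda$; Lemma~\ref{lem:direct_sum_of_orthogonal_ideals} identifies $J$ canonically with the $c_0$-direct sum $\bigoplus_\lambda J_\lambda$, and Lemma~\ref{lem:direct_sum_multiplier_algebra_injective_envelope} identifies $M(J)$ with $\prod_\lambda M(J_\lambda)$. The ideal $J$ is $\alpha$-invariant as the closed span of $\alpha$-invariant ideals, and the tuple $u\defeq(u_\lambda)_\lambda$ defines a unitary in $M(J)$ under this product identification. A coordinate-wise calculation then gives $\norm{\alpha|_J-(\Ad u)|_J}=\sup_\lambda\norm{\alpha|_{J_\lambda}-(\Ad u_\lambda)|_{J_\lambda}}\leq\varepsilon/2<\varepsilon$.

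The remaining and crucial step is to show that $J$ is essential in $A$. Let $J^\perp\defeq\setbuilder{a\in A}{aJ=0}$; this is automatically $\alpha$-invariant since $J$ is. Suppose for contradiction that $J^\perp\neq 0$. The key observation is that $\alpha|_{J^\perp}$ is itself quasi-inner: writing $\alpha=\Ad w$ on $I(A)$ for some unitary $w$, and letting $q\in Z(I(A))$ be the central projection with $I(J^\perp)\isoto I(A)q$ as in Proposition~\ref{prop:ideal_injective_envelope}, the $\alpha$-invariant corner $I(A)q$ carries the inner extension $\Ad(wq)$. Since $J^\perp$ is separable, a further application of the implication (\ref{thm:properly_outer_equivalence:injective_envelope})~$\Rightarrow$~(\ref{thm:properly_outer_equivalence:elliott_strong}) of Theorem~\ref{thm:properly_outer_equivalence} to $\alpha|_{J^\perp}$ (passing to the unitization if necessary) produces a non-zero $\alpha$-invariant ideal $K\subseteq J^\perp$ and a unitary $v\in M(K)$ with $\norm{\alpha|_K-(\Ad v)|_K}<\varepsilon/2$. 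Then $K\in\mathcal{F}$ and $K$ is orthogonal to every $J_\lambda$, contradicting maximality. Hence $J^\perp=0$, so $J$ is essential.

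The main obstacle is this essentiality step. The Zorn's lemma setup and the direct-sum/multiplier bookkeeping are mostly mechanical, but the essentiality upgrade hinges on two delicate ingredients: first, the transfer of quasi-innerness from $A$ to any $\alpha$-invariant ideal, which is afforded by the ideal-to-central-projection correspondence of Proposition~\ref{prop:ideal_injective_envelope}; and second, a legitimate application of the separable implication in Theorem~\ref{thm:properly_outer_equivalence}---whose statement is written for unital C*-algebras---to the possibly non-unital ideal $J^\perp$, which I would handle by passing to $(J^\perp)^+$ and noting that every proper $\alpha$-invariant ideal of the unitization sits inside $J^\perp$.
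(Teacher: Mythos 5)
Your proof follows the paper's argument essentially step for step: a maximal pairwise-orthogonal family via Zorn's lemma, assembly into the $c_0$-direct sum whose multiplier algebra is the $\ell^\infty$-product (Lemmas~\ref{lem:direct_sum_multiplier_algebra_injective_envelope} and~\ref{lem:direct_sum_of_orthogonal_ideals}), and essentiality forced by maximality together with quasi-innerness of $\alpha$ on the corner $I(J^\perp) \isoto I(A)q$. One correction to your closing remark, though: it is \emph{not} true that every proper ($\alpha$-invariant) ideal of $(J^\perp)^+$ sits inside $J^\perp$ --- for example, in $c_0^+$ the ideal of sequences vanishing on $\{1,\dots,5\}$ is proper, invariant under the identity, and not contained in $c_0$. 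So the ideal $L \subseteq (J^\perp)^+$ handed to you by Theorem~\ref{thm:properly_outer_equivalence} need not lie in $J^\perp$. The fix is the one the paper uses: replace $L$ by $L \cap J^\perp$, which is non-zero because $J^\perp$ is essential in its unitization, is a genuine $\alpha$-invariant ideal of $A$, and carries the same unitary because $L \cap J^\perp$ is essential in $L$, so $M(L) \subseteq M(L \cap J^\perp)$ by Remark~\ref{rem:multipliers_inclusion_essential_ideal}. With that one-line patch your argument is complete; your explicit use of $\varepsilon/2$ throughout is in fact slightly cleaner than the paper's bookkeeping, which only yields $\leq \varepsilon$ at the final estimate.
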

	
	\begin{proof}
		Fix $\varepsilon > 0$. By Zorn's lemma, there is a maximal family of pairs $(J_\lambda, u_\lambda)_{\lambda\in\Lambda}$ with the property that $J_\lambda$ is a non-zero $\alpha$-invariant ideal of $A$, $u_\lambda$ is a unitary in $M(J_\lambda)$ with the property that $\norm{\alpha|_{J_\lambda} - (\Ad u_\lambda)|_{J_\lambda}} < \varepsilon$, and $(J_\lambda)_{\lambda \in \Lambda}$ are all pairwise orthogonal.
		
		Observe that $J \defeq \closure{\operatorname{span}} J_\lambda$ is a new $\alpha$-invariant ideal in $A$, which is isomorphic to $\directsum_\lambda J_\lambda$ by Lemma~\ref{lem:direct_sum_of_orthogonal_ideals}. We then know by Lemma~\ref{lem:direct_sum_multiplier_algebra_injective_envelope} that its multiplier algebra is the $\ell^\infty$-direct sum $\prod_\lambda M(J_\lambda)$. In particular, we are free to set the individual coordinates to anything bounded with no other restriction, and so we have that $u \defeq (u_\lambda)_{\lambda\in\Lambda}$ is an element of the multiplier algebra $M(J)$ satisfying
		\[ \norm{\alpha|_J - (\Ad u)|_J} \leq \varepsilon. \]
		
		Maximality of the family $(J_\lambda,u_\lambda)_{\lambda\in\Lambda}$ gives us the final result we are after, namely that $J$ above is essential. If it were not, then $J^{\perp}$ would be $\alpha$-invariant and orthogonal to every $J_\lambda$. Moreover, using the fact that $I(J^{\perp})$ is a corner of $I(A)$ by Proposition~\ref{prop:ideal_injective_envelope}, we have that $\alpha$ is inner on $I(J^{\perp})$. Applying Theorem~\ref{thm:properly_outer_equivalence} to the unitization $(J^\perp)^+$, there at least exists some non-trivial $\alpha$-invariant ideal $L \subseteq (J^\perp)^+$ (and $L \cap J^+$ is a non-trivial $\alpha$-invariant ideal of $A$), and some unitary $v \in M(L) \subseteq M(L \cap J^+)$ with the property that $\norm{\alpha|_L - (\Ad v)|_L} < \varepsilon$. The pair $(L \cap J^+,v)$ could then be added to our maximal family from before, contradicting the fact that it is actually maximal.
	\end{proof}

	Unlike the proof of the equivalence between Elliott's characterization of proper outerness (condition~(3) in Theorem~\ref{thm:properly_outer_equivalence}) and the injective envelope version (condition~(2) in Theorem~\ref{thm:properly_outer_equivalence}), our approach will completely avoid the theory of derivations, and instead take a different path (one which genuinely appears to be necessary for the equivalence between the \emph{equivariant} notions of proper outerness). Essentially, what was shown in the proof of Theorem~\ref{thm:properly_outer_equivalence} is that if $\norm{\beta - \id} < 2$ on an ideal $J\subseteq A$ for some automorphism $\beta\in\Aut(J)$, then we have that $\beta$ extends to an inner automorphism on $I(J)$. What we need is some control over the unitary itself, in $I(J)$. Intuitively, if $\beta$ is ``close'' to the identity, then the resulting unitary should be ``close'' to $1$. In the context of von Neumann algebras at least, this intuition is indeed true, and the following lemma can be found in an important paper of Kadison and Ringrose.
	
	\begin{lemma}[{\cite[Lemma~5]{kadison_ringrose}}]
		\label{lem:kadison_ringrose_unitary_vn_alg}
		Let $\alpha \in \Aut(M)$ be an automorphism of a von Neumann algebra $M$ such that $\norm{\alpha - \id} < 2$. Then there is a unitary $u \in M$ such that $\alpha = \Ad u$, and moreover $u$ has the following restriction on its spectrum:
		\[ \sigma(u) \subseteq \setbuilder{z \in \mathbb{T}}{\Re z \geq \frac{1}{2} \sqrt{4 - \norm{\alpha - \id}^2}}. \]
	\end{lemma}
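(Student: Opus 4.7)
The plan is first to produce some unitary $v \in M$ implementing $\alpha$, and then to exploit the freedom of replacing $v$ by $cv$ for a central unitary $c \in U(Z(M))$ in order to bring the spectrum into the required arc.

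For the first step, the hypothesis $\|\alpha - \id\| < 2$ places $\alpha$ close to the identity in a precise quantitative sense. I would invoke the Kadison--Ringrose theorem on norm-continuous one-parameter automorphism groups (\cite[Theorem~7]{kadison_ringrose}, the very result already used in the proof of Theorem~\ref{thm:properly_outer_equivalence}), which ensures that $\alpha$ lies on such a one-parameter group, hence $\alpha = \exp(\delta)$ for some $*$-derivation $\delta \colon M \to M$. Every derivation of a von Neumann algebra is inner (Kadison--Sakai), so $\delta = [ih,\cdot]$ for some self-adjoint $h \in M$, and $v \defeq e^{ih}$ is then a unitary in $M$ with $\Ad v = \alpha$.

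For the second step, the identity $\|\alpha - \id\| = \|[v,\cdot]\|_{M \to M}$ holds for every unitary $v$ implementing $\alpha$, since right multiplication by $v^*$ is isometric on $M$. The key quantitative link between this commutator norm and $\min_{\lambda \in \sigma(v)} \Re \lambda$ is obtained by expanding $\|[v,x]\|^2 = \|(vx - xv)^*(vx - xv)\|$ via the $C^*$-identity and testing against carefully chosen $x$ (unitaries, or spectral projections coming from $v + v^*$). After some manipulation this yields
\[
\min\nolimits_{\lambda \in \sigma(u)} \Re \lambda \;\geq\; \tfrac{1}{2}\sqrt{4 - \|\alpha - \id\|^2}
\]
for the unitary $u \defeq cv$, where $c \in U(Z(M))$ is the central unitary built via Borel functional calculus so as to ``centre'' the spectrum of $cv$ at $1$. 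The translation $\|u - I\|^2 = 2\bigl(1 - \min_{\lambda \in \sigma(u)} \Re \lambda\bigr)$ then shows $\|u - I\| \leq \sqrt{2 - \sqrt{4 - \|\alpha - \id\|^2}}$, and rephrasing this inequality in spectral form produces the claimed inclusion.

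The hardest part of the plan is the construction of the central unitary $c$ in a general von Neumann algebra $M$. In the factor case $Z(M) = \C$ the optimisation reduces to choosing a single scalar phase, a transparent one-variable problem (and one can already check there that an uncorrected $v$ such as $v = \mathrm{diag}(1, \omega, \omega^2)$ demands a non-trivial rotation to satisfy the target bound). In general one must work fibrewise along the central decomposition of $M$ and glue the choices together measurably, which requires care because a naive fibrewise optimisation need not produce an element of $M$. Once the measurable central twist is correctly arranged, the passage from the bound on $\|u - I\|$ back to the stated inclusion $\sigma(u) \subseteq \{z \in \mathbb{T} : \Re z \geq \tfrac{1}{2}\sqrt{4 - \|\alpha - \id\|^2}\}$ is elementary.
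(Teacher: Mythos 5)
First, note that the paper does not prove this lemma at all: it is quoted verbatim from Kadison--Ringrose \cite[Lemma~5]{kadison_ringrose} and used as a black box, so there is no in-paper argument to compare against; your proposal has to be judged against the actual content of that result. Your first step (existence of \emph{some} implementing unitary via \cite[Theorem~7]{kadison_ringrose}, the derivation picture, and Kadison--Sakai) is fine. The gap is in the second step, which is the entire quantitative content of the lemma and which you reduce to ``after some manipulation this yields'' the bound. Worse, the sketch is aimed at the wrong invariant. Testing $\norm{vx-xv}$ against partial isometries between spectral projections of $v$ yields the estimate $\abs{\lambda-\mu}\leq\norm{\alpha-\id}$ for spectral points $\lambda,\mu$ not separated by a central projection, i.e.\ a bound on the \emph{diameter} of the fibrewise spectrum. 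That is not enough: the quantity actually controlled by $\norm{\alpha-\id}$ is the distance from $0$ to the convex hull of the fibrewise spectrum (for $\norm{\alpha-\id}=a<2$ one needs $\operatorname{dist}(0,\conv\sigma(u))\geq\tfrac12\sqrt{4-a^2}$, which is exactly what the stated arc encodes). Your own test case $v=\operatorname{diag}(1,\omega,\omega^2)$ exposes the problem: its spectrum has diameter $\sqrt{3}<2$, yet no rotation places the three cube roots of unity in any closed half-plane, let alone in $\set{\Re z\geq\tfrac12}$; the lemma is only saved because $\norm{\Ad v-\id}=2$ for this $v$ (by the Stampfli-type formula $\norm{[T,\cdot\,]}=2\operatorname{dist}(T,\C 1)$, since the Chebyshev radius of $\set{1,\omega,\omega^2}$ is $1$), so the hypothesis fails. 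Your parenthetical claim that this example merely ``demands a non-trivial rotation'' is therefore false, and it signals that the diameter-plus-rotation strategy cannot produce the stated spectral inclusion.

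A route that avoids both of your acknowledged hard points (the convex-hull estimate and the measurable central twist) is to stay inside the derivation picture: choose the self-adjoint generator $h$ of $\delta$ with $\norm{h}$ minimal, i.e.\ $\norm{h}=\operatorname{dist}(h,Z(M))=\tfrac12\norm{\delta}$, so that the central recentering is already built into $h$; the norm identity for the one-parameter group $t\mapsto\Ad e^{ith}$ then gives $\norm{h}\leq\arcsin(\norm{\alpha-\id}/2)$, and $u=e^{ih}$ has $\sigma(u)\subseteq\setbuilder{e^{it}}{\abs{t}\leq\arcsin(\norm{\alpha-\id}/2)}$, which is precisely the claimed arc since $\cos\arcsin(a/2)=\tfrac12\sqrt{4-a^2}$. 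Whichever route you take, the inequality linking $\norm{\alpha-\id}$ to the spectrum (equivalently, to $\norm{h}$) is the heart of the lemma and must be proved, not asserted; as written, your proposal does not contain it.
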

	
	Note that we indeed have that if $\norm{\alpha - \id}$ gets closer to zero, then $\norm{u - 1}$ indeed gets closer to zero as well. We suspect that the above lemma might work for general monotone complete C*-algebras. However, we work around this in a sneaky manner and can derive the result on just injective envelopes quite easily. For the proof, we will need the following lemma. We would also like to apologize to Zarikian, as when we had originally released the first preprint of our paper, we did not notice that this same exact result could be found in one of his papers.
	
	\begin{lemma}[{\cite[Lemma~2.2]{Zarikian}}]
		\label{lem:non_free_on_A_on_inj}
		Let $A$ be a unital C*-algebra and let $\alpha\in \Aut(A)$, with its unique extension to an automorphism of $I(A)$ also denoted by $\alpha$. Assume that $x\in I(A)$ is such that
		$xy=\alpha(y)x$ for every $y\in A$. Then $xy=\alpha(y)x$ for every $y\in I(A)$.
	\end{lemma}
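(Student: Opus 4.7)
The plan is to extend the commutation relation from $A$ to $I(A)$ by a rigidity argument. First, taking adjoints of $xy = \alpha(y)x$ and relabeling $y \mapsto y^*$ gives $yx^* = x^*\alpha(y)$ for all $y \in A$, and combining the two relations yields $x^*xy = x^*\alpha(y)x = yx^*x$ and $xx^*\alpha(y) = \alpha(y)xx^*$ for every $y \in A$. Thus both $x^*x$ and $xx^*$ lie in $A' \cap I(A) = Z(I(A))$, and hence so does $|x| = (x^*x)^{1/2}$. Taking the polar decomposition $x = u|x|$ in the monotone complete C*-algebra $I(A)$ afforded by Proposition~\ref{prop:monotonecompletePolarDec}, the projections $p \coloneqq u^*u = \RP(x) = \RP(x^*x)$ and $uu^* = \LP(x) = \RP(xx^*)$ are therefore both central.

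Feeding the hypothesis through centrality of $|x|$, the identity $u|x|y = \alpha(y)u|x|$ rewrites as $(uy - \alpha(y)u)|x| = 0$, and the defining property of the right projection $p$ of $|x|$ forces $uyp = \alpha(y)u$. Multiplying on the left by $u^*$ yields
\[ u^*\alpha(y)u = yp \qquad \text{for all } y \in A. \]
The key idea is to package this identity into a unital completely positive map $\phi\colon I(A) \to I(A)$ comparable to $\id_{I(A)}$ via rigidity. I would define
\[ \phi(y) \coloneqq u^*\alpha(y)u + (1-p)y(1-p). \]
This is completely positive as a sum of two c.p.\ maps, and unital because $\phi(1) = u^*u + (1-p)^2 = p + (1-p) = 1$. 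For $y \in A$, the identity above together with centrality of $p$ gives $\phi(y) = yp + (1-p)y = y$, so $\phi$ restricts to the identity on $A$. Rigidity of $A \subseteq I(A)$ (see Definition~\ref{def:GinjectiveGrigidetc}) then forces $\phi = \id_{I(A)}$.

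For arbitrary $y \in I(A)$, the equation $\phi(y) = y$ simplifies to $u^*\alpha(y)u = yp$. Multiplying on the left by $u$ and using centrality of $uu^*$ to let it commute past $\alpha(y)$, combined with $up = u$ and the centrality of $p$, gives $\alpha(y)u = uu^*\alpha(y)u = u\cdot yp = uy$. Multiplying on the right by $|x|$ and using centrality of $|x|$ once more produces
\[ \alpha(y)x = \alpha(y)u|x| = uy|x| = u|x|y = xy \]
for every $y \in I(A)$, as required. The main conceptual obstacle is picking the right u.c.p.\ map for the rigidity step: the natural candidate $y \mapsto u^*\alpha(y)u$ is non-unital, sending $1$ to $p$, and adding the complementary corner piece $(1-p)y(1-p)$ is the clean way to restore unitality while still recovering $y$ on $A$, thanks to the centrality of $p$.
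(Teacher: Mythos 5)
Your proof is correct, and it takes a genuinely different route from the paper's. Both arguments start the same way: $x^*x$ and $xx^*$ land in $A' \cap I(A) = Z(I(A))$ by \cite[Corollary~4.3]{hamana79_injective_envelopes_cstaralg}, the polar decomposition $x = u\abs{x}$ has central support projections, and the relation $uy = \alpha(y)u$ (equivalently $u^*\alpha(y)u = yp$) holds for all $y \in A$. From there the paper follows the template of Theorem~\ref{thm:PropOuterCorner}: it first proves $\alpha(p) = p$ via the supremum computations of Proposition~\ref{prop:ConcreteSupProj} and Lemma~\ref{lem:sup_bFbstar} applied to the ideal $J = I(A)p \cap A$, then identifies $I(A)p$ with $I(J^+)$ using Proposition~\ref{prop:ideal_injective_envelope}, and finally invokes uniqueness of the extension of $\alpha|_{J^+} = (\Ad u)|_{J^+}$ to the injective envelope to conclude $\alpha|_{I(A)p} = \Ad u$. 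You instead package the identity $u^*\alpha(\cdot)u = (\cdot)p$ into the u.c.p.\ map $\phi(y) = u^*\alpha(y)u + (1-p)y(1-p)$, which is unital and restricts to the identity on $A$ precisely because $p$ is central, and apply rigidity of $A \subseteq I(A)$ exactly once; note that your final step only needs $uu^*$ central (which follows from $xx^* \in Z(I(A))$), not $uu^* = u^*u$. This buys a shorter argument that bypasses the $\alpha$-invariance of $p$ and all of the supremum machinery; the paper's route has the side benefit of explicitly exhibiting $\alpha$ as $\Ad u$ on the corner $I(A)p$, a structural statement in the spirit of what it uses elsewhere, but for the lemma as stated your rigidity trick is the more economical path. (Both arguments ultimately rest on the same rigidity of $A \subseteq I(A)$: the paper's appeal to uniqueness of automorphism extensions is itself a rigidity statement.)
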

	
	\begin{lemma}
		\label{lem:kadison_ringrose_unitary_injective_envelope}
		Let $\alpha \in \Aut(A)$ be an automorphism of a not necessarily unital C*-algebra $A$, such that $\norm{\alpha - \id} < 2$. Then the unique extension of $\alpha$ to $I(A)$ (obtained by first extending canonically to $A^+$) is inner and of the form $\Ad u$, where $u$ is a unitary in $I(A)$ which can be required to satisfy the following restriction on its spectrum:
		\[ \sigma(u) \subseteq \setbuilder{z \in \mathbb{T}}{\Re z \geq \frac{1}{2} \sqrt{4 - \norm{\alpha - \id}^2}}. \]
	\end{lemma}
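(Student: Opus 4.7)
The plan is to produce $u$ by first obtaining a unitary $v \in A^{**}$ via Kadison--Ringrose on the enveloping von Neumann algebra of $A$, then pulling it back to an element $u_0 \in I(A)$ through a UCP map coming from injectivity, and finally extracting $u$ from $u_0$ by polar decomposition inside the monotone complete C*-algebra $I(A)$.

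First, I would pass to the enveloping von Neumann algebra $A^{**}$. The bidual extension $\alpha^{**} \in \Aut(A^{**})$ satisfies $\norm{\alpha^{**} - \id} = \norm{\alpha - \id} < 2$, since $\alpha^{**} - \id$ is weak*-continuous and the unit ball of $A$ is weak*-dense in that of $A^{**}$ by Goldstine's theorem. Applying Lemma~\ref{lem:kadison_ringrose_unitary_vn_alg} to $\alpha^{**}$ produces a unitary $v \in A^{**}$ with $\alpha^{**} = \Ad v$ and $\sigma(v) \subseteq \setbuilder{z \in \mathbb{T}}{\Re z \geq c_0}$, where $c_0 \defeq \tfrac{1}{2}\sqrt{4-\norm{\alpha-\id}^2} > 0$; in particular $\Re v \geq c_0 \cdot 1$. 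The unitization $A^+$ embeds unitally and isometrically into $A^{**}$, and by injectivity of $I(A) = I(A^+)$ together with the inclusion $A^+ \subseteq I(A)$, there is a unital completely positive map $\Phi \colon A^{**} \to I(A)$ with $\Phi|_{A^+} = \id_{A^+}$.

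Set $u_0 \defeq \Phi(v) \in I(A)$. Because $A^+$ lies in the multiplicative domain of $\Phi$, the relation $vy = \alpha^{**}(y) v$ for $y \in A^+$ passes through $\Phi$ to give $u_0 y = \tilde\alpha(y) u_0$ for all $y \in A^+$, where $\tilde\alpha$ is the unique extension of $\alpha$ to $I(A)$; Lemma~\ref{lem:non_free_on_A_on_inj} then promotes this to all $y \in I(A)$. Positivity and unitality of $\Phi$ applied to $\Re v \geq c_0 \cdot 1$ give $\Re u_0 \geq c_0 \cdot 1$, and $\norm{u_0} \leq 1$ because $\Phi$ is contractive. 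Since $u_0$ need not itself be a unitary, I would take its polar decomposition $u_0 = u \abs{u_0}$ inside the monotone complete C*-algebra $I(A)$. Theorem~\ref{thm:PropOuterCorner}, applied with $I(A)$ playing the role of $A$ and $u_0$ playing the role of $x$, then yields that $u$ is a partial isometry whose central support projection $p$ is $\tilde\alpha$-invariant, that $\abs{u_0}^2 = u_0^* u_0$ is central, and that $\tilde\alpha$ acts by $\Ad u$ on $I(A)p$.

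It remains to show $p = 1$ and to bound the spectrum of $u$. If $q \defeq 1 - p$ were non-zero, then $u_0 q = 0$, and centrality of $q$ would give $q\,\Re u_0 = 0$; but $\Re u_0 \geq c_0 \cdot 1$ forces $q\,\Re u_0 \geq c_0 q > 0$, a contradiction, so $u$ is a full unitary in $I(A)$ implementing $\tilde\alpha$. For the spectrum, centrality of $\abs{u_0}$ gives $\Re u_0 = \abs{u_0}\,\Re u$; combined with $\Re u \leq 1$ and $\abs{u_0}$ being a central positive contraction, this yields $\abs{u_0} \geq \abs{u_0}\,\Re u = \Re u_0 \geq c_0 \cdot 1$, so $\abs{u_0}$ is invertible in $Z(I(A))$ with $\abs{u_0}^{-1} \geq 1$, and therefore $\Re u = \abs{u_0}^{-1} \Re u_0 \geq c_0 \cdot 1$, which is equivalent to the desired spectral containment. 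The main obstacle is precisely this handoff from $A^{**}$ to $I(A)$: $u_0 = \Phi(v)$ is only a contraction \emph{a priori}, and Kadison--Ringrose's spectral bound on $v$ does not transfer to a unitary automatically; the sneaky point that rescues the argument is that the intertwining relation forces $\abs{u_0}$ to be central, and the real-part bound on $u_0$ then forces $\abs{u_0}$ above $c_0$, so the polar decomposition cleanly produces a unitary whose spectral estimate is inherited essentially unchanged from $v$.
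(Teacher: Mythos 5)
Your argument is correct and follows essentially the same route as the paper's proof: apply Kadison--Ringrose in an enveloping von Neumann algebra, compress the resulting unitary to $I(A)$ via a unital completely positive map fixing $A^+$, use the multiplicative domain together with Lemma~\ref{lem:non_free_on_A_on_inj} to get the intertwining relation on all of $I(A)$, and then use the polar decomposition plus the lower bound $\Re u_0 \geq c_0$ to see that $\abs{u_0}$ is central and invertible, so the partial isometry is a genuine unitary with the stated spectral restriction. The only differences are cosmetic (you work in $A^{**}$ with Goldstine where the paper allows a general enveloping von Neumann algebra via Kaplansky density, and you phrase the final step as operator inequalities rather than pointwise in $C^*(1,u,\abs{u_0}) \cong C(X)$).
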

	
	\begin{proof}
		We already know that such a result holds on von Neumann algebras. Let $M$ be a von Neumann algebra containing $A$ as a weak*-dense subalgebra, and moreover having the property that the automorphism $\alpha \in \Aut(A)$ extends (necessarily uniquely) to an automorphism $\wtilde{\alpha} \in \Aut(M)$. It does not appear that any sort of separability is required for $M$ in either the statement or the proof of Lemma~\ref{lem:kadison_ringrose_unitary_vn_alg} of Kadison and Ringrose (so in particular, one could take $M = A^{**}$). However, just for peace of mind of the reader, it is observed in that same paper that any automorphism $\alpha \in \Aut(A)$ with $\norm{\alpha - \id} < 2$ \emph{automatically} extends to any such enveloping von Neumann algebra $M$. See \cite[Theorem~7]{kadison_ringrose}, along with the definition of \emph{permanently weakly inner} or \emph{$\pi$-weakly inner} in \cite[Page~35]{kadison_ringrose}. It appears that Kadison and Ringrose deal with unital C*-algebras in their paper, but this is not a problem. The canonical extension of $\alpha$ to $A^+$ (call it $\alpha^+$) also satisfies $\norm{\alpha^+ - \id_{A^+}} < 2$. Hence, if $A$ is separable, one can easily take any faithful non-degenerate representation $A \subseteq B(H)$ where $H$ is separable, and then let $M = A''$.
		
		In any case, let $M$ and $\wtilde{\alpha} \in \Aut(M)$ be as above, and observe that $\norm{\wtilde{\alpha} - \id_M} = \norm{\alpha - \id_A}$, via a straightforward application of weak*-continuity of $\wtilde{\alpha}$ and Kaplansky's density theorem. Let $u \in M$ be a unitary obtained from Lemma~\ref{lem:kadison_ringrose_unitary_vn_alg}. In other words, $\wtilde{\alpha} = \Ad u$, and
		\[ \sigma(u) \subseteq \setbuilder{z \in \mathbb{T}}{\Re z \geq \frac{1}{2} \sqrt{4 - \norm{\alpha - \id}^2}}. \]
		For convenience, denote the bound $\frac{1}{2} \sqrt{4 - \norm{\alpha - \id}^2}$ by $r$. We have $\Re \sigma(u) \subseteq [r,1]$, and so by the continuous functional calculus, we genuinely have $\Re u \geq r$, where $\Re u = \frac{1}{2}(u + u^*)$.
		
		We proceed to construct a similar element in $I(A)$ using the only method we know. We know that $A^+ \subseteq M$, and by injectivity, there is a unital and completely positive map $E \colon M \to I(A)$ such that it is the identity map on $A^+$. Observe that, because $\Re u \geq r$ with $r > 0$, we have that
		\[ \Re E(u) = E(\Re u) \geq E(r) = r. \]
		In particular, we have that $E(u)$ is non-zero. Moreover, observe that for any $b \in A^+$ (which lies in the multiplicative domain of $E$), we have
		\[ E(u) b = E(ub) = E(\alpha(b) u) = \alpha(b) E(u). \]
		By Proposition~\ref{lem:non_free_on_A_on_inj}
		this automatically implies that $E(u) z = \alpha(z) E(u)$ for all $z \in I(A)$.
		
		For convenience, write $y = E(u)$. We have no reason to expect that $y$ is in any way a unitary element. However, this is not too hard to patch up. Let $y = v \abs{y}$ be the polar decomposition of $y$ in $I(A)$, with $p = v^*v$ being the support projection of $v$. Recall from Theorem~\ref{thm:PropOuterCorner} that
		\begin{itemize}
			\item $p$ is an $\alpha$-invariant central projection in $I(A)$.
			\item $v$ is a unitary in $I(A)p$, and on this corner, we in fact have $\alpha|_{I(A)p} = \Ad v$.
		\end{itemize}
		It was also shown in the proof of Theorem~\ref{thm:PropOuterCorner} that $y^*y$ (and therefore also $\abs{y}$) will always lie in the center of $I(A)$.  Given that $v$ is a unitary on its respective corner, it follows that $C^*(1,v,\abs{y})$ is a commutative C*-algebra of the form $C(X)$.
		
		Writing $y = v \abs{y}$ inside of $C(X)$, and using that $\Re y \geq r$ for $r > 0$, we note that $y$ is invertible in $C(X)$. Consequently, so is $\abs{y}$, and it follows that the support projection $p$ of $v$ was in fact $1$. In other words, $v$ is a unitary in $I(A)$ implementing $\alpha$. Finally, given any $x \in X$, we have $\Re y(x) \geq r$, and $0 < \abs{y}(x) \leq 1$ (which follows from $\norm{y} = \norm{E(u)} \leq 1$). Thus, $v(x) = \frac{y(x)}{\abs{y}(x)}$ also satisfies $\Re v(x) \geq r$ for all $x\in X$. As the spectrum of $v$ as an element of $C(X)$ is given by $\setbuilder{v(x)}{x \in X}$, and coincides with its spectrum as an element of $A$, this finishes the proof.
	\end{proof}
	
	Clearly, if $u$ is a unitary in any unital C*-algebra with $\Re \sigma(u) \subseteq [r,1]$ for $r$ close to $1$, then it is the case that $u$ is close to $1$ as well by the continuous functional calculus. The following lemma establishes the precise norm estimate:
	
	\begin{lemma}
		\label{lem:unitary_spectrum_to_norm}
		Let $k \in [0,2]$, and let $u \in U(A)$ be a unitary in a unital C*-algebra $A$ with $\Re \sigma(u) \subseteq [\frac{1}{2} \sqrt{4 - k^2},1]$. Then we have
		\[ \norm{u - 1} \leq \sqrt{2 - \sqrt{4-k^2}}. \]
	\end{lemma}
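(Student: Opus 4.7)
The plan is to reduce everything to a pointwise bound on the unit circle via the continuous functional calculus. Since $u$ is unitary, the C*-subalgebra $C^*(1,u) \subseteq A$ is isomorphic to $C(\sigma(u))$ with $u$ corresponding to the identity function $z \mapsto z$ on $\sigma(u) \subseteq \mathbb{T}$. Under this identification, $\norm{u-1}$ equals $\sup_{z \in \sigma(u)} |z-1|$.

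The next step is a one-line computation: for any $z \in \mathbb{T}$, we have
\[ |z-1|^2 = (z-1)(\bar{z}-1) = 2 - z - \bar{z} = 2 - 2\Re z. \]
Combining with the hypothesis $\Re z \geq \tfrac{1}{2}\sqrt{4-k^2}$, we get
\[ |z-1|^2 \leq 2 - \sqrt{4-k^2}, \]
for every $z \in \sigma(u)$. Taking square roots and then the supremum over $z \in \sigma(u)$ yields the claimed inequality
\[ \norm{u-1} \leq \sqrt{2 - \sqrt{4 - k^2}}. \]

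There is no genuine obstacle here; the content is simply the observation that on the unit circle the function $z \mapsto |z-1|$ is an explicit monotone function of $\Re z$. The only minor thing to double-check is that $2 - \sqrt{4-k^2} \geq 0$ (which is immediate from $k \in [0,2]$, making $\sqrt{4-k^2} \in [0,2]$), so that the final square root is well-defined.
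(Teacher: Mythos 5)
Your proof is correct and follows essentially the same route as the paper: reduce to a pointwise estimate over $\sigma(u)$ via the continuous functional calculus and then bound the distance from $1$ of a point on the circle with $\Re z \geq \frac{1}{2}\sqrt{4-k^2}$. The only difference is cosmetic — the paper computes that maximal distance by Euclidean geometry with a figure and two applications of the Pythagorean theorem, whereas you use the identity $|z-1|^2 = 2 - 2\Re z$, which is arguably cleaner and yields the same bound.
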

	
	\begin{proof}
		This is essentially an exercise in Euclidean geometry. Consider Figure~\ref{fig:unitary_norm}.
		\begin{figure}
			\centering
			\includegraphics[width=0.45\textwidth]{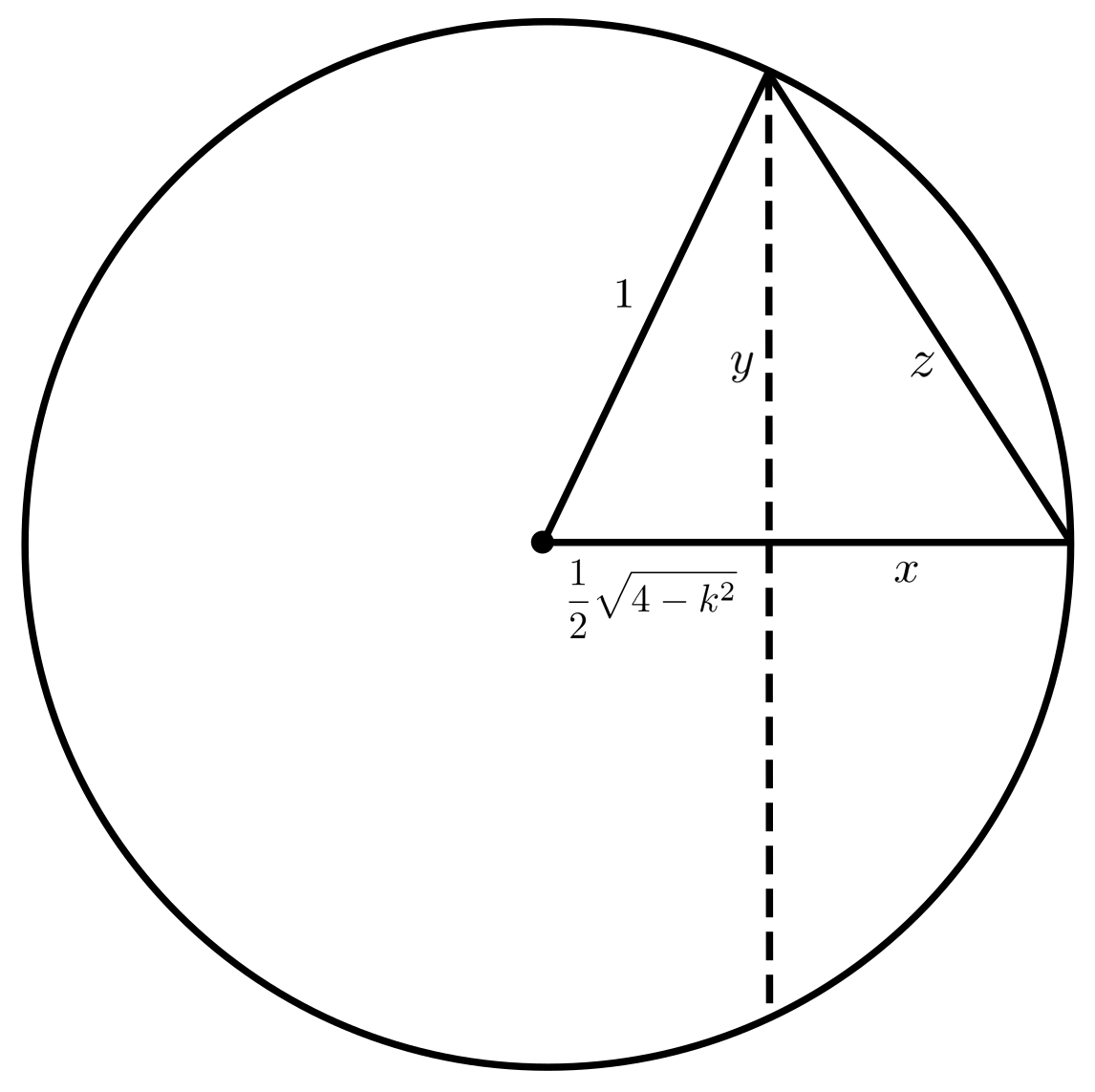}
			\caption{Estimate of $\norm{u - 1}$ based on estimate of $\Re \sigma(u)$.}
			\label{fig:unitary_norm}
		\end{figure}
		It depicts the unit circle $\mathbb{T}$, and the spectrum of $u$ lies on the arc of the circle that is to the \emph{right} of the dotted line. Some unknown side lengths are labeled as $x$, $y$, and $z$. It is clear that $x = 1 - \frac{1}{2} \sqrt{4 - k^2}$, and by the Pythagorean theorem that $y = \frac{1}{2} k$. One more application of the Pythagorean theorem gives us that
		\[ z = \sqrt{2 - \sqrt{4 - k^2}}. \]
		The value of $z$ represents the maximum distance of any given point in $\sigma(u)$ from $1$, and so by the continuous functional calculus, it must be the case that $\norm{u - 1} \leq z$, our desired result.
	\end{proof}
	
	A similar and even easier estimate characterizes when $\sigma(u)$ has a positive lower bound as well:
	
	\begin{lemma}
		\label{lem:unitary_spectrum_positive_to_norm_sqrt2}
		Let $u \in U(A)$ be a unitary in some unital C*-algebra $A$. Then there exists an $r > 0$ with $\Re \sigma(u) \subseteq [r,1]$ if and only if $\norm{u - 1} < \sqrt{2}$.
	\end{lemma}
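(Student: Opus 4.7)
The plan is to use the continuous functional calculus together with the elementary identity $|\lambda - 1|^2 = 2 - 2\Re\lambda$ valid for all $\lambda \in \mathbb{T}$. Since $u$ is a unitary, $\sigma(u) \subseteq \mathbb{T}$, and since the spectrum is compact, the function $\lambda \mapsto |\lambda - 1|$ attains its maximum on $\sigma(u)$. By the continuous functional calculus applied to the commutative C*-algebra $C^*(1,u)$, one obtains
\[ \norm{u - 1} = \max_{\lambda \in \sigma(u)} |\lambda - 1| = \max_{\lambda \in \sigma(u)} \sqrt{2 - 2\Re\lambda} = \sqrt{2 - 2 \min_{\lambda \in \sigma(u)} \Re\lambda}. \]

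From here both directions of the equivalence follow almost immediately. If there exists $r > 0$ with $\Re\sigma(u) \subseteq [r,1]$, then $\min \Re \sigma(u) \geq r > 0$, and the above identity gives $\norm{u - 1} \leq \sqrt{2 - 2r} < \sqrt{2}$. Conversely, if $\norm{u - 1} < \sqrt{2}$, then $2 - 2\min \Re \sigma(u) < 2$, so that $\min \Re \sigma(u) > 0$. Setting $r \defeq \min_{\lambda \in \sigma(u)} \Re\lambda > 0$ gives the desired containment $\Re\sigma(u) \subseteq [r,1]$.

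There is really no main obstacle here: the whole argument is a one-line computation with the continuous functional calculus, coupled with the compactness of the spectrum to turn the supremum into a maximum (which is what lets us produce a strictly positive $r$ rather than just an infimum). The geometric picture is the same as in Lemma~\ref{lem:unitary_spectrum_to_norm}: the vertical line $\Re z = r$ cuts off an arc of $\mathbb{T}$, and the furthest point on that arc from $1$ is exactly at distance $\sqrt{2 - 2r}$.
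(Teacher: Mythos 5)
Your proof is correct and is essentially the paper's argument: the paper establishes the same equivalence by the Euclidean geometry of the unit circle (via a figure) plus the continuous functional calculus, and your identity $\abs{\lambda-1}^2 = 2 - 2\Re\lambda$ is just the algebraic form of that picture. The use of compactness of $\sigma(u)$ to turn the infimum of $\Re\lambda$ into an attained, strictly positive minimum is exactly the point that makes the converse direction work, and you have handled it cleanly.
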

	
	\begin{proof}
		Again, we perform basic Euclidean geometry. In Figure~\ref{fig:unitary_norm_sqrt2},
		\begin{figure}
			\centering
			\includegraphics[width=0.45\textwidth]{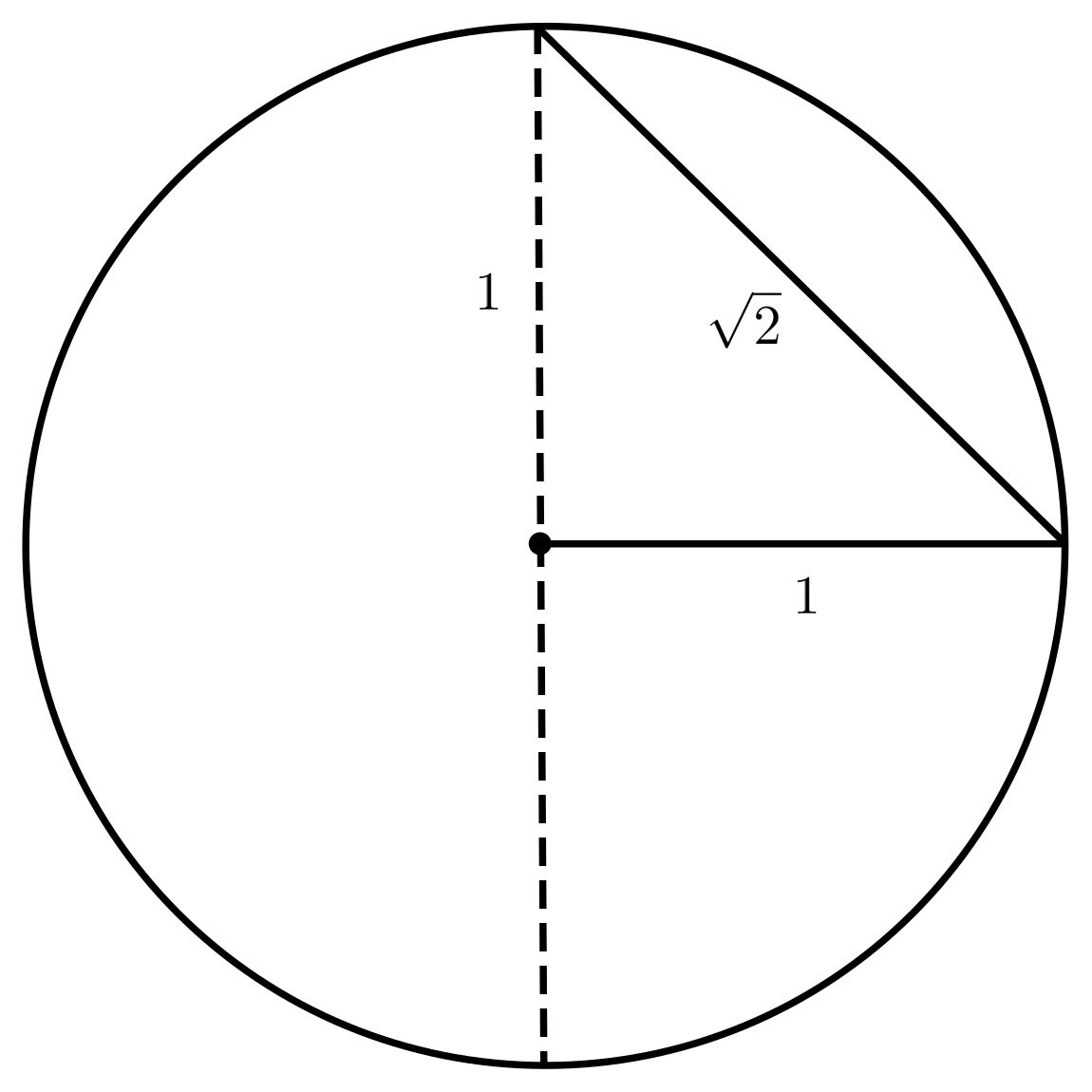}
			\caption{Estimate of $\norm{u-1}$ based on $\Re \sigma(u) > 0$ and vice versa.}
			\label{fig:unitary_norm_sqrt2}
		\end{figure}
		we again need $\sigma(u)$ to lie \emph{strictly} to the right of the dotted line, which is equivalent to any point in the spectrum being distance strictly less than distance $\sqrt{2}$ away from $1$. By continuous functional calculus, this is equivalent to $\norm{u-1} < \sqrt{2}$.
	\end{proof}
	
	Now we are ready to state the main result.
	
	\begin{theorem}
		\label{thm:equivariant_proper_outer_equivalence}
		Let $\alpha\colon G\to\Aut(A)$ be a group action on a unital C*-algebra $A$, such that we are in one of the following situations:
		\begin{itemize}
			\item $G$ is an FC-group.
			\item $G$ is an arbitrary discrete and $\alpha$ is a minimal action.
		\end{itemize}
		The following are equivalent:        
		\begin{enumerate}
			\item
			\label{thm:equivariant_proper_outer_equivalence:injective_envelope}
			There exists an $r \in \FC(G) \setminus \set{e}$, a non-zero $C_G(r)$-invariant central projection $p \in I(A)$, and a $C_G(r)$-invariant unitary $u \in U(I(A)p)$ such that $r$ acts by $\Ad u$ on $I(A)p$.
			
			\item
			\label{thm:equivariant_proper_outer_equivalence:elliott}
			There exists an $r \in \FC(G) \setminus \set{e}$, a non-zero $r$-invariant ideal $J \subseteq A$ with the property that $J \cap h \cdot J$ is essential in both $J$ and $h \cdot J$ for all $h \in C_G(r)$, and a unitary $u \in M(J)$, such that, if we let $k = \norm{\alpha_r|_J - (\Ad u)|_J}$ and $t = \sup_{h \in C_G(r)} \norm{h\cdot u - u}$, we have $2 \sqrt{2 - \sqrt{4 - k^2}} + t < \sqrt{2}$.
		\end{enumerate}
		
		Note that the $r \in \FC(G) \setminus \set{e}$ and $p \in Z(I(A))$ in one equivalence do not in any way need to correspond to the same $r$ or $J$ in the other equivalence! Moreover, note that in the second equivalence, we canonically have $M(J)$ and $h\cdot M(J)\cong M(h \cdot J)$ contained in $M(J \cap h \cdot J)$, for every $h\in C_G(r)$, by Remark~\ref{rem:multipliers_inclusion_essential_ideal}, and so the norm difference $\norm{h\cdot u - u}$ makes sense.
	\end{theorem}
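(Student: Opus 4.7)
The plan is to handle the two implications separately, with the magic threshold in condition~(\ref{thm:equivariant_proper_outer_equivalence:elliott}) engineered precisely so that a natural candidate implementer on $I(A)p$ has $C_G(r)$-defect strictly less than $\sqrt{2}$. By Lemma~\ref{lem:unitary_spectrum_positive_to_norm_sqrt2}, this is exactly the threshold at which the resulting defect cocycle retains strictly positive real part and thus admits a principal logarithm, enabling a cohomological trivialization inside the commutative monotone complete center $Z(I(A)p)$.

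For (\ref{thm:equivariant_proper_outer_equivalence:elliott}) $\Rightarrow$ (\ref{thm:equivariant_proper_outer_equivalence:injective_envelope}), I would first let $p$ be the central support projection of $J$ in $I(A)$ given by Proposition~\ref{prop:ideal_injective_envelope}. The hypothesis that $J \cap h \cdot J$ is essential in both $J$ and $h \cdot J$, combined with Proposition~\ref{prop:essential_ideal_containment_implies_inj_env}, forces the central supports of $J$ and $h \cdot J$ to coincide; since the latter is $h \cdot p$, this gives $h \cdot p = p$ for every $h \in C_G(r)$. Next, I apply Lemma~\ref{lem:kadison_ringrose_unitary_injective_envelope} to the automorphism $(\Ad u^*) \circ \alpha_r|_J$, whose distance from the identity equals $k < 2$, producing a unitary $w \in I(J) = I(A)p$ implementing it with $\norm{w-1} \leq s := \sqrt{2-\sqrt{4-k^2}}$ via Lemma~\ref{lem:unitary_spectrum_to_norm}. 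Then $v := uw$ implements $\alpha_r$ on $I(A)p$, and a short triangle inequality, using that $h$ preserves the spectral bound of $w$, yields $\norm{v - h \cdot v} \leq 2s + t < \sqrt{2}$ for every $h \in C_G(r)$.

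The central cocycle $c_h := v^*(h \cdot v) \in U(Z(I(A)p))$ satisfies $c_{h_1h_2} = c_{h_1}(h_1 \cdot c_{h_2})$ together with $\norm{c_h - 1} < \sqrt{2}$, so each $c_h$ lifts uniquely to a self-adjoint $\wtilde{c}_h \in Z(I(A)p)$ with $\norm{\wtilde{c}_h} < 1/4$ and $c_h = \exp(2\pi i \wtilde{c}_h)$; the uniform smallness rules out any branch ambiguity, so the multiplicative cocycle identity linearizes to $\wtilde{c}_{h_1h_2} = \wtilde{c}_{h_1} + h_1 \cdot \wtilde{c}_{h_2}$. The key observation is that the monotone complete commutative algebra $Z(I(A)p)$ (monotone complete by Proposition~\ref{prop:monotone_complete_center} and Proposition~\ref{prop:injective_is_mc_and_ginjective_is_injective}) admits the element $\wtilde{d} := \sup\setbuilder{\wtilde{c}_h}{h \in C_G(r)}$; since $*$-automorphisms preserve suprema and $h \mapsto h_0 h$ is a bijection of $C_G(r)$, one computes
\[ h_0 \cdot \wtilde{d} = \sup_h (h_0 \cdot \wtilde{c}_h) = \sup_h (\wtilde{c}_{h_0 h} - \wtilde{c}_{h_0}) = \wtilde{d} - \wtilde{c}_{h_0}. \]
Setting $d := \exp(2\pi i \wtilde{d})$ therefore makes $v' := v d$ a $C_G(r)$-invariant unitary in $I(A)p$ still implementing $\alpha_r$, which is precisely (\ref{thm:equivariant_proper_outer_equivalence:injective_envelope}).

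For (\ref{thm:equivariant_proper_outer_equivalence:injective_envelope}) $\Rightarrow$ (\ref{thm:equivariant_proper_outer_equivalence:elliott}), take $J_0 := I(A)p \cap A$; the $r$- and $C_G(r)$-invariance of $p$ descend to $J_0$, and $\alpha_r|_{J_0}$ is quasi-inner via its unique extension $\Ad u$ to $I(J_0) = I(A)p$. Proposition~\ref{prop:elliott_strong_essential} then supplies, for any $\varepsilon > 0$, an essential $r$-invariant ideal $J \subseteq J_0$ and $u' \in M(J)$ with $\norm{\alpha_r|_J - \Ad u'|_J} < \varepsilon$. The main obstacle will be arranging $u'$ to also be \emph{approximately} $C_G(r)$-invariant: the idea is that, since $u$ itself is genuinely invariant in $I(A)p \supseteq M(J)$, the element $u'^*u \in I(A)p$ approximately commutes with $J$ and should thus be approximable by a unitary in $Z(I(A)p)$; adapting the Zornification in the proof of Proposition~\ref{prop:elliott_strong_essential} to correct each $u'$ by such a central element, while passing to a common essential sub-ideal via Remark~\ref{rem:intersection_of_essential_ideals}, should make both approximation defects $k'$ and $t'$ small enough to satisfy $2\sqrt{2 - \sqrt{4-(k')^2}} + t' < \sqrt{2}$. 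Separability of $A$ enters this direction exactly as in Proposition~\ref{prop:elliott_strong_essential}.
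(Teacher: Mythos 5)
Your proof of (\ref{thm:equivariant_proper_outer_equivalence:elliott}) $\Rightarrow$ (\ref{thm:equivariant_proper_outer_equivalence:injective_envelope}) is correct and takes a genuinely different route from the paper. After the common setup (the shared $C_G(r)$-invariant support projection $p$, the Kadison--Ringrose unitary $w$ with $\norm{w-p}\leq\sqrt{2-\sqrt{4-k^2}}$, and the implementer $v=uw$ with $\sup_h\norm{h\cdot v-v}\leq 2\sqrt{2-\sqrt{4-k^2}}+t<\sqrt{2}$), the paper does \emph{not} trivialize the defect cocycle directly: it spreads the translates of $v$ over $\ell^\infty(G,I(A))$, compresses by a $G$-equivariant unital completely positive map into $I_G(A)$ (the $\sqrt{2}$ bound is used only to guarantee the compression is non-zero), manufactures a non-trivial element of $Z(I_G(A)\rtimes_\lambda G)$, and then re-imports Theorem~\ref{thm:mainSec4} or Theorem~\ref{thm:DiscreteMinimal} to produce a possibly \emph{different} triple $(p_2,u_2,r_2)$; this is the only place the case hypotheses (FC-group or minimality) enter. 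Your argument stays entirely inside $I(A)$: $c_h=v^*(h\cdot v)$ lands in the commutative monotone complete algebra $Z(I(A)p)$ because $h\cdot v$ also implements $\wtilde{\alpha_r}$ there; the bound $\norm{c_h-1}<\sqrt{2}$ gives a uniformly small logarithm; the additive cocycle identity holds exactly because the branch discrepancy is a continuous integer-valued function of norm at most $3/4<1$; and $\wtilde{d}=\sup_h\wtilde{c}_h$ exists (bounded families of self-adjoint elements in an extremally disconnected $C(Y)$ have suprema, obtained from finite lattice suprema and monotone completeness) and satisfies $h_0\cdot\wtilde{d}=\wtilde{d}-\wtilde{c}_{h_0}$, so $v\exp(2\pi i\wtilde{d})$ is genuinely invariant. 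I verified the computations; they are sound. This buys a strictly stronger conclusion (the invariant unitary implements $\alpha_r$ for the \emph{same} $r$ and $p$, and the case hypotheses are not needed for this implication), whereas the paper's detour buys nothing here except compatibility with machinery it has already built.

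The direction (\ref{thm:equivariant_proper_outer_equivalence:injective_envelope}) $\Rightarrow$ (\ref{thm:equivariant_proper_outer_equivalence:elliott}), however, is only a sketch, and the gap sits exactly where the paper does its real work. From Proposition~\ref{prop:elliott_strong_essential} (which is where separability enters, as you note) you get an essential $r$-invariant ideal $J$ and $u'\in M(J)$ with $\Ad u'$ close to $\alpha_r$ on $J$; one then has $u=\gamma u'w'$ on $I(J)=I(A)p$ with $w'$ close to $p$ and $\gamma$ a unitary in $Z(I(K))$. The obstruction is that $\gamma$ has no reason to lie in $M(L)$ for \emph{any} ideal $L\subseteq A$, so ``correct $u'$ by a central element'' is not yet an operation inside a multiplier algebra; your phrase ``should thus be approximable by a unitary in $Z(I(A)p)$'' misses the point, since $u'^*uw'^*$ already \emph{is} central --- the issue is landing the correction in $M$ of an essential ideal of $A$. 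The paper resolves this by exploiting extremal disconnectedness of the spectrum $X$ of $Z(I(K))$: approximate $\gamma$ uniformly by $f=\sum_{n}\beta_np_n$ over a finite clopen partition $\set{p_n}$ of $X$, pass to the essential $r$-invariant ideal $L=\closure{\operatorname{span}}(A\cap I(A)p_n)$ for which $f\in M(L)$ by Lemma~\ref{lem:direct_sum_multiplier_algebra_injective_envelope}, and work in $I=J\cap L$ with the corrected unitary $fu'$ (noting that the $L_n$ are $r$-invariant only because $\wtilde{\alpha_r}$, being inner on $I(K)$, acts trivially on $Z(I(K))$). Without an argument of this kind, your second direction does not close.
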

	
	\begin{proof}
		First, we prove (\ref{thm:equivariant_proper_outer_equivalence:elliott}) $\implies$ (\ref{thm:equivariant_proper_outer_equivalence:injective_envelope}). To give some intuition first on what is about to happen, all of the estimates that we gave are for the express purpose of being able to write (the unique extension of) $\alpha_r$ as some inner automorphism $\Ad w$ on $I(J)$, with respect to a unitary $w$ that is \emph{almost} $C_G(r)$-invariant. Our aim is then to ``average'' the translates $h \cdot w$ ($h \in C_G(r)$) of this unitary to get something that is truly invariant (and importantly, non-zero, which is what the almost invariance is for). It is, of course, not immediately obvious how to do this, given that the group $C_G(r)$ is not amenable in general, nor does $I(A)$ have any nice compact topology on the unit ball. The techniques developed in the rest of the paper, however, will show that neither are necessary.
		
		Let us also remark that, by Proposition~\ref{prop:essential_ideal_containment_implies_inj_env}, all of the translates $h \cdot J$ (for $h\in C_G(r)$) share the same injective envelope. Specifically, they all share the same support projection $p \in Z(I(A))$ and $I(h \cdot J)$ (for $h\in C_G(r)$) are all canonically isomorphic to $I(A)p$. Observe that this central projection $p$ is also necessarily $C_G(r)$-invariant. Moreover, the norm difference $\sup_{h \in C_G(r)}\norm{h\cdot u - u}$ from the statement of the theorem makes sense as a difference of unitaries in this corner, but it can also be viewed in significantly smaller C*-algebras by the fact that Remark~\ref{rem:multipliers_inclusion_essential_ideal} also tells us $M(J), \ M(h \cdot J) \subseteq M(J \cap h \cdot J)$, for all $h\in C_G(r)$.
		
		Right off the bat, we notice that $k < 2$, and so $\norm{(\Ad u^* \circ \alpha_r)|_J - \id|_J} < 2$. Let $\wtilde{\alpha_r} \in \Aut(I(A))$ denote the unique extension of $\alpha_r$ to an automorphism on $I(A)$, and observe that $\wtilde{\alpha_r}|_{I(J)}$ is also the unique extension of $\wtilde{\alpha_r}|_{J^+}$ to $I(J)$, where $J^+ = C^*(J,p)$. By Lemma~\ref{lem:kadison_ringrose_unitary_injective_envelope}, we have that $(\Ad u^* \circ \wtilde{\alpha_r})|_{I(J)} = (\Ad v)|_{I(J)}$ for some unitary $v \in I(J)$ with $\Re \sigma(v) \subseteq [\frac{1}{2} \sqrt{4 - k^2},1]$. Using Lemma~\ref{lem:unitary_spectrum_to_norm}, we have that
		\[ \norm{v - p} \leq \sqrt{2 - \sqrt{4 - k^2}}, \]
		from which we also obtain the fact that for any $h \in C_G(r)$, we have
		\[ \norm{h\cdot v - v} \leq \norm{h\cdot v - p} + \norm{v - p} = 2 \norm{v - p} \leq 2 \sqrt{2 - \sqrt{4 - k^2}}. \]
		Also for convenience, we will let $w \defeq uv \in U(I(J))$, so that in particular, $\wtilde{\alpha_r}|_{I(J)} = (\Ad w)|_{I(J)}$. Finally, we obtain an estimate on $\norm{h \cdot w - w}$ for $h \in C_G(r)$:
		\[ \norm{h \cdot w - w} \leq \norm{h \cdot u - u} \norm{h \cdot v} + \norm{u} \norm{h \cdot v - v} \leq 2 \sqrt{2 - \sqrt{4 - k^2}} + t. \]
		
		The above norm estimate ends up being good enough for our purposes, namely to be able to ``average'' the translates and obtain something non-zero. Now we describe the averaging process itself. Consider the C*-algebra $\ell^\infty(G,I(A))$. It is equipped with a diagonal $G$-action given by $(s \cdot f)(g) = s \cdot f(s^{-1}g)$, for $s,g\in G$. Moreover, the embedding of $I(A) \injectsinto \ell^\infty(G,I(A))$ given by sending $y \in I(A)$ to the constant function $f_y(g) = y$, for all $g\in G$, is a $G$-equivariant *-homomorphic embedding.
		
		To construct a $C_G(r)$-invariant ``average'' of $w$, we proceed as follows. Let $T \subseteq G$ be a transversal of the space of right coset space $C_G(r) \backslash G$. That is, every $g_1 \in G$ can be written \emph{uniquely} as $g_1 = h g_2$ for some $h \in C_G(r)$ and $g_2 \in T$. Set $W \in \ell^\infty(G,I(A))$ by $W(h g_2) = h \cdot w$ for every $h \in C_G(r)$ and $g_2 \in T$. It is not hard to check that $W$ is $C_G(r)$-invariant. Moreover, as $\wtilde{\alpha_r}|_{I(J)} = (\Ad w)|_{I(J)}$, and this automorphism is $C_G(r)$-invariant, it follows that $\Ad W$ also implements the diagonal automorphism induced by $r$ on $I(J) \subseteq I(A) \subseteq \ell^\infty(G,I(A))$.
		
		Now we wish to push $W$ back to $I(A)$. We could use an expectation back down onto this C*-algebra, but this would lose us $C_G(r)$-invariance. Instead, we push it down onto the larger algebra $I_G(A)$, and use the far more roundabout techniques developed in the rest of the paper, where we are still able to obtain an element in $I(A)$ satisfying the same properties. Let $E \colon \ell^\infty(G,I(A)) \to I_G(A)$ be a $G$-equivariant unital completely positive map that is the identity map on $I(A)$. Given any $y \in I(J)$, we have
		\[ E(W) y = E(Wy) = E(\wtilde{\alpha_r}(y)W) = \wtilde{\alpha_r}(y) E(W), \]
		where we use the fact that $I(J)$ lies in the multiplicative domain of $E$. Moreover, by $G$-equivariance of $E$, we still have that $E(W)$ is $C_G(r)$-invariant.
		
		It is important to verify that the element $E(W)$ that we obtain in $I_G(A)$ is sufficiently non-trivial for our purposes. Since $p \in Z(I(A))$, by Proposition~\ref{prop:injective_envelope_inclusions}, we have $p \in Z(I_G(A))$ as well. We aim to prove the following observation: $E(W)$ is a non-zero element in $I_G(A)p$.
		
		To see that $E(W) \in I_G(A)p$, we remark that $p$ lies in the multiplicative domain of $E$ and $W$ lies entirely in $\ell^\infty(G,I(A))p$. Thus,		
		\[E(W)=E(Wp)=E(W)p\in I_G(A)p.\]		
		It remains to show that $E(W) \neq 0$. Recalling that $W$ was constructed as having some translates $h \cdot w$ (for $h \in C_G(t)$) in every coordinate, and $w$ was a unitary in the corner $I(J) = I(A)p$, it follows that $W$ is a unitary in the corner $\ell^\infty(G,I(A))p$. Moreover, on this corner, we have
		\[ \norm{W - w} = \sup_{h \in C_G(r)} \norm{h \cdot w - w} \leq 2 \sqrt{2 - \sqrt{4 - k^2}} + t < \sqrt{2}. \]
		Equivalently, $\norm{w^*W - p} < \sqrt{2}$. But by Lemma~\ref{lem:unitary_spectrum_positive_to_norm_sqrt2}, we have that $\Re \sigma(w^*W) \subseteq [\varepsilon,1]$ for some $\varepsilon > 0$, where the spectrum is considered inside of the C*-algebra $\ell^\infty(G,I(A))p$ and \emph{not} all of $\ell^\infty(G,I(A))$. By the continuous functional calculus, this is equivalent to having $\Re (w^*W) \geq \varepsilon\cdot p$ for this same value of $\varepsilon$. Thus, keeping in mind that $w$ lies in the multiplicative domain of $E$, we have
		\[ 0 < \varepsilon\cdot p \leq E(\Re (w^*W)) = \Re E(w^*W)= \Re (w^*E(W)), \]
		which immediately implies $E(W)\neq 0$.
		
		To summarize, $E(W)$ was some non-zero element in $I_G(A)p$ implementing the action $\wtilde{\alpha_r}$ on $I(J) = I(A)p$. That is, given any $y \in I(J)$, we have
		\[ E(W) y = \wtilde{\alpha_r}(y) E(W). \]
		The importance of having $E(W)$ in the corner $I_G(A)p$ is reflected in the fact that, if $y \in I(A)(1-p)$, then $E(W)y = 0$ and $\wtilde{\alpha_r}(y) E(W) = 0$. In other words, we may conclude that for any $y \in I(A)$, and not just $y \in I(J)$, we have
		\[ E(W) y = \wtilde{\alpha_r}(y) E(W). \]
		
		Our setup is now beginning to look quite similar to the proofs of the previous major theorems in this paper. For convenience, we will significantly simplify notation by letting $x_r \defeq E(W)$ and simply writing $r \cdot y$ instead of $\wtilde{\alpha_r}(y)$. In this language, we have an element $x_r \in I_G(A)$ with the property that for any $y \in I(A)$ we have
		\[ x_r y = (r \cdot y) x_r. \]
		Recall Proposition~\ref{prop:equivariant_unitaries_to_single_unitary}. There is nothing special, per se, about the element $x_r$ needing to be unitary, and the same proof will show that, because $s \cdot x_r = x_r$ for all $s \in C_G(r)$,
		then defining $x_{srs^{-1}} \defeq s \cdot x_r$, for all $s\in G$ gives well-defined elements $(x_c)_{c\in C}$, where $C$ denotes the conjugacy class of $r$. Moreover, given any $c\in C$, we have $s \cdot x_c = x_{scs^{-1}}$, for all $s\in G$. A very similar proof to what is done in Proposition~\ref{prop:equivariant_unitaries_to_single_unitary} will also show that each element $x_{srs^{-1}}$ implements the automorphism corresponding to $srs^{-1}$, for $s\in G$. For completeness, we verify the details here. Given any $y \in I(A)$ and $s\in G$, we have
		\begin{align*}
		x_{srs^{-1}} y &= (s \cdot x_r) y = s \cdot (x_r (s^{-1} y)) \\
		&= s \cdot ((rs^{-1} \cdot y) x_r) = (srs^{-1} \cdot y) (s \cdot x_r) = (srs^{-1} \cdot y) x_{srs^{-1}}.
		\end{align*}
		Consequently, we have that
		\[ \sum_{c \in C} x_c^*\lambda_c \in (I(A) \rtimes_\lambda G)' \cap I_G(A) \rtimes_\lambda G. \]
		The proof of the above fact is straightforward, and if you wish, it is essentially identical to the proof of Proposition~\ref{prop:crossed_product_center_coefficients}. We claim that this element in fact lies in the center of $I_G(A) \rtimes_\lambda G$. To see this, recall from Theorem~\ref{thm:crossed_product_injective_envelope_inclusions} the inclusions
		\[ A \rtimes_\lambda G \subseteq I(A) \rtimes_\lambda G \subseteq I_G(A) \rtimes_\lambda G \subseteq I(A \rtimes_\lambda G). \]
		The element $\sum_{c \in C} x_c^*\lambda_c$ in particular commutes with all of $A \rtimes_\lambda G$. However, this automatically implies that the element lies in the center of $I(A \rtimes_\lambda G)$ (see \cite[Corollary~4.3]{hamana79_injective_envelopes_cstaralg}), and so in particular it lies in $Z(I_G(A) \rtimes_\lambda G)$. Clearly, because $r$ was non-trivial, this element does \emph{not} lie in $I_G(A)$.
		
		Now is when we split the proof of this direction into the two cases of when the group $G$ is an FC group, and when the action of $G$ on $A$ is minimal.
		
		The easier case is the minimal case. Given that the element $\sum_{c\in C} x_c^*\lambda_c$ is a non-trivial element lying in the center of $I_G(A) \rtimes_\lambda G$, we have that $I_G(A) \rtimes_\lambda G$ is not prime, and we know that this is equivalent to $A \rtimes_\lambda G$ is not prime by Proposition~\ref{proposition:primeifffactor}. This is a case that we have already completely solved in Section~\ref{sec:primality_minimal}. In particular, we automatically get by Theorem~\ref{thm:DiscreteMinimal} that there exists some $r_2\in\FC(G) \setminus \set{e}$, some $C_G(r_2)$-invariant central projection $p_2 \in Z(I(A))$, and some $C_G(r_2)$-invariant unitary $u_2 \in I(A)p_2$ with the property that $r_2$ acts by $\Ad u_2$ on $I(A)p_2$. Again, we emphasize that there is no reason to expect these elements, in particular $r_2$ and $p_2$, to have anything to do with the elements $r$ and $p$ from before.
		
		Next, consider the case where $G$ is an FC-group. Our aim is again to reduce it down to the results which we have already proven in Section~\ref{sec:intersection_property_fc}. The element $\sum_{c \in C} x_c^*\lambda_c$ that we have constructed above shows that the inclusion $Z(I_G(A))^G \subseteq Z(I_G(A) \rtimes_\lambda G)$ is proper. 
		Exactly as explained in the proof of Theorem~\ref{thm:mainSec4}, we can apply Lemma~\ref{lemma:stronglynonprime} and conclude that $(A,G)$ does not have the intersection property.
		
		By Theorem~\ref{thm:mainSec4}, we have that there exists some $r_2\in G \setminus \set{e} = \FC(G) \setminus \set{e}$, some $C_G(r_2)$-invariant central projection $p_2 \in Z(I(A))$, and some $C_G(r_2)$-invariant unitary $u_2 \in I(A)p_2$ with the property that $r_2$ acts by $u_2$ on $I(A)p_2$. Once again, these could be completely unrelated to the previous $r$ and $p$.
		
		Now we prove (\ref{thm:equivariant_proper_outer_equivalence:injective_envelope}) $\implies$ (\ref{thm:equivariant_proper_outer_equivalence:elliott}). Let $r\in \FC(G)$, let $p$ be a $C_G(r)$-invariant central projection in $I(A)$, and let $u$ be a $C_G(r)$-invariant unitary in $I(A)p$ such that $r$ acts by $\Ad u$ on $I(A)p$. By Proposition~\ref{prop:ideal_injective_envelope}, $K = A \cap I(A)p$ is an $r$-invariant ideal of $A$ with the property that $I(K) = I(A)p$.
		
		Now we show that, from $K$, we may obtain a smaller ideal that satisfies all of the properties, and importantly all of the norm estimates, that we want. We will start with some $\varepsilon > 0$, and occasionally at certain steps of the proof, mention that it can be made small enough to satisfy the properties that we want.
		
		Let $\varepsilon > 0$. We know by Proposition~\ref{prop:elliott_strong_essential} that there exists an \emph{essential} $r$-invariant ideal $J \subseteq K$ and a unitary $v \in M(J)$ with the property that $\norm{\alpha_r|_J - (\Ad v)|_J} \leq \varepsilon$. Equivalently, we have
		\[ \norm{(\Ad v^* \circ \alpha_r)|_J - \id_J} \leq \varepsilon. \]
		We may stick with $\varepsilon < 2$, so that by Lemma~\ref{lem:kadison_ringrose_unitary_injective_envelope} combined with Lemma~\ref{lem:unitary_spectrum_to_norm}, there is a unitary $w \in I(J)$ with the property that $\Ad v^* \circ \wtilde{\alpha_r} = \Ad w$ on $I(J)$ and $\norm{w-p} < \delta(\varepsilon)$, where $\delta(\varepsilon) \to 0$ as $\varepsilon \to 0$. Again, the precise estimate is not that relevant for our purposes.
		
		Given that $J$ was essential in $K$, we have by Proposition~\ref{prop:multiplier_algebra_contained_in_injective_envelope} and Proposition~\ref{prop:essential_ideal_containment_implies_inj_env} that $M(J) \subseteq I(J) = I(K)$. Hence, we may rewrite the above equality as $\Ad u = \Ad (vw)$ on $I(J)$. In other words, $u = \gamma vw$ for some $\gamma \in Z(I(K))$, and so $v = \gamma^* u w^*$. Our end goal is to show that, while $v$ itself might not be anywhere close to being $C_G(r)$-invariant, we may ``correct'' it by multiplying it with an appropriate central element (in some potentially smaller, but still essential, ideal of $K$). We already know that $w$ is close in norm to $p$, and is therefore approximately $C_G(r)$-invariant. Moreover, $u$ was $C_G(r)$-invariant by assumption. Thus, the only problematic element to deal with is $\gamma$.
		
		To this end, recall that $\gamma \in Z(I(K))$, and thus there is no reason to expect that $\gamma$ lies in $M(J)$. However, we claim that we can approximate it by something that does lie in some potentially different multiplier algebra. Write $Z(I(K))$ as $C(X)$. Given that $I(K)$ is monotone complete, so is $C(X)$, and thus $X$ is extremally disconnected (see the end of Section~\ref{sec:preliminaries:monotone_complete}).
		
		Using compactness of $X$, we can find a finite collection of pairs $(p_n,\beta_n)_{n=1}^{N}$, where each $p_n$ is a non-zero projection in $C(X)$, and each $\beta_n \in \mathbb{T}$ is a unimodular constant so that $\sum_{n=1}^{N}p_n=1_{C(X)}=p$ and $\norm{\gamma p_n - \beta_n p_n}\leq \varepsilon$ for all $n$. 
		
		For every $n\in\mathbb{N}$, let $L_n = A \cap I(A)p_n$. It is clear that the ideals $(L_n)_{n=1}^{N}$ are pairwise orthogonal. Letting $L$ be the ideal generated by all of them, we have $L = \operatorname{span} L_n$ and $L \isoto \directsum_{n=1}^{N} L_n$. It is a subtle, but very important, point that all $L_n$ and therefore $L$ as well are $r$-invariant, due to the fact that $\wtilde{\alpha_r}$ was inner on $I(K)$, and therefore acts trivially on $Z(I(K)) = C(X)$. Note that $L \subseteq K$, as we had $K = A \cap I(A)p$. Moreover, letting $q$ be the support projection of $L$ in $I(A)$, we have $q \leq p$, and also $q \geq p_n$ for all $n$. It follows that $q = p$. Equivalently, $L$ is essential in $K$ by Proposition~\ref{prop:essential_ideal_containment_implies_inj_env}, and so $I(L) = I(K) = I(A)p$.
		
		Note that $p_n\in M(L_n)\subseteq I(L_n)$ for every $n$, and moreover $M(L) \isoto \prod_{n=1}^N M(L_n)$ by Lemma~\ref{lem:direct_sum_multiplier_algebra_injective_envelope}. In fact, viewing $M(L) \subseteq I(A)p$, this isomorphism maps each copy of $M(L_n)$ to its canonical copy in $I(A)p$ as well. Thus,
		\[ f = \sum_{n=1}^N \beta_n p_n \]
		is a unitary in $M(L)$. By construction, we have $f \in Z(I(K))$ and $\norm{f - \gamma} \leq \varepsilon$.
		
		It is perhaps worthwhile to take a small break and to summarize the important points of our current setup. We have:
		\begin{itemize}
			\item $K \subseteq A$ was a $C_G(r)$-invariant ideal, with $r$ acting by $\Ad u$ on $I(K)$, and $u$ being $C_G(r)$-invariant as well.
			
			\item $J \subseteq K$ was an $r$-invariant ideal that is \emph{essential} in $K$, and $v \in M(J) \subseteq I(K)$ was a unitary with $\norm{\alpha_r|_J - (\Ad v)|_J} \leq \varepsilon$.
			
			\item $w \in I(K)$ was a unitary element with $\norm{w - p} \leq \delta(\varepsilon)$.
			
			\item $\gamma \in Z(I(K))$ was a central unitary element so that $v = \gamma^* u w^*$.
			
			\item $L \subseteq K$ was another $r$-invariant ideal, also essential in $K$, and $f \in M(L) \subseteq I(K)$ was a unitary element with $\norm{f - \gamma} \leq \varepsilon$. Moreover, $f \in Z(I(K))$.
		\end{itemize}
		
		With the above in hand, we are now ready to construct our approximately invariant ideal $I \subseteq A$, and approximately invariant unitary in $M(I)$. Let $I = J \cap L$ (and note that $I$ is in particular an ideal of $A$). This is $r$-invariant, as both $J$ and $L$ are $r$-invariant.
		Moreover, given that $J$ and $L$ were both essential in $K$, we have by Remark~\ref{rem:intersection_of_essential_ideals} that $I$ is also essential in $K$. However, $K$ was $C_G(r)$-invariant, and thus $h \cdot I$ is still essential in $K$, for every $h\in C_G(r)$. Applying Remark~\ref{rem:intersection_of_essential_ideals} again, we see that $I \cap h \cdot I$ is essential in $K$ for every $h\in C_G(r)$. In particular, this intersection must be essential in the smaller ideals $I$ and $h \cdot I$ as well.
		
		Now consider the equality $v = \gamma^* u w^*$. Our ``correction factor'' that we constructed for $I$ was the element $f\in M(L)$, which also belongs to $M(I)$ (applying Remark~\ref{rem:multipliers_inclusion_essential_ideal} and noting that $I$ is essential in $K$ and is therefore essential in $L$). Consider the new equality
		\[ fv = f \gamma^* u w^*. \]
		We have $\norm{f \gamma^* - p}\leq \varepsilon$, $\norm{w-p} \leq \delta(\varepsilon)$, and $u$ was completely $C_G(r)$-invariant to begin with. It follows that there is some new bound $\delta_2(\varepsilon)$ such that
		\[ \sup_{h \in C_G(r)}\norm{h \cdot (fv) - fv} \leq \delta_2(\varepsilon), \]
		where, while the precise bound $\delta_2(\varepsilon)$ is not important, what is important is that as $\varepsilon \to 0$, we also have $\delta_2(\varepsilon) \to 0$.
		
		Given that $f \in Z(I(K))$, we have that $fv$ is a unitary element of $M(I)$ with the property that $\Ad (fv)$ and $\Ad v$ implement the same automorphism on $I(K)$. The inequality $\norm{\alpha_r|_J - (\Ad v)|_J} \leq \varepsilon$ therefore implies that 
		\[\norm{\alpha_r|_I - (\Ad (fv))|_I} \leq \varepsilon. \]
		
		In summary, letting $\varepsilon$ be small enough, we have that
		\[ k = \norm{\alpha_r|_I - (\Ad (fv))|_I} \text{ \ and \ } t = \sup_{h \in C_G(r)} \norm{h \cdot (fv) - fv} \]
		are sufficiently small in order to satisfy $2 \sqrt{2 - \sqrt{4 - k^2}} + t < \sqrt{2}$.
	\end{proof}

    \section{The non-unital case}

    All of our previous results were obtained in the case of reduced crossed products $A \rtimes_\lambda G$ where the underlying C*-algebra $A$ is unital. Certain results genuinely made use of the fact that we were dealing with a minimal action on a unital C*-algebra. However, the results in Section~\ref{sec:intersection_property_fc} that characterize the intersection property in the case of FC-groups had no such assumption.

    As such, one might ask whether it is relatively straightforward to bootstrap these results to the non-unital case, perhaps by considering the corresponding action of $G$ on the unitization $A^+$. As a reminder, our convention is that $A^+ = A$ if $A$ is already unital. Note that if $A$ is genuinely non-unital, then the action on $A^+$ is never minimal, even if it is on $A$. Our aim is to show that we may still, however, relate intersection property results. Similar observations are made in \cite[Section~6]{kalantar_scarparo_locally_compact}, which among other things studies the intersection property for crossed products of the form $C_0(X) \rtimes_\lambda G$.

    \begin{lemma}
    \label{lem:nonunital_essential}
        Assume $A$ is a non-unital C*-algebra, and $G$ is a discrete group acting on $A$ by *-automorphisms. Let $A^+$ denote the unitization of $A$. We have that $A \rtimes_\lambda G$ is an essential ideal in $(A^+) \rtimes_\lambda G$.
    \end{lemma}

    \begin{proof}
        We may verify this directly by computing $(A \rtimes_\lambda G)^\perp$. Indeed, if $x \sim \sum b_g \lambda_g$ is the Fourier series of some element $x \in (A^+) \rtimes_\lambda G$, then for $a \in A \subseteq A \rtimes_\lambda G$ the Fourier series of the element $ax$ is $\sum (ab_g) \lambda_g$. In other words, $ax = 0$ tells us that $ab_g = 0$ for all $g \in G$ by uniqueness of Fourier series. But $A$ is an essential ideal in $A^+$, and so this just says that $b_g = 0$ for all $g \in G$, or in other words, $x = 0$.
    \end{proof}

    From here, it is quite straightforward to transfer the intersection property between the two algebras.

    \begin{proposition}
    \label{prop:nonunital_intersection_property_transfer}
        Assume $A$ is a non-unital C*-algebra, and $G$ is a discrete group acting on $A$ by *-automorphisms. Let $A^+$ denote the unitization of $A$. The reduced crossed product $A \rtimes_\lambda G$ has the intersection property if and only if $(A^+) \rtimes_\lambda G$ does.
    \end{proposition}

    \begin{proof}
        First, assume that $A \rtimes_\lambda G$ does not have the intersection property, so that there is some nontrivial ideal $J \triangleleft A \rtimes_\lambda G$ with $J \cap A = 0$. Since $A \rtimes_\lambda G$ is an ideal of $(A^+) \rtimes_\lambda G$, we have that $J$ is in fact an ideal of $(A^+) \rtimes_\lambda G$ as well. To check that it still has trivial intersection with $A^+$ is straightforward:
        \[ J \cap A^+ = (J \cap A \rtimes_\lambda G) \cap A^+ = J \cap (A \rtimes_\lambda G \cap A^+) = J \cap A = 0. \]

        Conversely, assume that $K$ is now a nontrivial ideal of $(A^+) \rtimes_\lambda G$ with the property that $K \cap A^+ = 0$. By Lemma~\ref{lem:nonunital_essential}, the intersection $J = K \cap (A \rtimes_\lambda G)$ is still a nontrivial ideal of $A \rtimes_\lambda G$. Moreover, it is also clear that
        \[ J \cap A \subseteq K \cap A^+ = 0. \]
    \end{proof}

    It remains to check that our two main characterizations (C1) and (C2) listed in Section~\ref{sec:introduction} actually are equivalent when interpreted on $A$ and $A^+$. We will not repeat these characterizations again here, as they are quite cumbersome, but rather just remark the key parts in the proof below.

    \begin{proposition}
        \label{prop:nonunital_characterizations_equivalent}
        Consider from Section~\ref{sec:introduction} the two characterizations (C1) and (C2) for an action of a discrete group on a C*-algebra. Assume $A$ is a non-unital C*-algebra on which $G$ acts by *-automorphisms, and denote its unitization by $A^+$. We have that
        \begin{enumerate}
            \item The action $G \actson A$ has characterization (C1) if and only if $G \actson A^+$ does.
            \item The action $G \actson A$ has characterization (C2) if and only if $G \actson A^+$ does.
        \end{enumerate}
    \end{proposition}

    \begin{proof}
        First, it is clear that characterization (C1), which is phrased purely in terms of the injective envelope, is equivalent for $G \actson A$ and $G \actson A^+$, as $I(A) = I(A^+)$ by definition (recall from Definition~\ref{def:nonunital_injective_envelope}, for example).

        For characterization (C2), which is written in terms of ideals of the original C*-algebra, it is clear that if $J \triangleleft A$ is a nontrivial ideal satisfying the required properties (together with some $r \in \FC(G) \setminus \{e\}$, etc...), then it is also an ideal of $A^+$ with the same properties as well.
        
        Conversely, if $K \triangleleft A^+$ is a nontrivial ideal with the required properties, then by essentiality of $A$ in $A^+$ we have that $J = K \cap A$ is a nontrivial ideal of $A$. Note that we still have that $J \cap sJ$ is essential in $J$ for the appropriate group elements, as
        \[ J \cap sJ = (K \cap A) \cap (sK \cap A) = (K \cap sK) \cap A, \]
        and we leave it as an exercise to the reader
        that because $K \cap sK$ is essential in $K$, then $(K \cap sK) \cap A$ is essential in $K \cap A$. Moreover, by similar logic, we have that $J$ is essential in $K$, and so our unitary $u \in M(K)$ also lies in $M(J)$, as we canonically have $M(K) \subseteq M(J)$. The rest of the required properties are immediate.
    \end{proof}

    \section{Examples, applications, and special cases}

    This section collects several basic examples and special cases. Some of these were already known, sometimes with additional assumptions (for example, separability), but we believe that re-proving them via our machinery highlights the power of these results.

    \subsection{The case of simple C*-algebras}

    The conditions of the main theorems simplify considerably when working with actions on simple C*-algebras. This is due to the following well-known result relating inner actions on $I(A)$ to inner actions on $A$.

    \begin{proposition}
    \label{prop:simple_inner}
        Assume $A$ is a simple C*-algebra, not necessarily unital, and let $\alpha \in \Aut(A)$ be an automorphism of $A$. Denote the multiplier algebra of $A$ by $M(A)$. If the unique extension of $\alpha$ to $I(A)$ is given by $\Ad u$ for some unitary $u \in I(A)$, then we in fact have $u \in M(A)$.
    \end{proposition}

    \begin{proof}
        First, let us quickly recall that for non-unital C*-algebras $A$ with unitization $A^+$, we have the inclusions:
        \[ A \subseteq A^+ \subseteq M(A) \subseteq I(A) = I(A^+) \]
        It is known that for simple C*-algebras $A$, any automorphism is inner on $M(A)$ if and only if it is inner on $I(A)$ - see \cite[Corollary~7.8]{hamana85-injective_envelopes_equivariant}. Thus, $\alpha_r \in \Aut(A)$ satisfies $\alpha_r = \Ad v$ for some $v \in U(M(A))$, and by uniqueness of extensions of automorphisms to the injective envelope, we have that the extension $\wtilde{\alpha_r} \in \Aut(I(A))$ also satisfies $\wtilde{\alpha_r} = \Ad v$.
        
        Since $A$ is simple, we have that $I(A)$ is a factor. Indeed, if we were to have a nontrivial central projection $p$, the ideal $I(A)p \cap A^+$ would be a nonzero ideal of $A^+$, and thus by essentiality would have to intersect $A$ nontrivially as well.  But $A$ is simple, and so reinterpreting this statement, we have that $ap = a$ for all $a \in A$. In other words, the quotient map $I(A) \to I(A)p$ would have to be an injective *-homomorphism on $A^+$, violating essentiality of $A^+ \subseteq I(A)$.
        
        Thus, $\Ad u = \Ad v$ on $I(A)$ just says that $u = \gamma v$ for some scalar $\gamma \in \mathbb{T}$, which immediately implies that $u$ was an element of $M(A)$ to begin with.
    \end{proof}

    Now, Theorem~\ref{thmIntro:A} and Theorem~\ref{thmIntro:B} can be reinterpreted as the following two corollaries, respectively:

	\begin{corollary}
    \label{cor:ip_fc}
		Let $G$ be an FC-group acting on a simple C*-algebra $A$, not necessarily unital. Then $A \rtimes_\lambda G$ is not simple if and only if there exist an $r \in G \setminus \{e\}$ and a unitary $u \in U(M(A))$ such that
		\begin{enumerate}
			\item $r$ acts by $\Ad u$ on $A$;
			\item $s \cdot u=u$, for all $s \in C_G(r)$.
		\end{enumerate}
	\end{corollary}

    \begin{corollary}
    \label{cor:simple_fch}
		Let $G$ be an FC-hypercentral group acting on a unital simple C*-algebra $A$. Then $A \rtimes_\lambda G$ is not simple if and only if there exist an $r \in \FC(G) \setminus \{e\}$ and a unitary $u \in U(A)$ such that
		\begin{enumerate}
			\item $r$ acts by $\Ad u$ on $A$;
			\item $s \cdot u=u$, for all $s \in C_G(r)$.
		\end{enumerate}
	\end{corollary}

    \subsection{A finite-dimensional example}
    \label{sec:examples:finite_dimensional}

    The following example is taken from \cite[Example~5.6]{kennedy_schafhauser_noncommutative_crossed_products} (see also \cite[Example~5.1]{Ursu}).
	In \cite[Example~5.6]{kennedy_schafhauser_noncommutative_crossed_products}, it is given as an example for which the methods presented in \cite{kennedy_schafhauser_noncommutative_crossed_products} for determining simplicity of crossed products, do not apply.
	
	\begin{example}\label{eg}
		Consider the unitaries
        \[ U = \begin{bmatrix}
		0 & 1\\
		1 & 0
		\end{bmatrix} \quad \text{and} \quad V=\begin{bmatrix}
		1 & 0\\
		0& -1
		\end{bmatrix}, \]
        and from here let $\alpha \colon (\mathbb{Z}/2\mathbb{Z})^2 \to \Aut(M_2)$ be the action determined by the automorphisms:
		\[ \alpha_{(1,0)}= \Ad U \quad \text{and} \quad \alpha_{(0,1)}= \Ad V. \]
        Given any automorphism $\beta = \Ad W \in \Aut(M_2)$, the choice of unitary $W$ is unique up to scalar multiplication. Together with the fact that the unitaries $U$ and $V$ themselves do not commute, but instead satisfy $UV = -VU$, it is quite easy to check that no matter which $r \in (\Z/2\Z)^2$ and which unitary $W \in U(M_2)$ satisfying $\alpha_r = \Ad W$ we choose, the unitary $W$ does not satisfy $s \cdot W = W$ for all $s \in (\Z/2\Z)^2$. By Theorem~\ref{thmIntro:B}, using the fact that $I(A) = A$ in this case, the crossed product $M_2 \rtimes (\Z/2\Z)^2$ is simple. Indeed, it is also quite easy to verify by hand that it is a 16-dimensional C*-algebra with trivial center, and thus isomorphic to $M_4$.
	\end{example}

    \subsection{The case of cyclic groups}

    One of the first cases of interest in crossed products were actions by a single automorphism, and the corresponding crossed products of $A \rtimes_\lambda \Z$ or $A \rtimes_\lambda \Z/n\Z$. For example, \cite[Theorem~2.5]{olesen_pedersen_II} characterizes the intersection property of $A \rtimes_\lambda G$ for arbitrary abelian groups in terms of the Connes spectrum of the action, while \cite[Theorem~10.4]{olesen_pedersen_III} gives several equivalent conditions in the case of $\Z$-actions, at least in the separable setting.
    
    More recent results, such as \cite[Theorem~2.5]{kwasniewski_meyer_aperiodicity}, remove any separability assumptions in the minimal setting and are able to characterize simplicity entirely based on if the action of $\Z$ or (certain) $\Z/n\Z$ is outer on $M(A)$. However, in the non-minimal setting, the same paper characterizes the intersection property in \cite[Theorem~9.12]{kwasniewski_meyer_aperiodicity} (for Fell bundle C*-algebras in fact, via a Morita equivalence to a crossed product) only under some additional assumptions, such as containing a separable essential ideal. See also \cite{kwasniewski_meyer_2022} for a subsequent paper studying implications among various related properties for inclusions of $A$ into $A \rtimes_\lambda G$ and other crossed-product-like constructions, again with many converses still requiring additional mild assumptions. It is worth noting that in these two (and previous works), they mostly use \emph{aperiodicity} instead of proper outerness (a condition which is equivalent for automorphisms and group actions), and say that an inclusion \emph{detects ideals} rather than saying it has the intersection property (two synonyms for the same concept).

    Here, we show that our results can bypass any sort of separability (or similar) requirement. In other words, we show that for $\Z$ and (certain) $\Z/n\Z$ actions, the intersection property corresponds to proper outerness. See Theorem~\ref{thm:properly_outer_equivalence} and Definition~\ref{def:properly_outer} to recall the notion of an automorphism/group action being properly outer (in the sense of Kishimoto) on a C*-algebra $A$, which is equivalent to proper outerness of the unique extension to $I(A)$.

    Since our main characterization, Theorem~\ref{thmIntro:A}, requires an invariance condition on any unitary $u \in I(A)p$ realizing a non-properly outer automorphism $\alpha_g$ ($g \in G$), the following lemma can be found quite useful. Similar techniques have been used several times in the literature, for example, in the proof of \cite[Theorem~4.6]{olesen_pedersen_II}.

    \begin{lemma}
        \label{lem:single_automorphism_untwist}
        Assume $\alpha \in \Aut(B)$ is an automorphism on a monotone complete C*-algebra $B$. Assume moreover that there is some $n \in \N$ with the property that $\alpha^n$ is not properly outer on $B$. Let $p$ be the largest $\alpha^n$-invariant central projection on which $\alpha^n|_{Bp}$ is inner. Then we have:
        \begin{enumerate}
            \item $p$ is $\alpha$-invariant.
            \item $\alpha^{n^2}$ is also inner on $Bp$, and more importantly, we may choose a unitary $v \in Bp$ with the property that $\alpha^{n^2}|_{Bp} = \Ad v$ and $\alpha(v) = v$.
        \end{enumerate}
    \end{lemma}

    \begin{proof}
        First, we claim that $\alpha^n$ is also inner on $B\alpha(p)$. Let $u \in U(Bp)$ be a unitary implementing $\alpha^n$ on $Bp$, and observe that
        \[ \alpha(u) (x \alpha(p)) \alpha(u)^* = \alpha(u \alpha^{-1}(x) p u^*) = \alpha(\alpha^n(\alpha^{-1}(x) p)) = \alpha^n(x\alpha(p)). \]
        In other words, $\alpha^n$ is implemented by $\alpha(u)$ on $B \alpha(p)$. But $p$ was the largest $\alpha^n$-invariant central projection on which $\alpha^n$ was inner, and so $\alpha(p) \leq p$. Chaining together enough inequalities, we get:
        \[ p = \alpha^n(p) \leq \alpha^{n-1}(p) \leq \dots \leq \alpha(p) \leq p, \]
        which proves that $\alpha(p) = p$.

        There is no reason to believe that $\alpha(u) = u$, or that we may even perturb $u$ so that this is true. However, we may build a unitary $v \in U(Bp)$ implementing $\alpha^{n^2}$ such that $\alpha(v) = v$ as follows. Let
        \[ v = u \alpha(u) \dots \alpha^{n-1}(u). \]
        We have shown previously that if $\alpha^n = \Ad u$ on $Bp$, then $\alpha^n = \Ad \alpha(u)$ on $B \alpha(p) = Bp$. Consequently, we have $\alpha^n = \Ad \alpha^k(u)$ on $Bp$ for any $k \geq 0$. This actually shows that $\Ad v = \alpha^{n^2}$ on $Bp$.
        
        To see $\alpha$-invariance of $v$, note that if $\Ad w_1 = \Ad w_2$ on some C*-algebra, then $w_1$ and $w_2$ commute. In other words, all the terms $u, \alpha(u), \alpha^2(u), \ldots$,  in the expression of $v$ commute. Thus, we may simply evaluate
        \[ \alpha(v) = \alpha(u) \alpha^2(u) \dots \alpha^n(u), \]
        where the last term $\alpha^n(u)$ is just $u$ again, as $\alpha^n = \Ad u$ on $Bp$. In other words, $\alpha(v) = v$.
    \end{proof}

    \begin{corollary}
    \label{cor:cyclic}
        Assume $G = \Z$ or $G = \Z/n\Z$ with $n$ being square-free (i.e. no nontrivial perfect square divides $n$). Assume that $G$ acts on some C*-algebra $A$, not necessarily unital. Then $A \rtimes_\lambda G$ has the intersection property if and only if the action is properly outer.
    \end{corollary}

    \begin{proof}
        Again, the characterization of proper outerness in terms of $I(A)$ is the most convenient for our purposes. By Theorem~\ref{thmIntro:A}, if the action is properly outer, the crossed product has the intersection property. Conversely, if the action is not properly outer, there is some nonzero $k \in \Z$ or $k \in \Z/n\Z = \set{0, \dots, n-1}$ and a largest nonzero $k$-invariant central projection $p \in I(A)$ with the property that $k = \Ad u$ for some $u \in U(I(A)p)$. By Lemma~\ref{lem:single_automorphism_untwist}, this implies that $p$ is $\alpha$-invariant, and $k^2 = \Ad v$ for some other unitary $v \in U(I(A)p)$ with the additional property that $\alpha(v) = v$. Importantly, if we are dealing with $G = \Z$, then clearly $k^2 \neq 0$ as well. If we are dealing with $\Z/n\Z$ for a square-free $n$, then the same conclusion that $k^2 \neq 0$ also holds. In either case, by Theorem~\ref{thmIntro:A}, the crossed product does not have the intersection property.
    \end{proof}

    This extra requirement that $\Z/n\Z$ should be restricted to the square-free case is quite necessary (with the reason being evident when reading through the above proof). We now construct counterexamples in the non-square free case.

    \begin{proposition}
        Assume $n \in \N$ is \emph{not} square-free. That is, $n$ is divisible by some nontrivial perfect square. Then there is an action of $\Z/n\Z$ on some unital, separable, and simple C*-algebra $A$ that is not properly outer (at least one nontrivial group element acts by an inner automorphism), but with the crossed product $A \rtimes_\lambda \Z/n\Z$ still being simple.
    \end{proposition}

    \begin{proof}
        We actually argue as in \cite[Section~5.2]{Ursu}, where we use the fact that the automorphisms of the hyperfinite $II_1$ factor $R$ are quite well-understood, and provide a plentiful source of automorphisms that are strange enough for our purposes. Connes' work, specifically \cite[Proposition~1.6]{connes_hyperfinite_II1_automorphisms}, tells us the following. Let $p \in \N$, let $\gamma$ be a $p$-th root of unity, and viewing $R = \closure{\tensor}_{n=1}^\infty M_p$, define a unitary $u \in R$ as
        \[ u = \begin{bmatrix} \gamma & & & \\ & \gamma^2 & & \\ & & \ddots & \\ & & & \gamma^p \end{bmatrix} \tensor (\otimes_{n=2}^\infty I). \]
        There is an automorphism $\alpha \in \Aut(R)$ with the following properties:
        
        \begin{enumerate}
            \item $p$ is the smallest positive integer with $\alpha^p$ inner
            \item $\alpha^p = \Ad u$
            \item $\alpha(u) = \gamma u$
        \end{enumerate}

        In the context of our counterexample, write the non-square-free number as
        \[ n = p_1 \dots p_k q_1^{n_1} \dots q_l^{n_l}, \]
        where $n_i \geq 2$ for all $i$, and let
        \[ m = p_1 \dots p_k q_1^{n_1-1} \dots q_{l-1}^{n_l-1}. \]
        Now let $p = m$ and $\gamma$ be a primitive $n/m$-th root of unity (observe that $n/m$ divides $m$, and so $\gamma$ is also an $m$-th root of unity). Let our automorphism $\alpha \in \Aut(R)$ be as before. Furthermore, by construction, it is clear that $u^{n/m} = 1$, and so $\alpha^n = (\alpha^m)^{n/m} = \id$. In other words, we obtain a $\Z/n\Z$-action on $R$ via this automorphism.

        We first quickly note that, other than the fact that $R$ is not norm-separable, this action satisfies the remaining required properties. Indeed, $R$ is simple as a C*-algebra - this is an easy exercise using the Dixmier averaging property, which in this context says that
        \[ \tau(x) \in \closure{\conv}^{\norm{\cdot}} \setbuilder{uxu^*}{u \in R}, \]
        where $\tau(x)$ is the (unique) faithful normal tracial state. Furthermore, we are in a position to apply Theorem~\ref{thmIntro:A}. The only $k \in \Z/n\Z$ that admit an inner automorphism are $k = ml$ for $l \in \Z$, and they are always of the form $\Ad u^l$, where $\alpha(u^l) = \gamma^l u^l$. Hence, $u^l$ can only be invariant if $l$ is a multiple of $n/m$, or equivalently, if $k = 0$ in $\Z/n\Z$. Note that the choice of unitary implementing some $\alpha^{ml}$ is not truly unique, but because $Z(R) = \C$, it is unique up to scalar multiplication by a unimodular constant. Hence, the same result of $\alpha(v) \neq v$ holds regardless of what unitary $v$ we choose instead of $u^l$. By Theorem~\ref{thmIntro:A}, the crossed product $R \rtimes_\lambda \Z/n\Z$ must have the intersection property.

        Dropping to a separable C*-subalgebra with the same properties is not too difficult. For convenience, Blackadar's notion of \emph{separably inheritable} \cite[Definition~II.8.5.1]{blackadar_operator_algebras} will keep track of things more easily. A property (P) of C*-algebras is said to be separably inheritable if
        \begin{enumerate}
            \item If $A$ has property (P) and $B \subseteq A$ is a separable subalgebra, then there is an intermediate separable subalgebra $B \subseteq C \subseteq A$ with property (P).
            \item Property (P) is closed under countable inductive limits $A_1 \subseteq A_2 \subseteq \dots$, i.e.\ taking $A = \closure{\cup_n A_n}$.
        \end{enumerate}
        For our purposes, we observe that being simple (\cite[Theorem~II.8.5.6]{blackadar_operator_algebras}) and being $\alpha$-invariant (easy exercise) are separably inheritable properties for subalgebras of $R$. By taking inductive limits in the appropriate way, it is possible to show that ``being simple and $\alpha$-invariant'' (in general, the combination of finite or even countably many separably inheritable properties) as a single property is itself separably inheritable \cite[Proposition~II.8.5.3]{blackadar_operator_algebras}.

        Thus, if we start with a unital, norm-separable, weak*-dense C*-subalgebra of $R$ that contains $u$, then it is contained in such a subalgebra with the additional two properties of being simple and $\alpha$-invariant. Call it $A$. We claim that this is the subalgebra that we are looking for.
        
        Clearly, because $u \in A$, the action of $\Z/n\Z$ on $A$ is not properly outer. We claim that, just like for $R \rtimes_\lambda \Z/n\Z$, the crossed product $A \rtimes_\lambda \Z/n\Z$ also has the intersection property. Assume otherwise, and let $k \in \Z/n\Z$ be a nonzero element such that $k = \Ad v$ for some $v \in U(I(A))$ and $\alpha(v) = v$ (obtained from Theorem~\ref{thmIntro:A} again). By Proposition~\ref{prop:simple_inner}, because $A$ is simple, the unitary $v$ in fact lies in $A \subseteq R$. However, because $\alpha^k \in \Aut(R)$ is necessarily normal, and $A \subseteq R$ is weak*-dense, this just says that $\alpha^k = \Ad v$ on all of $R$. This contradicts what we observed before, namely that such an automorphism cannot have $\alpha(v) = v$. Consequently, $A \rtimes_\lambda \Z/n\Z$ must have the intersection property.
    \end{proof}

	\section*{Acknowledgements}
	
	We are grateful to Siegfried Echterhoff for pointing out, after the first public release of our preprint, that Theorem~\ref{thmIntro:B} follows by combining Theorem~\ref{thmIntro:C} with Echterhoff's own results in \cite{echterhoff_jot}. This has now been made clear. Similarly, we had also overlooked the fact that Lemma~\ref{lem:non_free_on_A_on_inj} had already been proven in a paper of Vrej Zarikian as \cite[Lemma~2.2]{Zarikian}, and Zarikian kindly pointed this out to us shortly after. We have updated the paragraph just before Lemma~\ref{lem:non_free_on_A_on_inj} accordingly. Finally, we would like to thank Bartosz Kwa{\'s}niewski for many fruitful discussions on potential applications of our results.
	
%
%
%
	
	\bibliographystyle{amsalpha}
	\bibliography{crossed_products_fch_groups}
\end{document}